\definecolor{identifiercolor}{rgb}{.4,.6,.56}
\definecolor{stringcolor}{gray}{0.5}
\definecolor{inactivecolor}{rgb}{0.15,0.15,0.5}
\setlist[enumerate]{label=\emph{(\roman*)}}
\newtheorem{theorem}{Theorem}[section]
\newtheorem{lemma}[theorem]{Lemma}
\newtheorem{proposition}[theorem]{Proposition}
\theoremstyle{definition}
\newtheorem{definition}[theorem]{Definition}
\newtheorem{remark}[theorem]{Remark}
\numberwithin{equation}{section}
\newcommand{\R}{\mathbb{R}}
\newcommand{\dd}{{\rm d}}
\begin{document}

\title[2D mass-critical ZK equation]{On the near soliton dynamics for the 2D cubic Zakharov-Kuznetsov equations}

\author{Gong Chen}
\address{School of Mathematics, Georgia Institute of Technology, Atlanta, GA, 30332-0160, USA.}
\email{gc@math.gatech.edu}

\author{Yang Lan}
\address{Yau Mathematical Sciences Center, Tsinghua University, 100084 Beijing, P. R. China.}
\email{lanyang@mail.tsinghua.edu.cn}

\author{Xu Yuan}
\address{Department of Mathematics, The Chinese University of Hong Kong, Shatin, N.T., Hong Kong, P.R. China.}
\email{xu.yuan@cuhk.edu.hk}

\begin{abstract}
In this article, we consider the Cauchy problem for the cubic (mass-critical) Zakharov-Kuznetsov equations in dimension two:
\begin{equation*}
\partial_tu+\partial_{x_1}(\Delta u+u^3)=0,\quad (t,x)\in [0,\infty)\times \mathbb{R}^{2}.
\end{equation*}
For the initial data in $H^{1}$ close to the soliton and satisfying a suitable space-decay property, we fully describe the asymptotic behavior of the corresponding solution. More precisely, for such initial data, we show that only three possible behaviors can occur: 1) The solution leaves a tube near soliton in finite time; 2) the solution blows up in finite time; and 3) the solution is global and locally converges to a soliton. In addition, we show that for initial data near a soliton with non-positive energy and above the threshold mass, the corresponding solution will blow up as described in Case 2.

Our proof is inspired by the techniques developed for the mass-critical generalized Korteweg–de Vries (gKdV) equation in a similar context 
by Martel-Merle-Rapha\"el~\cite{MMR}. More precisely, our proof relies on refined modulation estimates and a modified energy-virial Lyapunov functional. The primary challenge in our problem is the lack of coercivity for the Schr\"odinger operator, which appears in the virial-type estimate.
To overcome the difficulty, we apply a transform, which was first introduced in Kenig-Martel~\cite{KenigMartel}, to perform the virial computations after converting the original problem into an adjoint one. The coercivity of the Schr\"odinger operator in the adjoint problem has been numerically verified by  Farah-Holmer-Roudenko-Yang~\cite{FHRY}. 
\end{abstract}

\maketitle

\section{Introduction}
\subsection{Main results}
Consider the 2D cubic Zakharov-Kuznetsov equation,
\begin{equation}\label{CP}
\partial_tu+\partial_{x_1}(\Delta u+u^3)=0,\quad (t,x)\in [0,\infty)\times \mathbb{R}^{2},
\end{equation}
where $x=(x_{1},x_{2})\in \mathbb{R}^{2}$ and $\Delta=\partial_{x_{1}}^{2}+\partial_{x_{2}}^{2}$ is the Laplace operator on $\mathbb{R}^{2}$. Recall that, by the work of \cite{Faminskii,Kin,LinPas,RibaudVento}, the Cauchy problem for equation~\eqref{CP} is locally well-posed in the energy space $H^{1}$: for any initial data $u_{0}\in H^{1}(\R^{2})$, there exists a unique (in a certain sense) maximal solution of~\eqref{CP} in $C\left([0,T):H^{1}(\R^{2})\right)$ with $u_{|t=0}=u_{0}$. Moreover, for this problem, the following blow-up criterion holds:
\begin{equation}\label{equ:Blowcri}
T<\infty\Longrightarrow \lim_{t\uparrow T}\|\nabla u(t)\|_{L^{2}}=\infty.
\end{equation}
For any $H^{1}$ solution $u$, the mass $M$ and energy $E$ are conserved, where 
\begin{equation*}
M(u(t))=\int_{\R^{2}}|u(t,x)|^{2}\dd x\quad \mbox{and}\quad 
E(u(t))=\frac{1}{2}\int_{\R^{2}}\left(|\nabla u(t,x)|^{2}-\frac{1}{2}|u(t,x)|^{4}\right)\dd x.
\end{equation*}
Recall also that, for any solution $u$ of~\eqref{CP} and $\lambda>0$, the scaling symmetry
\begin{equation*}
u_\lambda(t,x)=\lambda u(\lambda^3 t, \lambda x),\quad \mbox{for}\ (t,x)\in [0,\infty)\times\R^{2},
\end{equation*}
again results in a solution to \eqref{CP}. This scaling symmetry keeps the $L^2$-norm invariant so that the problem is \emph{mass-critical}.

Denote by $Q$ the \emph{ground state}, which is the unique radial positive solution of~\eqref{CP}: 
\begin{equation*}
-Q''-\frac{Q'}{r}+Q-Q^{3}=0,\quad Q'(0)=0\quad \mbox{and}\quad \lim_{r\to \infty}Q(r)=0.
\end{equation*}
It is well-known and easily checked that, for any $n\in \mathbb{N}$, 
\begin{equation*}
\left|Q^{(n)}(r)\right|\lesssim r^{-\frac{1}{2}}e^{-r},\quad \mbox{for}\ r>1.
\end{equation*}
We refer to Berestycki-Lions~\cite{BL} for the work related to the soliton $Q$. Using the symmetries of the equation, from $Q$, for any $(\lambda_0,x_{1,0},x_{2,0}) \in \mathbb{R}^+\times \mathbb{R}\times \R$, one can find the family of \emph{{soliton}/traveling wave} solutions to \eqref{CP}:
\begin{equation*}
    u(t,x)=\lambda_0 Q\left((\lambda_0(x_1-\lambda_0^2 t-x_{1,0}), \lambda_0 (x_2-x_{2,0})\right).
\end{equation*}

Based on a variational argument (see~\cite{BL,Kwong,Weinstein}),
the unique radial positive solution $Q$ attains the best constant $C$ in the following Gagliardo-Nirenberg inequality
\begin{equation*}
\|f\|_{L^{4}}^{4}\le C \|f\|_{L^{2}}^{2}\|\nabla f\|_{L^{2}}^{2},\quad \mbox{for any}\ f\in H^{1}(\R^{2}).
\end{equation*}
It follows from the definition of the energy $E$ that 
\begin{equation*}
E(u)\ge \frac{1}{2}\|\nabla u\|_{L^{2}}^{2}\left(1-\frac{\|u\|^{2}_{L^{2}}}{\|Q\|^{2}_{L^{2}}}\right),\quad \mbox{for any}\ u\in H^{1}(\R^{2}).
\end{equation*}
Combining the above estimate with the conservation of the energy and blow-up criterion, we obtain the global existence for any initial data with $\|u_{0}\|_{L^{2}}<\|Q\|_{L^{2}}$.

\smallskip
For the case of $\|u_{0}\|_{L^{2}}\ge \|Q\|_{L^{2}}$, the existence of blow-up solutions (in finite time or infinite time) has been an interesting problem and has attracted people’s attention in recent years. In particular, in this direction, the first result is obtained by Farah-Holmer-Roudenko-Yang~\cite{FHRY} which focuses on the blow-up dynamics for the case that the mass is slightly above the threshold.
More precisely, they show that, there exists $\alpha_{0}>0$ such that, for any initial data $u_{0}\in H^{1}(\R^{2})$ satisfying
\begin{equation*}
E(u_{0})<0\quad \mbox{and}\quad 0<\|u_{0}\|^{2}_{L^{2}}-\|Q\|_{L^{2}}^{2}\le \alpha_{0},
\end{equation*} 
the corresponding solution $u(t)$ blows up in finite or infinite forward time. 

\smallskip
In this article, we study the soliton dynamics of the 2D Cauchy problem~\eqref{CP}: we first prove the rigidity of the solution flow for the initial data near the soliton and then show a blow-up result for such solutions with non-positive energy. We start with the definition for the set of initial data and $L^{2}$-modulated tube near the soliton manifold.
\begin{definition}\label{def:Tube}
For any $\alpha>0$, we define the $L^2$-modulated tube near the soliton manifold as follows:
\begin{equation*}
\mathcal{T}_{\alpha}=\left\{ u\in H^{1}:\inf_{\substack{\lambda_{0}>0\\x_{0}\in \R^{2}}}\left\|u(\cdot)-\frac{1}{\lambda_{0}}Q\left(\frac{\cdot-x_{0}}{\lambda_{0}}\right)\right\|_{L^{2}}<\alpha\right\}.
\end{equation*}
Moreover, for any $\alpha>0$, we define the following initial data set:
\begin{equation*}
\mathcal{A}_{\alpha}=\left\{
u_{0}=Q+\varepsilon_{0}:\|\varepsilon_{0}\|_{H^{1}}<\alpha\ \mbox{and}\ 
\int_{\R}\int_{0}^{\infty}y_{1}^{100}\varepsilon_{0}^{2}(y_{1},y_{2})\dd y_{1}\dd y_{2}<1
\right\}.
\end{equation*}
\end{definition}

Our first main result is the following rigidity of the solution flow in $\mathcal{A}_{\alpha}$.
\begin{theorem}\label{MT}
There exist some universal constants $0<\alpha\ll\alpha^*\ll1$ such that the following is true. Let the initial data $u_{0}\in \mathcal{A}_{\alpha}$. Then for the corresponding solution $u(t)$ of~\eqref{CP}, one of the following three scenarios occurs{\rm{:}}

\begin{description}

\item [Exit] There exists a finite time $T\in (0,\infty)$ such that $u(T)\notin\mathcal{T}_{\alpha^*}$.

\item [Blow-up] The solution $u(t)$ blows up in finite time $T\in (0,\infty)$ with
	\begin{equation*}
	\|\nabla u(t)\|_{L^2}=\frac{\ell(u_{0})+o_{t\uparrow T}(1)}{(T-t)^{\beta}},
	\end{equation*}
    where $\beta\in \left(\frac{5}{7},\frac{5}{6}\right)$ is a universal constant and $\ell(u_{0})>0$ is a constant depending only on $u_{0}$ . Moreover, for all $t\in (0,T)$, we have $u(t)\in\mathcal{T}_{\alpha^*}$.
    
    \item [Soliton] The solution $u(t)$ is globally defined on $[0,\infty)$, and for all $t\in [0,\infty)$, we have $u(t)\in\mathcal{T}_{\alpha^*}$. Moreover, there exist some constants $(\lambda_{\infty},x_{2,\infty})\in [0,\infty)\times \R$ and a $C^1$ function $x(t)=(x_{1}(t),x_{2}(t))$ such that
    \begin{align*}
    &\left\|\lambda_{\infty}u(t,\lambda_{\infty}\cdot+x(t))-Q\right\|_{H^{1}_{{\rm loc}}}\rightarrow 0,\quad \quad \quad \quad\text{ as }t\rightarrow\infty,\\
    &|\lambda_{\infty}-1|\lesssim\delta(\alpha) \ \  \mbox{and}\ \ x(t)\sim \left(\frac{t}{\lambda_{\infty}^{2}},x_{2,\infty}\right),\text{ as }t\rightarrow\infty.
    \end{align*}
\end{description}

\end{theorem}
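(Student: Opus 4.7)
The plan is to follow the Martel--Merle--Rapha\"el framework for the mass-critical gKdV equation \cite{MMR}, adapted to the two-dimensional ZK setting. First, as long as the solution remains in the tube $\mathcal{T}_{\alpha^*}$, I decompose
\begin{equation*}
u(t,x)=\tfrac{1}{\lambda(t)}\bigl(Q+\varepsilon\bigr)\!\left(t,\tfrac{x-x(t)}{\lambda(t)}\right),
\end{equation*}
fixing the four modulation parameters $(\lambda(t), x_1(t), x_2(t), b(t))$ via four orthogonality conditions on $\varepsilon$ against profiles built from $\Lambda Q$, $\partial_{x_1}Q$, $\partial_{x_2}Q$ and an auxiliary direction isolating the scaling parameter $b$. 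Standard implicit function theorem arguments then produce modulation ODEs expressing $\lambda_s/\lambda$, $x_s$ and $b_s$ as inner products of $\varepsilon$ with localized profiles, up to controlled nonlinear errors, where $s$ is the rescaled time $ds/dt=1/\lambda^3$.

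Second, for sharp control of $\varepsilon$ I combine conservation of mass and energy (which, via the sharp Gagliardo--Nirenberg inequality, give a coarse $H^1$-bound on $\varepsilon$) with a local $H^1$ control issued from a virial-type quantity. Differentiating in time the natural weighted integral of $\varepsilon^2$ in $x_1$ produces a quadratic form associated with a Schr\"odinger-type operator which, in the 2D cubic setting, is \emph{not} coercive on the orthogonal subspace. To circumvent this central obstacle I apply the Kenig--Martel transform \cite{KenigMartel}, which conjugates the linearized flow by a multiplication operator and recasts the virial identity in terms of an adjoint variable $\tilde\varepsilon$; the coercivity of the resulting quadratic form is precisely the spectral statement verified numerically in \cite{FHRY}, and it is the single device that unlocks the whole scheme.

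Third, combining the conserved energy, the $L^2$-mass of $\varepsilon$, and a truncated weighted virial (with polynomial weight of order compatible with the $y_1^{100}$-decay built into $\mathcal{A}_\alpha$), I construct a modified energy-virial Lyapunov functional $\mathcal{F}(t)$. The core estimate to establish is an almost-monotonicity of the form
\begin{equation*}
\frac{d\mathcal{F}}{dt}\;\lesssim\;-\frac{1}{\lambda^2}\int_{\mathbb{R}^2}e^{-|y_1|/2}\bigl(|\nabla\varepsilon|^2+\varepsilon^2\bigr)\,dy+\text{(integrable remainders)},
\end{equation*}
which, combined with the adjoint-side coercivity, pins down $\varepsilon$ pointwise in time and yields a refined equation for $b(t)$ whose integration will produce the exponent $\beta\in(5/7,5/6)$ in the blow-up regime. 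The polynomial weight must be chosen carefully so that the transport term in the $x_1$-direction contributes with a favorable sign, which is precisely what the $y_1^{100}$ decay hypothesis buys.

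Finally, the trichotomy is selected by the asymptotic behavior of $b(t)$. If $b$ crosses a quantitative exit threshold, the modulation bounds force $\varepsilon$ out of an $L^2$-neighborhood of $Q$, so $u(T)\notin\mathcal{T}_{\alpha^*}$ at some finite $T$, giving the Exit case. If $b(t)$ stays strictly positive, integrating the scaling law $-\lambda_t/\lambda\sim b/\lambda^2$ against the refined $b$-ODE gives $\lambda(t)\sim(T-t)^{\beta}$ with the stated $\beta$, hence the claimed blow-up rate for $\|\nabla u(t)\|_{L^2}$ and finite blow-up time. If $b(t)\to 0$, the Lyapunov monotonicity produces $\lambda(t)\to\lambda_\infty>0$, local $H^1$ convergence of $\varepsilon$ to zero, and Lipschitz control of $x(t)$, yielding the asymptotic soliton with the prescribed parameters. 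The hardest step of the whole program, as flagged in the abstract, is the absence of coercivity for the Schr\"odinger operator in the natural virial identity; once the Kenig--Martel transform moves the analysis to the adjoint side where \cite{FHRY} applies, the remaining steps proceed along the MMR template with the genuinely two-dimensional modifications (choice of weight in $x_1$, and the extra transverse modulation parameter $x_2$) dictated by the ZK geometry.
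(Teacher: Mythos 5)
Your overall plan matches the paper's strategy: modulation decomposition, an energy--virial Lyapunov functional, the Kenig--Martel transform to circumvent the missing coercivity on the primal side, and a trichotomy driven by the sign of $b$. Two parts of the sketch, however, do not hang together as written, and they are exactly the places where the blow-up exponent comes from.

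First, you write the decomposition as $u=\frac{1}{\lambda}(Q+\varepsilon)$ while simultaneously declaring $b$ a fourth modulation parameter; but $b$ does not appear anywhere in that ansatz, so there is no fourth degree of freedom for the implicit function theorem to fix. The paper decomposes against a $b$-dependent profile $Q_b = Q + bP\phi_b$, where $P$ is a non-localized correction solving $\partial_{y_1}\mathcal{L}P = \Lambda Q$ and $\phi_b$ localizes it. This is the ingredient that produces the scalar ODE $b_s + \theta b^2 = 0$, with $\theta$ determined by the profile $P$ through the Fourier integral in \eqref{eq:theta}, and $\theta$ is what sets the blow-up exponent $\beta = 1/(3-\theta)$. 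In 2D ZK the transverse direction changes the elliptic problem for $P$ and makes $\theta\approx 1.66$ rather than $\theta=2$ as in gKdV; that is precisely why $\beta\in(5/7,5/6)$ rather than $\beta=1$. Your sketch contains no mechanism for extracting $\theta$, so the stated exponent would not emerge from your scheme.

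Second, your scaling law $-\lambda_t/\lambda \sim b/\lambda^2$ is off by one power of $\lambda$: since $ds/dt=\lambda^{-3}$ and $\lambda_s/\lambda=-b$, one has $\lambda_t = -b/\lambda^2$. Combined with the invariant $b\propto\lambda^\theta$, the correct relation gives $\frac{d}{dt}\lambda^{3-\theta}=\text{const}$ and hence $\lambda\sim(T-t)^{1/(3-\theta)}$; your version would give $\lambda\sim(T-t)^{1/(2-\theta)}$, which is not the rate in the theorem. A small point of terminology: the Kenig--Martel transform used here is $\eta=(1-\gamma\Delta)^{-1}\mathcal{L}\varepsilon$, a regularized differential operator, not conjugation by a multiplication operator; this does not change the substance of your plan, but the distinction matters for the computations.
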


Our second main result focuses on the blow-up dynamics near the soliton in $\mathcal{A}_{\alpha}$.
\begin{theorem}\label{MT2}
There exists a universal constant $0<\alpha\ll 1$ such that the following is true. Let the initial data $u_{0}\in \mathcal{A}_{\alpha}$ be such that 
\begin{equation*}
E(u_{0})\le 0\quad \mbox{and}\quad  \|Q\|_{L^{2}}< \|u_{0}\|_{L^{2}}.
\end{equation*}
Then the corresponding solution $u(t)$ blows up in finite time $T$ in the regime described by Theorem~\ref{MT}.
\end{theorem}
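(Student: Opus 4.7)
By Theorem~\ref{MT}, the solution starting from $u_0 \in \mathcal{A}_\alpha$ falls into exactly one of the scenarios \textbf{Exit}, \textbf{Blow-up}, or \textbf{Soliton}, so the plan is to rule out Exit and Soliton under the additional hypotheses $E(u_0) \le 0$ and $M(u_0) > M(Q)$. The key algebraic input will be $E(Q) = 0$, which follows in the mass-critical setting from the Pohozaev identities $\|\nabla Q\|_{L^2}^2 = \|Q\|_{L^2}^2$ and $\|Q\|_{L^4}^4 = 2\|Q\|_{L^2}^2$. On any time interval where $u(t) \in \mathcal{T}_{\alpha^*}$, I will work with the modulation decomposition $u(t,x) = \lambda(t)^{-1}(Q + \varepsilon(t))((x-x(t))/\lambda(t))$ produced in the proof of Theorem~\ref{MT}. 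Expanding the conservation of mass and of energy about $Q$ (using the 2D scaling $E(\lambda u(\lambda\cdot)) = \lambda^2 E(u)$) and eliminating the linear term $(\varepsilon, Q)$ between them will yield the master identity
\begin{equation*}
(L_+\varepsilon, \varepsilon) = \bigl(M(u_0) - M(Q)\bigr) + 2\lambda^2 E(u_0) + 2\int Q\varepsilon^3 + \tfrac{1}{2}\|\varepsilon\|_{L^4}^4,
\end{equation*}
where $L_+ = -\Delta + 1 - 3 Q^2$.

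To exclude Exit, I would argue by contradiction: if $u(T) \notin \mathcal{T}_{\alpha^*}$ at some finite $T$, then the modulation is valid on $[0,T)$. Using $E(u_0) \le 0$ and $M(u_0) - M(Q) = O(\alpha)$, the right-hand side of the master identity is bounded by $C\alpha + O(\|\varepsilon\|_{H^1}^3)$. Combined with the coercivity of $L_+$ on the orthogonal complement enforced by the modulation, together with the mass relation $2(\varepsilon, Q) = (M(u_0) - M(Q)) - \|\varepsilon\|_{L^2}^2$, a bootstrap argument yields $\|\varepsilon(t)\|_{H^1} \le \delta(\alpha) \ll \alpha^*$ uniformly on $[0,T)$; passing $t\to T$ by $H^1$-continuity of the flow then contradicts the Exit hypothesis.

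To exclude Soliton, suppose $\lambda(t) \to \lambda_\infty \in (0,\infty)$. The $H^{1}_{{\rm loc}}$-convergence of the rescaled profile to $Q$ forces the localized quantities $(\varepsilon, Q)$, $\int Q^2 \varepsilon^2$, and $\int Q \varepsilon^3$ to vanish along a sequence $t_n \to \infty$, while mass conservation gives $\|\varepsilon(t_n)\|_{L^2}^2 \to m_0 := M(u_0) - M(Q) > 0$. Passing to the limit in the master identity and applying the sharp Gagliardo-Nirenberg inequality $\|f\|_{L^4}^4 \le (2/M(Q))\|f\|_{L^2}^2 \|\nabla f\|_{L^2}^2$ yields
\begin{equation*}
\bigl(1 - m_0/M(Q)\bigr) \lim_{n\to\infty} \|\nabla\varepsilon(t_n)\|_{L^2}^2 \le 2\lambda_\infty^2 E(u_0).
\end{equation*}
For $\alpha$ small enough $m_0 < M(Q)$, so the left-hand side is nonnegative while the right-hand side is nonpositive. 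When $E(u_0) < 0$ this is already a contradiction; when $E(u_0) = 0$ the inequality forces $\|\nabla\varepsilon(t_n)\|_{L^2} \to 0$ together with $\|\varepsilon(t_n)\|_{L^2} \to \sqrt{m_0} > 0$, meaning the radiation \emph{flattens} while retaining its $L^2$-mass; I would then rule out this degenerate behavior using the refined virial/Lyapunov monotonicity on $\varepsilon$ built into the proof of Theorem~\ref{MT}.

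The hardest part will be closing the boundary case $E(u_0) = 0$ in the Soliton exclusion: the master identity together with the sharp Gagliardo-Nirenberg inequality alone leaves the flattening scenario for the radiation open, and ruling it out requires invoking the quantitative dispersive/monotonicity estimates developed in the proof of Theorem~\ref{MT}, in particular the adjoint-frame virial-Lyapunov functional inspired by the transform of Kenig-Martel~\cite{KenigMartel}.
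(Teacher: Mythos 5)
Your outline of the logic (rule out Exit, rule out Soliton, deduce Blow-up) matches the paper, and your use of the conservation laws to rule out Exit and the $E(u_0) < 0$ Soliton case is correct in spirit, though the paper takes a shortcut: Lemma~\ref{le:orbi} (Merle's variational lemma applied at every time, using only conservation of mass and energy) rules out Exit immediately without any modulation or bootstrap, and the Soliton exclusion for $E(u_0) < 0$ follows at once from $\lambda^2 |E_0| \lesssim |b| + \mathcal{N}_0 \to 0$ given in Lemma~\ref{le:modu1}(i) and the convergence statement in Proposition~\ref{Prop:rigidity}, with no need for the sharp Gagliardo--Nirenberg step.

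The genuine gap is the boundary case $E(u_0) = 0$, which you correctly identify as the hard part but then defer to ``the adjoint-frame virial-Lyapunov functional'' from the proof of Theorem~\ref{MT}. That functional cannot close the argument. The Lyapunov quantities $\mathcal{M}_{i,j}$ of Section~\ref{S:Mono} control the weighted norms $\mathcal{N}_i$, which use weights $\psi_B(y_1)$, $\varphi_{i,B}(y_1)$ that decay exponentially as $y_1 \to -\infty$. In the degenerate flattening scenario you describe, $\|\nabla\varepsilon(t_n)\|_{L^2} \to 0$ and $\|\varepsilon(t_n)\|_{L^2}^2 \to m_0 > 0$, the residual $L^2$-mass $m_0$ lives in the region $y_1 \to -\infty$ of the moving frame where those weights vanish, so $\mathcal{N}_i(t_n) \to 0$ is perfectly compatible with $\|\varepsilon(t_n)\|_{L^2}^2 \to m_0 > 0$. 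Nothing in the Theorem~\ref{MT} machinery excludes this. The paper supplies a new and separate ingredient for this case (Section~\ref{S:Endproof2}): moving-domain mass and energy flux estimates built from the cutoffs $\widetilde G_0(t,x_1) = G_0\bigl((x_1 - x_1(t))/L(t)\bigr)$ and $\widetilde G_1$ with $L(t) = (t-t_1)/4 + x_0$, tracking mass that tries to escape to the left of the soliton. Because the cutoff boundary recedes from the soliton more slowly than $x_1(t)$ advances, the monotonicity of $E_{x_0}(t)$ and $M_{x_0}(t)$ eventually shows $\lim_{t\to\infty}\|u(t)\|_{L^2}^2 = \|Q\|_{L^2}^2$, directly contradicting the assumption $\|u_0\|_{L^2} > \|Q\|_{L^2}$ by mass conservation. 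Without an argument of this dispersive/finite-speed-of-propagation type, the $E(u_0) = 0$ Soliton case is not excluded.
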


\begin{remark}
In the statement of Theorem~\ref{MT}, the constant $\beta$ is defined by 
\begin{equation}\label{eq:theta}
\beta=\frac{1}{3-\theta}\quad \mbox{and}\quad 
\theta=2\bigg(\int_{\R}\frac{\big|\widehat{F}(\xi)\big|^{2}}{1+|\xi|^{2}}\dd \xi\bigg)
\bigg/
\left(\int_{\R}\big|\widehat{F}(\xi)\big|^{2}\dd \xi\right).
\end{equation}
Here, the functions $F$ and $\widehat{F}$ are given by 
\begin{equation*}
F(y_{2})=\int_{\R}\Lambda Q(y_{1},y_{2})\dd y_{1}\quad \mbox{and}\quad 
\widehat{F}(\xi)=\frac{1}{\sqrt{2\pi }}\int_{\mathbb{R}}F(y_{2})e^{-iy_{2}\xi}\dd y_{2}.
\end{equation*}
Actually, the value of the constant $\theta$ is quite essential in our analysis, since the blow-up rate is determined by it (see more discussion in \S\ref{SS:Comm}).  Using elementary numerical computations, we find $\theta\approx 1.66\in \left(\frac{8}{5},\frac{9}{5}\right)$ which implies the blow-up rate $\beta\in \left(\frac{5}{7},\frac{5}{6}\right)$. We present the details of the numerical computation in Appendix~\ref{App:A}.
\end{remark}

\begin{remark}
We point out that the \(y_{1}^{100}\) weight in Theorem~\ref{MT} and Theorem~\ref{MT2} is merely a technical restriction, and we do not claim its sharpness. For a similar but more relaxed restriction in the context of the mass-critical gKdV equation, we refer to~\cite[\S 1.3]{MMR}.
\end{remark}

\begin{remark}
A detailed numerical study on blow-up for~\eqref{CP} and its the mass supercritical counterpart $\partial_tu+\partial_{x_1}(\Delta u+u^4)=0$ was presented in~Klein-Roudenko-Stoilov~\cite{KRS}. In the  mass-critical case, the authors of~\cite{KRS} conjectured that the blow-up happens in finite time and  at spatial infinity. Moreover, the authors conjectured that the blow-up rate satisfies $\|\nabla u(t)\|_{L^{2}}\sim \left(T-t\right)^{-\frac{1}{2}}$. In the  mass-supercritical case, the authors of~\cite{KRS} conjectured that the blow-up happens in finite time and  at a finite spatial location. Similar to the  mass-critical problem,  the authors also conjectured that the blow-up rate satisfies $\|\nabla u(t)\|_{L^{2}}\sim \left(T-t\right)^{-\frac{2}{9}}$. It is worth mentioning that our result is different from the conjecture that was checked by numerical blow-up computations. More precisely, our blow-up rate is faster than the one stated in their conjecture.
\end{remark}

\begin{remark}
We mention here that very recently, a similar result was proved independently by the work Bozgan-Ghoul-Masmoudi~\cite{BGM}, using a similar but different method. Most importantly, the two works employ different energy-virial Lyapunov functionals. (see more discussion in \S\ref{SS:Comm} and \S\ref{S:Mono}). 
\end{remark}

\subsection{Related results}\label{SS:Related}
In the last twenty years, there has been remarkable progress towards understanding the near soliton dynamics and singularity formation of nonlinear dispersive equations, particularly for the mass-critical gKdV and nonlinear Schr\"odinger (NLS) equations. For the gKdV equation, in the work of Martel-Merle-Rapha\"el~\cite{MMR,MMR1,MMR2}, the authors gave a complete description and classification of the solution flow near soliton that completes the previous results in~\cite{MartelMerleJMPA,MMGAFA,MMANN,MMJAMS,MMDUKE,Merle}. Then, building on the work~\cite{MMR}, Martel-Merle-Nakanishi-Rapha\"el~\cite{MMNR} constructed the threshold manifold near the soliton for gKdV equation. For the NLS equation, in the work of Bourgain-Wang~\cite{BW}, the authors constructed  a family of conformal blow-up solutions. Then, based on the series works~\cite{MR1,MR2,MR3,MR4,MR5} of Merle-Rapha\"el,  a complete description of singularity formation for the solution flow near soliton was obtained. We refer to~\cite{KK1,KK2,KSnls,LAN1,LAN2,LAN3,Merlenls,MRS,Perelmannls} for some related results of mass-critical models. We also refer to~\cite{JJ1,JJ2,KSTwave,MRRmap,RR,RSmap,Rodnianski} and references therein  for some related results of energy-critical models, which are natural analogies of mass-critical models.  

\smallskip
We now briefly survey the literature related to the Zakharov-Kuznetsov models. The Zakharov-Kuznetsov equations are natural extensions of the gKdV equations in higher dimensions and are of  physical importance. For more historical and physical background, we refer to the introductions by Farah-Holmer-Roudenko-Yang \cite{FHRY1,FHRY}. The local and global well-posedness theory of the Zakharov-Kuznetsov models with various nonlinear powers and in different dimensions has been studied extensively. Without attempting to be exhaustive, we refer to the works \cite{Faminskii,Kin,LinPas,RibaudVento} and the references therein for details.

\smallskip
For the 2D mass-critical problem \eqref{CP}, the instability of the soliton was obtained by Farah-Holmer-Roudenko \cite{FHR}. This matches the situation with the critical gKdV equation. Again in the mass-critical case, when the initial data has negative energy with the mass slightly above the threshold, Farah-Holmer-Roudenko-Yang~\cite{FHRY} showed that the gradient of the solution blows up in finite or infinite forward time. Finally, we would like to mention that for the 2D mass super-critical problems, the instability of soliton was shown by  Farah-Holmer-Roudenko \cite{FHR1}.

\smallskip
For the 2D quadratic Zakharov-Kuznetsov equation, after passing to the adjoint problem without regularization, the asymptotic stability of soliton and stability of multi-solitons have been proven by C\^ote-Mu\~noz-Pilod-Simpson~\cite{CMPS} using virial and monotonicity estimates. Their virial estimates relied on a sign condition verified numerically. However, these numerical computations do not hold for the problem in the 3D case. Recently, Farah-Holmer-Roudenko-Yang derived in \cite{FHRY1} a new virial estimate in the case of 3D, based on different orthogonality conditions, converting to an adjoint problem with regularizations and relying on the numerical analysis of the spectra of a linear operator. This allowed them to extend the asymptotic stability result of soliton to the 3D quadratic Zakharov-Kuznetsov equation. In the work of Mendez-Mu\~noz-Poblete-Pozo \cite{MMPP}, some new virial estimates in the cases of 2D and 3D are used to prove the decay of solutions in large time-dependent spatial regions. We mention here that, the existence and uniqueness of asymptotic multi-solitons were shown by Valet \cite{V}  in the cases of 2D and 3D using the strategy introduced by Martel~\cite{MartelAJM} for the gKdV equation.
 Very recently, the asymptotic stability of multi-solitons to the 2D and 3D quadratic Zakharov-Kuznetsov equations was established in Pilod-Valet \cite{PV2}, and then, the same authors described the collision of  two nearly equal solitary waves on the whole time interval and proved the stability of this phenomenon in \cite{PV1}.

\subsection{Comments on the proof}\label{SS:Comm}
The method of the current article, based on the use of the energy-virial Lyapunov functional, is inspired by the remarkable work \cite{MMR} in a similar context for the mass-critical gKdV equation. The first step of the method is the decomposition of solution $u$ for~\eqref{CP} close to a soliton in $H^{1}$:
\begin{equation*}
u(t,x)=\frac{1}{\lambda(t)}\left(Q_{b(t)}+\varepsilon\right)\left(t,\frac{x-x(t)}{\lambda(t)}\right).
\end{equation*}
Here, $Q_{b}$ is close to $Q$ for $b$ small enough. In the current case (see also~\cite[\S2.2]{MMR} for the case of gKdV equation), we can write the function $Q_{b}$ as 
\begin{equation*}
Q_{b}=Q+bP\phi_{b},
\end{equation*}
where $\phi_{b}$ is a suitable localized profile and $P$ is a non-localized profile to be determined.
As usual in investigating the blow-up phenomenon of mass-critical problem, we introduce the following new variables:
\begin{equation*}
s=\int_{0}^{t}\frac{1}{\lambda^{3}(\tau)}\dd \tau\quad \mbox{and}\quad 
y=\frac{x-x(t)}{\lambda(t)}.
\end{equation*}
Now, the sharp description of the near soliton dynamics relies on the determination of the finite-dimensional dynamical system for a suitable choice of geometrical parameters $(\lambda(t),b(t),x(t))$, coupled to the infinite-dimensional dynamics related to the reminder term $\varepsilon(t)$. Roughly speaking, in our analysis, we handle the finite-dimensional dynamics via standard ODE argument and then handle the infinite-dimensional dynamics via energy-virial Lyapunov functional. 

\smallskip

However, the presence of an additional dimension and the non-explicit soliton expression in our problem introduce several challenges compared to the case of gKdV equation. 
A direct problem we encounter when studying the 2D Cauchy problem, using the general strategy introduced by~\cite{MMR}, is that we have to solve a different elliptic equation when constructing the non-localized profile $P$ since an extra dimension $x_{2}$ exists. This directly leads to the difference in the finite-dimensional system for the geometrical parameters ($\lambda,b$). Roughly speaking, in our case now, the geometrical parameters $(\lambda,b)$ satisfy
\begin{equation}\label{equ:ode}
\frac{\dd s}{\dd t}=\frac{1}{\lambda^{3}},\quad 
\frac{\lambda_{s}}{\lambda}=-b\quad \mbox{and}\quad 
b_{s}+\theta b^{2}=0.
\end{equation}

Equation~\eqref{equ:ode} differs slightly from the case of gKdV equation (see~\cite[Page 67]{MMR}). It is worth mentioning that due to the presence of the constant $\theta$, we obtain a slightly different dynamic for the solutions, which results in a different blow-up rate. This may reflect the influences and interactions arising from the additional dimension. More precisely, from~\eqref{equ:ode}, we directly have 
\begin{equation*}
\frac{\lambda_{t}}{\lambda}=-\frac{b}{\lambda^{3}}\quad \mbox{and}\quad 
b_{t}=-\theta\frac{b^{2}}{\lambda^{3}}\Longrightarrow \frac{\dd }{\dd t}\left(\frac{b}{\lambda^{\theta}}\right)=0.
\end{equation*}
Formally, from a standard ODE argument, we obtain the following three scenarios for the dynamics of the solution with initial data $(\lambda,b)_{|t=0}=\left(1,b_{0}\right)$:
\begin{enumerate}
\item [(i)] For the case of $b_{0}<0$, we have $\lambda(t)=\left(1-(3-\theta)b_{0}t\right)^{\frac{1}{3-\theta}}$ on $[0,\infty)$ and the dynamic is stable.

\smallskip
\item [(ii)] For the case of $b_{0}=0$, we have $\lambda(t)\equiv 1$ on $[0,\infty)$ and the dynamic is unstable.

\item [(iii)] For the case of $b_{0}>0$, we have $\lambda(t)=\left(b_{0}(3-\theta)\left(T-t\right)\right)^{\frac{1}{3-\theta}}$ on $[0,\infty)$ where $T=(b_{0}(3-\theta))^{-1}>0$ and the dynamic is stable.
\end{enumerate}

\smallskip
Another, and more significant, problem we encounter when studying the 2D Cauchy problem is the lack of coercivity for the Schrödinger operator in the energy-virial estimate for the original remainder term  $\varepsilon$. Heuristically, when we compute the time variation of the virial quantity $\int_{\R^{2}}y_{1}\varepsilon^{2}\dd y_1\dd y_2$, the following Schr\"odinger operator $\mathcal{B}$ will appear in the estimate:
\begin{equation*}
\mathcal{B}=-\frac{3}{2}\partial_{y_{1}}^{2}-\frac{1}{2}\partial_{y_{2}}^{2}+\frac{1}{2}
-\frac{3}{2}Q^{2}+3y_{1}Q\partial_{y_{1}}Q.
\end{equation*}

In contrast to the case of the gKdV equation (see~\cite[Proposition 4]{MartelMerleJMPA} and~\cite[Lemma 3.4]{MMR}), the coercivity of the operator $\mathcal{B}$ under suitable orthogonality conditions has not yet been obtained. In particular, the analysis and numerical verification for the 2D operator seems to be significantly more challenging than in the 1D case.  To overcome this difficulty, inspired by the work of~\cite{FHRY}, we employ a transform introduced in Kenig-Martel~\cite{KenigMartel} to reduce the original problem to an adjoint one related to a new term  $\eta$. To the best of our knowledge, this technique was first introduced in the context of the gKdV equation by Martel~\cite{Martel}.
Here, we consider the regularized dual problem $\eta=(1-\gamma\Delta)^{-1}\mathcal{L}\varepsilon$ where $\gamma$ is a sufficiently small constant and $\mathcal{L}=-\Delta+1-3Q^2$ is the linearized operator around the soliton $Q$.  Actually, when we compute the time variation of the virial quantity $\int_{\R^{2}}y_{1}\eta^{2}\dd y_1 \dd y_2$, the following Schr\"odinger operator $\mathcal{A}$ will appear in the estimate:
\begin{equation*}
\begin{aligned}
\mathcal{A}f
&=-\frac{3}{2}\partial_{y_{1}}^{2}f-\frac{1}{2}\partial_{y_{2}}^{2}f+\frac{1}{2}f
-\left(\frac{3}{2}Q^{2}+3y_{1}Q\partial_{y_{1}}Q\right)f\\
&+3\frac{(f,y_{1}Q)}{(Q,Q)}Q^{2}\partial_{y_{1}}Q+3\frac{(f,Q^{2}\partial_{y_{1}}Q)}{(Q,Q)}y_{1}Q,\quad \mbox{for any}\ f\in H^{1}(\R^{2}).
\end{aligned}
\end{equation*}
We mention here that, under suitable orthogonality conditions, the coercivity of the operator $\mathcal{A}$ has been verified in~\cite[\S 16]{FHRY} through numerical computation. Therefore, even though we do not know whether the suitable coercivity of the operator $\mathcal{B}$ exists, we can establish a virial estimate for the regularized dual problem of $\eta$ based on the coercivity of the operator $\mathcal{A}$. Then, by combining the energy estimate of $\varepsilon$ with the virial estimate of  $\eta$, we obtain the energy-virial Lyapunov functional with monotonicity, and thus, we could obtain the control of the infinite-dimensional term over the whole space. We point out that the weight functions appearing in the energy and virial quantities must be chosen carefully, as the constant dependence on the time variation of such quantities differs and should be handled attentively. (see more details in \S\ref{SS:ENERGY}--\S\ref{SS:Mon}). Last, we also point out that the regularized transformation we apply here also reminisces the Darboux transformations which were successfully applied to study the asymptotic stability of kinks and solitons in various problems on any compact interval (see for instance~\cite{KMMV,Mar}).

\subsection{Outline of the article}
The article is organized as follows. First, Section~\ref{S:Pre} introduces the technical tools involved in the choice of the localized profile $Q_{b}$: spectral theory of the linearized operator, the pointwise estimates of the non-localized profile $P$ and the localized profile $Q_{b}$. Then, Section~\ref{S:Modu} introduces the technical tools involved in a dynamical approach to the soliton problem for~\eqref{CP}: geometric decomposition near soliton and the modulation estimates for the geometric parameters. Next, Section~\ref{S:Mono} focuses on the establishment of the energy-virial Lyapunov functional that plays a crucial role in our analysis.
Finally, by the monotonicity of the energy-virial Lyapunov functional and a suitable bootstrap argument, Theorem~\ref{MT} and Theorem~\ref{MT2} are proved in Section~\ref{S:Endproof} and Section~\ref{S:Endproof2}, respectively.

\subsection*{Acknowledgments}
The authors would like to thank Kuang Huang for the helpful discussion on the numerical computation related to the soliton.  The authors would also like to thank Yang Ge for the support of the coding for Mathematica. The authors are grateful to Claudio Mu\~noz, Didier Pilod, Frederic Valet and Kai Yang for valuable comments on the manuscript.

\section{Preliminaries}\label{S:Pre}
\subsection{Notation and Conventions}\label{SS:Nota}
For any $\alpha=(\alpha_{1},\alpha_{2})\in \mathbb{N}^{2}$, we set 
\begin{equation*}
|\alpha|=|\alpha_{1}|+|\alpha_{2}|\quad \mbox{and}\quad 
\partial_{y}^{\alpha}=\frac{\partial^{|\alpha|}}{\partial_{y_{1}}^{\alpha_{1}}\partial_{y_{2}}^{\alpha_{2}}}.
\end{equation*}
We denote by $\mathcal{Y}(\mathbb{R}^d)$ the set of smooth function $f$ on $\mathbb{R}^d$, such that for all $n\in\mathbb{N}$, there exist $r_n>0$ and $C_n>0$ such that
\begin{equation*}
\sum_{|\alpha|=n}|\partial_{y}^{\alpha}f(y)|\leq C_n(1+|y|)^{r_n}e^{-|y|},\quad \mbox{on}\ \R^{d}.
\end{equation*}
We also denote by $\mathcal{Z}(\mathbb{R}^d)$ the set of smooth function $f$ on $\mathbb{R}^d$, such that for all $n\in\mathbb{N}$, there exist $r_n>0$ and $C_n>0$ such that
\begin{equation*}
\sum_{|\alpha|=n}|\partial_{y}^{\alpha}f(y)|\leq C_n(1+|y|)^{r_n}e^{-\frac{|y|}{2}},\quad \mbox{on}\ \R^{d}.
\end{equation*}
For any $(f,g)\in L^{2}(\R)\times L^{2}(\R)$, we introduce the following $L^{2}(\R^{2})$ function
\begin{equation*}
f\otimes g: y=(y_{1},y_{2})\longmapsto f(y_{1})g(y_{2}).
\end{equation*}
The Fourier transform of a function $h\in L^{1}(\mathbb{R}^{d})$, denoted by $\mathcal{F}h$ or $\widehat{h}$, is defined as:
\begin{equation*}
\mathcal{F}h(\xi)=\widehat{h}(\xi)=\frac{1}{(2\pi)^{\frac{d}{2}}}\int_{\R^{d}}h(y)e^{-iy \cdot\xi}\dd y,\quad \mbox{on}\ \mathbb{R}^{d}.
\end{equation*}
The Fourier transform defines a linear isometric operator on $L^{2}(\R^{d})$, that is, 
\begin{equation*}
\int_{\R^{d}}|h(y)|^{2}\dd y=\int_{\R^{d}}|\widehat{h}(\xi)|^{2}\dd \xi,\quad \mbox{for any}\ h\in L^{2}(\R^{d}).
\end{equation*}
The inverse Fourier transform of a function $h$, denoted by $\mathcal{F}^{-1}h$, is defined as:
\begin{equation*}
\mathcal{F}^{-1}h(y)=\frac{1}{(2\pi)^{\frac{d}{2}}}\int_{\R^{d}}h(\xi)e^{iy \cdot\xi}\dd \xi,\quad \mbox{on}\ \mathbb{R}^{d}.
\end{equation*}
Recall that, we denote by $Q(y):=Q(|y|)$ the unique radial positive solution of~\eqref{CP}: 
\begin{equation*}
-Q''-\frac{Q'}{r}+Q-Q^{3}=0,\quad Q'(0)=0\quad \mbox{and}\quad \lim_{r\to \infty}Q(r)=0.
\end{equation*}
Based on the ODE arguments, for any $n\in \mathbb{N}$, there exists $C_{n}>0$ such that 
\begin{equation*}
\sum_{|\alpha|=n}\left|\partial^{\alpha}_{y}Q(y)\right|\lesssim e^{-|y|},\quad \mbox{on}\ \R^{2}.
\end{equation*}
Recall also that, from a variational argument,
the unique radial positive solution $Q$ attains the best constant $C$ in the following Gagliardo-Nirenberg inequality
\begin{equation*}
\|f\|_{L^{4}}^{4}\le C \|f\|_{L^{2}}^{2}\|\nabla f\|_{L^{2}}^{2},\quad \mbox{for any}\ f\in H^{1}(\R^{2}).
\end{equation*}
We denote the linearized operator around $Q$ by
\begin{equation*}
\mathcal{L}f=-\Delta f+f-3Q^{2}f,\quad \mbox{for}\ f\in H^{1}(\R^{2}).
\end{equation*}
Next, we introduce the scaling operator:
\begin{equation*}
\Lambda f= f+y\cdot \nabla f,\quad \mbox{for}\ f\in H^{1}(\R^{2}).
\end{equation*}
In this article, we set 
\begin{equation}\label{equ:defF}
F(y_{2})=\int_{\R}\Lambda Q(y_{1},y_{2})\dd y_{1},\quad \mbox{on}\ \R.
\end{equation}
For a given small constant $\alpha$, we denote by $\delta(\alpha)$ a generic small constant with
\begin{equation*}
\delta(\alpha)\to 0,\quad \mbox{as}\ \alpha\to 0.
\end{equation*}
For any $f\in L^{2}(\R^{2})$ and $g\in L^{2}(\R^{2})$, we denote the $L^2$-inner product by
\begin{equation*}
(f,g)=\int_{\R^{2}}f(y)g(y)\dd y.
\end{equation*}
Then, for any $T\in \mathcal{S}'(\R^{2})$ and $f\in \mathcal{S}(\R^{2})$, we denote by $\left<T,f\right>$ the canonical duality pairing between the distribution $T$ and the test function $f$.

\smallskip

In our analysis, we always need to carefully trace the dependence on a large-scale constant $B$. We set the following conventions:  the implied constants in $\lesssim$ and $O$  are \emph{independent of} $B$  from  Section~\ref{S:Pre} to  Section~\ref{S:Mono} and can \emph{depend on} the large constant $B$ in Section~\ref{S:Endproof} and Section~\ref{S:Endproof2}.

\subsection{The linearized operator}\label{SS:Linopera}
In this subsection, we recall the spectral theory of the linearized operator $\mathcal{L}$ and then introduce the useful function for the construction of the localized profile.
\begin{proposition}[Spectral properties of $\mathcal{L}$]\label{Prop:Spectral}
${\rm{(i)}}$ \emph{Spectrum}. The self-adjoint operator $\mathcal{L}$ has essential spectrum $[1,\infty)$, a unique single negative eigenvalue $-\mu_{0}$ with $\mu_{0}>0$, and its kernel is ${\rm{Span}}\left(\partial_{y_{1}}Q,\partial_{y_{2}}Q\right)$. Let $Y$ be the $L^{2}$ normalized eigenvector of $\mathcal{L}$ corresponding to the eigenvalue $-\mu_{0}$. It holds, for all $\alpha\in \mathbb{N}^{2}$,
\begin{equation*}
\left|\partial_{y}^{\alpha}Y(y)\right|\lesssim e^{-|y|},\quad \mbox{on}\ \ \mathbb{R}^{2}.
\end{equation*}

\smallskip
${\rm{(ii)}}$ \emph{$L^{2}$-scaling identities}. We have 
\begin{equation*}
\mathcal{L}\Lambda Q=-2Q\quad \mbox{and}\quad (Q,\Lambda Q)=0.
\end{equation*}

${\rm{(iii)}}$ \emph{First coercivity.} There exists $\mu>0$ such that, for all $f\in H^{1}(\R^{2})$,
\begin{equation*}
(\mathcal{L}f,f)\ge \mu\|f\|^2_{H^1}-\frac{1}{\mu}\left((f,Y)^2+(f,\partial_{y_{1}}Q)^2+(f,\partial_{y_{2}}Q)^2\right).
\end{equation*}

\smallskip
${\rm{(iv)}}$ \emph{Second coercivity.} There exists $\nu>0$ such that, for all $f\in H^{1}(\R^{2})$,
\begin{equation*}
(\mathcal{L}f,f)\ge \nu\|f\|^2_{H^1}-\frac{1}{\nu}\left((f,Q^3)^2+(f,\partial_{y_{1}}Q)^2+(f,\partial_{y_{2}}Q)^2\right).
\end{equation*}

\smallskip
${\rm(v)}$ \emph{Inversion of $\mathcal{L}$.}
Let $g\in L^{2}(\R^{2})$ be such that $\left|(g,\nabla Q)\right|=0$. Then there exists a unique $f\in H^{2}(\mathbb{R}^{2})$ such that $\mathcal{L}f=g$ and $\left|(f,\nabla Q)\right|=0$. Moreover, if $g$ is even in either $y_{1}$ and $y_{2}$, then $f$ is also even in $y_{1}$ or $y_{2}$. In addition, if $g\in \mathcal{Y}\left(\R^{2}\right)$, then we have $f\in \mathcal{Z}(\R^{2})$.

\end{proposition}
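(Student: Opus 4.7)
The plan is to prove the five claims in order, relying on standard Schr\"odinger spectral theory, the variational characterization of the ground state, and the non-degeneracy theorem of Kwong. For (i), I would apply Weyl's theorem to the relatively compact perturbation $-3Q^{2}$ of $-\Delta+1$ to obtain essential spectrum $[1,\infty)$. Translation invariance of the $Q$-equation immediately gives $\partial_{y_{1}}Q,\partial_{y_{2}}Q\in\ker\mathcal{L}$, and the opposite inclusion is the non-degeneracy theorem~\cite{Kwong} for the 2D radial ground state, obtained after decomposition into spherical harmonics and a Sturm--Liouville argument in the radial variable. Since $\mathcal{L}Q=-2Q^{3}$ (a direct consequence of the $Q$-equation), one has $(\mathcal{L}Q,Q)=-2\|Q\|_{L^{4}}^{4}<0$, forcing at least one negative direction, and Perron--Frobenius for Schr\"odinger-type operators pins down the ground state $Y$ as simple and strictly positive, separating it from the rest of the spectrum. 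The exponential decay of $Y$ is a routine Agmon estimate using the asymptotic potential $1$ at infinity.

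Part (ii) is a short computation: differentiating the mass invariance $\|\lambda Q(\lambda \cdot)\|_{L^{2}}^{2}=\|Q\|_{L^{2}}^{2}$ at $\lambda=1$ gives $(Q,\Lambda Q)=0$, and applying $\Lambda$ to the $Q$-equation together with $\Lambda Q^{3}=3Q^{2}\Lambda Q-2Q^{3}$ and the commutator $\Delta\Lambda-\Lambda\Delta=2\Delta$ yields $\mathcal{L}\Lambda Q=-2Q$. Part (iii) then follows from the spectral decomposition of $\mathcal{L}$: the three orthogonality conditions remove the unique negative eigendirection and the kernel, so the spectral gap provides $L^{2}$-coercivity; this upgrades to $H^{1}$-coercivity via a G{\aa}rding-type inequality applied to the identity $(\mathcal{L}f,f)=\|\nabla f\|_{L^{2}}^{2}+\|f\|_{L^{2}}^{2}-3(Q^{2}f,f)$.

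Part (iv) is the most delicate step and the main obstacle of the proof. The trick is to exploit $\mathcal{L}Q=-2Q^{3}$, which recasts the orthogonality $(f,Q^{3})=0$ as $(\mathcal{L}f,Q)=0$. Decompose $f=aY+f_{+}$ with $f_{+}\perp Y$; since $(Y,\partial_{y_{i}}Q)=0$ follows from distinct eigenvalues, the condition $(f,\partial_{y_{i}}Q)=0$ transfers to $f_{+}$, so (iii) applies. The non-degeneracy $(Y,Q^{3})=\frac{\mu_{0}}{2}(Y,Q)>0$ (again from $\mathcal{L}Q=-2Q^{3}$ and $Y>0$) determines $a$ in terms of $f_{+}$. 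A Cauchy--Schwarz in the $\mathcal{L}$-inner product on $\{Y,\nabla Q\}^{\perp}$, combined with the crucial sign $(\mathcal{L}Q,Q)=-2\|Q\|_{L^{4}}^{4}<0$, cancels the negative contribution $-\mu_{0}a^{2}$ against $(\mathcal{L}f_{+},f_{+})$ in the correct direction and yields $(\mathcal{L}f,f)\geq \nu \|f\|_{H^{1}}^{2}$. Finally, Part (v) is the Fredholm alternative: $\mathcal{L}\colon H^{2}\to L^{2}$ is self-adjoint with finite-dimensional kernel $\mathrm{Span}(\partial_{y_{1}}Q,\partial_{y_{2}}Q)$, so $(g,\nabla Q)=0$ gives existence and uniqueness of $f\in H^{2}$ with $\mathcal{L}f=g$ and $(f,\nabla Q)=0$; parity in $y_{i}$ transfers to $f$ by applying uniqueness to the reflected solution; and the $\mathcal{Z}(\R^{2})$ decay follows from Agmon-type weighted energy estimates (since $-3Q^{2}\to 0$ exponentially, $\mathcal{L}\sim -\Delta+1$ at infinity), multiplying by $e^{|y|/2}$ with a cutoff and bootstrapping on higher derivatives to obtain the claimed pointwise bounds.
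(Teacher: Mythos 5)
Your proposal is substantively correct but takes a more self-contained route than the paper in several places. For (iii)--(iv), the paper does not prove the $L^2$-coercivity from first principles; it cites \cite[Lemmas 3.5--3.6]{FHR} for the $L^2$ bound and then upgrades to $H^1$ via the interpolation $(\mathcal{L}f,f)=\delta(\mathcal{L}f,f)+(1-\delta)(\mathcal{L}f,f)$ together with $(\mathcal{L}f,f)\ge\|f\|_{H^1}^2-3(Q^2,f^2)$; your ``G{\aa}rding-type inequality'' is exactly this step. Your direct argument for (iv)---the decomposition $f=aY+f_+$, the identity $\mathcal{L}Q=-2Q^3$, and Cauchy--Schwarz in the $\mathcal{L}$-inner product on $\{Y,\nabla Q\}^{\perp}$ exploiting $(\mathcal{L}Q,Q)<0$---is the classical Weinstein route and does close quantitatively: the Cauchy--Schwarz yields $\mu_0 a^2\le\theta(\mathcal{L}f_+,f_+)$ with $\theta<1$, and $|a|\lesssim\|f_+\|_{H^1}$ lets you pass from $\|f_+\|_{H^1}$ to $\|f\|_{H^1}$; this is more work but avoids the external citation. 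For (v), your Fredholm alternative and the paper's Riesz representation on $H^{\perp}=\{v:\,(v,\nabla Q)=(v,Y)=0\}$ (followed by adding back $-\frac{a}{\mu_0}Y$) are essentially equivalent devices. The genuine difference is the decay argument: you propose Agmon weighted estimates with an $e^{|y|/2}$ multiplier and bootstrap, whereas the paper writes $f=\mathcal{F}^{-1}\bigl(\tfrac{1}{1+|\xi|^2}\bigr)*(3Q^2f+g)$ and reads off the $\mathcal{Z}(\R^2)$ bound from the explicit exponential decay of the Bessel kernel. Both work; the Agmon route is cleaner to state, while the Fourier route makes the kernel's geometry explicit and foreshadows the paper's later use of the regularization $(1-\gamma\Delta)^{-1}$.
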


\begin{proof}
Proof of (i)--(ii). The properties of $\mathcal{L}$ in (i)-(ii) are standard and easily checked. We refer to~\cite[Theorem 3.1 and Lemma 3.2]{FHR} for the details of the proofs.

\smallskip
Proof of (iii)--(iv). First, from~\cite[Lemma 3.6]{FHR}, there exists $\tau>0$ such that, for all $f\in H^{1}(\R^{2})$, we have
\begin{equation*}
\left(\mathcal{L}f,f\right)\ge \tau\left\|f\right\|_{L^{2}}^{2}-\frac{1}{\tau}\left((f,Y)^2+(f,\partial_{y_{1}}Q)^2+(f,\partial_{y_{2}}Q)^2\right).
\end{equation*}
Therefore, for any $0<\delta\ll1$, we obtain
\begin{equation*}
\begin{aligned}
\left(\mathcal{L}f,f\right)
&=\delta \left(\mathcal{L}f,f\right)+(1-\delta)\left(\mathcal{L}f,f\right)\\
&\ge \delta \|f\|_{{H}^{1}}^{2}+\tau(1-\delta)\|f\|_{L^{2}}^{2}-3\delta\left(Q^{2},f^{2}\right)\\
&-\frac{1}{\tau}\left((f,Y)^2+(f,\partial_{y_{1}}Q)^2+(f,\partial_{y_{2}}Q)^2\right),
\end{aligned}
\end{equation*}
which completes the proof of (iii) by taking $0<\delta\ll1$ small enough. The proof of (iv) is directly based on a similar argument to the one above and~\cite[Lemma 3.5]{FHR}.

\smallskip
Proof of (v). We define
\begin{equation*}
\begin{aligned}
L^{\perp}&=\left\{v\in L^{2}(\R^{2}):\left|(v,\nabla Q)\right|=\left(v,Y\right)=0\right\},\\
H^{\perp}&=\left\{v\in H^{1}(\R^{2}):\left|(v,\nabla Q)\right|=\left(v,Y\right)=0\right\}.
\end{aligned}
\end{equation*}
First, for any $g\in L^{\perp}$, the map $M_{1}$ which is defined by
\begin{equation*}
\begin{aligned}
M_{1}: h\longmapsto (h,g),\quad \mbox{for}\ h\in H^{\perp},
\end{aligned}
\end{equation*}
is a linear bounded functional. On the other hand, from (iii) of Proposition~\ref{Prop:Spectral}, we know that the map $M_{2}$ which is defined by 
\begin{equation*}
M_{2}: (f_{1},f_{2})\longmapsto \left(\mathcal{L}f_{1},f_{2}\right),\quad \mbox{for}\ (f_{1},f_{2})\in H^{\perp}\times H^{\perp},
\end{equation*}
is an inner product. Therefore, from Riesz representation Theorem, for any $g\in L^{\perp}$, there exists unique $f\in H^{\perp}$ such that 
\begin{equation*}
\left(\mathcal{L}f,h\right)=\left(g,h\right),\quad \mbox{for any}\ h\in H^{\perp}.
\end{equation*}
It follows from $(g,f)\in L^{\perp}\times H^{\perp}$ that 
\begin{equation}\label{equ:Lf=g}
\left(\mathcal{L}f,h\right)=\left(g,h\right),\ \ \mbox{for any}\ h\in H^{1}(\R^{2})\Longrightarrow
\mathcal{L}f=g,\ \ \mbox{on}\ \R^{2}.
\end{equation}
Second, for any $g\in L^{2}(\R^{2})$ with $\left|(g,\nabla Q)\right|=0$, we decompose
\begin{equation*}
g=g^{\perp}+aY,\ \ \mbox{where}\ g^{\perp}\in L^{\perp}\  \mbox{and} \ a=(g,Y).
\end{equation*}
Using~\eqref{equ:Lf=g}, we find, there exists unique $f^{\perp}\in H^{\perp}$ such that $\mathcal{L}f^{\perp}=g^{\perp}$ on $\R^{2}$. It follows that 
\begin{equation*}
f=f^{\perp}-\frac{a}{\mu_{0}}Y \ \mbox{with}\ |\left(f,\nabla Q\right)|=0\Longrightarrow \mathcal{L}f=g,\quad \mbox{on}\ \R^{2}.
\end{equation*}
Then, the uniqueness of $f$ is a direct consequence of ${\rm{Ker}}\mathcal{L}={\rm{Span}}\left\{\partial_{y_{1}}Q,\partial_{y_{2}}Q\right\}$. Note that, from the uniqueness of $f$, we obtain, if $g$ is even in either $y_{1}$ and $y_{2}$, then $f$ is also even in $y_{1}$ or $y_{2}$. 

\smallskip
Third, taking Fourier transforms on the both sides of $\mathcal{L}f=g$, we have
\begin{equation*}
\widehat{f}(\xi)=\frac{1}{1+|\xi|^{2}}\left(3\widehat{Q^{2}f}(\xi)+\widehat{g}(\xi)\right),\quad \mbox{on}\ \R^{2},
\end{equation*}
which implies
\begin{equation}\label{equ:fy}
f(y)=\left[\mathcal{F}^{-1}\left(\frac{1}{1+|\xi|^{2}}\right)\ast (3Q^{2}f+g)\right](y),\quad \mbox{on}\ \R^{2}.
\end{equation}
On the one hand, from standard elliptic arguments, we have 
\begin{equation}\label{est:3Q2f}
f\in \bigcap_{k=1}^{\infty}H^{k}(\R^{2})\Longrightarrow
3Q^{2}{f}+g\in \mathcal{Y}(\R^{2}).
\end{equation}
On the other hand, for any regular function $h\in \mathcal{S}(\R^{2})$, we find
\begin{equation*}
\begin{aligned}
\left<\mathcal{F}^{-1}\left(\frac{1}{1+|\xi|^{2}}\right),\widehat{h}\right>
&=\int_{\R^{2}}\frac{h(\xi)}{1+|\xi|^{2}}\dd \xi=\int_{0}^{\infty}e^{-\rho}\left(\int_{\R^{2}}h(y)e^{-\rho |y|^{2}}\dd y\right)\dd \rho.
\end{aligned}
\end{equation*}
By an elementary computation, 
\begin{equation*}
\mathcal{F}\left(e^{-\rho |y|^{2}}\right)(\xi)=(2\rho)^{-1}e^{-\frac{|\xi|^{2}}{4\rho}},\quad \mbox{on}\ \R^{2}.
\end{equation*}
It follows from the Plancherel Theorem that 
\begin{equation*}
\left<\mathcal{F}^{-1}\left(\frac{1}{1+|\xi|^{2}}\right),\widehat{h}\right>
=\frac{1}{2}\left[\int_{\R^{2}}\left(\int_{0}^{\infty}\rho^{-1}e^{-\left(\rho+\frac{|\xi|^{2}}{4\rho}\right)}\dd \rho\right) \widehat{h}(\xi)\dd \xi\right].
\end{equation*}
Based on the above identity, we know that
\begin{equation*}
\mathcal{F}^{-1}\left(\frac{1}{1+|\xi|^{2}}\right)(y)=\frac{1}{2}\int_{0}^{\infty}\rho^{-1}e^{-\left(\rho+\frac{|y|^{2}}{4\rho}\right)}\dd \rho\in L^{1}(\R^{2})\cap C^{\infty}\left(\R^{2}\setminus \left\{0\right\}\right).
\end{equation*}
Moreover, for any $y\in \R^{2}$ with $|y|>1$, we have 
\begin{equation*}
\begin{aligned}
\left|\mathcal{F}^{-1}\left(\frac{1}{1+|\xi|^{2}}\right)(y)\right|
&\lesssim \int_{0}^{\frac{|y|}{8}}\rho^{-1}e^{-\left(\rho+\frac{|y|^{2}}{4\rho}\right)}\dd \rho\\
&+\int^{2|y|}_{\frac{|y|}{8}}\rho^{-1}e^{-\left(\rho+\frac{|y|^{2}}{4\rho}\right)}\dd \rho+ \int_{2|y|}^{\infty}\rho^{-1}e^{-\left(\rho+\frac{|y|^{2}}{4\rho}\right)}\dd \rho\lesssim e^{-|y|}.
\end{aligned}
\end{equation*}
Combining the above estimate with~\eqref{equ:fy}--\eqref{est:3Q2f}, we conclude that $f\in \mathcal{Z}(\R^{2})$.
\end{proof}

Recall that, we set 
\begin{equation*}
F(y_{2})=\int_{\R}\Lambda Q(y_{1},y_{2})\dd y_{1},\quad \mbox{on}\ \R.
\end{equation*}
By an elementary computation, for any $n\in \mathbb{N}^{+}$, we have 
\begin{equation*}
\begin{aligned}
\left|\frac{\dd^{n}F}{\dd y_{2}^{n}}(y_{2})\right|
&\lesssim \int_{|y_{1}|\le |y_{2}|}\left(1+|y_{1}|+|y_{2}|\right)e^{-\sqrt{y_{1}^{2}+y^{2}_{2}}}\dd y_{1}\\
&+\int_{|y_{1}|> |y_{2}|}\left(1+|y_{1}|+|y_{2}|\right)e^{-\sqrt{y_{1}^{2}+y^{2}_{2}}}\dd y_{1}\lesssim (1+|y_{2}|^{2})e^{-|y_{2}|},
\end{aligned}
\end{equation*}
which means that $F\in \mathcal{Y}(\R)$. 

\smallskip
For future reference,  we denote by $h_{2}\in \mathcal{Y}(\R)$ the even solution of the following second-order ODE:
\begin{equation}\label{def:h2}
-h_{2}''(y_{2})+h_{2}(y_{2})=F''(y_{2}),\quad \mbox{on}\ \R.
\end{equation}
Then, we set 
\begin{equation*}
G: y_{1}\longmapsto \int_{\R}h_{2}(y_{2})Q(y_{1},y_{2})\dd y_{2}.
\end{equation*}
We fix a regular function $h_{1}\in \mathcal{Y}(\R)$ such that $\int_{\R}h_{1}(y_{1})\dd y_{1}=1$ and $h_{1}$ is orthogonal to $G$ in the $L^{2}(\R)$ sense. It follows that 
\begin{equation}\label{equ:h1h2}
\left(h_{1}\otimes h_{2},Q\right)=\int_{\R}h_{1}(y_{1})\left(\int_{\R}h_{2}(y_{2}) Q(y_{1},y_{2})\dd y_{2}\right)\dd y_{1}=0.
\end{equation}

\smallskip
We now introduce the following non-localized profile for future reference.
\begin{lemma}[Non-localized profile]\label{le:Nonloca}
There exists a smooth function $P\in C^{\infty}(\R^{2})$ such that $\partial_{y_{1}}P\in \mathcal{Z}(\R^{2})$ and 
\begin{equation}\label{equ:P1}
\partial_{y_{1}}\mathcal{L}P=\Lambda Q,\quad \lim_{y_{1}\to \infty}\partial_{y_{2}}^{n}P(y_{1},y_{2})=0,\ \forall n\in \mathbb{N},
\end{equation}
\begin{equation}\label{equ:P2}
|(\nabla P,Q)|=0\quad \mbox{and}\quad (P,Q)=\frac{1}{4}\int_{\mathbb{R}}|F(y_{2})|^{2}\dd y_2.
\end{equation}
Moreover, for any $\alpha=(\alpha_{1},\alpha_{2})\in \mathbb{N}^{2}$, there exists $C_{1\alpha}>0$ such that 
\begin{equation}\label{est:P1}
\begin{aligned}
\left|\partial_{y}^{\alpha}P(y_{1},y_{2})\right|&\le C_{1\alpha}e^{-\frac{|y_{2}|}{3}},\quad \mbox{on}\ \R^{2},\\
\left|\partial_{y}^{\alpha}P(y_{1},y_{2})\right|&\le C_{1\alpha}e^{-\frac{|y|}{3}},\quad \ \mbox{on}\ (0,\infty)\times \R.
\end{aligned}
\end{equation}
For any $\alpha=(\alpha_{1},\alpha_{2})\in \mathbb{N}^{2}$ with $\alpha_{1}\ne 0$, there exists $C_{2\alpha}>0$ such that 
\begin{equation}\label{est:P2}
\left|\partial_{y}^{\alpha}P(y_{1},y_{2})\right|\le C_{2\alpha}e^{-\frac{|y|}{3}},\quad \ \mbox{on}\ \mathbb{R}^{2}.
\end{equation}
\end{lemma}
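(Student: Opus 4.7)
The plan is to write $P = P_\infty + R$, where $P_\infty$ is an explicit, non-localized piece absorbing the non-decaying tail of any antiderivative of $\Lambda Q$ in $y_1$, and $R$ is an $H^{2}$-correction obtained from the elliptic inversion of Proposition~\ref{Prop:Spectral}(v); the obstruction to simply inverting $\mathcal{L}$ is that $\int_\R \Lambda Q\,\dd y_1 = F(y_2)\not\equiv 0$, so a globally decaying $P$ cannot exist. Concretely, set $V(y_1,y_2) = -\int_{y_1}^{\infty}\Lambda Q(y_1',y_2)\,\dd y_1'$, so that $\partial_{y_1}V=\Lambda Q$, $V\to 0$ as $y_1\to+\infty$, and $V\to -F(y_2)$ as $y_1\to-\infty$. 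I next solve $(-\partial_{y_2}^{2}+1)\phi=F$ on $\R$; its Schwartz solution is $\phi=F+h_{2}\in\mathcal{Y}(\R)$ with $h_{2}$ from~\eqref{def:h2}. Fix a smooth cutoff $\chi_-$ equal to $1$ on $\{y_1\le-1\}$ and $0$ on $\{y_1\ge 0\}$ and set $P_\infty(y_1,y_2) = -\chi_-(y_1)\phi(y_2)$. A direct computation gives $\mathcal{L}P_\infty = \chi_-''\phi - \chi_- F + 3Q^{2}\chi_-\phi$, and on $\{y_1\le-1\}$ one has $V+\chi_- F = \int_{-\infty}^{y_1}\Lambda Q\,\dd y_1'$, which together with the exponential decay of $\Lambda Q$ in both variables and the $\mathcal{Y}(\R)$-decay of $\phi$ shows $V-\mathcal{L}P_\infty\in\mathcal{Y}(\R^{2})$.

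The next step is to check $V-\mathcal{L}P_\infty \perp \ker\mathcal{L}={\rm Span}(\partial_{y_1}Q,\partial_{y_2}Q)$. By evenness in $y_2$ of $V$ and $\mathcal{L}P_\infty$, the pairing with the $y_2$-odd function $\partial_{y_2}Q$ vanishes. For $\partial_{y_1}Q$, integration by parts in $y_1$ gives $(V,\partial_{y_1}Q) = -(\Lambda Q,Q) = 0$ by Proposition~\ref{Prop:Spectral}(ii); and although $P_\infty\notin L^{2}$, the symmetry formula $(\mathcal{L}P_\infty,\partial_{y_1}Q)-(P_\infty,\mathcal{L}\partial_{y_1}Q) = \text{boundary fluxes}$ yields $0$ because $P_\infty\equiv 0$ at $y_1=+\infty$ and both $\partial_{y_1}Q$ and $\chi_-'$ vanish at $y_1=-\infty$. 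Proposition~\ref{Prop:Spectral}(v) now produces a unique $R\in\mathcal{Z}(\R^{2})$, even in $y_2$, with $\mathcal{L}R = V-\mathcal{L}P_\infty$ and $(R,\nabla Q)=0$. I then set $P = P_\infty + R + c_1\partial_{y_1}Q$; the condition $(\partial_{y_2}P,Q)=0$ holds by parity, while $c_1$ is fixed uniquely by $(\partial_{y_1}P,Q)=0$ using the non-degeneracy $(\partial_{y_1}^{2}Q,Q) = -\|\partial_{y_1}Q\|_{L^{2}}^{2}\neq 0$.

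The remaining identity $(P,Q)=\tfrac{1}{4}\int_\R F^{2}\,\dd y_2$ follows from pairing with $\mathcal{L}\Lambda Q=-2Q$ of Proposition~\ref{Prop:Spectral}(ii): the same boundary-term argument as above gives
\[
-2(P,Q) \;=\; (\mathcal{L}P,\Lambda Q) \;=\; (V,\Lambda Q) \;=\; \iint_{\R^{2}} V\,\partial_{y_1}V\,\dd y \;=\; \tfrac{1}{2}\int_\R\bigl[V^{2}\bigr]_{y_1=-\infty}^{y_1=+\infty}\dd y_2 \;=\; -\tfrac{1}{2}\int_\R F(y_2)^{2}\,\dd y_2.
\]
The pointwise estimates~\eqref{est:P1}--\eqref{est:P2} then combine the bound $|\partial_y^{\alpha}P_\infty|\lesssim e^{-|y_2|}$ (with compact $y_1$-support whenever $\chi_-$ is differentiated, in particular as soon as $\alpha_1\ge 1$) with the $\mathcal{Z}(\R^{2})$-decay of $R$ and its derivatives; on $\{y_1\ge 0\}$ the cutoff kills $P_\infty$ entirely, recovering the sharper estimate in~\eqref{est:P1} there and in~\eqref{est:P2} globally. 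The main obstacle in the plan is the orthogonality $(\mathcal{L}P_\infty,\partial_{y_1}Q)=0$: since $P_\infty\notin L^{2}$, it cannot be read off from self-adjointness, and the vanishing of the boundary fluxes depends critically on the precise choice $\phi=(-\partial_{y_2}^{2}+1)^{-1}F$, which is what forces the non-decaying part of $V$ to cancel exactly against $\mathcal{L}P_\infty$ at $y_1=-\infty$.
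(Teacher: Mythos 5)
Your proof is correct and reaches the same $P$ (up to the normalization convention for elements of $\ker\mathcal{L}$), but it organizes the construction differently from the paper. The paper takes as non-localized ansatz $N(y)=-\int_{y_1}^{\infty}\Lambda Q(\rho,y_2)\,\dd\rho-h_2(y_2)\int_{y_1}^{\infty}h_1(\rho)\,\dd\rho$ with an auxiliary bump $h_1\in\mathcal{Y}(\R)$, $\int h_1=1$, chosen orthogonal to $G(y_1)=\int h_2(y_2)Q(y_1,y_2)\,\dd y_2$; the payoff of that choice is that $(\partial_{y_1}P,Q)=-(\widetilde P,\partial_{y_1}Q)+(\Lambda Q,Q)+(h_1\otimes h_2,Q)=0$ holds automatically, with no need for a kernel correction. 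Your ansatz $P_\infty=-\chi_-(y_1)\phi(y_2)$ with $\phi=(-\partial_{y_2}^2+1)^{-1}F=F+h_2$ is a cruder localization of the same non-decaying tail (both tend to $-(F+h_2)(y_2)$ as $y_1\to-\infty$ and vanish for $y_1$ large), and you pay for the simpler cutoff by adding the explicit kernel element $c_1\partial_{y_1}Q$ to restore $(\partial_{y_1}P,Q)=0$, using the non-degeneracy $(\partial_{y_1}^2Q,Q)=-\|\partial_{y_1}Q\|_{L^2}^2\ne 0$; this correction costs nothing elsewhere since $(\partial_{y_1}Q,Q)=0$ preserves $(P,Q)$ and $\partial_{y_1}Q\in\mathcal{Y}(\R^2)$ preserves all decay bounds. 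A second, cosmetic difference is that the paper solves the divided equation $\mathcal{L}\widetilde P=R$ after identifying the $\partial_{y_1}$-antiderivative, whereas you solve $\mathcal{L}R=V-\mathcal{L}P_\infty$ directly for the residual. Your formal pairing $(\mathcal{L}P_\infty,\partial_{y_1}Q)=0$ and the identity $-2(P,Q)=(\mathcal{L}P,\Lambda Q)=(V,\Lambda Q)$ both rely on Green's identity with a non-$L^2$ factor; these are justified because $\partial_{y_1}Q$, $\Lambda Q$ and their gradients decay exponentially while $P_\infty$ and $\nabla P_\infty$ are uniformly bounded (constant in $y_1$ near $-\infty$, zero near $+\infty$), which is the same reason the paper's passage to $\mathcal{L}P\to 0$ as $y_1\to\infty$ works. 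Net effect: the paper's choice of $h_1$ is slightly slicker (the orthogonality is built in), while your route makes the cancellation of the nondecaying tail by $(-\partial_{y_2}^2+1)^{-1}F$ somewhat more transparent.
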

\begin{proof}
We consider $P\in C^{\infty}(\R^{2})$ of the form
\begin{equation*}
P(y_{1},y_{2})=\widetilde{P}(y_{1},y_{2})-\int_{y_{1}}^{\infty}\Lambda Q(\rho,y_{2})\dd \rho-h_{2}(y_{2})\int_{y_{1}}^{\infty}h_{1}(\rho)\dd \rho.
\end{equation*}
with $\widetilde{P}\in \mathcal{Z}(\R^{2})$.
By the definition of $h_{2}$, we see that $\partial_{y_{1}}\mathcal{L}P=\Lambda Q$ is equivalent to
\begin{equation}\label{def:R}
\partial_{y_{1}}\mathcal{L}\widetilde{P}=\Lambda Q+\partial_{y_{1}}\mathcal{L}\int_{y_1}^{+\infty}\left(\Lambda Q(\rho,y_2)+h_1(\rho)h_2(y_2)\right)\dd \rho=\partial_{y_{1}}R,
\end{equation}
where
\begin{equation*}
\begin{aligned}
R(y)=&\partial_{y_{1}}\Lambda Q-\int_{y_{1}}^{\infty}\partial_{y_{2}}^{2}\Lambda Q(\rho,y_{2})\dd \rho-3Q^{2}\int_{y_{1}}^{\infty}\Lambda Q(\rho,y_{2})\dd \rho\\
&+h'_{1}(y_{1})h_{2}(y_{2})+F''(y_{2})\int_{y_{1}}^{\infty}h_{1}(\rho)\dd \rho-3Q^{2}\int_{y_{1}}^{\infty}h_{1}(\rho)h_{2}(y_{2})\dd \rho.
\end{aligned}
\end{equation*}
First, on $(y_{1},y_{2})\in (1,\infty)\times \R$, from $\left(h_{1},h_{2},F\right)\in \mathcal{Y}(\R)\times \mathcal{Y}(\R)\times \mathcal{Y}(\R)$ and the definition of $\Lambda Q$, for any $n\in \mathbb{N}$, there exists $r_{n}>0$, such that 
\begin{equation*}
\begin{aligned}
\sum_{|\alpha|=n}\left|\partial_{y}^{\alpha}R(y)\right|&\lesssim \int^{\infty}_{y_{1}}\left(1+\left(\rho^{2}+y_{2}^{2}\right)^{\frac{r_{n}}{2}}\right)e^{-\sqrt{\rho^{2}+y_{2}^{2}}}\dd \rho\\
&+|y|^{r_{n}}e^{-|y|}
\lesssim (1+|y|)^{r_{n}+1}e^{-|y|}.
\end{aligned}
\end{equation*}
Second, on $(y_{1},y_{2})\in (-\infty,-1)\times \R$, from the definition of $h_{1}$ and $F$, we see that 
\begin{equation*}
\lim_{y_{1}\to -\infty}\left(F''(y_{2})\int_{y_{1}}^{\infty}h_{1}(\rho)\dd \rho-\int_{y_{1}}^{\infty}\partial_{y_{2}}^{2}\Lambda Q(\rho,y_{2})\dd \rho\right)=0.
\end{equation*}
Based on the above identity and the Fundamental theorem, on $(y_{1},y_{2})\in (-\infty,-1)\times \R$, for any $n\in \mathbb{N}$, there exists $r_{n}>0$ such that 
\begin{equation*}
\begin{aligned}
\sum_{|\alpha|=n}\left|\partial_{y}^{\alpha}R(y)\right|
&\lesssim \int_{-\infty}^{y_{1}}\left(1+\left(\rho^{2}+y_{2}^{2}\right)^{\frac{r_{n}}{2}}\right)e^{-\sqrt{\rho^{2}+y_{2}^{2}}}\dd \rho\\
&+|y|^{r_{n}}e^{-|y|}
\lesssim (1+|y|)^{r_{n}+1}e^{-|y|}.
\end{aligned}
\end{equation*}
Combining the above two estimates, we obtain $R\in \mathcal{Y}(\R^{2})$. 

\smallskip
On the other hand, since $R(y_{1},y_{2})$ and $Q(y_{1},y_{2})$ are both even in $y_{2}$, we have $(R,\partial_{y_{2}}Q)=0$. Then, using~\eqref{def:R}, $\partial_{y_{1}} Q\in {\rm{Ker}}\mathcal{L}$ and (ii) of Proposition~\eqref{Prop:Spectral}, 
\begin{equation*}
\left(R,\partial_{y_{1}}Q\right)=-(\Lambda Q,Q)+\left(\int_{y_1}^{+\infty}\left(\Lambda Q(\rho,y_2)+h_1(\rho)h_2(y_2)\right)\dd \rho,\mathcal{L}\partial_{y_{1}}Q\right)=0.
\end{equation*}
Therefore, from (v) of Proposition~\ref{Prop:Spectral}, there exists $\widetilde{P}\in \mathcal{Z}(\R^{2})$ such that 
\begin{equation*}
\mathcal{L}\widetilde{P}=R\ \  \mbox{with}\ \left|\left(\widetilde{P},\nabla Q\right)\right|=0\Longrightarrow \partial_{y_{1}}\mathcal{L}\widetilde{P}=\partial_{y_{1}}R\ \ \mbox{with}\ \left|\left(\widetilde{P},\nabla Q\right)\right|=0.
\end{equation*}
Moreover, from $h_{2}(y_{2})$ and $R(y_{1},y_{2})$ are even in $y_{2}$ and (v) of Proposition~\ref{Prop:Spectral}, we see that $P$ is also even in $y_{2}$.

\smallskip
Note that, from the definition of $P$ and $\widetilde{P}\in \mathcal{Z}(\R^{2})$, for any $n\in \mathbb{N}$, we have 
\begin{equation*}
\begin{aligned}
\lim_{y_{1}\to\infty}\partial_{y_{2}}^{n}P(y_{1},y_{2})
&=\lim_{y_{1}\to \infty}\partial_{y_{2}}^{n}\widetilde{P}(y_{1},y_{2})-\lim_{y_{1}\to \infty}h^{(n)}_{2}(y_{2})\int_{y_{1}}^{\infty}h_{1}(\rho)\dd \rho\\
&-\lim_{y_{1}\to \infty}\int_{y_{1}}^{\infty}\partial_{y_{2}}^{n}\Lambda Q(\rho,y_{2})\dd \rho=0.
\end{aligned}
\end{equation*}
Note also that, using~\eqref{equ:h1h2} and the definition $P$, we see that 
\begin{equation*}
\left(\partial_{y_{1}}P,Q\right)=-\left(\widetilde{P},\partial_{y_{1}}Q\right)+\left(\Lambda Q, Q\right)+\left(h_{1}\otimes h_{2},Q\right)=0.
\end{equation*}
In addition, from $P(y_{1},y_{2})$ and $Q(y_{1},y_{2})$ are even in $y_{2}$, we have $(\partial_{y_{2}}P,Q)=0$.
Next, from (ii) of Proposition~\ref{Prop:Spectral}, $\partial_{y_{1}}\mathcal{L}P=\Lambda Q$ and $\mathcal{L}P\to 0$ as $y_{1}\to \infty$, 
\begin{equation*}
\begin{aligned}
\left(P,Q\right)=-\frac{1}{2}\left(\mathcal{L}P,\Lambda Q\right)
&=\frac{1}{2}\left(\int_{y_{1}}^{\infty}\Lambda Q(\rho,y_{2})\dd \rho,\Lambda Q\right)\\
&=\frac{1}{2}\int_{\R}\left(\int_{\R^{2}}\Lambda Q(\rho,y_{2})\Lambda Q(y_{1},y_{2}){\textbf{1}}_{\{y_{1}\le \rho\}}\dd y_{1}\dd \rho\right)\dd y_{2}\\
&=\frac{1}{4}\int_{\R}\left(\int_{\R}\Lambda Q(y_{1},y_{2})\dd y_{1}\right)^{2}\dd y_{2}=\frac{1}{4}\int_{\R}|F(y_{2})|^{2}\dd y_{2}.
\end{aligned}
\end{equation*}
Last, the estimate \eqref{est:P1} with $\alpha_{1}\ne 0$, the second line of estimate~\eqref{est:P1} and the estimate~\eqref{est:P2} are direct consequences of $\partial_{y_{1}}\mathcal{L}P\in \mathcal{Z}(\R^{2})$. On the other hand, for the case of $\alpha_{1}=0$, from $(h_{1},h_{2},\Lambda Q,\widetilde{P})\in \mathcal{Y}(\R)\times \mathcal{Y}(\R)\times \mathcal{Y}(\R^{2})\times\mathcal{Z}(\R^{2})$, there exists $r_{\alpha_{2}}>0$ such that 
\begin{equation*}
\begin{aligned}
\left|\partial_{y}^{\alpha}P(y_{1},y_{2})\right|
&\lesssim (1+|y_{1}|^{r_{\alpha_{2}}}+|y_{2}|^{r_{\alpha_{2}}})e^{-\frac{\sqrt{y_{1}^{2}+y^{2}_{2}}}{2}}+(1+|y_{2}|^{r_{\alpha_{2}}})e^{-|y_{2}|}\\
&+\int^{\infty}_{y_{1}}\left(1+|\rho|^{r_{\alpha_{2}}}+|y_{2}|^{r_{\alpha_{2}}}\right)e^{-\sqrt{\rho^{2}+y^{2}_{2}}}\dd \rho\lesssim e^{-\frac{|y_{2}|}{3}}.
\end{aligned}
\end{equation*}
The proof of Lemma~\ref{le:Nonloca} is complete.
\end{proof}

\subsection{The localized profile}\label{SS:loca}
In this subsection, we introduce a localized profile to avoid the growth of $P$ as $y_{1}\to -\infty$.
Let $\phi\in C^{\infty}(\mathbb{R})$ be such that $\phi\in [0,1]$ with $\phi'\ge 0$ on $\mathbb{R}$ and
\begin{equation*}
\phi(y_{1})=
\left\{
\begin{aligned}
&0,\quad \mbox{for}\ y_{1}<-2,\\
&1,\quad \mbox{for}\ y_{1}>-1.
\end{aligned}\right.
\end{equation*}
For any $|b|\ll1$, we now define the localized profile
\begin{equation*}
\phi_{b}(y_{1})=\phi(|b|^{\frac{3}{4}}y_{1})\quad \mbox{and}\quad 
Q_{b}(y)=Q(y)+bP(y)\phi_{b}(y_{1}).
\end{equation*}

Then the following estimates related to the localized profile hold.

\begin{lemma}\label{le:Ketest}
There exists a small constant $0<b^*\ll1 $ such that for any $|b|<b^*$, the following estimates hold.
\begin{enumerate}
\item \emph{Estimate of $Q_b$}. For all $y\in\mathbb{R}^2$ and $k\in\mathbb{N}^{+}$, we have 
\begin{equation}\label{est:Qb}
\begin{aligned}
|Q_{b}(y)|&\lesssim e^{-\frac{|y|}{3}}
+|b|e^{-\frac{|y_{2}|}{3}}\mathbf{1}_{[-2,0]}(|b|^{\frac{3}{4}}y_1),\\
|\partial_{y_1}^kQ_b(y)|&\lesssim e^{-\frac{|y|}{3}}+ |b|^{1+\frac{3k}{4}}e^{-\frac{|y_{2}|}{3}}\mathbf{1}_{[-2,-1]}(|b|^{\frac{3}{4}}y_1).
\end{aligned}
\end{equation}
\item \emph{Estimate for the error term}. Let 
\begin{equation}\label{def:Psib}
\Psi_{b}=-b\Lambda Q_{b}+\partial_{y_{1}}\left(-\Delta Q_{b}+Q_{b}-Q_{b}^3\right).
\end{equation}
Then, for all $y\in\mathbb{R}^2$ and $k\in\mathbb{N}^{+}$, we have 
\begin{equation}\label{est:Psib}
\begin{aligned}
\left|\Psi_{b}(y)\right|\lesssim& |b|^{2}\left(e^{-\frac{|y|}{3}}+e^{-\frac{|y_{2}|}{3}}\mathbf{1}_{[-2,0]}(|b|^{\frac{3}{4}}y_1)\right)\\
&+|b|^{\frac{7}{4}}e^{-\frac{|y_{2}|}{3}}\mathbf{1}_{[-2,-1]}(|b|^{\frac{3}{4}}y_1),\\
|\partial_{y_1}^k\Psi_b(y)|\lesssim& |b|^{2}e^{-\frac{|y|}{3}}+|b|^{1+\frac{3}{4}(k+1)}e^{-\frac{|y_{2}|}{3}}\mathbf{1}_{[-2,-1]}(|b|^{\frac{3}{4}}y_1).
\end{aligned}
\end{equation}
\item \emph{Scalar product with $Q$}. We have 
 \begin{equation}\label{equ:PsiQ}
(\Psi_b, Q)=-\frac{b^2}{2}\int_{\mathbb{R}}\frac{|\widehat{F}(\xi)|^2}{1+|\xi|^2}\,\dd \xi+O\left(|b|^3\right).
\end{equation}
\item \emph{Energy and mass of $Q_b$}. We have 
\begin{equation}\label{est:EMQb}
\left|E(Q_b)+b(P,Q)\right|\lesssim b^2\ \ \mbox{and}\ \
\left|\int_{\R^{2}} Q_b^2\dd y-\int_{\R^{2}} Q^2\dd y-2b(P,Q)\right|\lesssim |b|^{\frac{5}{4}}.
\end{equation}
\end{enumerate}
\end{lemma}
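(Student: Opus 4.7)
The plan is to prove items (1)--(4) by direct expansion about $Q_b = Q + bP\phi_b$, using as key inputs: the exponential bounds on $Q$ and its derivatives; the estimates from Lemma~\ref{le:Nonloca} ($|\partial_y^\alpha P| \lesssim e^{-|y_2|/3}$ globally, improved to $\lesssim e^{-|y|/3}$ either on $(0,\infty) \times \mathbb{R}$ or whenever $\alpha_1 \neq 0$); the fact that $\phi_b^{(k)}(y_1) = |b|^{3k/4}\phi^{(k)}(|b|^{3/4}y_1)$ is supported in $\{|b|^{3/4} y_1 \in [-2,-1]\}$ for $k \geq 1$; and the identities $-\Delta Q + Q - Q^3 = 0$, $\mathcal{L}\partial_{y_1}Q = 0$, and $\partial_{y_1}\mathcal{L}P = \Lambda Q$.

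For (1), I split according to the sign of $y_1$: on $\{y_1 > 0\}$, $|bP\phi_b| \lesssim |b| e^{-|y|/3}$ is absorbed into the $e^{-|y|/3}$ term, while on $\{-2|b|^{-3/4} \leq y_1 \leq 0\}$ I keep $|bP\phi_b| \lesssim |b| e^{-|y_2|/3}$. For the $k$-th $y_1$-derivative, Leibniz moves some derivatives onto $\phi_b$: whenever at least one derivative lands on $P$, the full 2D decay of $\partial_{y_1}^j P$ is available, contributing to $e^{-|y|/3}$; the worst case has all $k$ derivatives on $\phi_b$, producing $|b|^{1 + 3k/4} e^{-|y_2|/3}\mathbf{1}_{[-2,-1]}(|b|^{3/4}y_1)$. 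For (2), after substituting into the definition of $\Psi_b$ and using $-\Delta Q + Q - Q^3 = 0$ with $\partial_{y_1}\mathcal{L}P = \Lambda Q$,
\begin{equation*}
\Psi_b = -b(1-\phi_b)\Lambda Q + b\bigl[\phi_b'\mathcal{L}P - 2\phi_b'\partial_{y_1}^2 P - 3\phi_b''\partial_{y_1}P - \phi_b'''P\bigr] - b^2\Lambda(P\phi_b) - 3b^2\partial_{y_1}\bigl(Q(P\phi_b)^2\bigr) - b^3\partial_{y_1}(P\phi_b)^3.
\end{equation*}
The $(1-\phi_b)\Lambda Q$ piece is super-polynomially small in $|b|$ since $\Lambda Q$ is exponentially small where $y_1 \lesssim -|b|^{-3/4}$; terms in which a derivative of $\phi_b$ multiplies a $y_1$-derivative of $P$ are likewise super-polynomially small because $\partial_{y_1}^j P$ has full 2D decay and $|y_1|\sim|b|^{-3/4}$ on the support. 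What survives at leading order is $b\phi_b'\mathcal{L}P$ of size $|b|^{7/4}e^{-|y_2|/3}\mathbf{1}_{[-2,-1]}$ and the $b^2$ pieces bounded by $b^2\bigl(e^{-|y|/3} + e^{-|y_2|/3}\mathbf{1}_{[-2,0]}\bigr)$ after splitting in $y_1$. The derivative bound follows the same bookkeeping with one extra factor $|b|^{3/4}$ per $y_1$-derivative landing on $\phi_b$.

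For (3), I rewrite $(\Psi_b, Q) = -b(\Lambda Q_b, Q) - (-\Delta Q_b + Q_b - Q_b^3, \partial_{y_1}Q)$ via $y_1$-IBP (legal because $Q$ decays exponentially). Using $(\Lambda Q, Q) = 0$ from Proposition~\ref{Prop:Spectral} and $(\mathcal{L}(P\phi_b), \partial_{y_1}Q) = (P\phi_b, \mathcal{L}\partial_{y_1}Q) = 0$ (self-adjointness is legal because $\partial_{y_1}Q$ decays exponentially), I obtain
\begin{equation*}
(\Psi_b, Q) = -b^2(\Lambda(P\phi_b), Q) + 3b^2\bigl(Q(P\phi_b)^2, \partial_{y_1}Q\bigr) + O(b^3).
\end{equation*}
Replacing $\phi_b$ by $1$ costs only super-polynomially small errors (again by the fast decay of $Q$), so the leading coefficient is $-(\Lambda P, Q) + 3(QP^2, \partial_{y_1}Q)$. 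The identity $(\Lambda P, Q) = -(P, \Lambda Q)$ persists despite $P$'s non-decay at $y_1 \to -\infty$, because the $y_1$-IBP boundary term $y_1 P Q$ is killed by $Q$. Using $\Lambda Q = \partial_{y_1}\mathcal{L}P$ and one further $y_1$-IBP, the boundary values $P(-\infty,y_2) = -F(y_2)-h_2(y_2)$ and $\mathcal{L}P(-\infty,y_2) = -F(y_2)$, read off from the construction in Lemma~\ref{le:Nonloca}, contribute exactly $-\int_{\mathbb{R}}(F^2+Fh_2)\,dy_2$. The remaining interior term $(\partial_{y_1}P, \mathcal{L}P)$ is evaluated by self-adjointness (legal since $\partial_{y_1}P \in \mathcal{Z}$) together with the commutator $[\mathcal{L}, \partial_{y_1}]P = 6Q\partial_{y_1}Q \cdot P$, yielding
\begin{equation*}
2(P, \Lambda Q) = -\int_{\mathbb{R}}(F^2+Fh_2)\,dy_2 - 6(Q\partial_{y_1}Q, P^2).
\end{equation*}
Hence $(\Lambda P, Q) = \tfrac{1}{2}\int(F^2+Fh_2)\,dy_2 + 3(Q\partial_{y_1}Q, P^2)$, so the $(Q\partial_{y_1}Q, P^2)$ piece cancels exactly against $3(QP^2,\partial_{y_1}Q)$, leaving $(\Psi_b, Q) = -\tfrac{b^2}{2}\int(F^2+Fh_2)\,dy_2 + O(b^3)$. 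Finally, the ODE $-h_2''+h_2 = F''$ in~\eqref{def:h2} gives $\widehat{h_2}(\xi) = -\xi^2\widehat{F}(\xi)/(1+\xi^2)$, and Plancherel yields $\int(F^2+Fh_2)\,dy_2 = \int |\widehat{F}(\xi)|^2/(1+\xi^2)\,d\xi$, which is exactly~\eqref{equ:PsiQ}.

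For (4), $E(Q) = 0$ by Pohozaev, and after IBP with $-\Delta Q - Q^3 = -Q$ I get $E(Q_b) = -b(P\phi_b, Q) + O(b^2) = -b(P, Q) + O(b^2)$ (replacing $\phi_b$ by $1$ at super-polynomial cost in $|b|$). For the mass, $\int Q_b^2 = \int Q^2 + 2b(P\phi_b, Q) + b^2\int (P\phi_b)^2$; splitting $\int(P\phi_b)^2$ by the sign of $y_1$, the integral is $O(1)$ on $\{y_1 > 0\}$ using $|P|\lesssim e^{-|y|/3}$ there, and $O(|b|^{-3/4})$ on $\{-2|b|^{-3/4} \leq y_1 \leq 0\}$ using $|P|\lesssim e^{-|y_2|/3}$ over a $y_1$-interval of length $O(|b|^{-3/4})$, so $b^2\int(P\phi_b)^2 = O(|b|^{5/4})$, which gives the stated bound. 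The main obstacle is the calculation in (3): because $P$ does not decay as $y_1 \to -\infty$, $y_1$-integrations by parts generate explicit boundary terms involving $F$ and $h_2$, and arriving at the exact Bessel-type coefficient $1/(1+\xi^2)$ hinges on the commutator $[\mathcal{L}, \partial_{y_1}]P = 6Q\partial_{y_1}Q \cdot P$ cancelling the otherwise extraneous $(Q\partial_{y_1}Q, P^2)$ contribution.
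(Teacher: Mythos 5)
Your treatments of parts (1), (2), and (4) match the paper's: the same decomposition of $\Psi_b$ into $-b(1-\phi_b)\Lambda Q$ plus cutoff-commutator pieces plus $O(b^2)$ remainders, the same power counting from $\phi_b^{(k)}=|b|^{3k/4}\phi^{(k)}(|b|^{3/4}\cdot)$, and the same use of $-\Delta Q-Q^3=-Q$ and the support splitting for the energy and mass. (Your $b^3$-term $-b^3\partial_{y_1}(P^3\phi_b^3)$ corrects a typo in the paper's displayed expansion, which has $P^2$.) Part (3) is correct but takes a genuinely different route past the shared intermediate identity $(\Psi_b,Q)=-b^2(\Lambda P,Q)+3b^2(QP^2,\partial_{y_1}Q)+O(|b|^3)$: you arrive there through the unexpanded $Q_b$ together with $(\mathcal{L}(P\phi_b),\partial_{y_1}Q)=(P\phi_b,\mathcal{L}\partial_{y_1}Q)=0$, whereas the paper reads it off the expansion from (ii). To evaluate $(\Lambda P,Q)$, the paper substitutes $\Lambda Q=\partial_{y_1}\mathcal{L}P$, notices the potential term $(P,\partial_{y_1}(3Q^2P))$ equals $-(\partial_{y_1}(3QP^2),Q)$ so the cubic cancels at once, and then computes the residual quadratic form $(P,\Delta\partial_{y_1}P-\partial_{y_1}P)$ by three explicit $y_1$-integrations by parts, obtaining $(P,\partial_{y_1}^3P)=0$ and the boundary contributions $\frac{1}{2}\int(F'+h_2')^2$ and $\frac{1}{2}\int(F+h_2)^2$. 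You instead do a single $y_1$-IBP on $(P,\partial_{y_1}\mathcal{L}P)$, read off the boundary integral $-\int F(F+h_2)$ from $\mathcal{L}P(-\infty,\cdot)=-F$ and $P(-\infty,\cdot)=-(F+h_2)$, and reflect $(\partial_{y_1}P,\mathcal{L}P)$ back to $(P,\Lambda Q)$ via $\mathcal{L}$-self-adjointness and the commutator $[\mathcal{L},\partial_{y_1}]P=6Q\partial_{y_1}Q\cdot P$, giving the doubling identity $2(P,\Lambda Q)=-\int(F^2+Fh_2)-6(Q\partial_{y_1}Q,P^2)$; the cubic cancels only at the end. Both land on the same constant through $\widehat{F}+\widehat{h_2}=\widehat{F}/(1+|\xi|^2)$: the paper computes $\frac{1}{2}\int(1+|\xi|^2)|\widehat{F}+\widehat{h_2}|^2\,\dd\xi$, you compute $\frac{1}{2}\int\widehat{F}\,\overline{(\widehat{F}+\widehat{h_2})}\,\dd\xi$, both equal to $\frac{1}{2}\int|\widehat{F}|^2/(1+|\xi|^2)\,\dd\xi$. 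Your route tracks one boundary term instead of two and makes the role of self-adjointness explicit; the paper's decouples from $\mathcal{L}$ early and works only with the constant-coefficient operator $\Delta\partial_{y_1}-\partial_{y_1}$, avoiding the commutator bookkeeping.
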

\begin{proof}
Proof of (i). The estimate~\eqref{est:Qb} follows directly from Lemma~\ref{le:Nonloca}.

\smallskip
Proof of (ii). By~\eqref{equ:P1} and an elementary computation, we have 
\begin{equation*}
\begin{aligned}
\Psi_{b}
=&-b(1-\phi_{b})\Lambda Q+b\left(\left(\mathcal{L}P-2\partial_{y_{1}}^{2}P\right)\phi'_{b}-3(\partial_{y_{1}}P)\phi''_{b}-P\phi'''_{b}\right)\\
&-b^{2}\left(3\partial_{y_{1}}\left(QP^{2}\phi^{2}_{b}\right)+(\Lambda P)\phi_{b}+y_{1}P\phi'_{b}\right)
-b^{3}\partial_{y_{1}}\left(P^{2}\phi_{b}^{3}\right).
\end{aligned}
\end{equation*}
Then, the estimate~\eqref{est:Psib} also follows directly from Lemma~\ref{le:Nonloca}.

\smallskip 
Proof of (iii). From  the definition of $\Psi_b$ and decay properties of $P$ and $Q$, 
\begin{equation*}
(\Psi_b, Q)=-b^2\left(\partial_{y_{1}}\left(3QP^2\right)+\Lambda P,Q\right)+O\left(|b|^3\right).
\end{equation*}
Using~\eqref{equ:P1} and integration by parts, we have
\begin{equation*}
\begin{aligned}
(\Lambda P,Q)=-(P,\Lambda Q)&=\left(P, \Delta \partial_{y_{1}}P-\partial_{y_{1}}P+\partial_{y_{1}}(3Q^2P)\right)\\
&=\left(P, \Delta \partial_{y_{1}}P-\partial_{y_{1}}P\right)-\left(\partial_{y_{1}} \left(3QP^2\right),Q\right).
\end{aligned}
\end{equation*}
From~\eqref{def:h2} and the definition of $P$ in Lemma~\ref{le:Nonloca}, we know that
\begin{equation*}
\begin{aligned}
\left(P,\partial_{y_1}^3P\right)&=-\frac{1}{2}\int_{\mathbb{R}^2}\partial_{y_{1}}\left((\partial_{y_{1}}P)^{2}\right)\dd y=0,\\
\left(P,\partial_{y_{1}}P\right)&=\frac{1}{2}\int_{\mathbb{R}}\left(P^2(y_1,y_2)\bigg|^{y_1=+\infty}_{y_1=-\infty}\right)\,\dd y_2=-\frac{1}{2}\int_{\mathbb{R}}\left(F(y_2)+h_2(y_2)\right)^2\dd y_2,\\
(P,\partial_{y_2}^2\partial_{y_1}P)&=-\frac{1}{2}\int_{\mathbb{R}}\left(\left(\partial_{y_{2}}P\right)^2(y_1,y_2)\bigg|^{y_1=+\infty}_{y_1=-\infty}\right)\dd y_2=\frac{1}{2}\int_{\mathbb{R}}\left(F'(y_2)+h'_2(y_2)\right)^2\dd y_2.
\end{aligned}
\end{equation*}
Taking the Fourier transform on the both sides of~\eqref{def:h2}, we deduce that 
\begin{equation*}
\left(1+|\xi|^{2}\right)\widehat{h_{2}}(\xi)=-|\xi|^{2}\widehat{F}(\xi),\ \ \mbox{on}\ \R\Longrightarrow
\widehat{F}(\xi)+\widehat{h_{2}}(\xi)=\frac{\widehat{F}(\xi)}{1+|\xi|^2}, \ \ \mbox{on}\ \R.
\end{equation*}
Combining the above identities with the Plancherel theorem, 
\begin{equation*}
(P, \Delta \partial_{y_{1}}P-\partial_{y_{1}}P)=
\frac{1}{2}\int_{\mathbb{R}}(1+|\xi|^2)\left(\widehat{F}(\xi)+\widehat{h_{2}}(\xi)\right)^2\dd \xi=\frac{1}{2}\int_{\mathbb{R}}\frac{|\widehat{F}(\xi)|^2}{1+|\xi|^2}\,\dd \xi.
\end{equation*}
We see that~\eqref{equ:PsiQ} follows from the above identities.

\smallskip
Proof of (iv). By an elementary computation and integration by parts, we have 
\begin{equation*}
\begin{aligned}
E(Q_{b})&=E(Q)+b\left(P\phi_{b},-\Delta Q-Q^{3}\right)+O(|b|^{2}),\\
\int_{\R^{2}}Q_{b}^{2}\dd y&=\int_{\R^{2}}Q^{2}\dd y+b^{2}\int_{\R^{2}}P^{2}\phi_{b}^{2}\dd y+2b\left(P\phi_{b},Q\right).
\end{aligned}
\end{equation*}
Combining the above identities with~\eqref{est:P1}, $E(Q)=0$, $-\Delta Q+Q-Q^{3}=0$ and the decay properties of $Q$, we complete the proof of~\eqref{est:EMQb}. 
\end{proof}

\section{Modulation estimates} \label{S:Modu}
\subsection{Geometric decomposition and Bootstrap assumptions}\label{SS:Geo}
 In this subsection, we recall a standard decomposition result on solutions of~\eqref{CP} that are close to the soliton manifold. More precisely, we assume that there exist $(\overline{\lambda}(t),\overline{x}(t),\overline{\varepsilon}(t))\in(0,+\infty)\times\mathbb{R}^2\times H^{1}(\R^{2})$ such that, for all $t\in[0,t_0)$, the solution $u(t)$ of \eqref{CP} satisfies
\begin{equation}\label{est:decomposition}
u(t,x)=\frac{1}{\overline{\lambda}(t)}\left[{Q}+\bar{\varepsilon}(t)\right]\left(\frac{x-\bar{x}(t)}{\bar{\lambda}(t)}\right),\quad \mbox{with}\quad \|\bar{\varepsilon}(t)\|_{L^2}\le \kappa\le \kappa^{*},
\end{equation}
where $0<\kappa^{*}\ll1$ is small enough universal constant.

\smallskip
We now recall the following modulation result for solutions of~\eqref{CP}.
\begin{proposition}\label{Prop:decomposition}
Let $u(t)$ be a solution of~\eqref{CP} satisfying~\eqref{est:decomposition} on $[0,t_{0})$.
Then there exist $C^1$ functions $(\lambda(t),x(t),b(t))\in(0,\infty)\times\mathbb{R}^3$ such that, for all $t\in [0,t_{0})$, $\varepsilon(t)$ being defined by
\begin{equation}\label{def:var}
\varepsilon(t,y)=\lambda(t)u(t,\lambda(t)y+x(t))-Q_{b(t)}(y),
\end{equation}
it satisfies the orthogonality conditions
\begin{equation}\label{equ:orth}
(\varepsilon(t),Q)=(\varepsilon(t), Q^3)=|(\varepsilon(t),\nabla Q)|=0.
\end{equation}
Moreover, we have
\begin{equation*}
\|\varepsilon(t)\|_{L^2}+|b(t)|+\bigg|1-\frac{\overline{\lambda}(t)}{\lambda(t)}\bigg|\lesssim\delta(\kappa)\quad \mbox{and}\quad 
\|\varepsilon(t)\|_{H^1}\lesssim \delta(\|{\varepsilon}(0)\|_{H^1}).
\end{equation*}
\end{proposition}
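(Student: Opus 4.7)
The plan is to apply the implicit function theorem (IFT) with the four parameters $(\lambda, x_1, x_2, b)$ matched to the four scalar orthogonality conditions in~\eqref{equ:orth}. Given $u$ close to $Q$ in $H^1$, set $\varepsilon_{\lambda, x, b}(y) = \lambda u(\lambda y + x) - Q_b(y)$ and consider the map
\begin{equation*}
\Phi(u; \lambda, x, b) = \bigl( (\varepsilon_{\lambda,x,b}, Q), \ (\varepsilon_{\lambda,x,b}, Q^3), \ (\varepsilon_{\lambda,x,b}, \partial_{y_1}Q), \ (\varepsilon_{\lambda,x,b}, \partial_{y_2}Q) \bigr) \in \mathbb{R}^4.
\end{equation*}
At the base point $(u, \lambda, x, b) = (Q, 1, 0, 0)$ one has $\Phi = 0$, and the task is to check that the Jacobian with respect to $(\lambda, x_1, x_2, b)$ is non-singular, upgrade to $C^1$ time-regularity via the PDE, and extract the quantitative estimates.

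The Jacobian is computed from $\partial_\lambda \varepsilon|_{\mathrm{base}} = \Lambda Q$, $\partial_{x_i}\varepsilon|_{\mathrm{base}} = \partial_{y_i}Q$ (up to sign), and $\partial_b \varepsilon|_{\mathrm{base}} = -P$, where the last identity is justified by dominated convergence (noting $\phi_b \to 1$ pointwise and that the transition zone $\{|b|^{3/4} y_1 \in [-2,-1]\}$ contributes only exponentially small corrections to inner products with decaying test functions). Pairing with $(Q, Q^3, \nabla Q)$ and using Proposition~\ref{Prop:Spectral}(ii), the radial symmetry of $Q$, integration by parts, and $|(\nabla P, Q)| = 0$ from Lemma~\ref{le:Nonloca}, most entries of the $4\times 4$ matrix vanish; the surviving entries are $(\Lambda Q, Q^3) = \tfrac{1}{2}\|Q\|_{L^4}^4$, $-(P, Q) = -\tfrac{1}{4}\int_{\mathbb{R}} F^2\,\dd y_2$ (non-zero by Lemma~\ref{le:Nonloca}), and $\|\partial_{y_i}Q\|_{L^2}^2$. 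The resulting matrix is lower-block-triangular after a column swap, with determinant equal to the non-zero product of these four quantities.

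Having a non-singular Jacobian, I would next verify that $\Phi$ is $C^1$ in $(\lambda, x, b)$ despite the H\"older dependence of $\phi_b$ on $b$ through $|b|^{3/4}$: the potentially singular contribution to $\partial_b (Q_b, f)$ lives on the support of $\phi'_b$ near $y_1 \sim -|b|^{-3/4}$ and is exponentially small when $f \in \{Q, Q^3, \nabla Q\}$. The IFT then furnishes unique continuous maps $u \mapsto (\lambda, x, b)(u)$ on a small $H^1$-neighborhood of the soliton manifold, which, composed with $u(t) \in C([0, t_0); H^1)$, gives continuous parameters in $t$. The $C^1$ time regularity follows by differentiating the orthogonality conditions in $t$ along~\eqref{CP}: this yields a linear system for $(\lambda_t/\lambda, x_t, b_t)$ whose matrix equals the above Jacobian perturbed by $O(\|\varepsilon\|_{L^2} + |b|)$, hence still invertible, and the standard smoothness of the PDE flow lifts the solution.

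Finally, Lipschitz continuity of the IFT solution near the base point, combined with comparison against the prescribed decomposition~\eqref{est:decomposition}, yields $|b| + |1 - \bar\lambda/\lambda| + \lambda^{-1}|x - \bar x| \lesssim \|\bar\varepsilon\|_{L^2} \lesssim \kappa$; then $\|\varepsilon\|_{L^2} \lesssim \delta(\kappa)$ follows after rescaling using $\|Q_b - Q\|_{L^2} = O(|b|^{1/2})$ from Lemma~\ref{le:Ketest}, and the $H^1$ bound comes from $H^1$-continuity of the decomposition together with the hypothesis on $\bar\varepsilon(0)$. I expect the only real subtlety to be the non-smooth $b$-dependence of $Q_b$, resolved entirely by the exponential smallness argument in the cutoff zone; the remainder is routine modulation analysis in the spirit of~\cite[\S 2]{MMR}.
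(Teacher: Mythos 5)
Your proposal is correct and takes essentially the same route as the paper: an implicit function theorem argument keyed to the four orthogonality conditions, with the Jacobian at the base point $(Q,1,0,0)$ reduced to the matrix whose non-zero diagonal entries are $\tfrac12\|Q\|_{L^4}^4$, $(P,Q)=\tfrac14\int_{\R}F^2$, and $\|\partial_{y_i}Q\|_{L^2}^2$, which the paper records as the invertible matrix $\mathcal{M}$ before deferring the remaining standard steps to \cite[Lemma 4.4]{FHRY} and \cite[Lemma 2.5]{MMR}. Your additional remarks on the $|b|^{3/4}$ cutoff (non-smooth $b$-dependence contributing only exponentially small corrections in the transition zone) and the $C^1$ time regularity via the flow are valid elaborations of what the paper leaves implicit; note that, as in the paper, the $H^1$ bound $\|\varepsilon(t)\|_{H^1}\lesssim\delta(\|\varepsilon(0)\|_{H^1})$ ultimately rests on energy conservation (cf.~the expansion underlying \eqref{CL}) rather than continuity of the modulation map alone.
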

\begin{proof}
The proof of the decomposition proposition relies on a standard argument based on Proposition~\ref{Prop:Spectral}, Lemma~\ref{le:Nonloca} and the implicit function Theorem.
For the sake of completeness, we provide a sketch here. Define the functional
\begin{equation*}
(u,\Gamma) \longmapsto \Theta(u,\Gamma):= \left((\varepsilon,Q),(\varepsilon,Q^3),(\varepsilon,\partial_{x_1} Q),(\varepsilon,\partial_{x_2} Q)\right) \in \mathbb R^{4},
\end{equation*}
where $\Gamma=\left(\lambda,x,b\right)$.
We compute the Jacobian matrix of the above mapping with respect to $(\lambda,x,b)$ and evaluate it at $(u,\lambda,x,b)=(Q,\overline{\lambda},\overline{x},0)$. Up to a rescaling and translations,
the heart of the proof is the invertibility of the Jacobian matrix:
\begin{equation*}
\mathcal{M}=\left(\begin{array}{cccc}
\left(\Lambda Q,Q^{3}\right)& \left(\Lambda Q, Q\right) &\left(\Lambda Q, \partial_{y_{1}}Q\right) & \left(\Lambda Q, \partial_{y_{2}}Q\right)\\
\left(P,Q^{3}\right)& \left(P, Q\right) &\left(P, \partial_{y_{1}}Q\right) & \left(P, \partial_{y_{2}}Q\right)\\
\left(\partial_{y_{1}}Q,Q^{3}\right)& \left(\partial_{y_{1}} Q, Q\right) &\left(\partial_{y_{1}} Q, \partial_{y_{1}}Q\right) & \left(\partial_{y_{1}}Q, \partial_{y_{2}}Q\right)\\
\left(\partial_{y_{2}} Q,Q^{3}\right)& \left(\partial_{y_{2}} Q, Q\right) &\left(\partial_{y_{2}} Q, \partial_{y_{1}}Q\right) & \left(\partial_{y_{2}} Q, \partial_{y_{2}}Q\right)
\end{array}\right).
\end{equation*}
Actually, by an elementary computation, we obtain
\begin{equation*}
\mathcal{M}=\left(\begin{array}{cccc}
\frac{1}{2}\|Q\|_{L^{4}}^{4}& 0 & 0 &0\\
\left(P,Q^{3}\right)& \frac{1}{4}\|F\|_{L^{2}}^{2} & 0 & \left(P, \partial_{y_{2}}Q\right)\\
0& 0 &\|\partial_{y_{1}} Q\|^{2}_{L^{2}} & 0\\
0 & 0 & 0 & \|\partial_{y_{2}} Q\|^{2}_{L^{2}}
\end{array}\right),
\end{equation*}
which implies $\mathcal{M}$ is invertible. See more details in the proof of~\cite[Lemma 4.4]{FHRY} and also see the proof of~\cite[Lemma 2.5]{MMR}.
\end{proof}
%\item The parameters and error term depend continuously on the initial data. Considering a family of solutions $u_n(t)$, with $u_{0,n}\in\mathcal{A}_{\alpha_0}$, and $u_{0,n}\rightarrow u_{0}$ in $H^1$, as $n\rightarrow+\infty$. Let $(\lambda_n(t),b_n(t),x_n(t),\varepsilon_n(t))$ be the corresponding geometrical parameters and error terms of $u_n(t)$. Suppose the geometrical decomposition of $u_n(t)$ and $u(t)$ hold on $[0,T_0]$ for some $T_0>0$. Then for all $t\in[0,T_0]$, we have:
%	\begin{equation}\label{C206}
%	\big(\lambda_n(t),b_n(t),x_n(t),\varepsilon_n(t)\big)\xrightarrow{\mathbb{R}^3\times H^1}\big(\lambda(t),b(t),x(t),\varepsilon(t)\big),
%	\end{equation}
%	as $n\rightarrow+\infty$.
%\end{enumerate}
As usual in investigating the blow-up phenomenon of mass-critical dispersive equations, we introduce the following new time variable
\begin{equation*}
 s=\int_0^t\frac{1}{\lambda^3(\sigma)}\dd \sigma\Longleftrightarrow
 \frac{\dd s}{\dd t}=\frac{1}{\lambda^{3}(t)}.
\end{equation*} 
Recall that, we define
\begin{equation*}
\Psi_{b}=-b\Lambda Q_{b}+\partial_{y_{1}}\left(-\Delta Q_{b}+Q_{b}-Q_{b}^3\right).
\end{equation*}
In addition, we set 
\begin{equation*}
\begin{aligned}
{\rm{Mod}}&=\left(\frac{\lambda_s}{\lambda}+b\right)\left(\Lambda Q_{b}+\Lambda\varepsilon\right)
-b_s\frac{\partial Q_{b}}{\partial b}\\
&+\left(\frac{x_{1s}}{\lambda}-1\right)(\partial_{y_{1}}Q_{b}+\partial_{y_{1}}\varepsilon)+\frac{x_{2s}}{\lambda}(\partial_{y_{2}}Q_{b}+\partial_{y_{2}}\varepsilon).
\end{aligned}
\end{equation*}
We now deduce the equation of $\varepsilon$ from~\eqref{CP} and~\eqref{def:var}.
\begin{lemma}[Equation of $\varepsilon$] \label{le:equvar}
The function $\varepsilon$ satisfies
\begin{equation*}
	\partial_{s}\varepsilon=\partial_{y_{1}}\mathcal{L}\varepsilon-b\Lambda\varepsilon+
	{\rm{Mod}}+\Psi_{b}-\partial_{y_{1}}R_{b}-\partial_{y_{1}}R_{\rm NL},
	\end{equation*}
	where
	\begin{equation*}
	R_{b}=3(Q_{b}^2-Q^2)\varepsilon\quad \mbox{and}\quad 
	R_{{\rm {NL}}}=(Q_{b}+\varepsilon)^3-3Q_{b}^{2}\varepsilon-Q_{b}^3.
	\end{equation*}
\end{lemma}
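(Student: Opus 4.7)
The plan is a direct change-of-variables computation, substituting the ansatz into \eqref{CP} and carefully collecting terms into the four groups appearing in the statement (linear $\partial_{y_1}\mathcal{L}\varepsilon-b\Lambda\varepsilon$, modulation ${\rm Mod}$, profile error $\Psi_b$, and cubic remainders $R_b, R_{\rm NL}$). Setting $v = Q_b + \varepsilon$, so that $u(t,x) = \lambda^{-1} v(s,y)$ with $y = (x - x(t))/\lambda(t)$ and $ds/dt = 1/\lambda^3$, the chain rule gives
$$\partial_t u = \frac{1}{\lambda^4}\partial_s v - \frac{\lambda_t}{\lambda^2}\Lambda v - \frac{1}{\lambda^2} x_t\cdot\nabla_y v, \qquad \partial_{x_1}(\Delta u + u^3) = \frac{1}{\lambda^4}\partial_{y_1}(\Delta_y v + v^3).$$
Using $\lambda^2\lambda_t = \lambda_s/\lambda$ and $\lambda^2 x_t = x_s/\lambda$, multiplying \eqref{CP} by $\lambda^4$ produces
$$\partial_s v - \frac{\lambda_s}{\lambda}\Lambda v - \frac{x_s}{\lambda}\cdot\nabla_y v + \partial_{y_1}(\Delta v + v^3) = 0.$$

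Next I would reorganize the nonlinear term around the linearization at $Q_b$. Writing $(Q_b+\varepsilon)^3 = Q_b^3 + 3Q_b^2\varepsilon + R_{\rm NL}$ with $R_{\rm NL}$ as in the statement, and using $\Delta\varepsilon + 3Q_b^2\varepsilon = -\mathcal{L}\varepsilon + \varepsilon + R_b$ with $R_b = 3(Q_b^2 - Q^2)\varepsilon$, one obtains
$$\Delta v + v^3 = (\Delta Q_b + Q_b^3) + \varepsilon - \mathcal{L}\varepsilon + R_b + R_{\rm NL}.$$
Applying $\partial_{y_1}$ and invoking the definition $\Psi_b = -b\Lambda Q_b + \partial_{y_1}(-\Delta Q_b + Q_b - Q_b^3)$ rearranges the profile contribution as
$$\partial_{y_1}(\Delta Q_b + Q_b^3) = \partial_{y_1} Q_b - b\Lambda Q_b - \Psi_b.$$

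Substituting these back and expanding $\partial_s v = b_s\partial_b Q_b + \partial_s\varepsilon$, $\Lambda v = \Lambda Q_b + \Lambda\varepsilon$, $\nabla_y v = \nabla_y Q_b + \nabla_y \varepsilon$, I solve for $\partial_s\varepsilon$ and regroup via the algebraic identities
$$\tfrac{\lambda_s}{\lambda}(\Lambda Q_b + \Lambda\varepsilon) + b\Lambda Q_b = \bigl(\tfrac{\lambda_s}{\lambda}+b\bigr)(\Lambda Q_b+\Lambda\varepsilon) - b\Lambda\varepsilon,$$
$$\tfrac{x_{1s}}{\lambda}(\partial_{y_1}Q_b + \partial_{y_1}\varepsilon) - (\partial_{y_1}Q_b + \partial_{y_1}\varepsilon) = \bigl(\tfrac{x_{1s}}{\lambda}-1\bigr)(\partial_{y_1}Q_b+\partial_{y_1}\varepsilon),$$
together with the $x_{2s}/\lambda$ and $-b_s\partial_b Q_b$ terms that directly belong to ${\rm Mod}$. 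The $(\lambda_s/\lambda+b)$, $(x_{1s}/\lambda-1)$, $x_{2s}/\lambda$, and $b_s$ pieces assemble precisely into ${\rm Mod}$; the leftover $-b\Lambda\varepsilon$ combines with $\partial_{y_1}\mathcal{L}\varepsilon$ to form the stated linear operator; and $\Psi_b$, $-\partial_{y_1}R_b$, $-\partial_{y_1}R_{\rm NL}$ remain as the last three terms.

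There is no genuine analytic difficulty in this lemma; the entire content is bookkeeping. The only place where a sign or coefficient can be mishandled is the pairing that produces the $-b\Lambda\varepsilon$ term: one must resist the temptation to absorb the full $\frac{\lambda_s}{\lambda}\Lambda\varepsilon$ contribution into ${\rm Mod}$ and instead split off exactly $-b\Lambda\varepsilon$, since this is the piece that combines with the linearization at $Q$ to give the correct transport-type linear operator $\partial_{y_1}\mathcal{L}-b\Lambda$ needed later for the modulation and virial analysis.
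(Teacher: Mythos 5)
Your proof is correct and follows essentially the same route as the paper's: a direct change of variables to rescaled coordinates yielding $\partial_s v - \frac{\lambda_s}{\lambda}\Lambda v - \frac{x_s}{\lambda}\cdot\nabla v + \partial_{y_1}(\Delta v + v^3) = 0$, followed by algebraic regrouping of $v = Q_b + \varepsilon$ into the $\mathrm{Mod}$, $\Psi_b$, $R_b$, $R_{\rm NL}$ terms. Your splitting $\frac{\lambda_s}{\lambda}\Lambda v + b\Lambda Q_b = (\frac{\lambda_s}{\lambda}+b)\Lambda v - b\Lambda\varepsilon$ and the identity $\partial_{y_1}(\Delta Q_b + Q_b^3) = \partial_{y_1}Q_b - b\Lambda Q_b - \Psi_b$ are exactly the bookkeeping the paper performs (slightly more tersely) when it writes $\partial_s\varepsilon = \partial_{y_1}(-\Delta v + v - v^3) + \mathrm{Mod} - b\Lambda v$.
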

\begin{proof}
We denote 
\begin{equation*}
v(t,y)=\lambda(t) u(t,\lambda(t)y+x(t)).
\end{equation*}
Using~\eqref{CP}, we see that 
\begin{equation*}
\lambda^{3}\partial_{t}v+\partial_{y_{1}}\left(\Delta v+v^{3}\right)-\lambda^{2}\lambda_{t}\Lambda v-\lambda^{2}x_{t}\cdot \nabla v=0.
\end{equation*}
Based on the above identity and the definition of the time variable $s$, we have 
\begin{equation*}
\partial_{s}v+\partial_{y_{1}}\left(\Delta v+v^{3}\right)-\frac{\lambda_{s}}{\lambda}\Lambda v-\frac{x_{s}}{\lambda}\cdot \nabla v=0.
\end{equation*}
Therefore, from $v(s,y)=\varepsilon(s,y)+Q_{b(s)}(y)$, we conclude that 
\begin{equation*}
\begin{aligned}
\partial_{s}\varepsilon
&=\partial_{y_{1}}\left(-\Delta (Q_{b}+\varepsilon)+(Q_{b}+\varepsilon)-(Q_{b}+\varepsilon)^{3}\right)+
{\rm{Mod}}-b(\Lambda Q_{b}+\Lambda \varepsilon)\\
&=\partial_{y_{1}}\mathcal{L}\varepsilon-b\Lambda\varepsilon+
	{\rm{Mod}}+\Psi_{b}-\partial_{y_{1}}R_{b}-\partial_{y_{1}}R_{\rm NL}.
\end{aligned}
\end{equation*}
\end{proof}
For $i=0,1,2$, we define the smooth function $\varphi_{i}\in C^{\infty}(\R)$ as follows,
\begin{equation*}
\begin{aligned}
\vartheta_{i}(y_1)=
\begin{cases}
\frac{1}{2},&\mbox{for}\ y_1\in (-\infty,\frac{1}{2}),\\
y_1^{i+6},&\mbox{for}\ y_1\in(1,+\infty),
\end{cases}
\quad \vartheta'(y_1)\ge0,\ \ \mbox{on}\ \mathbb{R}.
\end{aligned}
\end{equation*}
Moreover, we define the smooth even function $\zeta\in C^{\infty}(\R)$ with $\zeta\in (0,1]$ as follows,
\begin{equation*}
\begin{aligned}
\zeta(y_1)=
\begin{cases}
e^{2y_{1}},&\mbox{for}\ y_1\in(-\infty,-\frac{1}{6}),\\
1,&\mbox{for}\ y_1\in(-\frac{1}{10},\frac{1}{10}),\\
e^{-2y_{1}},&\mbox{for}\ y_1\in(\frac{1}{6},\infty),\\
\end{cases}
\quad \int_{\R}\zeta (y_{1})\dd y_{1}=1.
\end{aligned}
\end{equation*}
Let $B>100$ be a large enough universal constant to be chosen later. For $i=0,1,2$, we define the following weight function,
\begin{equation*}
\vartheta_{i,B}(y_1)=\vartheta_{i}\left(\frac{y_1}{B^{10}}\right),\quad \mbox{on}\ \R.
\end{equation*} 
We also define a smooth function $\psi_B\in C^\infty(\mathbb{R})$ such that 
\begin{equation*}
\lim_{y_1\rightarrow-\infty}\psi_B(y_1)=0 \ \ \mbox{and}\ \ 
\psi_{B}'(y_1)=
\begin{cases}
\frac{1}{B}\zeta\left(\frac{y_{1}}{B}+\frac{1}{3}-\frac{1}{2}B^{-\frac{1}{3}}\right),&\text{ for }y_1<-\frac{1}{3}B,\\
\frac{1}{B}\zeta\left(\frac{y_1}{B^{\frac{2}{3}}}+\frac{1}{3}B^{\frac{1}{3}}\right),&\text{ for }y_1\geq-\frac{1}{3}B.
\end{cases}
\end{equation*}
Last, for $i=0,1,2$, we set 
\begin{equation*}
\varphi_{i,B}(y_{1})=\sqrt{2\psi_{B}(y_{1})}\vartheta_{i,B}(y_{1}),\quad \mbox{on}\ \R.
\end{equation*}

\begin{lemma}\label{le:psiphi}
For all large enough $B>100$, the following estimates hold.
\begin{enumerate}
\item We know that $\psi_B$ is strictly increasing and $\psi_B(y_1)\to\frac{1}{2}$ as $y_{1}\to \infty$.

\item For all $y_1\in (-\infty,-B)$, we have 
\begin{equation*}
e^{\frac{2y_{1}}{B}}\le \psi_{B}(y_{1})\le 2e^{\frac{2y_{1}}{B}}\quad 
\mbox{and}\quad \frac{\sqrt{2}}{2}e^{\frac{y_{1}}{B}}\le \varphi_{i,B}(y_{1})\le e^{\frac{y_{1}}{B}}.
\end{equation*}

\item For all $y_{1}\in \left(-\frac{B}{4},\frac{B}{4}\right)$, we have
\begin{equation*}
\psi_B'(y_1)+\left|\psi_{B}(y_{1})-\frac{1}{2}\right|
+\sum_{i=1,2}\left(\varphi'_{i,B}(y_{1})+\left|\varphi_{i,B}(y_1)-\frac{1}{2}\right|\right)\lesssim e^{-\frac{1}{6}B^{\frac{1}{3}}}.
\end{equation*}
\item For all $y_1\in\mathbb{R}$ and $i=0,1,2$, we have $\psi_B(y_1) \le \varphi_{i,B}(y_1)$.
\end{enumerate}
\end{lemma}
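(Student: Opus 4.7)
The proof is a direct verification exploiting the piecewise structure of $\zeta$, $\vartheta_i$, and $\psi_B'$, together with elementary changes of variable; I would treat the four items in order.

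For (i), strict monotonicity is immediate from $\psi_B'(y_1) = \frac{1}{B}\zeta(\cdot) > 0$. For the limit, I would split $\int_{\R}\psi_B'(y_1)\,\dd y_1$ at $y_1 = -B/3$ and change variable in each piece: $u = y_1/B + \frac{1}{3} - \frac{1}{2}B^{-1/3}$ on the left, $v = y_1/B^{2/3} + \frac{1}{3}B^{1/3}$ on the right. Using evenness of $\zeta$ with $\int_{\R}\zeta = 1$ together with $\zeta \equiv 1$ on $(-1/10,1/10)$ (valid once $B^{-1/3}/2 < 1/10$), the left piece evaluates to $\frac{1}{2} - \frac{1}{2}B^{-1/3}$ and the right to $\frac{1}{2}B^{-1/3}$, so their sum is $\frac{1}{2}$. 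For (ii), in the regime $y_1 \leq -B$ the argument $u(y) = y/B + 1/3 - B^{-1/3}/2$ is bounded above by $-2/3 - B^{-1/3}/2 < -1/6$ throughout the integration range, so $\zeta(u(y)) = e^{2u(y)}$; integrating $\psi_B'(y) = \frac{1}{B}e^{2y/B + 2/3 - B^{-1/3}}$ from $-\infty$ to $y_1$ yields the closed form $\psi_B(y_1) = \frac{1}{2} e^{2/3 - B^{-1/3}}e^{2y_1/B}$, from which the claimed two-sided bounds follow for $B$ large. Since $y_1 < -B$ forces $y_1/B^{10} < -1/2$, hence $\vartheta_{i,B}(y_1) = 1/2$, the bound on $\varphi_{i,B}(y_1) = \frac{1}{2}\sqrt{2\psi_B(y_1)}$ is immediate.

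For (iii), since $-B/4 > -B/3$ we are in the second piece throughout, and the argument $v = y_1/B^{2/3} + B^{1/3}/3$ lies in $(B^{1/3}/12,\, 7B^{1/3}/12) \subset (1/6,\infty)$ for $B$ large, so $\zeta(v) = e^{-2v} \leq e^{-B^{1/3}/6}$, giving $\psi_B'(y_1) \leq \frac{1}{B}e^{-B^{1/3}/6}$. Writing $\frac{1}{2} - \psi_B(y_1) = \int_{y_1}^{\infty}\psi_B'(y)\,\dd y$ and applying the same change of variable delivers $\frac{1}{2} - \psi_B(y_1) \lesssim B^{-1/3}e^{-B^{1/3}/6}$. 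In the same range $y_1/B^{10}$ lies in the constant region of $\vartheta_i$, so $\vartheta_{i,B} \equiv 1/2$ and $\vartheta_{i,B}' \equiv 0$; hence $\varphi_{i,B} = \frac{1}{2}\sqrt{2\psi_B}$ with derivative $\varphi_{i,B}' = \psi_B'/(2\sqrt{2\psi_B})$, and the bounds on $|\varphi_{i,B} - 1/2|$ and $\varphi_{i,B}'$ follow from $|\sqrt{2\psi_B}-1| \leq |2\psi_B - 1|$ together with $\psi_B$ bounded away from $0$ in this region.

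Finally, for (iv) I would use (i) to conclude $\psi_B(y_1) \in [0, 1/2]$, whence $\sqrt{2\psi_B} \geq 2\psi_B$; combined with $\vartheta_{i,B} \geq 1/2$ this gives $\varphi_{i,B} \geq \psi_B$ pointwise. There is no deep obstacle here: the proof reduces to careful bookkeeping of which piece of $\psi_B'$ and which region of $\zeta$ and $\vartheta_i$ apply in each regime, plus verifying that auxiliary quantities like $B^{-1/3}/2$ lie inside the flat/exponential regions of $\zeta$ once $B$ is chosen large enough.
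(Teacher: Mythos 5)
Your proof takes the same direct piecewise-verification route as the paper for all four items, with identical changes of variable, case analysis, and closed forms. Two small slips are worth noting. In (ii) you assert that $y_1 < -B$ forces $y_1/B^{10} < -1/2$; this is false (one only gets $y_1/B^{10} < -B^{-9}$). What you actually need — and what does hold — is simply $y_1/B^{10} < 1/2$, which already places the argument in the flat region of $\vartheta_i$ and gives $\vartheta_{i,B}(y_1) = \tfrac{1}{2}$. Separately, the closed form you (and the paper) derive, namely $\psi_B(y_1) = \tfrac{1}{2}e^{2/3 - B^{-1/3}}e^{2y_1/B}$, gives $\psi_B \approx 0.974\,e^{2y_1/B}$ for large $B$, since $\tfrac{1}{2}e^{2/3} < 1$; so the stated lower bound $\psi_B(y_1) \ge e^{2y_1/B}$ does not quite follow and the constant in the statement should be relaxed slightly (say to $\tfrac{1}{2}e^{2y_1/B}$, which is harmless for the later uses). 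This last imprecision is present in the paper's own proof as well, so it is not an error you introduced, but your phrase ``from which the claimed two-sided bounds follow'' glosses over a check that would have caught it.
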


\begin{proof}
Proof of (i). First, from $\psi'_B>0$ on $\mathbb{R}$, we know that $\psi_B$ is strictly increasing.
Then, from $\zeta$ is an even function and $\int_{\R}\zeta(y_{1})\dd y_{1}=1$, we obtain
\begin{equation*}
\int_{-\infty}^{0}\zeta(y_{1})\dd y_{1}=\int_{0}^{\infty}\zeta (y_{1})\dd y_{1}=\frac{1}{2},
\end{equation*}
which implies
\begin{align*}
&\lim_{y_1\rightarrow+\infty}\psi_B(y_1)=\int_{\mathbb{R}}\psi'_B(y_1)\,\dd y_1\\
&=\int_{-\infty}^{-\frac{B}{3}}\frac{1}{B}\zeta\left(\frac{y_{1}}{B}+\frac{1}{3}-\frac{1}{2}B^{-\frac{1}{3}}\right)\dd y_{1}
+\int_{-\frac{B}{3}}^{\infty}\frac{1}{B}\zeta\left(\frac{y_{1}}{B^{\frac{2}{3}}}+\frac{1}{3}B^{\frac{1}{3}}\right)\dd y_{1}\\
&=\int_{-\infty}^{-\frac{1}{2}B^{-\frac{1}{3}}}\zeta(y_1)\dd y_{1}+B^{-\frac{1}{3}}\int_{0}^{\infty}\zeta(y_1)\dd y_{1}=\frac{1}{2}-\int_{-\frac{1}{2}B^{-\frac{1}{3}}}^0\zeta(y_1)\,\dd y_1+\frac{1}{2}B^{-\frac{1}{3}}=\frac{1}{2}.
\end{align*}

Proof of (ii). For all $y_1<-B$, we have $\frac{y_{1}}{B}+\frac{1}{3}-\frac{1}{2}B^{-\frac{1}{3}}\le -\frac{1}{6}$. It follows that 
\begin{equation*}
\begin{aligned}
\psi_B(y_1)&=\int_{-\infty}^{y_1}\zeta\left(\frac{\rho}{B}+\frac{1}{3}-\frac{1}{2}B^{-\frac{1}{3}}\right)\dd \rho\\
&=\int_{-\infty}^{y_1}\frac{1}{B}\exp\left(\frac{2\rho}{B}+\frac{2}{3}-B^{-\frac{1}{3}}\right)\dd \rho\\
&=\frac{1}{2}\exp\left(\frac{2}{3}-B^{-\frac{1}{3}}\right)e^{\frac{2y_{2}}{B}}\in \left[e^{\frac{2y_{1}}{B}},2e^{\frac{2y_{1}}{B}}\right].
\end{aligned}
\end{equation*}
Based on the above estimate and the definition of $\varphi_{i,B}$, for $y_{1}<-B$, we find 
\begin{equation*}
\varphi_{i,B}(y_{1})=\frac{1}{2}\sqrt{2\psi_{B}(y_{1})}\in \left[\frac{\sqrt{2}}{2}e^{\frac{y_{1}}{B}},e^{\frac{y_{1}}{B}}\right].
\end{equation*}

Proof of (iii). For all $y_{1}\in \left(-\frac{B}{4},\frac{B}{4}\right)$, we have 
\begin{equation*}
\frac{y_{1}}{B^{2}}\in \left(-\frac{1}{4B},\frac{1}{4B}\right)\subset \left(-\infty,\frac{1}{2}\right)\quad \mbox{and}\quad \frac{y_{1}}{B^{\frac{2}{3}}}+\frac{1}{3}B^{\frac{2}{3}}\in\left(\frac{1}{12}B^{\frac{1}{3}},\frac{7}{12}B^{\frac{1}{3}}\right)\subset \left(\frac{1}{6},\infty\right).
\end{equation*}
It follows from the definition of $\psi_{B}$ and $\varphi_{i,B}$ that 
\begin{equation*}
\psi'_{B}(y_{1})+\sum_{i=1,2}\varphi'_{i,B}(y_{1})\lesssim \frac{1}{B}\exp \left(-\frac{2y_{1}}{B^{\frac{2}{3}}}-\frac{2}{3}B^{\frac{1}{3}}\right)\lesssim e^{-\frac{1}{6}B^{\frac{1}{3}}},
\end{equation*}
and
\begin{equation*}
\begin{aligned}
\left|\psi_{B}(y_{1})-\frac{1}{2}\right|
&\lesssim \int_{-\frac{B}{4}}^{\infty}\frac{1}{B}\exp\left(-\frac{2\rho}{B^{\frac{2}{3}}}-\frac{2}{3}B^{\frac{1}{3}}\right)\dd \rho\lesssim e^{-\frac{1}{6}B^{\frac{1}{3}}},\\
\sum_{i=1,2}\left|\varphi_{i,B}(y_{1})-\frac{1}{2}\right|&=\left|\sqrt{2\psi_{B}(y_{1})}-1\right|=\left|\frac{2\psi_{B}(y_{1})-1}{\sqrt{2\psi_{B}(y_{1})}+1}\right|\lesssim e^{-\frac{1}{6}B^{\frac{1}{3}}}.
\end{aligned}
\end{equation*}

Proof of (iv). From $0<\psi_{B}<\frac{1}{2}$ on $\R$ and the definition of $\psi_{B}$ and $\varphi_{i,B}$, we complete the proof of (iv).
\end{proof}

Based on the above lemma, we obtain the following technical lemma related to the pointwise estimates $\psi_{B}$ and $\varphi_{i,B}$ and their derivatives.
\begin{lemma}\label{le:psiphi2}
The following estimates hold.
\begin{enumerate}
\item \emph{First-type estimates of derivatives of $\psi_{B}$}. We have 
\begin{equation*}
B^{\frac{2}{3}}|\psi''_{B}|+B^{\frac{4}{3}}|\psi'''_{B}|\lesssim \psi'_{B},\quad \mbox{on}\ \R.
\end{equation*}

\item \emph{Second-type estimates of derivatives of $\psi_{B}$.} For $i=1,2$, we have 
\begin{equation*}
\sqrt{B\psi'_{B}}\lesssim B\varphi'_{i,B}+\psi_{B},\quad \mbox{on}\ \R.
\end{equation*}

\item {\emph{Third-type estimates of derivatives of $\psi_{B}$.}} For $i=1,2$, we have
\begin{equation*}
|y_{1}|\psi'_{B}\lesssim \sqrt{\psi_{B}}\lesssim B\varphi'_{i,B}+\psi_{B},\quad \mbox{on}\ \R.
\end{equation*}

\item \emph{First-type estimates of derivatives of $\varphi_{i,B}$}. For $i=1,2$, we have 
\begin{equation*}
\begin{aligned}
|\varphi''_{i,B}|&\lesssim B^{-\frac{2}{3}}\varphi'_{i,B}+B^{-20}\psi_{B},\quad \mbox{on}\ \R,\\
|\varphi'''_{i,B}|&\lesssim B^{-\frac{4}{3}}\varphi'_{i,B}+B^{-30}\psi_{B},\quad \mbox{on}\ \R.
\end{aligned}
\end{equation*}

\item {\emph{Second-type estimates on derivatives of $\varphi_{i,B}$.}} For $i=1,2$, we have 
\begin{equation*}
B\varphi'_{i,B}+\psi_{B}\lesssim  \varphi_{i-1,B}\lesssim B^{10}\varphi'_{i,B}+\psi_{B},\quad \mbox{on}\ \R.
\end{equation*}

\item {\emph{Third-type  estimates on derivatives of $\varphi_{i,B}$.}} For $i=1,2$, we have 
\begin{equation*}
\varphi_{i,B}\lesssim B\varphi'_{i,B}+\psi_{B}+|y_{1}|\varphi'_{i,B}\mathbf{1}_{[B^{10},\infty)},\quad \mbox{on}\ \R.
\end{equation*}
\end{enumerate}
\end{lemma}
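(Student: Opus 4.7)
The plan is to prove each of the six estimates by a case analysis based on three regions dictated by the structure of $\psi_B$ and $\vartheta_{i,B}$: the \emph{exponential regime} $y_1<-B$ in which $\psi_B'(y_1)=(2/B)\psi_B(y_1)$ and $\vartheta_{i,B}\equiv 1/2$; the \emph{bulk regime} $y_1\in[-B,B^{10}/2]$ in which $\psi_B(y_1)$ is pinched between two positive universal constants while $\vartheta_{i,B}\equiv 1/2$; and the \emph{polynomial regime} $y_1>B^{10}$ in which $\vartheta_{i,B}(y_1)=(y_1/B^{10})^{i+6}$ while $\psi_B\to 1/2$ and $\psi_B'$ is super-polynomially small. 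The narrow transition windows $[-B,-B/3]$ (where $\psi_B'$ comes from the second branch of its definition) and $[B^{10}/2,B^{10}]$ (where $\vartheta_i$ interpolates smoothly) are handled by crude $L^\infty$ bounds.

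For (i), I differentiate $\psi_B'$ directly. In the exponential regime, each derivative picks up a factor $2/B$, so that $B^{2(k-1)/3}|\psi_B^{(k)}|\lesssim B^{-(k-1)/3}\psi_B'\ll\psi_B'$. For $y_1\ge -B/3$ one has $\psi_B^{(k)}(y_1)=B^{-1-2(k-1)/3}\zeta^{(k-1)}(\tau)$ with $\tau=y_1/B^{2/3}+B^{1/3}/3$, and the claim follows from smoothness of $\zeta$ combined with the pointwise domination $|\zeta^{(k)}(\tau)|\lesssim\zeta(\tau)$ (valid since $\zeta\ge c>0$ on the compact transition intervals $[\pm 1/10,\pm 1/6]$). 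For (ii)--(iii), in the exponential regime $\sqrt{B\psi_B'}=\sqrt{2\psi_B}$ and $B\varphi_{i,B}'=\sqrt{\psi_B}/\sqrt{2}$ follow by direct substitution of $\psi_B'=(2/B)\psi_B$ and $\vartheta_{i,B}\equiv 1/2$, while the inequality $|y_1|\psi_B'\lesssim\sqrt{\psi_B}$ reduces to $te^{-t}\lesssim 1$ via $t=-y_1/B\ge 1$. In the bulk and polynomial regimes, $\psi_B\gtrsim c$ absorbs all remaining pieces since $\psi_B'\lesssim 1/B$ and $|y_1|\psi_B'$ is uniformly bounded (using that $\zeta(\tau)$ decays exponentially for $\tau>1/6$).

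For (iv), I expand $\varphi_{i,B}^{(k)}$ via the Leibniz rule applied to $\varphi_{i,B}=\sqrt{2\psi_B}\vartheta_{i,B}$. The purely $\psi_B$-derivative terms $\psi_B^{(j)}/\psi_B^{j-1/2}\cdot\vartheta_{i,B}$ are absorbed by $B^{-2/3}\varphi_{i,B}'$ using (i) and the uniform bound $\psi_B'/\psi_B\lesssim 1/B$. Cross-terms containing $\vartheta_{i,B}'$ are supported on $y_1\ge B^{10}/2$, where $\psi_B'$ is super-polynomially small, so they are negligible. The $\sqrt{2\psi_B}\vartheta_{i,B}''$ term on the transition window $[B^{10}/2,B^{10}]$ yields the $B^{-20}\psi_B$ remainder (since $\vartheta_i''$ is bounded and $\psi_B\gtrsim c$); on $y_1>B^{10}$ it equals $(i+6)(i+5)\sqrt{2\psi_B}(y_1/B^{10})^{i+4}/B^{20}$ and is dominated by the main piece of $\varphi_{i,B}'\sim\sqrt{2\psi_B}(i+6)(y_1/B^{10})^{i+5}/B^{10}$, with relative size $B^{-10}(y_1/B^{10})^{-1}\ll B^{-2/3}$. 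The comparisons (v)--(vi) then follow from the identity $\vartheta_i(t)=t\vartheta_{i-1}(t)$ for $t>1$, which gives $B^{10}\varphi_{i,B}'\approx(i+6)\varphi_{i-1,B}$ and $y_1\varphi_{i,B}'\approx(i+6)\varphi_{i,B}$ in the polynomial regime; the bulk and exponential parts are settled by $\psi_B\gtrsim c$ absorbing the $O(1)$ values of $\varphi_{j,B}$ and by the direct identity $\varphi_{i,B}=\sqrt{2}\,B\varphi_{i,B}'$ on $y_1<-B$.

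The main delicacy is the polynomial regime $y_1>B^{10}$: one must track precisely how the polynomial degrees $i+6$ of $\vartheta_i$ interact with the scale $B^{10}$ to ensure that the factor $B^{-2/3}$ in (iv), which comes from the inner scale $B^{2/3}$ in the argument of $\zeta$, still controls $\sqrt{2\psi_B}\vartheta_{i,B}''$, which carries a different scale $B^{-20}$. The long plateau imposed by the large exponent $10$ in the definition of $\vartheta_{i,B}$ is precisely what keeps the mixed terms $\psi_B'\vartheta_{i,B}'$ super-polynomially small and makes this compatibility possible across the two scales.
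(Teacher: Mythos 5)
Your proposal is correct and follows the same underlying approach as the paper's proof: write out the derivatives of $\psi_B$ and $\varphi_{i,B}$ explicitly and verify the inequalities by a regime-by-regime case analysis based on where the branch points of $\zeta$, $\psi_B'$, and $\vartheta_i$ fall. The paper's proof simply displays the derivative formulas and states that the estimates follow from Lemma~\ref{le:psiphi}; your proposal supplies the three-regime decomposition (exponential tail, bulk, polynomial plateau plus transition windows) that makes the bookkeeping precise, which is exactly what one needs to carry out the suppressed verification. Two harmless slips worth noting: on $(-B,-B/3)$ the formula for $\psi_B'$ still comes from the \emph{first} branch of the piecewise definition (it is the argument of $\zeta$ that crosses into the plateau there, not the branch of $\psi_B'$), and on $y_1<-B$ the exact identity is $\varphi_{i,B}=B\varphi_{i,B}'$, not $\sqrt{2}\,B\varphi_{i,B}'$; neither affects the estimates since only $\lesssim$ is claimed.
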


\begin{proof}
Proof of (i). From the definition of $\psi_{B}$, we see that 
\begin{equation*}
\psi_{B}''(y_1)=
\begin{cases}
\frac{1}{B^{2}}\zeta'\left(\frac{y_{1}}{B}+\frac{1}{3}-\frac{1}{2}B^{-\frac{1}{3}}\right),&\text{ for }y_1<-\frac{1}{3}B,\\
\frac{1}{B^{\frac{5}{3}}}\zeta'\left(\frac{y_1}{B^{\frac{2}{3}}}+\frac{1}{3}B^{\frac{1}{3}}\right),&\text{ for }y_1\geq-\frac{1}{3}B,
\end{cases}
\end{equation*}
\begin{equation*}
\psi_{B}'''(y_1)=
\begin{cases}
\frac{1}{B^{3}}\zeta''\left(\frac{y_{1}}{B}+\frac{1}{3}-\frac{1}{2}B^{-\frac{1}{3}}\right),&\text{ for }y_1<-\frac{1}{3}B,\\
\frac{1}{B^{\frac{7}{3}}}\zeta''\left(\frac{y_1}{B^{\frac{2}{3}}}+\frac{1}{3}B^{\frac{1}{3}}\right),&\text{ for }y_1\geq-\frac{1}{3}B.
\end{cases}
\end{equation*}
We see that the estimate in (i) directly follows from the above identities.

Proof of (ii) and (iii). Using again the definition of $\psi_{B}$ and $\varphi_{i,B}$, we have 
\begin{equation*}
B\psi_{B}'(y_1)=
\begin{cases}
\zeta\left(\frac{y_{1}}{B}+\frac{1}{3}-\frac{1}{2}B^{-\frac{1}{3}}\right),&\text{ for }y_1<-\frac{1}{3}B,\\
\zeta\left(\frac{y_1}{B^{\frac{2}{3}}}+\frac{1}{3}B^{\frac{1}{3}}\right),&\text{ for }y_1\geq-\frac{1}{3}B,
\end{cases}
\end{equation*}
\begin{equation*}
|y_{1}|\psi_{B}'(y_1)=
\begin{cases}
\frac{|y_{1}|}{B}\zeta\left(\frac{y_{1}}{B}+\frac{1}{3}-\frac{1}{2}B^{-\frac{1}{3}}\right),&\text{ for }y_1<-\frac{1}{3}B,\\
\frac{|y_{1}|}{B}\zeta\left(\frac{y_1}{B^{\frac{2}{3}}}+\frac{1}{3}B^{\frac{1}{3}}\right),&\text{ for }y_1\geq-\frac{1}{3}B,
\end{cases}
\end{equation*}
and
\begin{equation*}
\varphi'_{i,B}(y_{1})
=\frac{\psi'_{B}(y_{1})}{\sqrt{2\psi_{B}(y_{1})}}\vartheta_{i}\left(\frac{y_{1}}{B^{10}}\right)
+\frac{1}{B^{10}}\sqrt{2\psi_{B}(y_{1})}\vartheta'_{i}\left(\frac{y_{1}}{B^{10}}\right).
\end{equation*}
Therefore, from (i) of Lemma~\ref{le:psiphi}, we complete the proof of (ii) and (iii).

Proof of (iv). Using again the definition of $\varphi_{i,B}$, we have 
\begin{equation*}
\begin{aligned}
\frac{\varphi''_{i,B}(y_{1})}{\sqrt{2\psi_{B}(y_{1})}}
&=\left(\frac{\psi''_{B}}{2\psi_{B}}-\left(\frac{\psi'_{B}}{2\psi_{B}}\right)^{2}\right)\vartheta_{i}\left(\frac{y_{1}}{B^{10}}\right)\\
&+\frac{2}{B^{10}}\frac{\psi'_{B}}{2\psi_{B}}\vartheta_{i}'\left(\frac{y_{1}}{B^{10}}\right)+\frac{1}{B^{20}}\vartheta''_{i}\left(\frac{y_{1}}{B^{10}}\right),\qquad \qquad \qquad \qquad \qquad 
\end{aligned}
\end{equation*}
\begin{equation*}
\begin{aligned}
\frac{\varphi'''_{i,B}(y_{1})}{\sqrt{2\psi_{B}(y_{1})}}
&=
\left(\frac{\psi'''_{B}(y_{1})}{2\psi_{B}(y_{1})}-3\frac{\psi'_{B}(y_{1})\psi''_{B}(y_{1})}{(2\psi_{B}(y_{1}))^{2}}+3\left(\frac{\psi'_{B}(y_{1})}{2\psi_{B}(y_{1})}\right)^{3}\right)\vartheta_{i}\left(\frac{y_{1}}{B^{10}}\right)\\
&+\frac{3}{B^{10}}\left(\frac{\psi''_{B}(y_{1})}{2\psi_{B}(y_{1})}-\left(\frac{\psi'_{B}(y_{1})}{2\psi_{B}(y_{1})}\right)^{2}\right)\vartheta'_{i}\left(\frac{y_{1}}{B^{10}}\right)\\
&+\frac{3}{B^{20}}\frac{\psi'_{B}(y_{1})}{2\psi_{B}(y_{1})}\vartheta''_{i}\left(\frac{y_{1}}{B^{10}}\right)+\frac{1}{B^{30}}\vartheta'''_{i}\left(\frac{y_{1}}{B^{10}}\right).
\end{aligned}
\end{equation*}
Therefore, using again the definition of $\psi_{B}$ and $\varphi_{i,B}$, we complete the proof of (iv).

Proof of (v)--(vi). Combining the above identities with (i) and (ii) of Lemma~\ref{le:psiphi}, we complete the proof of (v)--(vi).
\end{proof}

Recall that, from the definition of $Q_{b}$, we have 
\begin{equation*}
\frac{\partial Q_{b}}{\partial {b}}(y)=P(y)\frac{\partial}{\partial{b}}
\left(b\phi\left(|b|^{\frac{3}{4}}y_{1}\right)\right)=
P(y)\left(\phi+\frac{3}{4}y_{1}\phi'\right)\left(|b|^{\frac{3}{4}}y_{1}\right).
\end{equation*}
It follows from Lemma~\ref{le:Nonloca} that
\begin{equation*}
\sum_{|\alpha|=0}^{2}\left|\partial_{y}^{\alpha}\frac{\partial Q_{b}}{\partial b}\right|\lesssim 
e^{-\frac{|y_{2}|}{3}}\mathbf{1}_{[-2,0]}(|b|^{\frac{4}{3}}y_{1})+e^{-\frac{|y|}{3}}\mathbf{1}_{[0,\infty)}(y_{1}).
\end{equation*}

Based on the above estimate, Lemma~\ref{le:Nonloca}, Lemma~\ref{le:psiphi} and Lemma~\ref{le:psiphi2}, we obtain the following pointwise estimates.

\begin{lemma}\label{le:psiphi3}
The following estimates hold.
\begin{enumerate}
\item \emph{First-type weighted estimates for $Q_{b}$.} It holds
\begin{equation*}
\begin{aligned}
&\sum_{|\alpha|=0}^{2}\left|\partial_{y}^{\alpha}Q_{b}\right|
\left(\psi_B'+\varphi'_{i,B}+|\psi_B-\varphi_{i,B}|\right)\\
&\lesssim e^{-\frac{|y_{2}|}{4}} \left(B^{-30}+|b|\right)\left(B\varphi'_{i,B}+\psi_{B}\right).
\end{aligned}
\end{equation*}

\item \emph{Second-type weighted estimates for $Q_{b}$.} It holds
\begin{equation*}
\begin{aligned}
&\sum_{|\alpha|=0}^{2}\left|\partial_{y}^{\alpha}\frac{\partial{Q}_{b}}{\partial{b}}\right|\left(\psi_{B}+\psi'_{B}+\varphi_{i,B}\right)\\
&\lesssim \left(e^{-\frac{|y_{2}|}{4}}\mathbf{1}_{[-2,0]}(|b|^{\frac{4}{3}}y_{1})+e^{-\frac{|y|}{4}}\mathbf{1}_{[0,\infty)}(y_{1})\right)\left(B\varphi'_{i,B}+\psi_{B}\right).
\end{aligned}
\end{equation*}

\item \emph{Third-type weighted estimates for $Q_{b}$.} For any $\Gamma\in \left\{\Lambda,\nabla \right\}$, we have 
\begin{equation*}
\begin{aligned}
&\sum_{|\alpha|=0}^{2}\left|\partial_{y}^{\alpha}(\Gamma Q_{b}-\Gamma Q)\right|\left(\psi_{B}+\psi'_{B}+\varphi_{i,B}\right)\\
&\lesssim
 |b|\left(e^{-\frac{|y_{2}|}{4}}\mathbf{1}_{[-2,0]}(|b|^{\frac{4}{3}}y_{1})+e^{-\frac{|y|}{4}}\mathbf{1}_{[0,\infty)}(y_{1})\right)\left(B\varphi'_{i,B}+\psi_{B}\right).
\end{aligned}
\end{equation*}

\item\emph{Weighted estimates for $\Psi_{b}$.} It holds
\begin{equation*}
\begin{aligned}
&\sum_{|\alpha|=0}^{2}\left|\partial_{y}^{\alpha}\Psi_{b}\right|
\left(\psi_B'+\varphi'_{i,B}+|\psi_B-\varphi_{i,B}|\right)\\
&\lesssim e^{-\frac{|y_{2}|}{4}} \left(B^{-30}+|b|\right)\left(B\varphi'_{i,B}+\psi_{B}\right).
\end{aligned}
\end{equation*}

\end{enumerate}

\end{lemma}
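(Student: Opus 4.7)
\textbf{Proof strategy for Lemma~\ref{le:psiphi3}.} All four assertions share the same mechanism: express the relevant profile $(Q_{b}$, $\partial_{b}Q_{b}$, $\Gamma Q_{b}-\Gamma Q$, or $\Psi_{b})$ via the pointwise bounds already available from Lemma~\ref{le:Nonloca} and Lemma~\ref{le:Ketest}, and then absorb the weight combination into $e^{-|y_{2}|/4}(\,\text{small factor}\,)(B\varphi'_{i,B}+\psi_{B})$ by using the region-by-region bounds of Lemma~\ref{le:psiphi} and Lemma~\ref{le:psiphi2}. The plan is to split $\R^{2}$ in the $y_{1}$--variable into three regions: a \emph{central zone} $\mathcal{C}=\{|y_{1}|\le B/4\}$, a \emph{right zone} $\mathcal{R}=\{y_{1}>B/4\}$, and a \emph{left zone} $\mathcal{L}=\{y_{1}<-B/4\}$, and treat each piece separately.

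\textbf{The central zone $\mathcal{C}$.} Here Lemma~\ref{le:psiphi}(iii) is decisive: it yields
\begin{equation*}
\psi_{B}'+\varphi'_{i,B}+|\psi_{B}-\varphi_{i,B}|\lesssim e^{-B^{1/3}/6}\lesssim B^{-30},
\end{equation*}
together with $\psi_{B}\gtrsim 1$ on $\mathcal{C}$. Consequently $\psi_{B}+\psi'_{B}+\varphi_{i,B}\lesssim 1\lesssim \psi_{B}\le B\varphi'_{i,B}+\psi_{B}$. The profile quantities all decay like $e^{-|y|/3}$ on $\mathcal{C}$ (using Lemma~\ref{le:Nonloca} and Lemma~\ref{le:Ketest}), so multiplying and replacing $e^{-|y|/3}$ by $e^{-|y_{2}|/4}\cdot e^{-B/12}$ closes (i) and (iv) with factor $B^{-30}$; for (ii) and (iii) the $|b|$ in the prefactor is either inherent (the $b\partial/\partial b$ operation or the $\Gamma Q_{b}-\Gamma Q=b\,\Gamma(P\phi_{b})$ identity gives an explicit factor $|b|$) and works directly.

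\textbf{The right and left zones $\mathcal{R}\cup\mathcal{L}$.} On these regions the weights are no longer small, but the profiles decay in $|y_{1}|$ strongly enough to win. I would use Lemma~\ref{le:psiphi2}(iii) and (v)--(vi) to dominate $\psi_{B}'$, $\psi_{B}$, $\varphi_{i,B}$ by $B\varphi'_{i,B}+\psi_{B}$ up to a mild $|y_{1}|$-factor on $\mathcal{R}\cap\{y_{1}>B^{10}\}$. For the $Q$ contribution and the $e^{-|y|/3}$--part of $\partial_{y}^{\alpha}Q_{b}$, the decay $e^{-|y_{1}|/3}$ converts to $e^{-B/12}\ll B^{-30}$, again producing the factor $B^{-30}+|b|$. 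For the localized piece $bP\phi_{b}$ appearing in $Q_{b}$ and for $\partial_{b}Q_{b}$ on the region $y_{1}\le 0$, one only has $e^{-|y_{2}|/3}$ decay; here the explicit $|b|$ prefactor supplies the required smallness, and the support condition $|b|^{3/4}y_{1}\in[-2,0]$ (respectively $[-2,-1]$) of the cutoff $\phi_{b}$ keeps the estimate contained inside the indicator on the right of each inequality. A similar argument handles $\Psi_{b}$ in (iv) via Lemma~\ref{le:Ketest}(ii), using $|b|^{2}\le |b|$ for the regular piece and the supported piece directly.

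\textbf{Assembly.} For each of (i)--(iv) I would write $\partial_{y}^{\alpha}(\cdot)=\text{(regular part)}+\text{(supported part)}$ based on Lemma~\ref{le:Ketest} (and, for (iii), the identity $\Gamma Q_{b}-\Gamma Q=b\,\Gamma(P\phi_{b})$); multiply by the weight, and bound each product separately on $\mathcal{C}$, $\mathcal{R}$, $\mathcal{L}$ using the schemes above. The two pieces combine into the single bound displayed in each statement.

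\textbf{Main obstacle.} The principal difficulty is booking the factor $e^{-|y_{2}|/4}(B^{-30}+|b|)(B\varphi'_{i,B}+\psi_{B})$ uniformly in both $B$ and $b$: the small constants $B^{-30}$ and $|b|$ come from structurally different mechanisms (smallness of the weight derivatives inside $\mathcal{C}$ versus the explicit $b$-dependence of $Q_{b}-Q$), and one must carefully match them to the right region so that, on each piece, exactly one of the two factors is harvested. In particular, for the supported piece $|b|^{3/4}y_{1}\in[-2,-1]$ of $\Psi_{b}$ in (iv), one has to compare the profile bound $|b|^{7/4}e^{-|y_{2}|/3}$ with the weight $B\varphi'_{i,B}+\psi_{B}$ in a region that can lie anywhere depending on $|b|$; I would use Lemma~\ref{le:psiphi}(ii) on this support (when $|b|^{-3/4}\gg B$) to replace $\psi_{B}$ by $e^{2y_{1}/B}$ and absorb the power $|b|^{3/4}$ into an overall $|b|$ factor. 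The verification of (ii) for $\partial_{b}Q_{b}$ uses the same mechanism applied to $(\phi+\tfrac{3}{4}y_{1}\phi')(|b|^{3/4}y_{1})$ in place of the $\phi'$-cutoff.
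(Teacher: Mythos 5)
Your plan---split $\mathbb{R}^2$ in the $y_1$--variable into a central, left and right region and combine the pointwise profile bounds of Lemma~\ref{le:Nonloca} and Lemma~\ref{le:Ketest} with the weight estimates of Lemma~\ref{le:psiphi} and Lemma~\ref{le:psiphi2}---is exactly the approach the paper intends (those are precisely the lemmas it cites before omitting the proof), and the region bookkeeping you describe does close each item. Two local slips in your central-zone step should, however, be repaired, although they do not break the plan.

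First, the inequality $e^{-|y|/3}\lesssim e^{-|y_2|/4}\,e^{-B/12}$ is false on $\mathcal{C}$: there $y_1$ can vanish, so $e^{-|y|/3}$ can be as large as $e^{-|y_2|/3}$. On $\mathcal{C}$ the factor $B^{-30}$ in parts (i) and (iv) comes entirely from Lemma~\ref{le:psiphi}(iii), namely $\psi_B'+\varphi'_{i,B}+|\psi_B-\varphi_{i,B}|\lesssim e^{-\frac{1}{6}B^{1/3}}\lesssim B^{-30}$, together with $B\varphi'_{i,B}+\psi_B\gtrsim \tfrac12$ (not $\gtrsim 1$, since $\psi_B\le\tfrac12$ everywhere); the profile, which on $\mathcal{C}$ also contains the localized piece $|b|e^{-|y_2|/3}\mathbf{1}_{[-2,0]}(|b|^{3/4}y_1)$ and not only $e^{-|y|/3}$, merely supplies the $e^{-|y_2|/4}$. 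Second, part (ii) has no $|b|$ and no $B^{-30}$ on its right-hand side, and $\frac{\partial Q_b}{\partial b}=P\,\big(\phi+\tfrac{3}{4}y_1\phi'\big)(|b|^{3/4}y_1)$ carries no $b$-prefactor, so there is no ``inherent $|b|$'' to harvest there as your last sentence suggests; (ii) needs only the absorption $\psi_B+\psi'_B+\varphi_{i,B}\lesssim B\varphi'_{i,B}+\psi_B+|y_1|\varphi'_{i,B}\mathbf{1}_{[B^{10},\infty)}$ from Lemma~\ref{le:psiphi2}(vi), together with downgrading $e^{-|y|/3}$ to $e^{-|y|/4}$ to soak up the polynomial in $y_1$ on the far right region.
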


\begin{proof}
The proof of the above estimates relies on an argument based on~Lemma~\ref{le:Ketest}, Lemma~\ref{le:psiphi} and Lemma~\ref{le:psiphi2}, and we omit it.
\end{proof}

For $i=0,1,2$, we now define the following weighted $H^1$ norms of $\varepsilon$,
\begin{equation*}
\mathcal{N}_i(s)=\int_{\mathbb{R}^2}\left(|\nabla\varepsilon(s,y)|^2\psi_B(y_1)
+|\varepsilon(s,y)|^{2}\varphi_{i,B}(y_1)\right)\dd y.
\end{equation*}

Let $u(t)$ be a solution of \eqref{CP} satisfying \eqref{est:decomposition} on $[0,t_0]$, and hence the geometrical decomposition in Proposition~\ref{Prop:decomposition} holds on $[0,t_0]$. Let $0<\kappa\ll 1$ be a small enough universal constant. We denote by $s_0=s(t_0)$ and assume the following priori bounds hold for all $s\in[0,s_0]$:
\begin{enumerate}
\item [{\rm(i)}] Scaling invariant bounds. We assume
\begin{equation}\label{est:Boot1}
|b(s)|+\mathcal{N}_2(s)+\|\varepsilon(s)\|_{L^2}\leq\kappa.
\end{equation}
\item [{\rm(ii)}] Bounds related to $H^{\frac{\theta}{2}}$ scaling. We assume
\begin{equation}\label{est:Boot2}
\frac{|b(s)|+\mathcal{N}_2(s)}{\lambda^{\theta}(s)}\le\kappa.
\end{equation}
Here, the value of $\theta$ is given by~\eqref{eq:theta}.
\item [{\rm{(iii)}}]Decay assumption on the $y_1$-variable. We assume
\begin{equation}\label{est:Boot3}
\int_{\R}\int_{0}^{\infty}y_1^{100}\varepsilon^2(s,y)\dd y_{1}\dd y_{2}\leq 10\bigg(1+\frac{1}{\lambda^{100}(s)}\bigg).
\end{equation}
\end{enumerate}

We mention here that, the bootstrap assumption \eqref{est:Boot1}--\eqref{est:Boot3} plays a crucial role in our proof. See more details related to the bootstrap assumption in Section~\ref{S:Mono}--\ref{S:Endproof}.

\subsection{Modulation estimates}
In this subsection, we deduce the modulation estimates for the geometric parameters from the equation of $\varepsilon$ and the orthogonal conditions. Recall that, in this article, we still assume that $u(t)$ is a solution of~\eqref{CP} which satisfies~\eqref{est:decomposition} on $[0,t_{0}]$ and thus admits on $[0,t_{0}]$ a decomposition~\eqref{def:var} as in Proposition~\ref{Prop:decomposition}. Recall also that, we always denote $s_{0}=s(t_{0})$.

We start with the following standard energy and modulation estimates.
\begin{lemma}\label{le:modu1}
Assume that for all $s\in[0,s_0]$, the solution $u(t)$ with initial data $u_{0}$ satisfies the bootstrap assumption \eqref{est:Boot1}--\eqref{est:Boot3}.
Then the following estimates hold.
\begin{enumerate}
	\item \emph{Estimate induced by the conservation law.} We have 
	\begin{equation}\label{CL}
	\begin{aligned}
	\|\varepsilon\|_{L^2}^{2}
	&\lesssim|b|+\left|\int_{\mathbb{R}^2} \left(u^2_0- Q^2\right)\dd y\right|,\\
  	\left|2\lambda^2 E(u_0)-\|\nabla \varepsilon\|_{L^2}^2\right| &\lesssim|b|
  	+\int_{\R^{2}} \varepsilon^2e^{-\frac{|y|}{10}}\dd y+(\|\varepsilon\|_{L^2}^2+|b|^{\frac{1}{4}})\|\nabla \varepsilon\|_{L^2}^2.
  	\end{aligned}
	\end{equation}
	\item \emph{Standard modulation estimates}. We have
 	\begin{equation*}
 	\begin{aligned}
 	|b_s|&\lesssim b^2+\int_{\R^{2}} \varepsilon^2e^{-\frac{|y|}{10}}\dd y,\\
	\left|\frac{\lambda_s}{\lambda}+b\right|+\left|\frac{x_{1s}}{\lambda}-1\right|
	&+\left|\frac{x_{2s}}{\lambda}\right|
	\lesssim b^{2}+\left(\int_{\R^{2}} \varepsilon^2e^{-\frac{|y|}{10}}\dd y\right)^{\frac{1}{2}}.
	\end{aligned}
	\end{equation*}
	    \item \emph{Weighted $L^1$ estimate}. For all $f\in\mathcal{Z}(\R^{2})$, we have
        \begin{equation*}
         \int_{\mathbb{R}^2}\left(\left|\varepsilon(y_1,y_2)\right|\left|\int_{-\infty}^{y_1}f(\rho,y_2)\dd \rho\right| \right)\dd y_1\dd y_2\lesssim \left(B^{10}\int_{\R^{2}}\varepsilon^2\varphi_{0,B}\dd y\right)^{\frac{1}{2}}.
        \end{equation*}
	\end{enumerate}
	\end{lemma}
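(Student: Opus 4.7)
The plan is standard for this modulation framework. Parts (i) and (ii) rely on expanding the mass and energy around $Q_{b}$ and on differentiating the four orthogonality conditions \eqref{equ:orth}, while (iii) is a weighted Cauchy--Schwarz argument.

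For (i), mass conservation combined with the scaling \eqref{def:var} gives $\|u_{0}\|_{L^{2}}^{2}=\|Q_{b}+\varepsilon\|_{L^{2}}^{2}$. Expanding and using $(\varepsilon,Q)=0$, the cross term $(Q_{b},\varepsilon)=b(P\phi_{b},\varepsilon)$ is $O(|b|\|\varepsilon\|_{L^{2}})$, while \eqref{est:EMQb} yields $\|Q_{b}\|_{L^{2}}^{2}-\|Q\|_{L^{2}}^{2}=2b(P,Q)+O(|b|^{5/4})$; Young's inequality then gives the first line of \eqref{CL}. For energy, rescaling gives $\lambda^{2}E(u_{0})=E(Q_{b}+\varepsilon)$, whose expansion produces a linear piece $(-\Delta Q_{b}-Q_{b}^{3},\varepsilon)=-(Q,\varepsilon)+O(|b|)$-error using Lemma~\ref{le:Ketest} which vanishes to leading order by $(\varepsilon,Q)=0$, a quadratic piece $(Q_{b}^{2},\varepsilon^{2})$ controlled by $\int\varepsilon^{2}e^{-|y|/10}\,\dd y+b^{2}\|\varepsilon\|_{L^{2}}^{2}$ using the decay of $Q$ and Lemma~\ref{le:Nonloca}, and cubic and quartic remainders bounded via 2D Gagliardo--Nirenberg by $\|\varepsilon\|_{L^{2}}^{2}\|\nabla\varepsilon\|_{L^{2}}^{2}$. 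Combined with $E(Q_{b})=O(|b|)$ from \eqref{est:EMQb}, this yields the second line of \eqref{CL}.

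For (ii), I differentiate each of the four orthogonality conditions in $s$ and substitute the equation from Lemma~\ref{le:equvar}. Using $\partial_{y_{1}}Q\in\ker\mathcal{L}$ and self-adjointness of $\mathcal{L}$, the contribution $(\partial_{y_{1}}\mathcal{L}\varepsilon,f)=-(\varepsilon,\mathcal{L}\partial_{y_{1}}f)$ is bounded by $(\int\varepsilon^{2}e^{-|y|/10}\,\dd y)^{1/2}$ for $f\in\{Q,Q^{3},\partial_{y_{i}}Q\}$; the term $b(\Lambda\varepsilon,f)$ is of size $|b|\cdot (\int\varepsilon^{2}e^{-|y|/10}\,\dd y)^{1/2}\lesssim b^{2}+\int\varepsilon^{2}e^{-|y|/10}\,\dd y$; the nonlinear contributions from $R_{b}$ and $R_{\mathrm{NL}}$ are handled similarly using the exponential decay of $Q_{b}$; and the profile error $(\Psi_{b},f)$ is $O(b^{2})$ by Lemma~\ref{le:Ketest}. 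Gathering the contributions from $\mathrm{Mod}$ yields a $4\times 4$ linear system for $b_{s}$, $\tfrac{\lambda_{s}}{\lambda}+b$, $\tfrac{x_{1s}}{\lambda}-1$, $\tfrac{x_{2s}}{\lambda}$ whose coefficient matrix is a small perturbation of the invertible matrix $\mathcal{M}$ appearing in the proof of Proposition~\ref{Prop:decomposition}, and inverting this system delivers the stated bounds. The sharper quadratic estimate for $|b_{s}|$ reflects the fact that $(\varepsilon,Q)=0$ eliminates the only possible $|b|$-contribution in the corresponding row.

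For (iii), Cauchy--Schwarz with weight $\varphi_{0,B}$ reduces the claim to the bound $\int\varphi_{0,B}^{-1}(y_{1})|\int_{-\infty}^{y_{1}}f(\rho,y_{2})\,\dd\rho|^{2}\,\dd y\lesssim B^{10}$. Since $f\in\mathcal{Z}(\mathbb{R}^{2})$ one has $|\int_{-\infty}^{y_{1}}f(\rho,y_{2})\,\dd\rho|\lesssim e^{-|y_{2}|/4}\min(1,e^{y_{1}/4})$; splitting the weight integral at $y_{1}=0$ and invoking Lemma~\ref{le:psiphi} (which gives $\varphi_{0,B}\sim 1$ on $|y_{1}|\le B^{10}$, $\varphi_{0,B}\sim(y_{1}/B^{10})^{6}$ for $y_{1}>B^{10}$, and $\varphi_{0,B}\sim e^{y_{1}/B}$ for $y_{1}<-B$), the integral over $y_{1}>0$ is $O(B^{10})$ while the integral over $y_{1}<0$ is $O(1)$, giving the claim. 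The main technical hurdle lies in (ii): every error term must be forced into either $b^{2}$ or $\int\varepsilon^{2}e^{-|y|/10}\,\dd y$ (resp.~their square roots), and the perturbed $4\times 4$ matrix must remain invertible throughout the bootstrap regime $\kappa\ll 1$.
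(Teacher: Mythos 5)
Your proposal follows essentially the same route as the paper for all three parts: mass/energy expansion around $Q_b$ for (i), differentiation of the four orthogonality conditions for (ii) (the paper organizes it as a $3\times 3$ system for the translation/scaling parameters plus a separate argument for $b_s$ using \eqref{equ:PsiQ}, which is the block-elimination form of your $4\times 4$ system), and weighted Cauchy--Schwarz for (iii).

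One imprecision in part (i) is worth flagging because it conceals the origin of one term in the stated bound. You assert $(Q_b,\varepsilon)=b(P\phi_b,\varepsilon)=O(|b|\|\varepsilon\|_{L^2})$ and describe the linear piece in the energy expansion as a pure $O(|b|)$-error. In fact, since $P$ does not decay as $y_1\to-\infty$ and $\phi_b$ has support of width $\sim|b|^{-3/4}$, one only has $\|P\phi_b\|_{L^2}\sim|b|^{-3/8}$ and $\|\nabla(P\phi_b)\|_{L^2}\lesssim|b|^{-3/8}$, so these cross terms are $O\big(|b|^{5/8}\|\varepsilon\|_{H^1}\big)$. For the mass this is harmless after Young's inequality (since $|b|^{5/4}\lesssim|b|$). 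For the energy, however, it is precisely the term $|b|\|\nabla\varepsilon\|_{L^2}\|\nabla(\phi_b P)\|_{L^2}\lesssim|b|^{5/8}\|\nabla\varepsilon\|_{L^2}\lesssim|b|+|b|^{1/4}\|\nabla\varepsilon\|_{L^2}^2$ that produces the $|b|^{1/4}\|\nabla\varepsilon\|_{L^2}^2$ contribution in the second line of \eqref{CL}; your sketch never explains where that term comes from. Carrying out your plan carefully would surface this, so the proposal is not wrong, just incomplete on that point.
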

	
	\begin{proof}
	Proof of (i). First, from the mass conservation law, we find
	\begin{equation*}
	\begin{aligned}
	\int_{\R^{2}}u_{0}^{2}\dd y
	&=\int_{\R^{2}}Q_{b}^{2}\dd y-\int_{\R^{2}}Q^{2}\dd y-2b(P,Q)\\
	&+\int_{\R^{2}}\varepsilon^{2}\dd y+2b(P,Q)+\int_{\R^{2}}Q^{2}\dd y+2b(\varepsilon,\phi_{b}P).
	\end{aligned}
	\end{equation*}
	Combining the above identity with~\eqref{est:EMQb}, $\|\phi_{b}P\|_{L^{2}}^{2}\lesssim |b|^{-\frac{3}{4}}$ and the Cauchy-Schwarz inequality, we obtain the first estimate in (i).
	
 Second, from the energy conservation law, we find
\begin{equation*}
\begin{aligned}
2\lambda^2 E(u_0)=&\int_{\R^{2}}|\nabla \varepsilon|^{2}\dd y+2\int_{\R^{2}}\varepsilon\left(-\Delta (Q_{b}-Q)-(Q_{b}^{3}-Q^{3})\right)\dd y-2b(P,Q)\\
&+\left(2E(Q_{b})+2b(P,Q)\right)-\frac{1}{2}\int_{\R^{2}}\left((Q_{b}+\varepsilon)^{4}-Q_{b}^{4}-4Q_{b}^{3}\varepsilon\right)\dd y.
\end{aligned}
\end{equation*}
Note that 
\begin{equation*}
\begin{aligned}
&-\Delta (Q_{b}-Q)=-b\phi_{b}\Delta P-2b\phi'_{b}\partial_{y_{1}}P-b\phi''_{b}P\\
&-(Q_{b}^{3}-Q^{3})=-3b\phi_{b}PQ^{2}-3b^{2}\phi_{b}^{2}P^{2}Q-b^{3}\phi_{b}^{3}P^{3}.
\end{aligned}
\end{equation*}
Hence, from~\eqref{est:Boot1} and the decay property of $P$ in Lemma~\ref{le:Nonloca}, we deduce that 
\begin{equation*}
\begin{aligned}
&\left|\int_{\R^{2}}\varepsilon\left(-\Delta (Q_{b}-Q)\right)\dd y\right|\\
&\lesssim |b|\|\nabla \varepsilon\|_{L^2}\|\nabla(\phi_bP)\|_{L^2}\lesssim |b|^{\frac{5}{8}}\|\nabla\varepsilon\|_{L^2}\lesssim |b|+|b|^{\frac{1}{4}}\|\nabla\varepsilon\|_{L^2}^2
%&\lesssim 
%|b|\int_{\R^{2}}|\varepsilon|e^{-\frac{|y|}{5}}\dd y+|b|\int_{\R^{2}}|\varepsilon|e^{-\frac{|y_{2}|}{3}}\textbf{1}_{[-2,0]}(|b|^{\frac{3}{4}}y_{1})\dd y\lesssim |b|^{\frac{5}{8}}.
\end{aligned}
\end{equation*}
and
\begin{align*}
&\left|\int_{\R^{2}}\varepsilon\left(-(Q_{b}^{3}-Q^{3})\right)\dd y\right|\\
&\lesssim|b|\left(\int_{\mathbb{R}^2}\varepsilon^2e^{-\frac{|y|}{10}}\right)^{\frac{1}{2}}+|b|^3\|\varepsilon\|_{L^2}\|\phi_b^3P^3\|_{L^2}\lesssim |b|
  	+\int_{\R^{2}} \varepsilon^2e^{-\frac{|y|}{10}}\dd y.
\end{align*}
Next, from the Gagliardo-Nirenberg's inequality and the definition of $Q_{b}$, 
\begin{align*}
&\left|\int_{\R^{2}}\left((Q_{b}+\varepsilon)^4-Q_{b}^4-4\varepsilon Q_{b}^3\right)\dd y\right|\\
&\lesssim|b|\int_{\R^{2}}\varepsilon^2\dd y+
\int_{\R^{2}} \varepsilon^2Q^2\dd y+\int_{\R^{2}}\varepsilon^4\dd y\lesssim \int_{\R^{2}}\varepsilon^2e^{-\frac{|y|}{10}}\dd y+|b|+\|\varepsilon\|_{L^2}^2\|\nabla\varepsilon\|_{L^2}^2.
\end{align*}
Combining the above estimates, we obtain the second estimate in (i).
	
	\smallskip
	Proof of (ii). First, differentiating the orthogonality conditions $\left(\varepsilon,Q^{3}\right)=|(\varepsilon,\nabla Q)|=0$ in \eqref{equ:orth} and then using~\eqref{est:Psib} and~Lemma~\ref{le:equvar}, we obtain
\begin{equation*}
\begin{aligned}
&\left(1+O(b)+O\left(\|\varepsilon e^{-\frac{|y|}{6}})\|_{L^{2}}\right)\right)\left(\left|\frac{\lambda_s}{\lambda}+b\right|
+\left|\frac{x_{1s}}{\lambda}-1\right|
+\left|\frac{x_{2s}}{\lambda}\right|\right)\\
&\lesssim b^{2}+|b_s|+\int_{\R^{2}}|\varepsilon|e^{-\frac{|y|}{3}}\dd y +\int_{\R^{2}}\varepsilon^2e^{-\frac{|y|}{3}}\dd y+\int_{\R^{2}}\varepsilon^3e^{-\frac{|y|}{3}}\dd y.
\end{aligned}
\end{equation*}
From the 2D Sobolev embedding inequality and~\eqref{est:Boot1}, we see that
\begin{equation*}
\begin{aligned}
\left(\int_{\R^{2}}\varepsilon^3e^{-\frac{|y|}{3}}\dd y\right)
&\lesssim \|\varepsilon e^{-\frac{|y|}{9}}\|_{L^{2}}\|\varepsilon e^{-\frac{|y|}{9}}\|^{2}_{L^{4}}\\
&\lesssim\|\varepsilon e^{-\frac{|y|}{9}}\|_{L^2}^2
\|\nabla\left(\varepsilon e^{-\frac{|y|}{9}}\right)\|_{L^2}\\
&\lesssim \left(\int_{\R^{2}} \varepsilon^2e^{-\frac{|y|}{10}}\dd y\right)\mathcal{N}_0^{\frac{1}{2}}\lesssim\int_{\R^{2}} \varepsilon^2e^{-\frac{|y|}{10}}\dd y.
\end{aligned}
\end{equation*}
Next, differentiating the orthogonality condition $\left(\varepsilon, Q\right)=0$ and then using~\eqref{equ:PsiQ} and the fact that $\left(\partial_{y_{1}}\mathcal{L}\varepsilon,Q\right)=\left(\varepsilon, \mathcal{L}\partial_{y_{1}}Q\right)=\left(\Lambda Q,Q\right)=\left|\left(\nabla Q,Q\right)\right|=0$, we obtain
\begin{equation*}
|b_s|\lesssim  |b|\left(\left|\frac{\lambda_s}{\lambda}+b\right|+\left|\frac{x_{1s}}{\lambda}-1\right|+\left|\frac{x_{2s}}{\lambda}\right|+|b|\right)+\int_{\R^{2}}\varepsilon^2e^{-\frac{|y|}{3}}\dd y
+\int_{\R^{2}}\varepsilon^3e^{-\frac{|y|}{3}}\dd y.
\end{equation*}
Combining the above estimates, we complete the proof of (ii).
	
	\smallskip
	Proof of (iii). Note that, for any $f\in \mathcal{Z}(\R^{2})$, we have 
	\begin{equation*}
	\left|\int_{-\infty}^{y_{1}}f(\rho,y_{2})\dd \rho\right|\lesssim 
	 \int_{-\infty}^{y_{1}}e^{-\frac{\sqrt{\rho^{2}+y^{2}_{2}}}{4}}\dd \rho\lesssim \left(e^{-\frac{|y_{1}|}{10}}{\textbf{1}}_{(-\infty,0)}(y_{1})+{\textbf{1}}_{[0,\infty)}(y_{1})\right)e^{-\frac{|y_{2}|}{10}}.
	\end{equation*}
	On the other hand, from the definition of $\varphi_{0,B}$, we have 
	\begin{equation*}
	e^{\frac{y_{1}}{B}}\textbf{1}_{(-\infty,0)}(y_{1})+\left(1+\left(\frac{y_{1}}{B^{10}}\right)^{6}\right)\textbf{1}_{[0,\infty)}(y_{1})\lesssim \varphi_{0,B}(y_{1}),\quad \mbox{on}\ \R.
	\end{equation*}
	It follows that 
	\begin{equation*}
	\frac{\left|\int_{-\infty}^{y_{1}}f(\rho,y_{2})\dd \rho\right|^{2}}{\varphi_{0,B}(y_{1})}\lesssim e^{-\frac{|y|}{10}}\textbf{1}_{(-\infty,0)}(y_{1})
	+\frac{e^{-\frac{|y_{2}|}{10}}}{\left(1+\left(\frac{y_{1}}{B^{10}}\right)^{6}\right)}\textbf{1}_{[0,\infty)}(y_{1}).
	\end{equation*}
	Combining the above estimates with the Cauchy-Schwarz inequality, we conclude that 
	\begin{equation*}
	\begin{aligned}
	 &\int_{\mathbb{R}^2}\left(\left|\varepsilon(y_1,y_2)\right|\left|\int_{-\infty}^{y_1}f(\rho,y_2)\dd \rho\right| \right)\dd y_1\dd y_2\\
	 &\lesssim \left\|\varepsilon\sqrt{\varphi_{0,B}}\right\|_{L^{2}}\left(\int_{\R^{2}}\frac{\left|\int_{-\infty}^{y_{1}}f(\rho,y_{2})\dd \rho\right|^{2}}{\varphi_{0,B}(y_{1})}\dd y_{1}\dd y_{2}\right)^{\frac{1}{2}}
	 \lesssim \left(B^{10}\int_{\R^{2}}\varepsilon^{2}\varphi_{0,B}\dd y\right)^{\frac{1}{2}}.
	 \end{aligned}
	\end{equation*}
	\end{proof}
	
	Note that, for any $f\in \mathcal{Z}(\R^{2})$, from Lemma~\ref{le:equvar} and integration by parts, we have 
	\begin{equation*}
	\begin{aligned}
	\frac{\dd }{\dd s}\left(\varepsilon(s),\int_{-\infty}^{y_{1}}f(\rho,y_{2})\dd \rho\right)
	&=-\left(\varepsilon,\mathcal{L}f\right)
	+\left(\Psi_{b},\int_{-\infty}^{y_{1}}f(\rho,y_{2})\dd \rho\right)+(R_{b},f)\\
	&+\left({\rm{Mod}}-b\Lambda \varepsilon,\int_{-\infty}^{y_{1}}f(\rho,y_{2})\dd \rho\right)+\left(R_{NL},f\right).
	\end{aligned}
	\end{equation*}
	Therefore, from~\eqref{est:P1},~\eqref{est:P2},~\eqref{est:Boot1}, Lemma~\ref{le:Ketest}, Lemma~\ref{le:equvar}, Lemma~\ref{le:modu1} and the 2D Sobolev embedding inequality, we see that 
	\begin{equation}\label{equ:varf}
	\begin{aligned}
	&\frac{\dd }{\dd s}\left(\varepsilon(s),\int_{-\infty}^{y_{1}}f(\rho,y_{2})\dd \rho\right)\\
	=&-\left(\varepsilon,\mathcal{L}f\right)+\left(\frac{\lambda_{s}}{\lambda}+b\right)\left(\Lambda Q, \int_{-\infty}^{y_{1}}f(\rho,y_{2})\dd \rho\right)\\
	&+\frac{x_{2s}}{\lambda}\left(\partial_{y_{2}}Q,\int_{-\infty}^{y_{1}}f(\rho,y_{2})\dd \rho\right)
	-b_{s}\left(\frac{\partial Q_{b}}{\partial b },\int_{-\infty}^{y_{1}}f(\rho,y_{2})\dd \rho\right)\\
	&-\left(\frac{x_{1s}}{\lambda}-1\right)\left(Q,f\right)+O\left(B^{5}b^{2}+B^{5}\int_{\R^{2}}\varepsilon^{2}\varphi_{0,B}\dd y\right).
	\end{aligned}
	\end{equation}
	
	Last, we deduce the refined modulation estimates for the geometric parameters from the above identity and Lemma~\ref{le:modu1}.
	\begin{lemma}\label{le:modu2}
	In the context of Lemma~\ref{le:modu1}, the following estimates hold.
	\begin{enumerate}  
   
		\item \emph{Law of $\lambda$.} Let
		\begin{equation*}
		\sigma_{1}(y)=\frac{1}{\|F\|^2_{L^2}}\int_{-\infty}^{y_1}\Lambda Q(\rho,y_2)\,\dd \rho\quad 
		\mbox{and}\quad  J_1(s)=(\varepsilon(s),\sigma_{1}).
		\end{equation*}
		Then we have
		\begin{equation*}
		\left|\frac{\lambda_{s}}{\lambda}+b-2J_{1s}\right|\lesssim B^{5}b^2+B^{5}\int_{\R^{2}}\varepsilon^2\varphi_{0,B}\dd y.
		\end{equation*}
		\item \emph{Law of $b$}. Let
		\begin{equation*}
		 \begin{aligned}
		\sigma_{2}(y)&=\frac{1}{(P,Q)}\left(P(y)+F(y_{2})+h_{2}(y_{2})\right)\\
		&+\frac{(\Lambda P,Q)}{(P,Q)(Q^3,\Lambda Q)}Q^{3}(y)-c_1\int_{-\infty}^{y_{1}}\Lambda Q(\rho,y_{2})\dd \rho,
		\end{aligned}
		\end{equation*}
		where $F$ and $h_{2}$ are defined in~\eqref{equ:defF} and~\eqref{def:h2} respectively. In addition, the constant $c_1\in  \mathbb{R}$ is chosen to ensure that
		\begin{equation*}
		\left(\sigma_{2},\Lambda Q\right)=\frac{1}{(P,Q)}\left(F+h_{2},\Lambda Q\right)-c_{1}\left(\int_{-\infty}^{y_{1}}\Lambda Q(\rho,y_{2})\dd \rho, \Lambda Q\right)=0.
		\end{equation*}
		We set 
		\begin{equation*}
		g_{2}(y)=\partial_{y_{1}}\sigma_{2}(y)\quad \mbox{and}\quad 
		J_2(s)=(\varepsilon(s),\sigma_{2}).
        \end{equation*}
		Then we have
		\begin{equation*}
		|b_s+\theta b^2+bJ_{2s}|\lesssim  B^{5}|b|^3+\left(B^{5}|b|+1\right)\int_{\R^{2}}\varepsilon^2\varphi_{0,B}\dd y.
		\end{equation*}
		Here, the value of $\theta$ is given by \eqref{eq:theta}.
		\item \emph{Law of $\frac{b}{\lambda^{\theta}}$}. Let
		\begin{equation*}
		\sigma=2\theta\sigma_{1}+\sigma_{2}\quad \mbox{and}\quad J(s)=(\varepsilon(s),\sigma).
        \end{equation*}
		Then we have
		\begin{equation*}
		\left|\frac{\dd}{\dd s}\left(\frac{b}{\lambda^{\theta}}\right)+\frac{b}{\lambda^{\theta}}J_s\right|\lesssim \frac{1}{\lambda^{\theta}}\left(B^{5}|b|^3+\left(B^{5}|b|+1\right)\int_{\R^{2}}\varepsilon^2\varphi_{0,B}\dd y\right).
		\end{equation*}
        \item \emph{Law of $x_2$}. Let
        \begin{equation*}
        \sigma_{3}(y)=\frac{1}{c_{2}}\int_{-\infty}^{y_1}\partial_{y_{2}}Q(\rho,y_2)\dd \rho \quad \mbox{and}\quad J_3(s)=(\varepsilon(s),\sigma_{3}),
        \end{equation*}
        where
        \begin{equation*}
        c_2
        =\frac{1}{2}\int_{\mathbb{R}}\left(\int_{\mathbb{R}}\partial_{y_{2}}Q(y_1,y_2)\dd y_{1}\right)^2 \dd y_{2}.
        \end{equation*}
     Then we have
        \begin{equation*}
        \left|\frac{x_{2s}}{\lambda}-J_{3s}\right|\lesssim B^{5}b^2+B^{5}\int_{\R^{2}}\varepsilon^2\varphi_{0,B}\dd y.
        \end{equation*}
	\end{enumerate}
\end{lemma}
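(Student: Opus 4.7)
All four statements are proved by the same scheme. For a well-chosen profile $\sigma_i$, define $J_i(s)=(\varepsilon(s),\sigma_i)$ and differentiate using the integrated form \eqref{equ:varf} of the $\varepsilon$-equation, specialized to $f=\partial_{y_1}\sigma_i$. The profile $\sigma_i$ is engineered so that, among the five scalar products appearing on the right-hand side of \eqref{equ:varf}, four either vanish exactly (by the orthogonality conditions \eqref{equ:orth} on $\varepsilon$, or by the common $y_2$-parity of $P$, $F$, $h_2$, $Q$, $Q^3$, $\Lambda Q$ combined with the oddness of $\partial_{y_2}Q$) or are absorbed into the standard modulation error via Lemma~\ref{le:modu1}(ii). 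The single remaining pairing is normalized to isolate the geometric parameter of interest.

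For (i), take $f=\frac{1}{\|F\|_{L^2}^2}\Lambda Q$, so that $\sigma_1=\int_{-\infty}^{y_1}f(\rho,y_2)\,d\rho$. Then $\mathcal{L}f=-\frac{2}{\|F\|_{L^2}^2}Q$ by Proposition~\ref{Prop:Spectral}(ii), hence $(\varepsilon,\mathcal{L}f)=0$ by \eqref{equ:orth}; similarly $(Q,f)=0$, and $(\partial_{y_2}Q,\sigma_1)=0$ by $y_2$-parity. The cross term $-b_s(\partial_bQ_b,\sigma_1)$ is absorbed by the bound $|b_s|\lesssim b^2+\int\varepsilon^2 e^{-|y|/10}\,dy$ from Lemma~\ref{le:modu1}(ii). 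The only surviving pairing is
\begin{equation*}
(\Lambda Q,\sigma_1)=\frac{1}{\|F\|_{L^2}^2}\int_{\R^2}\Lambda Q(y_1,y_2)\int_{-\infty}^{y_1}\Lambda Q(\rho,y_2)\,d\rho\,dy=\frac{1}{2},
\end{equation*}
evaluated by symmetrizing in $(y_1,\rho)$ as in the computation $(P,Q)=\frac{1}{4}\|F\|_{L^2}^2$ from Lemma~\ref{le:Nonloca}. This gives $2J_{1s}=\frac{\lambda_s}{\lambda}+b+O(B^5 b^2+B^5\int\varepsilon^2\varphi_{0,B}\,dy)$, proving (i). Part (iv) is the $y_2$-twin: $\sigma_3$ is odd in $y_2$, the identity $\mathcal{L}\partial_{y_2}Q=0$ kills the $\mathcal{L}$-term, and the constant $c_2$ enforces $(\partial_{y_2}Q,\sigma_3)=1$ by the same symmetrization.

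Part (ii) is the delicate one. The three pieces of $\sigma_2$ play distinct roles: (a) $\frac{1}{(P,Q)}(P+F+h_2)$ is a corrected non-local profile vanishing as $y_1\to-\infty$ (using the asymptotic $P(y_1,y_2)\to -F(y_2)-h_2(y_2)$ built into Lemma~\ref{le:Nonloca}), chosen so that the pairing with $\partial_bQ_b$ produces $(P,Q)$ at leading order; (b) the $Q^3$-piece, after the commutator identity $\mathcal{L}\partial_{y_1}P=\Lambda Q+6PQ\partial_{y_1}Q$ (obtained from $\partial_{y_1}\mathcal{L}P=\Lambda Q$ and $[\partial_{y_1},\mathcal{L}]=-6Q\partial_{y_1}Q$), enforces $(Q,g_2)=0$; (c) the constant $c_1$ is tuned precisely so that $(\Lambda Q,\sigma_2)=0$. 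With these vanishings together with $(\partial_{y_2}Q,\sigma_2)=0$ from $y_2$-parity, \eqref{equ:varf} reduces, after the expansion of $\mathcal{L}g_2$ collapses via $(\varepsilon,Q)=(\varepsilon,Q^3)=0$ to localized pairings of order $\int\varepsilon^2 e^{-|y|/10}\,dy$, to an identity of the form $J_{2s}+\Lambda(b)\,b_s=O(\cdot)$ with $\Lambda(b)=1+O(|b|)$. Plugging in the sharp law $b_s=-\theta b^2+O(\cdot)$ (obtained as in Lemma~\ref{le:modu1}(ii) but keeping the leading constant in \eqref{equ:PsiQ} and using $(P,Q)=\frac{1}{4}\int|\widehat F|^2\,d\xi$, which produces exactly the $\theta$ of \eqref{eq:theta}), then multiplying by $b$ to raise the $b^2$ remainders to $|b|^3$, yields the claim. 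The main obstacle is the constant matching: verifying that the coefficient emerging from the combination of \eqref{equ:PsiQ} with the normalization of $\sigma_2$ is precisely $\theta$ and not an off-by-two variant.

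Part (iii) is purely algebraic. Write $\frac{d}{ds}(b\lambda^{-\theta})=\lambda^{-\theta}(b_s-\theta b\frac{\lambda_s}{\lambda})$ and substitute $\frac{\lambda_s}{\lambda}=-b+2J_{1s}+O(\cdot)$ from (i) together with $b_s=-\theta b^2-bJ_{2s}+O(\cdot)$ from (ii). The $\pm\theta b^2$ terms cancel, leaving $\frac{d}{ds}(b\lambda^{-\theta})=-\frac{b}{\lambda^\theta}(2\theta J_{1s}+J_{2s})+O(\cdot)$. Linearity in $\sigma=2\theta\sigma_1+\sigma_2$ gives $J_s=2\theta J_{1s}+J_{2s}$, which completes (iii).
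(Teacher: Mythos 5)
Parts (i), (iii), and (iv) of your proposal match the paper's argument: you apply \eqref{equ:varf} with a suitable $f$, use parity and the orthogonality conditions \eqref{equ:orth} to kill all but one pairing, normalize, and for (iii) you combine (i) and (ii) algebraically via $J_s=2\theta J_{1s}+J_{2s}$. The evaluation $(\Lambda Q,\sigma_1)=\tfrac12$ by the $(y_1,\rho)$-symmetrization is also exactly right.

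The proof of (ii), however, misdescribes the key mechanism. You assert that "the expansion of $\mathcal{L}g_2$ collapses via $(\varepsilon,Q)=(\varepsilon,Q^3)=0$ to localized pairings of order $\int\varepsilon^2 e^{-|y|/10}$," but this is false. Using $\mathcal{L}\partial_{y_1}P=\Lambda Q+6PQ\partial_{y_1}Q$ and $\mathcal{L}\Lambda Q=-2Q$, one gets
\[
(\varepsilon,\mathcal{L}g_2)=\frac{1}{(P,Q)}(\varepsilon,\mathcal{L}\partial_{y_1}P)
+\frac{(\Lambda P,Q)}{(P,Q)(Q^3,\Lambda Q)}(\varepsilon,\mathcal{L}\partial_{y_1}Q^3)+2c_1(\varepsilon,Q),
\]
and while the last term vanishes, the first two are genuinely \emph{linear} in $\varepsilon$ (of order $\mathcal{N}_0^{1/2}$, not $\int\varepsilon^2$), since $\Lambda Q$, $PQ\partial_{y_1}Q$, and $\mathcal{L}\partial_{y_1}Q^3$ are not in the span of the functions against which $\varepsilon$ is orthogonal. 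Consequently the identity you write, $J_{2s}+\Lambda(b)b_s=O(\cdot)$, is not what \eqref{equ:varf} yields, and "plugging in $b_s=-\theta b^2+O(\cdot)$ and multiplying by $b$" cannot close the estimate: the sharp expansion of $b_s+\theta b^2$ itself contains a linear-in-$\varepsilon$ contribution of order $|b|\,\mathcal{N}_0^{1/2}$, which is larger than $|b|^3+\int\varepsilon^2\varphi_{0,B}$. The whole purpose of $\sigma_2$ is that $bJ_{2s}$ reproduces \emph{exactly} (with opposite sign) those linear-in-$\varepsilon$ terms: one first establishes
\[
b_s+\theta b^2-\frac{b(\Lambda P,Q)}{(P,Q)(Q^3,\Lambda Q)}(\varepsilon,\mathcal{L}\partial_{y_1}Q^3)-\frac{b}{(P,Q)}(\varepsilon,\mathcal{L}\partial_{y_1}P)=O\Big(|b|^3+\int\varepsilon^2 e^{-\frac{|y|}{10}}\Big)
\]
by differentiating both $(\varepsilon,Q)=0$ and $(\varepsilon,Q^3)=0$ and using \eqref{equ:PsiQ}, and then computes $bJ_{2s}$ from \eqref{equ:varf} with $f=g_2$; the same two linear pairings appear with the opposite sign, so that the sum $b_s+\theta b^2+bJ_{2s}$ is free of linear-in-$\varepsilon$ contributions. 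Without this explicit cancellation your argument does not yield the stated bound. Two small side remarks: $\sigma_2\to 0$ as $y_1\to-\infty$ (not $+\infty$), which is what makes $\sigma_2=\int_{-\infty}^{y_1}g_2\,\dd\rho$ and hence \eqref{equ:varf} applicable; and the role of the corrector $F+h_2$ is to make $\sigma_2$ vanish on the left, not to engineer the pairing with $\partial_b Q_b$, which is simply absorbed via $|b_s|\lesssim b^2+\int\varepsilon^2 e^{-|y|/10}$.
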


\begin{proof}
Proof of (i). Let $f=\Lambda Q$ in~\eqref{equ:varf}. Note that 
\begin{equation*}
\mathcal{L}\Lambda Q=-2Q\quad \mbox{and}\quad 
\left(\Lambda Q,Q\right)=\left(\partial_{y_{2}}Q,\int_{-\infty}^{y_{1}}\Lambda Q(\rho,y_{2})\dd \rho\right)=0.
\end{equation*}
Therefore, from~\eqref{equ:varf} and Lemma~\ref{le:modu1}, we obtain
\begin{equation*}
\begin{aligned}
\|F\|_{L^{2}}^{2}J_{1s}
&=\left(\frac{\lambda_{s}}{\lambda}+b\right)\left(\Lambda Q,\int_{-\infty}^{y_{1}}\Lambda Q(\rho,y_{2})\dd \rho\right)\\
&+O\left(B^{5}b^2+B^{5}\int_{\R^{2}}\varepsilon^2\varphi_{0,B}\dd y\right)\\
&=\frac{1}{2}\|F\|_{L^{2}}^{2}\left(\frac{\lambda_{s}}{\lambda}+b\right)+O\left(B^{5}b^2+B^{5}\int_{\R^{2}}\varepsilon^2\varphi_{0,B}\dd y\right),
\end{aligned}
\end{equation*}
which completes the proof of (i).

\smallskip
Proof of (ii). We claim that 
\begin{equation}\label{est:bsb2}
\begin{aligned}
&b_s+\theta b^2-\frac{b}{(P,Q)}\frac{(\Lambda P,Q)}{(Q^3,\Lambda Q)}(\varepsilon,\mathcal{L}\partial_{y_{1}}Q^3)\\
&-\frac{b}{(P,Q)}(\varepsilon,\mathcal{L}\partial_{y_{1}}P)=O\left(|b|^3+\int_{\R^{2}} \varepsilon^2e^{-\frac{|y|}{10}}\dd y\right).
\end{aligned}
\end{equation}
Indeed, from Lemma~\ref{le:Ketest},  Lemma~\ref{le:equvar} and $\left(\varepsilon,Q^{3}\right)=0$, we have 
\begin{equation*}
\left(\frac{\lambda_{s}}{\lambda}+b\right)=\frac{
\left(\varepsilon,\mathcal{L}\partial_{y_{1}}Q^{3}\right)}{\left(\Lambda Q,Q^{3}\right)}+O\left(b^{2}+\int_{\R^{2}}\varepsilon^{2}e^{-\frac{|y|}{10}}\dd y\right).
\end{equation*}
Then, using again Proposition~\ref{Prop:Spectral}, Lemma~\ref{le:Nonloca}, Lemma~\ref{le:Ketest},  Lemma~\ref{le:equvar} and $\left(\varepsilon,Q\right)=|\left(\varepsilon,\nabla Q\right)|=0$, we obtain
\begin{equation*}
\begin{aligned}
&b_{s}+\theta b^{2}-\frac{b}{\left(P,Q\right)}\left(\varepsilon,\Lambda Q+6PQ\partial_{y_{1}}Q\right)\\
&-b\left(\frac{\lambda_{s}}{\lambda}+b\right)\frac{\left(\Lambda P, Q\right)}{\left(P,Q\right)}=O\left(|b|^{3}+\int_{\R^{2}}\varepsilon^{2}e^{-\frac{|y|}{10}}\dd y\right).
\end{aligned}
\end{equation*}
Note that, from Lemma~\ref{le:Nonloca} and an elementary computation,
\begin{equation*}
\partial_{y_{1}}\mathcal{L}P=\Lambda Q\Longrightarrow
\mathcal{L}\partial_{y_{1}}P=\Lambda Q+6PQ\partial_{y_{1}}Q.
\end{equation*}
We see that~\eqref{est:bsb2} follows from the above three identities.

On the other hand, from the definition of $g_{2}$ and $\sigma_{2}$ and $P$ and $Q$ are even in $y_{2}$, 
\begin{equation*}
\left(\Lambda Q,\sigma_{2}\right)=\left(\partial_{y_{2}}Q,\sigma_{2}\right)=(Q,g_{2})=0.
\end{equation*}
Moreover, using again the definition of $g_{2}$ and $\sigma_{2}$, we see that 
\begin{equation*}
\lim_{y_{1}\to \infty}\sigma_{2}(y_{1},y_{2})=0,\quad \mbox{for any}\ y_{2}\in \mathbb{R}\Longrightarrow \sigma_{2}\left(y_{1},y_{2}\right)=\int_{-\infty}^{y_{1}}g_{2}(\rho,y_{2})\dd \rho.
\end{equation*}
Therefore, from~\eqref{equ:varf}, $\mathcal{L}\Lambda Q=-2Q$ and $\left(\varepsilon,Q\right)=0$, we see that 
\begin{equation*}
\begin{aligned}
bJ_{2s}
=&-\frac{b}{(P,Q)}\frac{(\Lambda P,Q)}{(Q^3,\Lambda Q)}(\varepsilon,\mathcal{L}\partial_{y_{1}}Q^3)-\frac{b}{(P,Q)}(\varepsilon,\mathcal{L}\partial_{y_{1}}P)\\
&+O\left(B^{5}|b|^{3}+B^{5}|b|\int_{\R^{2}}\varepsilon^{2}\varphi_{0,B}\dd y\right).
\end{aligned}
\end{equation*}
Combining~\eqref{est:bsb2} with the above inequality, we complete the proof of (ii).

\smallskip
Proof of (iii). The estimate in (iii) is a direct consequence of (i) and (ii).

\smallskip
Proof of (iv). Let $f=\partial_{y_{2}}Q$ in~\eqref{equ:varf}. Note that
\begin{equation*}
\mathcal{L}\partial_{y_{2}}Q=0\quad \mbox{and}\quad 
\left(Q,\partial_{y_{2}}Q\right)=\left(\Lambda Q,\int_{-\infty}^{y_{1}}\partial_{y_{2}}Q(\rho,y_{2})\dd \rho\right)=0.
\end{equation*}
Therefore, using again~\eqref{equ:varf} and Lemma~\ref{le:modu1}, we obtain
\begin{equation*}
\begin{aligned}
c_{2}J_{3s}&=\frac{x_{2s}}{\lambda}\left(\partial_{y_{2}}Q,\int_{-\infty}^{y_{1}}\partial_{y_{2}}Q(\rho,y_{2})\dd \rho\right)\\
&+O\left(B^{5}b^2+B^{5}\int_{\R^{2}}\varepsilon^2\varphi_{0,B}\dd y\right)\\
&=c_{2}\frac{x_{2s}}{\lambda}+O\left(B^{5}b^2+B^{5}\int_{\R^{2}}\varepsilon^2\varphi_{0,B}\dd y\right),
\end{aligned}
\end{equation*}
which completes the proof of (iv).
\end{proof}

\section{Monotonicity formula}\label{S:Mono}
\subsection{Energy estimate}\label{SS:ENERGY}
In this subsection, we introduce the weighted energy estimate for the function $\varepsilon$. For $(i,j)\in \left\{1,2\right\}^{2}$, we denote 
\begin{equation}\label{equ:defJij}
\mathcal{J}_{i,j}=(1-J_1)^{-2\theta(j-1)-2i-12}-1,
\end{equation}
where $J_{1}$ is defined in (i) of Lemma~\ref{le:modu2}. Similar to the case of the mass-critical gKdV equation (see \emph{e.g.}~\cite[Section 3.1]{MMR}), for all $(i,j)\in \left\{1,2\right\}^{2}$, we define the following energy functionals of $\varepsilon$,
\begin{equation}\label{equ:defFij}
\mathcal{F}_{i,j}=\int_{\R^{2}}\big(|\nabla\varepsilon|^2\psi_B+(1+\mathcal{J}_{i,j})\varepsilon^2\varphi_{i,B}-\frac{1}{2}\psi_B((Q_{b}+\varepsilon)^4-Q_{b}^4-4 Q_{b}^3\varepsilon)\big)\dd y.
\end{equation}
The following qualitative estimate of the time variation of $\mathcal{F}_{i,j}$ plays an important role in our analysis (see more details in Section~\ref{SS:Mon} and Section~\ref{S:Endproof}).

\begin{proposition}\label{Prop:dsFij}
There exist some universal constants $B>100$ large enough and $0<\kappa_{1}< \min\left\{\kappa^{*},B^{-100}\right\}$ small enough such that the following holds. Assume that for all $s\in[0,s_0]$, the solution $u(t)$ with initial data $u_{0}$ satisfies the bootstrap assumption \eqref{est:Boot1}--\eqref{est:Boot3} with $0<\kappa<\kappa_{1}$. Then for all $(i,j)\in\{1,2\}^2$ and $s\in[0,s_0]$, we have 
\begin{equation}\label{est:dsFij}
	\begin{aligned}
	&\lambda^{\theta(j-1)}\frac{\dd}{\dd s}\left(\frac{\mathcal{F}_{i,j}}{\lambda^{\theta(j-1)}}\right)+\frac{1}{4}\int_{\R^{2}}\left(|\nabla \varepsilon|^2+\varepsilon^2\right)\varphi'_{i,B}\dd y\\
    &\le \frac{C_0}{B^{30}}
    \int_{\R^{2}}\left(|\nabla \varepsilon|^2+\varepsilon^2\right)\psi_{B}\dd y+
    C_1b^4.
	\end{aligned}
	\end{equation}
Here, $C_0>1$ is a universal constant independent of $B$ and $C_{1}=C_1(B)>1$ is a constant depending only on $B$.
\end{proposition}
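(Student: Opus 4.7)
The plan is to differentiate $\mathcal{F}_{i,j}$ in time, substitute the equation of $\varepsilon$ from Lemma~\ref{le:equvar}, and perform careful weighted integration by parts so that the resulting quadratic form splits into a positive boundary piece carrying the weights $\psi_B'$ and $\varphi_{i,B}'$, a controllable remainder with small $B$-dependent factors, and terms of order $b^4$. First I would write
\begin{equation*}
\tfrac{\dd}{\dd s}\mathcal{F}_{i,j}
= 2\int \nabla\varepsilon\cdot\nabla(\partial_s\varepsilon)\psi_B\,\dd y
+ 2(1+\mathcal{J}_{i,j})\int \varepsilon(\partial_s\varepsilon)\varphi_{i,B}\,\dd y
+ (\partial_s \mathcal{J}_{i,j})\int \varepsilon^2\varphi_{i,B}\,\dd y
- \int\psi_B\,\partial_s\bigl[\tfrac{1}{2}((Q_b+\varepsilon)^4-Q_b^4)-2Q_b^3\varepsilon\bigr]\,\dd y,
\end{equation*}
and substitute $\partial_s\varepsilon=\partial_{y_1}\mathcal{L}\varepsilon-b\Lambda\varepsilon+\mathrm{Mod}+\Psi_b-\partial_{y_1}R_b-\partial_{y_1}R_{\mathrm{NL}}$.

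The dominant contribution comes from the linear flux $\partial_{y_1}\mathcal{L}\varepsilon$. Moving all $y$-derivatives onto the weights yields the main positive terms $\tfrac{3}{2}\int|\partial_{y_1}\nabla\varepsilon|^2\psi_B'+\tfrac{1}{2}\int|\partial_{y_2}\nabla\varepsilon|^2\psi_B'+\tfrac{3}{2}\int(\partial_{y_1}\varepsilon)^2\varphi_{i,B}'+\tfrac{1}{2}\int(\partial_{y_2}\varepsilon)^2\varphi_{i,B}'+\tfrac{1}{2}\int\varepsilon^2\varphi_{i,B}'$ (up to factors), together with remainders $\int|\nabla\varepsilon|^2\psi_B'''$, $\int\varepsilon^2\varphi_{i,B}'''$, and quadratic terms localized on the soliton coming from $3Q^2\varepsilon$. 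Using Lemma~\ref{le:psiphi2}(i)(iv)(v), the higher derivatives of the weights are absorbed by $\psi_B'$ and $\varphi_{i,B}'$ up to the prefactor $B^{-2/3}$, while the terms supported on $Q$ are handled with Proposition~\ref{Prop:Spectral}(iii)–(iv) combined with the orthogonality conditions $(\varepsilon,Q)=(\varepsilon,Q^3)=|(\varepsilon,\nabla Q)|=0$, producing the coercive control $\int(|\nabla\varepsilon|^2+\varepsilon^2)\varphi_{i,B}'$ on the right-hand side.

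The crucial structural balance is the cancellation between the scaling term $-b\Lambda\varepsilon$, the factor $\lambda^{\theta(j-1)}\tfrac{\dd}{\dd s}(\lambda^{-\theta(j-1)})=-\theta(j-1)\tfrac{\lambda_s}{\lambda}$, and $\partial_s\mathcal{J}_{i,j}$. From Lemma~\ref{le:modu2}(i), $J_{1s}=\tfrac12(\tfrac{\lambda_s}{\lambda}+b)+O(B^5b^2+B^5\int\varepsilon^2\varphi_{0,B})$, so
\begin{equation*}
\partial_s\mathcal{J}_{i,j}
= (2\theta(j-1)+2i+12)(1-J_1)^{-2\theta(j-1)-2i-13}J_{1s}
\end{equation*}
and the bad quadratic contribution $b\int\varepsilon^2\varphi_{i,B}$ arising from $-b\Lambda\varepsilon$ combined with the $(1+\mathcal{J}_{i,j})\varepsilon^2\varphi_{i,B}$ mass term is designed to be compensated exactly by this $\partial_s\mathcal{J}_{i,j}$ factor and the rescaling contribution, leaving only higher order errors. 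This is the reason for the specific exponent $-2\theta(j-1)-2i-12$ in \eqref{equ:defJij}. Similarly, the $b$-scaling on the gradient term $\int|\nabla\varepsilon|^2\psi_B$ produces an uncancelled piece which must be swallowed by the positive flux term, and this is where the different weights $\psi_B$ vs.\ $\varphi_{i,B}$ are indispensable.

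The remaining contributions are handled as errors. Modulation terms produce factors $(|\frac{\lambda_s}{\lambda}+b|+|b_s|+|\frac{x_{1s}}{\lambda}-1|+|\frac{x_{2s}}{\lambda}|)\times(\text{exponentially localized quadratic in }\varepsilon)$ which by Lemma~\ref{le:modu1}(ii) split into an $O(b^2)\int\varepsilon^2 e^{-|y|/10}$ contribution and a self-improving one. The $\Psi_b$ term contributes $\lesssim|b|^2\sqrt{\int\varepsilon^2\psi_B}$ via Lemma~\ref{le:Ketest}(ii) and Lemma~\ref{le:psiphi3}(iv), yielding a term absorbable by $C_1b^4+\tfrac{C_0}{B^{30}}\int\varepsilon^2\psi_B$ by Young. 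The nonlinear remainders $\partial_{y_1}R_b$ and $\partial_{y_1}R_{\mathrm{NL}}$ are treated pointwise: $R_b$ is linear in $\varepsilon$ with $|b|$-small coefficient localized on $Q$, while $R_{\mathrm{NL}}$ is at least quadratic in $\varepsilon$ and controlled by 2D Sobolev embedding together with $\|\varepsilon\|_{L^2}\lesssim\delta(\kappa)$ from \eqref{est:Boot1}.

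The main obstacle I expect is the combinatorial accounting of the positive weighted terms against the non-positive contributions from the localized $Q^2\varepsilon$ cross terms — essentially forcing the coercivity of the virial-type quadratic form on $\varepsilon$. Since no energy coercivity inversion through the adjoint transform is available at this stage (that is reserved for the $\eta$ variable in the next section), one has to rely on the scalar coercivities in Proposition~\ref{Prop:Spectral}(iii)–(iv) together with the orthogonality constraints to absorb these localized indefinite pieces, and simultaneously ensure that every remaining $B$-dependent error has prefactor at most $B^{-30}$ by exploiting Lemma~\ref{le:psiphi2} systematically to trade $\psi_B''$, $\psi_B'''$, $\varphi_{i,B}''$, $\varphi_{i,B}'''$ for $\psi_B'$, $\varphi_{i,B}'$ at the cost of negative powers of $B$.
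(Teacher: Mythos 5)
Your overall decomposition (differentiate $\mathcal{F}_{i,j}$, substitute the $\varepsilon$-equation, integrate by parts to produce positive flux plus errors) matches the paper's strategy, but there are two substantive gaps.

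First, you attribute the control of the $Q^2$-localized cross terms to the spectral coercivity results in Proposition~\ref{Prop:Spectral}(iii)--(iv) together with the orthogonality conditions, and you characterize the difficulty as "forcing the coercivity of the virial-type quadratic form on $\varepsilon$." This is a misconception: the paper's proof of Proposition~\ref{Prop:dsFij} never invokes spectral coercivity of $\mathcal{L}$. The mechanism that neutralizes the $Q$-localized cross terms (and, more generally, every term carrying a factor of $Q_b$, $\partial_{y_1} Q_b$, etc.) is simply that the weight derivatives $\psi'_B$, $\varphi'_{i,B}$ and the difference $|\psi_B-\varphi_{i,B}|$ are exponentially small of order $e^{-\frac16 B^{1/3}}\lesssim B^{-30}$ on the soliton support $|y_1|\lesssim B/4$ (Lemma~\ref{le:psiphi}(iii)), which is packaged in Lemma~\ref{le:psiphi3}(i). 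The positive definite term $-\frac{9}{10}\int(|\nabla\varepsilon|^2+\varepsilon^2)\varphi'_{i,B}$ arises directly from integration by parts of the linear flux $\partial_{y_1}(-\Delta\varepsilon+\varepsilon)$ (the term $\mathcal{I}_{1,2,5}$ in the paper), not from any quadratic-form coercivity. The coercivity of $\mathcal{L}$ under the orthogonality conditions is deferred entirely to Proposition~\ref{Prop:Mon}(i), where it is needed to show $\mathcal{N}_i\lesssim\mathcal{M}_{i,j}$; conflating these two statements makes the argument circular and also suggests a sign issue that does not actually arise.

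Second, your sketch of $\mathcal{I}_3$ (the scaling contribution) is incomplete in a way that hides the hardest technical point. After the exact cancellation furnished by the choice of exponent in $\mathcal{J}_{i,j}$ (which you correctly identify in spirit), the term $-\frac{\lambda_s}{\lambda}(1+\mathcal{J}_{i,j})\int y_1\varphi'_{i,B}\varepsilon^2$ remains. For $y_1\ge B^{10}$, $y_1\varphi'_{i,B}$ grows polynomially and cannot be absorbed by $\frac{1}{4}\int\varepsilon^2\varphi'_{i,B}$ or by $\frac{C_0}{B^{30}}\int\varepsilon^2\psi_B$ without additional input. The paper handles this via the algebraic identity $(i+6)\varphi_{i,B}-y_1\varphi'_{i,B}=\frac{\psi'_B}{\sqrt{2\psi_B}}\left(\frac{y_1}{B^{10}}\right)^{i+6}$ on $y_1\geq B^{10}$ (valid precisely because of the exponent $i+6$ in $\vartheta_i$), and then crucially uses the decay bootstrap assumption \eqref{est:Boot3} through the interpolation estimates \eqref{est:y17}--\eqref{est:y18} to bound $\int_{\mathbb R}\int_{B^{10}}^\infty y_1^{i+6}\varepsilon^2\,\dd y$ in terms of $\int\varphi'_{i,B}\varepsilon^2$ raised to a power slightly less than one, times a power of $1/\lambda$ controlled by \eqref{est:Boot2}. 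Without this step the estimate simply does not close, and your proposal does not mention \eqref{est:Boot3} or the resulting interpolation at all. The bootstrap assumption on $y_1^{100}$-weighted mass is not an incidental hypothesis; it is load-bearing exactly here.
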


To complete the proof of Proposition~\ref{Prop:dsFij}, we first recall the following weighted Sobolev estimates on $\mathbb{R}$ introduced in \cite{Merle}.
\begin{lemma}[\cite{Merle}]\label{le:weight1D}
Let $\omega: \mathbb{R}\rightarrow (0,\infty)$ be a $C^1$ function such that $\|\omega'/\omega\|_{L^\infty(\mathbb{R})}\lesssim 1$. Then, for all $f\in H^1(\mathbb{R})$, we have
\begin{equation*}
\begin{aligned}
\left\|f^2\sqrt{\omega}\right\|_{L^\infty(\mathbb{R})}^2&\lesssim \|f\|_{L^2(\mathbb{R})}^2\left(\int_{\mathbb{R}} \left(|f'|^2+|f|^2\right)\omega\dd r\right),\\
\left\|f^2\sqrt{\omega}\right\|_{L^\infty(\mathbb{R})}^2&\lesssim \|f\|_{L^2(\mathbb{R})}^2\left(\int_{\mathbb{R}} |f'|^2\omega\dd r+\int_{\R^{2}}|f|^2\omega\left(\frac{\omega'}{\omega}\right)^{2}\dd r\right).
\end{aligned}
\end{equation*}
\end{lemma}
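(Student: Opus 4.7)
The plan is to derive both inequalities from the fundamental theorem of calculus applied to the scalar function $r \mapsto f(r)^2 \sqrt{\omega(r)}$, combined with two applications of Cauchy--Schwarz. By a standard approximation/density argument (truncate and mollify; the bound $\|\omega'/\omega\|_{L^\infty} \lesssim 1$ forces $\omega$ to grow at most exponentially, so $C_c^\infty(\R)$ is dense in the relevant weighted $H^1$-space), it suffices to treat $f \in C_c^\infty(\R)$, in which case $f^2 \sqrt{\omega}$ vanishes outside a compact set and all boundary terms at $\pm\infty$ disappear.

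For such $f$ and any $x \in \R$, I would start from the identity
\begin{equation*}
f(x)^2 \sqrt{\omega(x)} = \int_{-\infty}^{x} \partial_r \bigl(f(r)^2 \sqrt{\omega(r)}\bigr)\, \dd r = \int_{-\infty}^{x} \Bigl( 2 f f' \sqrt{\omega} + \tfrac{1}{2} f^2 \tfrac{\omega'}{\sqrt{\omega}} \Bigr)\, \dd r.
\end{equation*}
Taking absolute values inside, Cauchy--Schwarz bounds the first term by $2 \|f\|_{L^2} \bigl(\int |f'|^2 \omega\, \dd r\bigr)^{1/2}$. For the second term, the key manipulation is to write the integrand as a product $|f| \cdot \bigl(|f| \sqrt{\omega} \cdot |\omega'/\omega|\bigr)$ and apply Cauchy--Schwarz to this pairing, producing the bound $\tfrac{1}{2}\|f\|_{L^2} \bigl(\int f^2 \omega (\omega'/\omega)^2\, \dd r\bigr)^{1/2}$. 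Taking the supremum over $x$ and squaring via $(a+b)^2 \le 2a^2 + 2b^2$ yields the second inequality of the lemma.

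The first inequality then follows immediately by invoking the hypothesis $\|\omega'/\omega\|_{L^\infty} \lesssim 1$, which absorbs $\int f^2 \omega (\omega'/\omega)^2\, \dd r \lesssim \int f^2 \omega\, \dd r$ on the right-hand side of the second inequality. The only genuine subtlety in the whole argument is ensuring the boundary value at $-\infty$ is harmless, which is what the density reduction to $C_c^\infty$ handles; everything else is a one-line FTC identity followed by two applications of Cauchy--Schwarz, with the clever step being the splitting $f^2 \omega'/\sqrt{\omega} = f \cdot (|f|\sqrt{\omega}\,\omega'/\omega)$ that produces the exact weighted norm $\int f^2 \omega (\omega'/\omega)^2$ needed for the second estimate.
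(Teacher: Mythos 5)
Your proof is correct and is precisely the standard argument the paper alludes to (citing Merle's Lemma 6 and declining to reproduce it): reduce to $C_c^\infty$ by density, differentiate $f^2\sqrt{\omega}$, integrate from $-\infty$, and apply Cauchy--Schwarz twice, with the key splitting $f^2\omega'/\sqrt{\omega} = f\cdot\bigl(f\sqrt{\omega}\,\omega'/\omega\bigr)$ to produce the weighted factor $\omega(\omega'/\omega)^2$. This matches the paper's intended route; no discrepancy.
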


\begin{proof}
The proof relies on a standard argument based on the Fundamental Theorem and the Cauchy-Schwarz inequality (see \emph{e.g.}~\cite[Lemma 6]{Merle}), and we omit it.
\end{proof}

Next, we generalize the above 1D weighted Sobolev estimate to the case of 2D.

\begin{lemma}\label{le:weight2D}
Let $\omega:\mathbb{R}^2\rightarrow (0,\infty)$ be a $C^2$ function such that 
\begin{equation}\label{est:omega}
\left\|\frac{\nabla \omega}{\omega}\right\|_{L^{\infty}(\R^{2})}+
\sum_{|\alpha|=2}\left\|\frac{\partial_{y}^{\alpha}\omega}{\omega}\right\|_{L^{\infty}(\R^{2})}\lesssim 1.
\end{equation}
 Then, for all $ f\in H^2(\mathbb{R}^2)$, we have
 \begin{equation}\label{est:weight1}
\begin{aligned}
\|f^2\sqrt{\omega}\|_{L^\infty(\mathbb{R}^2)}^2
\lesssim&\left(\int_{\mathbb{R}^2}(|f|^2+|\nabla f|^2)\sqrt{\omega}\dd y\right)^2\\
&+ \|f\|_{L^2(\mathbb{R}^2)}^2\left(\int_{\R^{2}}|\partial_{y_1}\partial_{y_2}f|^2\omega\dd y\right).
\end{aligned}
\end{equation}
Moreover, for all $f\in H^1(\mathbb{R}^2)$, we have
\begin{equation}\label{est:weight2}
\|f^2\sqrt{\omega}\|_{L^2(\mathbb{R}^2)}^2
\lesssim \|f\|_{L^2(\mathbb{R}^2)}^2\left(\int_{\mathbb{R}^2} (|\nabla f|^2+|f|^2)\omega\dd y\right).
\end{equation}
\end{lemma}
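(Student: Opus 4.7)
The plan is to adapt the one-dimensional proof of Lemma~\ref{le:weight1D} to two dimensions via two different FTC-based devices: an iterated FTC in both variables for \eqref{est:weight1}, and a slicing-and-product decomposition in the style of the standard proof of the 2D Gagliardo--Nirenberg inequality for \eqref{est:weight2}. By density I may assume $f\in\mathcal{S}(\R^2)$, so that all boundary terms at infinity vanish and the integration by parts manipulations below are justified. Differentiating $\sqrt{\omega}$ and using~\eqref{est:omega} yields immediately that $|\partial_{y_i}\sqrt{\omega}|\lesssim \sqrt{\omega}$ and $|\partial_{y_1}\partial_{y_2}\sqrt{\omega}|\lesssim \sqrt{\omega}$ pointwise on $\R^2$, so every derivative of the weight that appears below is absorbed back into the weight itself.

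For~\eqref{est:weight1} the key identity is
\begin{equation*}
f^2(y_1,y_2)\sqrt{\omega(y_1,y_2)} = \int_{-\infty}^{y_1}\int_{-\infty}^{y_2} \partial_\rho \partial_\sigma\bigl(f^2\sqrt{\omega}\bigr)(\rho,\sigma)\,\dd \sigma\,\dd \rho,
\end{equation*}
which is the two-variable analogue of the starting point of Lemma~\ref{le:weight1D}. Expanding $\partial_{y_1}\partial_{y_2}(f^2\sqrt{\omega})$ via the product rule and using the weight bounds just mentioned gives the pointwise estimate
\begin{equation*}
|\partial_{y_1}\partial_{y_2}(f^2\sqrt{\omega})|\lesssim (|\nabla f|^2+f^2)\sqrt{\omega} + |f|\,|\partial_{y_1}\partial_{y_2}f|\sqrt{\omega}.
\end{equation*}
Integrating over $\R^2$ and applying the Cauchy--Schwarz inequality to the last term produces the pointwise bound
\begin{equation*}
f^2(y_1,y_2)\sqrt{\omega}\lesssim \int_{\R^2}(|\nabla f|^2+f^2)\sqrt{\omega}\,\dd y + \|f\|_{L^2}\Bigl(\int_{\R^2}|\partial_{y_1}\partial_{y_2}f|^2\omega\,\dd y\Bigr)^{1/2},
\end{equation*}
and squaring yields~\eqref{est:weight1}.

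For~\eqref{est:weight2} I would use the 1D FTC separately in each coordinate to get two one-sided bounds
\begin{equation*}
f^2\sqrt{\omega}(y_1,y_2)\le H_1(y_2),\qquad f^2\sqrt{\omega}(y_1,y_2)\le H_2(y_1),
\end{equation*}
where
\begin{equation*}
H_1(y_2)\lesssim \int_{\R}(|f||\partial_{y_1}f|+f^2)\sqrt{\omega}\,\dd y_1,\qquad H_2(y_1)\lesssim \int_{\R}(|f||\partial_{y_2}f|+f^2)\sqrt{\omega}\,\dd y_2.
\end{equation*}
Multiplying the two bounds gives $(f^2\sqrt{\omega})^2\le H_1(y_2)H_2(y_1)$, and integrating over $\R^2$ by Fubini factorizes as $\bigl(\int_\R H_1\,\dd y_2\bigr)\bigl(\int_\R H_2\,\dd y_1\bigr)$. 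The Cauchy--Schwarz inequality on each factor pairs the unweighted $\|f\|_{L^2}$ with the weighted quantity $\bigl(\int(|\partial_{y_i}f|^2+f^2)\omega\bigr)^{1/2}$, which after recombination produces~\eqref{est:weight2}.

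The entire argument is essentially a careful accounting exercise; the only delicate point is the matching of weights, namely that $\sqrt{\omega}$ must appear on the left while $\omega$ appears on the right. This is exactly what~\eqref{est:omega} is tailored to provide: derivatives of $\sqrt\omega$ are bounded by $\sqrt\omega$, and Cauchy--Schwarz then upgrades $|f|\sqrt{\omega}$ to the product of an unweighted $|f|$ and $|f|\sqrt{\omega}\cdot\sqrt{\omega}=|f|\omega^{1/2}\cdot\omega^{1/2}$ to yield the full weight $\omega$. I do not anticipate any substantive analytic obstacle beyond this bookkeeping.
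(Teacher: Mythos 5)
Your proof is correct and follows essentially the same approach as the paper's: for \eqref{est:weight1} you expand $\partial_{y_1}\partial_{y_2}(f^2\sqrt{\omega})$, bound it pointwise via \eqref{est:omega}, apply the iterated fundamental theorem of calculus and Cauchy--Schwarz on the cross-derivative term; for \eqref{est:weight2} you slice in each variable, multiply the two one-dimensional bounds, use Fubini, and finish with Cauchy--Schwarz. The only cosmetic difference is that the paper invokes the 1D result of Lemma~\ref{le:weight1D} directly for the slicing step, whereas you re-derive the slice bound from scratch via the 1D FTC.
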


\begin{proof}
First, by an elementary computation, we see that 
\begin{equation*}
\begin{aligned}
\partial_{y_{1}}\partial_{y_{2}}\left(f^{2}\sqrt{\omega}\right)
&=f\sqrt{\omega}\left(\partial_{y_{1}}f\frac{\partial_{y_{2}}\omega}{\omega}+\partial_{y_{2}}f\frac{\partial_{y_{1}}\omega}{\omega}\right)+f^{2}\sqrt{\omega}\frac{\partial_{y_{1}}\partial_{y_{2}}\omega}{2\omega}\\
&+2\left(\partial_{y_{1}}f\partial_{y_{2}}f+f\partial_{y_{1}}\partial_{y_{2}}f\right)\sqrt{\omega}
-f^{2}\sqrt{\omega}\left(\frac{\partial_{y_{1}}\omega}{2\omega}\right)\left(\frac{\partial_{y_{2}}\omega}{2\omega}\right).
\end{aligned}
\end{equation*}
Combining the above identity with~\eqref{est:omega}, the Fundamental Theorem and the Cauchy-Schwarz inequality, we complete the proof of~\eqref{est:weight1}.

\smallskip

Second, using~\eqref{est:omega} and Lemma~\ref{le:weight1D}, we have 
\begin{equation*}
\begin{aligned}
\left\|(f^{2}\sqrt{\omega})(\cdot,y_{2})\right\|_{L^{\infty}(\R)}^{2}&\lesssim \left\|f(\cdot,y_{2})\right\|^{2}_{L^{2}(\R)}\int_{\R}\left(|\partial_{y_{1}}f(\rho,y_{2})|^{2}+|f(\rho,y_{2})|^{2}\right)\omega(\rho,y_{2})\dd \rho,\\
\left\|(f^{2}\sqrt{\omega})(y_{1},\cdot)\right\|_{L^{\infty}(\R)}^{2}&\lesssim \left\|f(y_{1},\cdot)\right\|^{2}_{L^{2}(\R)}\int_{\R}\left(|\partial_{y_{2}}f(y_{1},\rho)|^{2}+|f(y_{1},\rho)|^{2}\right)\omega(y_{1},\rho)\dd \rho.
\end{aligned}
\end{equation*}
It follows from the H\"older inequality that
\begin{equation*}
\begin{aligned}
&\int_{\R}\left\|(f^{2}\sqrt{\omega})(\cdot,y_{2})\right\|_{L^{\infty}(\R)}\dd y_{2}
+\int_{\R}\left\|(f^{2}\sqrt{\omega})(y_{1},\cdot)\right\|_{L^{\infty}(\R)}\dd y_{1}\\
&\lesssim \|f\|_{L^{2}(\R^{2})}\left(\int_{\R^{2}}\left(|\nabla f(y)|^{2}+|f(y)|^{2}\right)\omega(y)\dd y\right)^{\frac{1}{2}}.
\end{aligned}
\end{equation*}
Based on the above estimates, we obtain 
\begin{equation*}
\begin{aligned}
\int_{\mathbb{R}^2}f^4(y)\omega(y)\dd y
&\lesssim
\int_{\mathbb{R}}\int_{\mathbb{R}}\left\|(f^2\sqrt{\omega})(\cdot,y_2)\right\|_{L^\infty(\mathbb{R})}\left\|(f^2\sqrt{\omega})(y_1,\cdot)\right\|_{L^\infty(\mathbb{R})}\dd y_{1}\dd y_{2}\\
&\lesssim \left(\int_{\mathbb{R}}\left\|(f^2\sqrt{\omega})(\cdot,y_2)\right\|_{L^\infty(\mathbb{R})}\dd y_2\right)
\times\left(\int_{\mathbb{R}}\left\|(f^2\sqrt{\omega})(y_1,\cdot)\right\|_{L^\infty(\mathbb{R})}\dd y_1\right)\\
&\lesssim \|f\|_{L^2(\mathbb{R}^2)}^2\left(\int_{\mathbb{R}^2} 
\left(|\nabla f(y)|^2+|f(y)|^2\right)\omega\dd y\right),
\end{aligned}
\end{equation*}
which completes the proof of~\eqref{est:weight2}.
\end{proof}
From the definitions of $\varphi_{i,B}$ and $\psi_{B}$, we can easily check that, for $i=0,1,2$, 
\begin{equation}\label{est:psiphi3}
\begin{aligned}
\left\|\frac{\nabla \psi_{B}}{\psi_{B}}\right\|_{L^{\infty}(\R^{2})}+\left\|\frac{\nabla \varphi_{i,B}}{\varphi_{i,B}}\right\|_{L^{\infty}(\R^{2})}
+\left\|\frac{\nabla \sqrt{\psi'_{B}}}{\sqrt{\psi'_{B}}}\right\|_{L^{\infty}(\R^{2})}
&\lesssim 1,\\
\sum_{|\alpha|=2}\left(\left\|\frac{\partial_{y}^{\alpha}\psi_{B}}{\psi_{B}}\right\|_{L^{\infty}(\R^{2})}+
\left\|\frac{\partial_{y}^{\alpha}\varphi_{i,B}}{\varphi_{i,B}}\right\|_{L^{\infty}(\R^{2})}
+\left\|\frac{\partial_{y}^{\alpha}\sqrt{\psi'_{B}}}{{\sqrt{\psi'_{B}}}}\right\|_{L^{\infty}(\R^{2})}
\right)&\lesssim 1.
\end{aligned}
\end{equation}
On the other hand, using again the definitions of $\varphi_{i,B}$ and $\psi_{B}$, the H\"older inequality and the bootstrap assumption~\eqref{est:Boot3}, we deduce that
\begin{equation}\label{est:y17}
\begin{aligned}
&\int_{\R}\int_{0}^{\infty}y_1^7\varepsilon^2\dd y_{1}\dd y_{2}\\
&\lesssim B^{70}\int_{\R^{2}}\psi_{B}\varepsilon^2\dd y+ 
\left(\int_{\R}\int_{B^{10}}^{\infty}y_1^{100}\varepsilon^2\dd y_{1}\dd y_{2}\right)^{\frac{1}{94}}
\left(\int_{\R}\int_{B^{10}}^{\infty}y_1^{6}\varepsilon^2\dd y_{1}\dd y_{2}\right)^{\frac{93}{94}}\\
&\lesssim
B^{70}\int_{\R^{2}}\psi_{B}\varepsilon^2\dd y+
 \left(1+\frac{1}{\lambda^{\frac{50}{47}}}\right)\left(B^{70}\int_{\R}\int_{B^{10}}^{\infty}\varphi'_{1,B}\varepsilon^2\dd y_{1}\dd y_{2}\right)^{\frac{93}{94}},
\end{aligned} 
\end{equation}
and
\begin{equation}\label{est:y18}
\begin{aligned}
&\int_{\R}\int_{0}^{\infty}y_1^8\varepsilon^2\dd y_{1}\dd y_{2}\\
&\lesssim B^{80}\int_{\R^{2}}\psi_{B}\varepsilon^2\dd y+
\left(\int_{\R}\int_{B^{10}}^{\infty}y_1^{100}\varepsilon^2\dd y_{1}\dd y_{2}\right)^{\frac{1}{93}}
\left(\int_{\R}\int_{B^{10}}^{\infty}y_1^{7}\varepsilon^2\dd y_{1}\dd y_{2}\right)^{\frac{92}{93}}\\
&\lesssim B^{80}\int_{\R^{2}}\psi_{B}\varepsilon^2\dd y
+\left(1+\frac{1}{\lambda^{\frac{100}{93}}}\right)
\left(B^{80}\int_{\R}\int_{B^{10}}^{\infty}\varphi'_{2,B}\varepsilon^2\dd y_{1}\dd y_{2}\right)^{\frac{92}{93}}.
\end{aligned} 
\end{equation}

We now give a complete proof of Proposition~\ref{Prop:dsFij}.
\begin{proof}[Proof of Proposition~\ref{Prop:dsFij}]
For all $(i,j)\in \left\{1,2\right\}^{2}$, we decompose
\begin{equation}\label{equ:dsFij}
\lambda^{\theta(j-1)}\frac{\dd }{\dd s}\left(\frac{\mathcal{F}_{i,j}}{\lambda^{\theta(j-1)}}\right)
=\frac{\dd}{\dd s}\mathcal{F}_{i,j}-\theta(j-1)\frac{\lambda_{s}}{\lambda}\mathcal{F}_{i,j}
=\mathcal{I}_{1}+\mathcal{I}_{2}+\mathcal{I}_{3},
\end{equation}
where
\begin{equation*}
\begin{aligned}
&\mathcal{I}_{1}=2\int_{\R^{2}}
\left(\partial_{s}\varepsilon-\frac{\lambda_s}{\lambda}\Lambda\varepsilon\right)\left(-\nabla\cdot(\psi_B\nabla\varepsilon)+\varphi_{i,B}\varepsilon-\psi_B\left((Q_b+\varepsilon)^3-Q_b^3\right)\right)\dd y,\\
&\mathcal{I}_{2}=2\mathcal{J}_{i,j}\int_{\R^{2}}\left(\partial_{s}\varepsilon-\frac{\lambda_s}{\lambda}\Lambda\varepsilon\right)\varphi_{i,B}\varepsilon\dd y-2\int_{\R^{2}}\psi_B\partial_{s}Q_{b}
\left(3Q_{b}\varepsilon^{2}+\varepsilon^{3}\right)\dd y,\\
&\mathcal{I}_{3}=2\frac{\lambda_s}{\lambda}\int_{\R^{2}}\Lambda\varepsilon
\left(-\nabla\cdot(\psi_B\nabla\varepsilon)+(1+\mathcal{J}_{i,j})\varphi_{i,B}\varepsilon-\psi_B\left((Q_b+\varepsilon)^3-Q_b^3\right)\right)\dd y\\
&\quad 
+\left(\frac{\dd}{\dd s}\mathcal{J}_{i,j}\right)\int_{\R^{2}}\varphi_{i,B}\varepsilon^2\dd y
-\theta(j-1)\frac{\lambda_s}{\lambda}\mathcal{F}_{i,j}.
\end{aligned}
\end{equation*}

\textbf{Step 1.} Estimate on $\mathcal{I}_{1}$. We claim that, there exist some universal constants $B>100$ large enough and $0<\kappa_{1}<B^{-100}$ small enough, such that 
\begin{equation}\label{est:I1}
\begin{aligned}
\mathcal{I}_{1}
&\le-\frac{1}{2}\int_{\R^{2}}(|\nabla\varepsilon|^2+\varepsilon^2)\varphi'_{i,B}\dd y\\
&+ \frac{C_{2}}{B^{30}}\int_{\R^{2}}\left(|\nabla\varepsilon|^2+\varepsilon^2\right)\psi_B\dd y+{C_{3}}b^4.
\end{aligned}
\end{equation}
Here, $C_{2}>1$ is a universal constant independent of $B$ and $C_{3}=C_{3}(B)>1$ is a constant depending only on $B$. 

Indeed, we use~\eqref{equ:varf} to rewrite
\begin{equation*}
\partial_{s}\varepsilon-\frac{\lambda_{s}}{\lambda}\Lambda \varepsilon=\partial_{y_{1}}\left(-\Delta \varepsilon+\varepsilon-(Q_{b}+\varepsilon)^{3}+Q_{b}^{3}\right)+{\rm{Mod}}-\left(\frac{\lambda_{s}}{\lambda}+b\right)\Lambda \varepsilon+\Psi_{b}.
\end{equation*}
Based on the above identity, we decompose
\begin{equation*}
\mathcal{I}_{1}=\mathcal{I}_{1,1}+\mathcal{I}_{1,2}+\mathcal{I}_{1,3},
\end{equation*}
where
\begin{equation*}
\begin{aligned}
\mathcal{I}_{1,1}&=2\int_{\R^{2}}
\Psi_{b}\left(-\nabla\cdot(\psi_B\nabla\varepsilon)+\varphi_{i,B}\varepsilon-\psi_B\left((Q_b+\varepsilon)^3-Q_b^3\right)\right)\dd y,\\
\mathcal{I}_{1,2}&=2\int_{\R^{2}}
\partial_{y_{1}}\left(-\Delta \varepsilon+\varepsilon-(Q_{b}+\varepsilon)^{3}+Q_{b}^{3}\right)\\
&\quad \quad\quad \quad  \times\left(-\nabla\cdot(\psi_B\nabla\varepsilon)+\varphi_{i,B}\varepsilon-\psi_B\left((Q_b+\varepsilon)^3-Q_b^3\right)\right)\dd y,\\
\mathcal{I}_{1,3}&=2\int_{\R^{2}}
\left({\rm{Mod}}-\left(\frac{\lambda_{s}}{\lambda}+b\right)\Lambda \varepsilon\right)\\
&\quad \quad \quad \quad \times\left(-\nabla\cdot(\psi_B\nabla\varepsilon)+\varphi_{i,B}\varepsilon-\psi_B\left((Q_b+\varepsilon)^3-Q_b^3\right)\right)\dd y.
\end{aligned}
\end{equation*}
\emph{Estimate on $\mathcal{I}_{1,1}$.} Note that, from Lemma~\ref{le:Ketest}, the definition of $\varphi_{i,B}$ and $\psi_{B}$, 
\begin{equation*}
\begin{aligned}
&\int_{\R^{2}}\left(|\nabla \Psi_{b}|^{2}+\left|\Psi_{b}\right|^{2}\right)\left(\varphi'_{i,B}+\psi_{B}+|y_{1}|^{2}\varphi'_{i,B}\mathbf{1}_{[B^{10},\infty)}\right)\dd y\\
&\lesssim |b|^{\frac{7}{2}}\int_{\R}\int_{-\infty}^{-|b|^{-\frac{4}{3}}}e^{\frac{y_{1}}{B}}e^{-\frac{|y_{2}|}{3}}\dd y_{1}\dd y_{2}+b^{4}\int_{\R}\int_{0}^{\infty}e^{-\frac{|y|}{3}}\left(1+|y_{1}|^{10}\right)\dd y_{1}\dd y_{2}\\
&+b^{4}\int_{\R}\int_{-\infty}^{0}e^{\frac{y_{1}}{B}}e^{-\frac{|y_{2}|}{3}}\dd y_{1}\dd y_{2}\lesssim Bb^{4}.
\end{aligned}
\end{equation*}
It follows from~\eqref{est:Boot1}, \eqref{est:psiphi3}, Lemma~\ref{le:Ketest}, Lemma~\ref{le:psiphi2} and Lemma~\ref{le:weight2D} that,
\begin{equation}\label{est:I11}
\begin{aligned}
\mathcal{I}_{1,1}
&\lesssim B^\frac{1}{2}b^{2}\left( \int_{\R^{2}}
\left(|\nabla \varepsilon|^{2}+\varepsilon^{2}\right)
(\varphi'_{i,B}+{\psi'_B}+\psi_{B})\dd y\right)^{\frac{1}{2}}\\
&+|b|^{\frac{7}{4}}\int_{\R^{3}}\varepsilon^{2}\psi_{B}\dd y+|b|^{\frac{7}{4}}\int_{\R^{2}}\varepsilon^{4}\psi_{B}\dd y\\
&\le \frac{C_{4}}{B^{30}}\int_{\R^{2}}\left(|\nabla \varepsilon|^{2}+\varepsilon^{2}\right)(B\varphi'_{i,B}+\psi_{B})\dd y
+C_{5}b^{4}.
\end{aligned}
\end{equation}
Here, $C_{4}>1$ is a universal constant independent of $B$ and $C_{5}=C_{5}(B)>1$ is a constant depending only on $B$.

\smallskip
\emph{Estimate on $\mathcal{I}_{1,2}$.} By an elementary computation, we decompose 
\begin{equation*}
\mathcal{I}_{1,2}=\mathcal{I}_{1,2,1}+\mathcal{I}_{1,2,2}+\mathcal{I}_{1,2,3},
\end{equation*}
where 
\begin{equation*}
\begin{aligned}
\mathcal{I}_{1,2,1}&=2\int_{\R^{2}}\left(\partial_{y_{1}}\left(-\Delta\varepsilon+\varepsilon\right)\right)\left(-\psi'_{B}\partial_{y_{1}}\varepsilon+\left(\varphi_{i,B}-\psi_{B}\right)\varepsilon\right)\dd y,\\
\mathcal{I}_{1,2,2}&=2\int_{\R^{2}}\left(\partial_{y_{1}}
\left((Q_{b}+\varepsilon)^{3}-Q_{b}^{3}\right)\right)\left(\psi'_{B}\partial_{y_{1}}\varepsilon-\left(\varphi_{i,B}-\psi_{B}\right)\varepsilon
\right)\dd y,\\
\mathcal{I}_{1,2,3}&=2\int_{\R^{2}}\left(\partial_{y_{1}}\left(-\Delta \varepsilon+\varepsilon-(Q_{b}+\varepsilon)^{3}+Q_{b}^{3}\right)\right)\left(-\Delta \varepsilon+\varepsilon-(Q_{b}+\varepsilon)^{3}+Q_{b}^{3}\right)\psi_{B}\dd y.
\end{aligned}
\end{equation*}
From integration by parts, we directly deduce that 
\begin{equation*}
\begin{aligned}
\mathcal{I}_{1,2,1}=
&-2\int_{\R^{2}}\psi'_{B}\left((\partial^{2}_{y_{1}}\varepsilon)^{2}+(\partial_{y_{1}}\partial_{y_{2}}\varepsilon)^{2}\right)\dd y-\int_{\R^{2}}(\partial_{y_{1}}\varepsilon)^{2}\left(3\varphi'_{i,B}-\psi'_{B}-\psi'''_{B}\right)\dd y\\
&-\int_{\R^{2}}\varepsilon^{2}\left((\varphi'_{i,B}-\psi'_{B})-(\varphi'''_{i,B}-\psi'''_{B})\right)\dd y-\int_{\R^{2}}(\partial_{y_{2}}\varepsilon)^{2}\left(\varphi'_{i,B}-\psi'_{B}\right)\dd y,
\end{aligned}
\end{equation*}
\begin{equation*}
\begin{aligned}
\mathcal{I}_{1,2,2}
&=-2\int_{\R^{2}}\left(\varphi_{i,B}-\psi_{B}\right)(\partial_{y_{1}}Q_{b})\left(3Q_{b}\varepsilon^{2}+\varepsilon^{3}\right)\dd y\\
&+\frac{1}{2}\int_{\R^{2}}\left(\varphi'_{i,B}-\psi'_{B}\right)(6Q^{2}_{b}\varepsilon^{2}+8Q_{b}\varepsilon^{3}+3\varepsilon^{4})\dd y\\
&+6\int_{\R^{2}}\left(\psi'_{B}\partial_{y_{1}}\varepsilon\right)
\left((\partial_{y_{1}}Q_{b})(2Q_{b}\varepsilon+\varepsilon^{2})+(\partial_{y_{1}}\varepsilon)(Q_{b}+\varepsilon)^{2}\right)\dd y,\quad \quad \quad \quad \quad \quad 
\end{aligned}
\end{equation*}
and
\begin{equation*}
\begin{aligned}
\mathcal{I}_{1,2,3}
&=-\int_{\R^{2}}\psi'_{B}
\bigg(2|\nabla \varepsilon|^{2}+\sum_{|\alpha|=2}|\partial_{y}^{\alpha}\varepsilon|^{2}\bigg)\dd y
+\int_{\R^{2}}\left(\psi'''_{B}(\partial_{y_{2}}\varepsilon)^{2}
-\left(\psi'_{B}-\psi'''_{B}\right)\varepsilon^{2}
\right)\dd y\\
&-
\int_{\R^{2}}\left(\left(-\Delta \varepsilon+\varepsilon-(Q_{b}+\varepsilon)^{3}+Q_{b}^{3}\right)^{2}-
\left(-\Delta \varepsilon+\varepsilon\right)^{2}
\right)
\psi'_{B}\dd y.
\end{aligned}
\end{equation*}
Based on the above identities, we rewrite the term $\mathcal{I}_{1,2}$ by
\begin{equation*}
\mathcal{I}_{1,2}=\mathcal{I}_{1,2,2}+\mathcal{I}_{1,2,4}+\mathcal{I}_{1,2,5}+\mathcal{I}_{1,2,6},
\end{equation*}
where 
\begin{equation*}
\begin{aligned}
\mathcal{I}_{1,2,4}&=-\int_{\R^{2}}\psi'_{B}
\left(3(\partial^{2}_{y_{1}}\varepsilon)^{2}+4(\partial_{y_{1}}\partial_{y_{2}}\varepsilon)^{2}+(\partial^{2}_{y_{2}}\varepsilon)^{2}\right)\dd y,\\
\mathcal{I}_{1,2,5}&=-\int_{\R^{2}}(\partial_{y_{1}}\varepsilon)^{2}\left(3\varphi'_{i,B}+\psi'_{B}-
\psi'''_{B}\right)\dd y\\
&-\int_{\R^{2}}(\partial_{y_{2}}\varepsilon)^{2}\left(\varphi'_{i,B}+\psi'_{B}-
\psi'''_{B}\right)\dd y
-\int_{\R^{2}}\varepsilon^{2}\left(\varphi'_{i,B}-\varphi'''_{i,B}\right)\dd y,\\
\mathcal{I}_{1,2,6}&=-
\int_{\R^{2}}\left(\left(-\Delta \varepsilon+\varepsilon-(Q_{b}+\varepsilon)^{3}+Q_{b}^{3}\right)^{2}-
\left(-\Delta \varepsilon+\varepsilon\right)^{2}\right)
\psi'_{B}\dd y.
\end{aligned}
\end{equation*}

First, from~\eqref{est:Boot1},~\eqref{est:psiphi3}, Lemma~\ref{le:psiphi2}, Lemma~\ref{le:psiphi3} and Lemma~\ref{le:weight2D}, we have
\begin{equation*}
\mathcal{I}_{1,2,2}\lesssim \frac{1}{B^{30}}\int_{\R^{2}}\left(|\nabla \varepsilon|^{2}+\varepsilon^{2}\right)\left(B\varphi'_{i,B}+\psi_{B}\right)\dd y+\int_{\R^{2}}\psi'_{B}\varepsilon^{2}(\partial_{y_{1}}\varepsilon)^{2}\dd y.
\end{equation*}
Note that, using again~\eqref{est:psiphi3} and Lemma~\ref{le:weight2D}, we deduce that 
\begin{equation*}
\begin{aligned}
\int_{\R^{2}}\varepsilon^{2}|\nabla\varepsilon|^{2}\psi'_{B}\dd y
&\lesssim \left\|\varepsilon^{2}\sqrt{\psi'_{B}}\right\|_{L^{\infty}}
\left(\int_{\R^{2}}|\nabla \varepsilon|^{2}\sqrt{\psi'_{B}}\dd y\right)\\
&\lesssim \left(\int_{\R^{2}}|\nabla \varepsilon|^{2}\sqrt{\psi'_{B}}\dd y\right)
\left(\int_{\R^{2}}(|\nabla \varepsilon|^{2}+\varepsilon^{2})\sqrt{\psi'_{B}}\dd y\right)\\
&+\|\varepsilon\|_{L^{2}}\left(\int_{\R^{2}}|\nabla \varepsilon|^{2}\sqrt{\psi'_{B}}\dd y\right)
\left(\sum_{|\alpha|=2}\left|\partial_{y}^{\alpha}\varepsilon\right|^{2}\psi'_{B}\dd y\right)^{\frac{1}{2}}.
\end{aligned}
\end{equation*}
By integration by parts,~\eqref{est:psiphi3} and Lemma~\ref{le:psiphi2},
\begin{equation*}
\begin{aligned}
\int_{\R^{2}}\varepsilon^{2}\sqrt{\psi'_{B}}\dd y
&\lesssim \int_{\R^{2}}\varepsilon^{2}\left(B^{\frac{1}{2}}\varphi'_{i,B}+B^{-\frac{1}{2}}\psi_{B}\right)\dd y\lesssim \mathcal{N}_{2},\\
\int_{\R^{2}}|\nabla \varepsilon|^{2}\sqrt{\psi'_{B}}\dd y
&=-\int_{\R^{2}}\varepsilon\Delta \varepsilon\sqrt{\psi'_{B}}\dd y+\frac{1}{2}\int_{\R^{2}}\varepsilon^{2}\partial_{y_{1}}^{2}\sqrt{\psi'_{B}}\dd y\\
&\lesssim \|\varepsilon\|_{L^{2}}\sum_{|\alpha|=2}\left(\int_{\R^{2}}\left|\partial_{y}^{\alpha}\varepsilon\right|^{2}\psi'_{B}\dd y\right)^{\frac{1}{2}}+\int_{\R^{2}}\varepsilon^{2}\sqrt{\psi'_{B}}\dd y.
\end{aligned}
\end{equation*}
It follows from~\eqref{est:Boot1} that 
\begin{equation*}
\int_{\R^{2}}\psi'_{B}\varepsilon^{2}(\partial_{y_{1}}\varepsilon)^{2}\dd y
\lesssim \frac{1}{B^{30}}\sum_{|\alpha|=2}\int_{\R^{2}}\left|\partial_{y}^{\alpha}\varepsilon\right|^{2}\psi'_{B}\dd y+\frac{1}{B^{30}}\int_{\R^{2}}\varepsilon^{2}\left(B\varphi'_{i,B}+\psi_{B}\right)\dd y.
\end{equation*}
Combining the above estimates, we obtain
\begin{equation}\label{est:I122}
\begin{aligned}
\mathcal{I}_{1,2,2}
&\lesssim \frac{1}{B^{30}}\int_{\R^{2}}\left(|\nabla \varepsilon|^{2}+\varepsilon^{2}\right)\left(B\varphi'_{i,B}+\psi_{B}\right)\dd y\\
&+\frac{1}{B^{30}}\sum_{|\alpha|=2}\int_{\R^{2}}\left|\partial_{y}^{\alpha}\varepsilon\right|^{2}\psi'_{B}\dd y.
\end{aligned}
\end{equation}
Second, from the definition of $\mathcal{I}_{1,2,5}$ and Lemma~\ref{le:psiphi2}, we directly have 
\begin{equation}\label{est:I125}
\mathcal{I}_{1,2,5}+\frac{9}{10}\int_{\R^{2}}\left(|\nabla \varepsilon|^{2}+\varepsilon^{2}\right)\varphi'_{i,B}\dd y\lesssim \frac{1}{B^{30}}\int_{\R^{2}}\left(|\nabla \varepsilon|^{2}+\varepsilon^{2}\right)\psi_{B}\dd y.
\end{equation}
Based on a similar argument to the one in the estimate of $\mathcal{I}_{1,2,2}$, we deduce that 
\begin{equation}\label{est:I126}
\begin{aligned}
\mathcal{I}_{1,2,6}
&\lesssim \frac{1}{B^{30}}\int_{\R^{2}}\left(|\nabla \varepsilon|^{2}+\varepsilon^{2}\right)\left(B\varphi'_{i,B}+\psi_{B}\right)\dd y\\
&+\frac{1}{B^{30}}\sum_{|\alpha|=2}\int_{\R^{2}}\left|\partial_{y}^{\alpha}\varepsilon\right|^{2}\psi'_{B}\dd y.
\end{aligned}
\end{equation}
Here, we use the fact that 
\begin{equation*}
\left\|\varepsilon^{2}\sqrt{\psi'_{B}}\right\|_{L^{\infty}}^{2}\lesssim \|\varepsilon\|_{L^{2}}^{2}\sum_{|\alpha|=2}\int_{\R^{2}}\left|\partial_{y}^{\alpha}\varepsilon\right|^{2}\psi'_{B}\dd y+\mathcal{N}_{2}\int_{\R^{2}}\varepsilon^{2}\sqrt{\psi'_{B}}\dd y.
\end{equation*}
Combining estimates~\eqref{est:I122}--\eqref{est:I126} with the definition of $\mathcal{I}_{1,2,4}$, we conclude that 
\begin{equation}\label{est:I12}
\begin{aligned}
\mathcal{I}_{1,2}&\le -\frac{8}{9}\sum_{|\alpha|=2}\int_{\R^{2}}|\partial^{\alpha}_{y}\varepsilon|^{2}\psi'_{B}\dd y\\
&-\frac{8}{9}\int_{\R^{2}}
(|\nabla \varepsilon|^{2}+\varepsilon^{2})\varphi'_{i,B}\dd y+\frac{C_{6}}{B^{30}}\int_{\R^{2}}\left(|\nabla \varepsilon|^{2}+\varepsilon^{2}\right)\psi_{B}\dd y.
\end{aligned}
\end{equation}
Here, $C_{6}>1$ is a universal constant independent of $B$.

\smallskip
\emph{Estimate on $\mathcal{I}_{1,3}$.} By an elementary computation, we decompose
\begin{equation*}
\mathcal{I}_{1,3}=\mathcal{I}_{1,3,1}+\mathcal{I}_{1,3,2}+\mathcal{I}_{1,3,3}+\mathcal{I}_{1,3,4}+\mathcal{I}_{1,3,5},
\end{equation*}
where 
\begin{equation*}
\mathcal{I}_{1,3,1}=2\int_{\R^{2}}\widetilde{\rm{Mod}}\left(-\nabla \cdot (\psi_{B}\nabla \varepsilon)+\varphi_{i,B}\varepsilon
-\psi_{B}((Q_{b}+\varepsilon)^{3}-Q^{3}_{b})\right)\dd y, \quad \quad \quad \quad \
\end{equation*}
\begin{equation*}
\mathcal{I}_{1,3,2}=2\left(\frac{\lambda_{s}}{\lambda}+b\right)\int_{\R^{2}}\Lambda Q\left(-\nabla \cdot (\psi_{B}\nabla \varepsilon)+\varphi_{i,B}\varepsilon
-\psi_{B}((Q_{b}+\varepsilon)^{3}-Q^{3}_{b})\right)\dd y, \ \
\end{equation*}
\begin{equation*}
\mathcal{I}_{1,3,3}=2\left(\frac{x_{1s}}{\lambda}-1\right)
\int_{\R^{2}}\partial_{y_{1}}Q\left(-\nabla \cdot (\psi_{B}\nabla \varepsilon)+\varphi_{i,B}\varepsilon
-\psi_{B}((Q_{b}+\varepsilon)^{3}-Q^{3}_{b})\right)\dd y, \
\end{equation*}
\begin{equation*}
\mathcal{I}_{1,3,4}=2\left(\frac{x_{1s}}{\lambda}-1\right)
\int_{\R^{2}}\partial_{y_{1}}\varepsilon\left(-\nabla \cdot (\psi_{B}\nabla \varepsilon)+\varphi_{i,B}\varepsilon
-\psi_{B}((Q_{b}+\varepsilon)^{3}-Q^{3}_{b})\right)\dd y, \
\end{equation*}

\begin{equation*}
\mathcal{I}_{1,3,5}=2\frac{x_{2s}}{\lambda}\int_{\R^{2}}(\partial_{y_{2}}Q+\partial_{y_{2}}\varepsilon)\left(-\nabla \cdot (\psi_{B}\nabla \varepsilon)+\varphi_{i,B}\varepsilon
-\psi_{B}((Q_{b}+\varepsilon)^{3}-Q^{3}_{b})\right)\dd y.\
\end{equation*}
Here, we denote 
\begin{equation*}
\begin{aligned}
\widetilde{\rm{Mod}}&={\rm{Mod}}-\left(\frac{x_{1s}}{\lambda}-1\right)(\partial_{y_{1}}Q+\partial_{y_{1}}\varepsilon)\\
&-\frac{x_{2s}}{\lambda}(\partial_{y_{2}}Q+\partial_{y_{2}}\varepsilon)-\left(\frac{\lambda_{s}}{\lambda}+b\right)
\left(\Lambda Q+\Lambda \varepsilon\right).
\end{aligned}
\end{equation*}

Indeed, by integration by parts, we deduce that,
\begin{equation*}
\mathcal{I}_{1,3,1}=-2\int_{\R^{2}}\nabla \cdot \left(\psi_{B}\nabla {\widetilde{\rm{Mod}}}\right
)\varepsilon\dd y+
2\int_{\R^{2}}\widetilde{\rm{Mod}}\left(\varphi_{i,B}\varepsilon
-\psi_{B}((Q_{b}+\varepsilon)^{3}-Q^{3}_{b})\right)\dd y.
\end{equation*}

Based on Lemma~\ref{le:psiphi3} and Lemma~\ref{le:modu1}, we have 
\begin{equation*}
\begin{aligned}
&\left|\widetilde{{\rm{Mod}}}\right|+\left|\nabla\widetilde{{\rm{Mod}}}\right|+\sum_{|\alpha|=2}\left|\partial_{y}^{\alpha}\widetilde{{\rm{Mod}}}\right|\\
&\lesssim \left(b^{2}+\mathcal{N}_{2}^{\frac{1}{2}}\right)\left(e^{-\frac{|y_{2}|}{4}}\mathbf{1}_{[-2,0]}(|b|^{\frac{3}{4}}y_{1})+e^{-\frac{|y|}{4}}\mathbf{1}_{[0,\infty)}(y_{1})\right)\left(B\varphi'_{i,B}+\psi_{B}\right).
\end{aligned}
\end{equation*}

Therefore, from~\eqref{est:psiphi3}, Lemma~\ref{le:psiphi2} and~Lemma~\ref{le:weight2D}, we obtain
\begin{equation}\label{est:I131}
\begin{aligned}
\mathcal{I}_{1,3,1}
&\lesssim 
\left(b^{2}+\mathcal{N}_{2}^{\frac{1}{2}}\right)\int_{\R}\int_{-\infty}^{0}e^{-\frac{|y_{2}|}{4}}|\varepsilon|\left(B\varphi'_{i,B}+\psi_{B}\right)\dd y_{1}\dd y_{2}\\
&+\left(b^{2}+\mathcal{N}_{2}^{\frac{1}{2}}\right)\int_{\R}\int_{0}^{\infty}e^{-\frac{|y|}{4}}|\varepsilon|\left(B\varphi'_{i,B}+\psi_{B}\right)\dd y_{1}\dd y_{2}\\
&+\left(b^{2}+\mathcal{N}_{2}^{\frac{1}{2}}\right)\int_{\R^{2}}\varepsilon^{2}\psi_{B}\dd y+\left(b^{2}+\mathcal{N}_{2}^{\frac{1}{2}}\right)\int_{\R^{2}}\varepsilon^{4}\psi_{B}\dd y\\
&\le \frac{C_{7}}{B^{30}}\int_{\R^{2}}\left(|\nabla\varepsilon|^{2}+\varepsilon^{2}\right)\left(B\varphi'_{i,B}+\psi_{B}\right)\dd y+C_{8}b^{4}.
\end{aligned}
\end{equation}

Here, $C_{7}>1$ is a universal constant independent of $B$ and $C_{8}=C_{8}(B)>1$ is a constant dependent only on $B$.

By direct computations, we check the following identity
\begin{equation}\label{equ:KeyI13}
\begin{aligned}
&2\left(-\nabla \cdot (\psi_{B}\nabla \varepsilon)+\varphi_{i,B}\varepsilon
-\psi_{B}((Q_{b}+\varepsilon)^{3}-Q^{3}_{b})\right)\\
&=\mathcal{L}\varepsilon+2(\varphi_{i,B}-1)\varepsilon-\left(3(Q_{b}^{2}-Q^{2})\varepsilon+3Q_{b}\varepsilon^{2}+\varepsilon^{3}\right)\\
&+(2\psi_{B}-1)\left(-\Delta \varepsilon-3Q_{b}^{2}\varepsilon-3Q_{b}\varepsilon^{2}-\varepsilon^{3}\right)-2\psi'_{B}\partial_{y_{1}}\varepsilon.
\end{aligned}
\end{equation}
From the orthogonality condition~\eqref{equ:orth},~\eqref{equ:KeyI13} and integration by parts, 
\begin{equation*}
\begin{aligned}
\mathcal{I}_{1,3,2}
&=-2\left(\frac{\lambda_{s}}{\lambda}+b\right)\int_{\R^{2}}\Lambda Q(2\psi_{B}-1)\left(3Q_{b}^{2}\varepsilon+3Q_{b}\varepsilon^{2}+\varepsilon^{3}\right)\dd y\\
&+2\left(\frac{\lambda_{s}}{\lambda}+b\right)\int_{\R^{2}}\Lambda Q\left(2(\varphi_{i,B}-1)\varepsilon-(2\psi_{B}-1)\Delta \varepsilon-2\psi'_{B}\partial_{y_{1}}\varepsilon\right)\dd y\\
&-2\left(\frac{\lambda_{s}}{\lambda}+b\right)\int_{\R^{2}}\Lambda Q(3(Q_{b}^{2}-Q^{2})\varepsilon+3Q_{b}\varepsilon^{2}+\varepsilon^{3})\dd y.
\end{aligned}
\end{equation*}
Then, from the definition of $\varphi_{i,B}$, Lemma~\ref{le:psiphi} and Lemma~\ref{le:psiphi2},
\begin{equation*}
\begin{aligned}
&|\Lambda Q|\left(|2\psi_{B}-1|+|2\varphi_{i,B}-1|+|\psi''_{B}|\right)+|\partial_{y_{1}}\Lambda Q||\psi'_{B}|\lesssim e^{-\frac{1}{6}B^{\frac{1}{3}}}e^{-\frac{|y|}{10}}\psi_{B}.
\end{aligned}
\end{equation*}
It follows from Lemma~\ref{le:modu1} and Lemma~\ref{le:weight2D} that 
\begin{equation*}
\begin{aligned}
&\left|\left(\frac{\lambda_{s}}{\lambda}+b\right)\int_{\R^{2}}\Lambda Q(2\psi_{B}-1)\left(3Q_{b}^{2}\varepsilon+3Q_{b}\varepsilon^{2}+\varepsilon^{3}\right)\dd y\right|\\
&+\left|\left(\frac{\lambda_{s}}{\lambda}+b\right)\int_{\R^{2}}\Lambda Q\left(2(\varphi_{i,B}-1)\varepsilon-(2\psi_{B}-1)\Delta \varepsilon-2\psi'_{B}\partial_{y_{1}}\varepsilon\right)\dd y\right|\\
&\lesssim \left(b^{2}+\left(\int_{\R^{2}}\varepsilon^{2}e^{-\frac{|y|}{10}}\dd y\right)^{\frac{1}{2}}\right)
\left(|b|+e^{-\frac{1}{6}B^{\frac{1}{3}}}\right)\int_{\R^{2}}e^{-\frac{|y|}{10}}\left(|\varepsilon|+|\varepsilon|^{4}\right)\psi_{B}\dd y\\
&\le \frac{C_{9}}{B^{30}}\int_{\R^{2}}\left(|\nabla \varepsilon|^{2}+\varepsilon^{2}\right)(B\varphi'_{i,B}+\psi_{B})\dd y+C_{10}b^{4}.
\end{aligned}
\end{equation*}
Here, $C_{9}>1$ is a universal constant independent of $B$ and $C_{10}=C_{10}(B)>1$ is a constant dependent only on $B$. Next, using again~\eqref{est:Boot1}, Lemma~\ref{le:modu1} and Lemma~\ref{le:weight2D}, 
\begin{equation*}
\begin{aligned}
&\left|\left(\frac{\lambda_{s}}{\lambda}+b\right)\int_{\R^{2}}\Lambda Q(3(Q_{b}^{2}-Q^{2})\varepsilon+3Q_{b}\varepsilon^{2}+\varepsilon^{3})\dd y\right|\\
&\lesssim  \left(b^{2}+\left(\int_{\R^{2}}\varepsilon^{2}e^{-\frac{|y|}{10}}\dd y\right)^{\frac{1}{2}}\right)
\int_{\R^{2}}e^{-\frac{|y|}{10}}\left(|b||\varepsilon|+\varepsilon^{2}+|\varepsilon|^{3}\right)\dd y\\
&\le \frac{C_{11}}{B^{30}}\int_{\R^{2}}\left(|\nabla \varepsilon|^{2}+\varepsilon^{2}\right)(B\varphi'_{i,B}+\psi_{B})\dd y+C_{12}b^{4}.
\end{aligned}
\end{equation*}
Here, $C_{11}>1$ is a universal constant independent of $B$ and $C_{12}=C_{12}(B)>1$ is a constant dependent only on $B$. 

Combining the above estimates, we deduce that 
\begin{equation}\label{est:I132}
\mathcal{I}_{1,3,2}\le  \frac{C_{13}}{B^{30}}\int_{\R^{2}}\left(|\nabla \varepsilon|^{2}+\varepsilon^{2}\right)(B\varphi'_{i,B}+\psi_{B})\dd y+C_{14}b^{4}.
\end{equation}
Here, $C_{13}>1$ is a universal constant independent of $B$ and $C_{14}=C_{14}(B)>1$ is a constant dependent only on $B$.

Based on a similar argument to the one in the estimate of $\mathcal{I}_{1,3,2}$, we deduce that 
\begin{equation}\label{est:I133}
\mathcal{I}_{1,3,3}\le \frac{C_{15}}{B^{30}}\int_{\R^{2}}\left(|\nabla \varepsilon|^{2}+\varepsilon^{2}\right)\left(B\varphi'_{i,B}+\psi_{B}\right)\dd y+C_{16}b^{4}.
\end{equation}
Here, $C_{15}>1$ is a universal constant independent of $B$ and $C_{16}=C_{16}(B)>1$ is a constant dependent only on $B$.

Note also that, by integration by parts,
\begin{equation*}
\begin{aligned}
&\int_{\R^{2}}\partial_{y_{1}}\varepsilon\left(-\nabla \cdot (\psi_{B}\nabla \varepsilon)+\varphi_{i,B}\varepsilon
-\psi_{B}((Q_{b}+\varepsilon)^{3}-Q^{3}_{b})\right)\dd y\\
&=-\frac{1}{2}\int_{\R^{2}}\left(|\nabla \varepsilon|^{2}\psi'_{B}+\varepsilon^{2}\varphi'_{i,B}
-3\varepsilon^{2}\partial_{y_{1}}(Q^{2}_{b}\psi_{B})-2\varepsilon^{3}\partial_{y_{1}}(Q_{b}\psi_{B})-\frac{1}{2}\varepsilon^{4}\psi'_{B}\right)\dd y.
\end{aligned}
\end{equation*}
It follows from~\eqref{est:Boot1}, Lemma~\ref{le:psiphi3} and Lemma~\ref{le:weight2D} that
\begin{equation*}
\begin{aligned}
&\int_{\R^{2}}\partial_{y_{1}}\varepsilon\left(-\nabla \cdot (\psi_{B}\nabla \varepsilon)+\varphi_{i,B}\varepsilon
-\psi_{B}((Q_{b}+\varepsilon)^{3}-Q^{3}_{b})\right)\dd y\\
&\lesssim \int_{\R^{2}}\left(|\nabla \varepsilon|^{2}+\varepsilon^{2}\right)\left(B\varphi'_{i,B}+\psi_{B}\right)\dd y+b^{4}.
\end{aligned}
\end{equation*}
Therefore, from~\eqref{le:modu1}, we obtain
\begin{equation}\label{est:I134}
\mathcal{I}_{1,3,4}\le \frac{C_{17}}{B^{30}}\int_{\R^{2}}\left(|\nabla \varepsilon|^{2}+\varepsilon^{2}\right)\left(B\varphi'_{i,B}+\psi_{B}\right)\dd y+C_{18}b^{4}.
\end{equation}
Here, $C_{17}>1$ is  a universal constant independent of $B$ and $C_{18}=C_{18}(B)>1$ is a constant dependent only on $B$.

\smallskip
Based on a similar argument to the one in the estimate of $\mathcal{I}_{1,3,2}$ and $\mathcal{I}_{1,3,4}$, we deduce that 
\begin{equation}\label{est:I135}
\mathcal{I}_{1,3,5}\le\frac{C_{19}}{B^{30}}\int_{\R^{2}}\left(|\nabla \varepsilon|^{2}+\varepsilon^{2}\right)\left(B\varphi'_{i,B}+\psi_{B}\right)\dd y+C_{20}b^{4}.
\end{equation}
Here, $C_{19}>1$ is a universal constant independent of $B$ and $C_{20}=C_{20}(B)>1$ is a constant dependent only on $B$.

\smallskip

Combining estimates~\eqref{est:I131}--\eqref{est:I135}, we conclude that 
\begin{equation}\label{est:I13}
\mathcal{I}_{1,3}\le \frac{C_{21}}{B^{30}}\int_{\R^{2}}\left(|\nabla\varepsilon|^{2}+\varepsilon^{2}\right)\left(B\varphi'_{i,B}+\psi_{B}\right)\dd y+C_{22}b^{4}.
\end{equation}
Here, $C_{21}>1$ is a universal constant independent of $B$ and $C_{22}=C_{22}(B)>1$ is a constant dependent only on $B$.

\smallskip
We see that~\eqref{est:I1} follows from~\eqref{est:I11},~\eqref{est:I12} and~\eqref{est:I13}.

\smallskip
\textbf{Step 2.} Estimate on $\mathcal{I}_{2}$. We claim that, there exist some universal constants $B>100$ large enough and $0<\kappa_{1}<B^{-100}$ small enough, such that 
\begin{equation}\label{est:I2}
\mathcal{I}_{2}\le \frac{C_{23}}{B^{30}}\int_{\R^{2}}\left(|\nabla\varepsilon|^2+\varepsilon^2\right)(B\varphi'_{i,B}+\psi_{B})\dd y+C_{24}b^4.
\end{equation}
Here, $C_{23}>1$ is a universal constant independent of $B$ and $C_{24}=C_{24}(B)>1$ is a constant depending only on $B$. 

Indeed, using~Lemma~\ref{le:equvar} and integration by parts,
\begin{equation*}
\begin{aligned}
\mathcal{I}_{2}
&=2\mathcal{J}_{i,j}\int_{\R^{2}}\left({\rm{Mod}}-\left(\frac{\lambda_{s}}{\lambda}+b\right)\Lambda \varepsilon\right)\varphi_{i,B}\varepsilon\dd y+2\mathcal{J}_{i,j}\int_{\R^{2}}\Psi_{b}\varphi_{i,B}\varepsilon\dd y\\
&-\mathcal{J}_{i,j}\int_{\R^{2}}(\varphi'_{i,B}-\varphi'''_{i,B})\varepsilon^{2}\dd y-
\mathcal{J}_{i,j}\int_{\R^{2}}\varphi'_{i,B}\left(3(\partial_{y_{1}}\varepsilon)^{2}+(\partial_{y_{2}}\varepsilon)^{2}\right)\dd y\\
&-2\int_{\R^{2}}(\mathcal{J}_{i,j}\varphi_{i,B}\partial_{y_{1}}Q_{b}+\psi_{B}\partial_{s}Q_{b})\left(3Q_{b}\varepsilon^{2}+\varepsilon^{3}\right)\dd y\\
&+\frac{1}{2}\mathcal{J}_{i,j}\int_{\R^{2}}\varphi'_{i,B}\left(6Q_{b}^{2}\varepsilon^{2}+8Q_{b}\varepsilon^{3}+3\varepsilon^{4}\right).
\end{aligned}
\end{equation*}
On the other hand, from the definition of $\mathcal{J}_{i,j}$ and~\eqref{est:Boot1},
\begin{equation*}
|\mathcal{J}_{i,j}|\lesssim J_{1}\lesssim \left(B^{10}\mathcal{N}_{2}\right)^{\frac{1}{2}}\lesssim (B^{10}\kappa)^{\frac{1}{2}}.
\end{equation*}
Therefore, using a similar argument to the one in Step 1, we complete the proof of~\eqref{est:I2} by taking $\kappa$ small enough and $B$ large enough.

\smallskip
\textbf{Step 3.} Estimate on $\mathcal{I}_{3}$. We claim that, there exist some universal constants $B>100$ large enough and $0<\kappa_{1}<B^{-100}$ small enough, such that 
\begin{equation}\label{est:I3}
\mathcal{I}_{3}\le \frac{C_{25}}{B^{30}}\int_{\R^{2}}\left(|\nabla\varepsilon|^2+\varepsilon^2\right)(B\varphi'_{i,B}+\psi_{B})\dd y+C_{26}b^4.
\end{equation}
Here, $C_{25}>1$ is a universal constant independent of $B$ and $C_{26}=C_{26}(B)>1$ is a constant depending only on $B$. 

Indeed, by an elementary computation, we decompose 
\begin{equation*}
\mathcal{I}_{3}=\mathcal{I}_{3,1}+\mathcal{I}_{3,2}+\mathcal{I}_{3,3}+\mathcal{I}_{3,4},
\end{equation*}
where
\begin{equation*}
\begin{aligned}
\mathcal{I}_{3,1}&=2\frac{\lambda_{s}}{\lambda}\int_{\R^{2}}\psi_{B}\Lambda Q_{b}\left(3Q_{b}^{2}\varepsilon^{2}+\varepsilon^{3}\right)\dd y,\\
\mathcal{I}_{3,2}&=\frac{\lambda_{s}}{\lambda}\int_{\R^{2}}
\left(\left(2-\theta(j-1)\right)\psi_{B}-y_{1}\psi'_{B}\right)|\nabla \varepsilon|^{2}\dd y,\\
\mathcal{I}_{3,3}&=-\frac{1}{2}\frac{\lambda_{s}}{\lambda}\int_{\R^{2}}
\left(\left(2-\theta(j-1)\right)\psi_{B}-y_{1}\psi'_{B}\right)(6Q^{2}_{b}\varepsilon^{2}+4Q_{b}\varepsilon^{3}+\varepsilon^{4})\dd y,\\
\mathcal{I}_{3,4}&=\left(\frac{\dd }{\dd s}\mathcal{J}_{i,j}-\theta(j-1)\frac{\lambda_{s}}{\lambda}(1+\mathcal{J}_{i,j})\right)\int_{\R^{2}}\varphi_{i,B}\varepsilon^{2}\dd y-\frac{\lambda_{s}}{\lambda}(1+\mathcal{J}_{i,j})\int_{\R^{2}}y_{1}\varphi'_{i,B}\varepsilon^{2}\dd y.
\end{aligned}
\end{equation*}

\smallskip
\emph{Estimate on $\mathcal{I}_{3,1}$.} 
Using~\eqref{est:Boot1},~\eqref{est:psiphi3}, Lemma~\ref{le:modu1} and Lemma~\ref{le:weight2D}, we obtain
\begin{equation}\label{est:I31}
\begin{aligned}
\mathcal{I}_{3,1}
&\lesssim 
\left(|b|+\left(\int_{\R^{2}}\varepsilon^{2}e^{-\frac{|y|}{10}}\dd y\right)^{\frac{1}{2}}\right)
\int_{\R^{2}}\psi_{B}\left(\varepsilon^{2}+\varepsilon^{4}\right)\dd y\\
&\le \frac{C_{27}}{B^{30}}\int_{\R^{2}}\left(|\nabla \varepsilon|^{2}+\varepsilon^{2}\right)\psi_{B}\dd y+C_{28}b^{4}.
\end{aligned}
\end{equation}
Here, $C_{27}>1$ is a universal constant independent of $B$ and $C_{28}=C_{28}(B)>1$ is a constant depending only on $B$. 

\smallskip
\emph{Estimate on $\mathcal{I}_{3,2}$.}
Using~\eqref{est:Boot1},~\eqref{est:psiphi3}, (iv) of Lemma~\ref{le:psiphi2} and (ii) of Lemma~\ref{le:modu1},
\begin{equation}\label{est:I32}
\begin{aligned}
\mathcal{I}_{3,2}
&\lesssim
\left(|b|+\left(\int_{\R^{2}}\varepsilon^{2}e^{-\frac{|y|}{10}}\dd y\right)^{\frac{1}{2}}\right)
\int_{\R^{2}}|\nabla \varepsilon|^{2}(B\varphi'_{i,B}+\psi_{B})\dd y\\
&\le \frac{C_{29}}{B^{30}}\int_{\R^{2}}\left(|\nabla \varepsilon|^{2}+\varepsilon^{2}\right)(B\varphi'_{i,B}+\psi_{B})\dd y+C_{30}b^{4}.
\end{aligned}
\end{equation}
Here, $C_{29}>1$ is a universal constant independent of $B$ and $C_{30}=C_{30}(B)>1$ is a constant depending only on $B$. 

\smallskip
\emph{Estimate on $\mathcal{I}_{3,3}$.}
Using a similar argument to the one above, we obtain
\begin{equation}\label{est:I33}
\begin{aligned}
\mathcal{I}_{3,3}
&\lesssim
\left(|b|+\left(\int_{\R^{2}}\varepsilon^{2}e^{-\frac{|y|}{10}}\dd y\right)^{\frac{1}{2}}\right)
\int_{\R^{2}}\left( \varepsilon^{2}+\varepsilon^{4}\right)(\sqrt{\psi_{B}}+\psi_{B})\dd y\\
&\le \frac{C_{31}}{B^{30}}\int_{\R^{2}}\left(|\nabla \varepsilon|^{2}+\varepsilon^{2}\right)(B\varphi'_{i,B}+\psi_{B})\dd y+C_{32}b^{4}.
\end{aligned}
\end{equation}
Here, $C_{31}>1$ is a universal constant independent of $B$ and $C_{32}=C_{32}(B)>1$ is a constant depending only on $B$. 

\smallskip
\emph{Estimate on $\mathcal{I}_{3,4}$.} According to the integration region,  we decompose 
\begin{equation*}
\mathcal{I}_{3,4}=\mathcal{I}_{3,4,1}+\mathcal{I}_{3,4,2}+\mathcal{I}_{3,4,3},
\end{equation*}
where 
\begin{equation*}
\begin{aligned}
\mathcal{I}_{3,4,1}&=-\frac{\lambda_{s}}{\lambda}(1+\mathcal{J}_{i,j})\int_{\R}\int_{-\infty}^{B^{10}}y_{1}\varphi'_{i,B}\varepsilon^{2}\dd y_{1}\dd y_{2},\\
\mathcal{I}_{3,4,2}&=\left(\frac{\dd }{\dd s}\mathcal{J}_{i,j}-\theta(j-1)\frac{\lambda_{s}}{\lambda}(1+\mathcal{J}_{i,j})\right)\int_{\R}\int_{-\infty}^{B^{10}}\varphi_{i,B}\varepsilon^{2}\dd y_{1}\dd y_{2},
\end{aligned}
\end{equation*}
\begin{equation*}
\begin{aligned}
\mathcal{I}_{3,4,3}&=-\frac{\lambda_{s}}{\lambda}(1+\mathcal{J}_{i,j})\int_{\R}\int_{B^{10}}^{\infty}y_{1}\varphi'_{i,B}\varepsilon^{2}\dd y_{1}\dd y_{2}\\
&+\left(\frac{\dd }{\dd s}\mathcal{J}_{i,j}-\theta(j-1)\frac{\lambda_{s}}{\lambda}(1+\mathcal{J}_{i,j})\right)\int_{\R}\int_{B^{10}}^{\infty}\varphi_{i,B}\varepsilon^{2}\dd y_{1}\dd y_{2}.
\end{aligned}
\end{equation*}
Note that, from~\eqref{est:Boot1}, the definition of $\mathcal{J}_{i,j}$ and Lemma~\ref{le:modu2}, 
\begin{equation*}
\left|\frac{\dd}{\dd s}\mathcal{J}_{i,j}\right|+\left|\mathcal{J}_{i,j}\right|\lesssim 
|J_{1s}|+|J_{1}|
\lesssim \kappa^{\frac{1}{2}}+B^{5}\kappa+B^{5}\kappa^{2}\lesssim B^{-50}.
\end{equation*}
Note also that, for any $B$ large enough and $y_{1}<B^{10}$, we have 
\begin{equation*}
|y_{1}\varphi'_{i,B}|\lesssim |B\varphi'_{i,B}|^{\frac{99}{100}}.
\end{equation*}
It follows from~\eqref{est:Boot1}, (iv) of Lemma~\ref{le:psiphi2}, Lemma~\ref{le:modu1} and the H\"older inequality that
\begin{equation*}
\begin{aligned}
\mathcal{I}_{3,4,1}
&\lesssim
B^{\frac{99}{100}}\|\varepsilon\|^{\frac{1}{100}}_{L^{2}}\left( |b|+\left(\int_{\R^{2}}\varepsilon^{2}e^{-\frac{|y|}{10}}\dd y\right)^{\frac{1}{2}}\right)\left(\int_{\R^{2}}\varepsilon^{2}\varphi'_{i,B}\dd y\right)^{\frac{99}{100}}\\
&\le \frac{C_{33}}{B^{30}}\int_{\R^{2}}(|\nabla \varepsilon|^{2}+\varepsilon^{2})(B\varphi'_{i,B}+\psi_{B})\dd y+C_{34}b^{4},\\
\mathcal{I}_{3,4,2}
&\lesssim \frac{1}{B^{30}}\int_{\R^{2}}(|\nabla \varepsilon|^{2}+\varepsilon^{2})(B\varphi'_{i,B}+\psi_{B})\dd y.
\end{aligned}
\end{equation*}
Here, $C_{33}>1$ is a universal constant independent of $B$ and $C_{34}=C_{34}(B)>1$ is a constant depending only on $B$. 

On the other hand, using the definition of $\varphi_{i,B}$, 
\begin{equation*}
(i+6)\varphi_{i,B}(y_{1})-y_{1}\varphi'_{i,B}(y_{1})=\frac{\psi'_{B}(y_{1})}{\sqrt{2\psi_{B}(y_{1})}}\left(\frac{y_{1}}{B^{10}}\right)^{i+6},\quad \mbox{for}\ y_{1}\ge B^{10}.
\end{equation*}
Based on the above identity, we rewrite the term $\mathcal{I}_{3,4,3}$ by 
\begin{equation*}
\mathcal{I}_{3,4,3}=\mathcal{I}_{3,4,3,1}+\mathcal{I}_{3,4,3,2},
\end{equation*}
where
\begin{equation*}
\begin{aligned}
\mathcal{I}_{3,4,3,1}
&=\frac{1}{i+6}\left(\frac{\dd }{\dd s}\mathcal{J}_{i,j}
-\left(\theta(j-1)+i+6\right)\left(1+\mathcal{J}_{i,j}\right)\frac{\lambda_{s}}{\lambda}\right)\int_{\R}\int_{B^{10}}^{\infty}y_{1}\varphi'_{i,B}\varepsilon^{2}\dd y_{1}\dd y_{2},\\
\mathcal{I}_{3,4,3,2}&=\frac{1}{i+6}\left(\frac{\dd }{\dd s}\mathcal{J}_{i,j}
-\theta(j-1)\left(1+\mathcal{J}_{i,j}\right)\frac{\lambda_{s}}{\lambda}\right)\int_{\R}\int_{B^{10}}^{\infty}\frac{\psi'_{B}}{\sqrt{2\psi_{B}}}\left(\frac{y_{1}}{B^{10}}\right)^{i+6}\varepsilon^{2}\dd y_{1}\dd y_{2}.
\end{aligned}
\end{equation*}

Note that, from the definition of $\mathcal{J}_{i,j}$ and an elementary computation,
\begin{equation*}
\begin{aligned}
&\frac{\dd }{\dd s}\mathcal{J}_{i,j}
-\left(\theta(j-1)+i+6\right)\left(1+\mathcal{J}_{i,j}\right)
\frac{\lambda_{s}}{\lambda}\\
&=\left(\theta(j-1)+i+6\right)\left(1-J_{1}\right)^{-2\theta(j-1)-2i-13}\left(2J_{1s}-\frac{\lambda_{s}}{\lambda}+\frac{\lambda_{s}}{\lambda}J_{1}\right).
\end{aligned}
\end{equation*}
It follows from~\eqref{est:Boot1}, Lemma~\ref{le:modu1} and Lemma~\ref{le:modu2} that
\begin{equation*}
\begin{aligned}
&\left|\frac{\dd }{\dd s}\mathcal{J}_{i,j}
-\left(\theta(j-1)+i+6\right)\left(1+\mathcal{J}_{i,j}\right)
\frac{\lambda_{s}}{\lambda}\right|&\lesssim |b|+B^{5}\int_{\R^{2}}\varepsilon^{2}(B^{10}\varphi'_{i,B}+\psi_{B})\dd y.
\end{aligned}
\end{equation*}
Using~\eqref{est:Boot1} and $\theta>\frac{8}{5}>\frac{100}{93}>\frac{50}{47}$, we have 
\begin{equation*}
(|b|+\mathcal{N}_{2})\left(1+\frac{1}{\lambda^{\theta}}\right)\le 2\kappa\Longrightarrow
(|b|+\mathcal{N}_{2})\left(1+\frac{1}{\lambda^{\frac{8}{5}}}+\frac{1}{\lambda^{\frac{50}{47}}}+\frac{1}{\lambda^{\frac{93}{100}}}\right)\le 8\kappa.
\end{equation*}
Moreover, we have
\begin{equation*}
\begin{aligned}
\frac{|b|}{\lambda^{\frac{50}{47}}}&\lesssim|b|^{\frac{63}{188}}
\kappa^{\frac{125}{188}},\quad 
\frac{\mathcal{N}_{2}}{\lambda^{\frac{50}{47}}}\lesssim\mathcal{N}_{2}^{\frac{63}{188}}
\kappa^{\frac{125}{188}},\\
\frac{|b|}{\lambda^{\frac{100}{93}}}&\lesssim|b|^{\frac{61}{186}}
\kappa^{\frac{125}{186}},\quad 
\frac{\mathcal{N}_{2}}{\lambda^{\frac{100}{93}}}\lesssim\mathcal{N}_{2}^{\frac{61}{186}}
\kappa^{\frac{125}{186}}.
\end{aligned}
\end{equation*}
Next, using again the definition of $\varphi_{i,B}$, we find
\begin{equation*}
\left|y_{1}\varphi'_{i,B}\right|\lesssim B^{-10(i+6)}y_{1}^{i+6},\quad \mbox{on}\ y_{1}>B^{10}.
\end{equation*}
Therefore, for $i=1$, from~\eqref{est:y17} and the Cauchy-Schwarz inequality, we deduce that 
\begin{equation*}
\begin{aligned}
\mathcal{I}_{3,4,3}
&\lesssim 
\left(|b|+B^{5}\int_{\R^{2}}\varepsilon^{2}(B^{10}\varphi'_{1,B}+\psi_{B})\dd y\right)
\int_{\R^{2}}\psi_{B}\varepsilon^2\dd y\\
&+ \left(1+\frac{1}{\lambda^{\frac{50}{47}}}\right)
\left(|b|+B^{5}\int_{\R^{2}}\varepsilon^{2}(B^{10}\varphi'_{1,B}+\psi_{B})\dd y\right)
\left(\int_{\R^{2}}\varphi'_{1,B}\varepsilon^2\dd y\right)^{\frac{93}{94}}\\
&\lesssim \frac{1}{B^{30}}\int_{\R^{2}}\left(|\nabla \varepsilon|^{2}+\varepsilon^{2}\right)\left(B\varphi'_{1,B}+\psi_{B}\right)\dd y+b^{4}.
  \end{aligned}
\end{equation*}
Similarly, for $i=2$, from~\eqref{est:y18} and the Cauchy-Schwarz inequality, we deduce that 
\begin{equation*}
\begin{aligned}
\mathcal{I}_{3,4,3}
&\lesssim 
\left(|b|+B^{5}\int_{\R^{2}}\varepsilon^{2}(B^{10}\varphi'_{2,B}+\psi_{B})\dd y\right)
\int_{\R^{2}}\psi_{B}\varepsilon^2\dd y\\
&+ \left(1+\frac{1}{\lambda^{\frac{100}{93}}}\right)
\left(|b|+B^{5}\int_{\R^{2}}\varepsilon^{2}(B^{10}\varphi'_{2,B}+\psi_{B})\dd y\right)
\left(\int_{\R^{2}}\varphi'_{2,B}\varepsilon^2\dd y\right)^{\frac{92}{93}}\\
&\lesssim \frac{1}{B^{30}}\int_{\R^{2}}\left(|\nabla \varepsilon|^{2}+\varepsilon^{2}\right)\left(B\varphi'_{2,B}+\psi_{B}\right)\dd y+b^{4}.
  \end{aligned}
\end{equation*}
On the other hand, using again~\eqref{est:Boot1}, Lemma~\ref{le:modu1} and Lemma~\ref{le:modu2},
\begin{equation*}
\begin{aligned}
&\left|\frac{\dd }{\dd s}\mathcal{J}_{i,j}
-\theta(j-1)\left(1+\mathcal{J}_{i,j}\right)
\frac{\lambda_{s}}{\lambda}\right|\\
&\lesssim |b|+\left(\int_{\R^{2}}\varepsilon^{2}e^{-\frac{|y|}{10}}\dd y\right)^{\frac{1}{2}}+B^{5}\int_{\R^{2}}\varepsilon^{2}(B^{10}\varphi'_{i,B}+\psi_{B})\dd y.
\end{aligned}
\end{equation*}

Moreover, from the definition of $\psi_{B}$, we see that 
\begin{equation*}
\left|\frac{\psi'_{B}}{\sqrt{2\psi_{B}}}\left(\frac{y_{1}}{B^{10}}\right)^{i+6}\right|=\frac{1}{B}\exp\left(-2\left(\frac{y_{1}}{B^{\frac{2}{3}}}+\frac{1}{3}B^{\frac{1}{3}}\right)\right)\left(\frac{y_{1}}{B^{10}}\right)^{i+6}\lesssim B^{-30}\varphi'_{i,B}.
\end{equation*}
It follows that 
\begin{equation*}
\mathcal{I}_{3,4,3,2}\lesssim \frac{1}{B^{30}}\int_{\R^{2}}\varepsilon^{2}\varphi'_{i,B}\dd y.
\end{equation*}
Combining the above estimates, we obtain
\begin{equation}\label{est:I34}
\mathcal{I}_{3,4}
\le \frac{C_{35}}{B^{30}}\int_{\R^{2}}\left(|\nabla \varepsilon|^{2}+\varepsilon^{2}\right)(B\varphi'_{i,B}+\psi_{B})\dd y+C_{36}b^{4}.
\end{equation}
Here, $C_{35}>1$ is a universal constant independent of $B$ and $C_{36}=C_{36}(B)>1$ is a constant depending only on $B$. 

\smallskip
We see that~\eqref{est:I3} follows from~\eqref{est:I31},~\eqref{est:I32},~\eqref{est:I33} and \eqref{est:I34}.

\smallskip
\textbf{Step 4.} Conclusion. Combining the estimates~\eqref{equ:dsFij},~\eqref{est:I1},~\eqref{est:I2} and~\eqref{est:I3}, we complete the proof of~\eqref{est:dsFij} by taking $B$ large enough.
\end{proof}

\subsection{Virial estimate}\label{SS:VIRIAL}
In this subsection, we introduce the virial estimate related to the solution of~\eqref{CP}. As we mentioned in \S\ref{SS:Comm}, since the lack of coercivity of the Schr\"odinger operator appears in the primal virial estimate of $\varepsilon$, we should first introduce a transformed problem, and then based on the special structure of the transformed linearized problem and numerical computation, we could obtain the coercivity and virial estimates for this transformed problem (see more details in Proposition~\ref{Prop:Virial},~Lemma~\ref{le:coerl} and~\cite[Lemma 14.2 and \S16]{FHRY}).

\smallskip
We define the smooth function $\sigma\in [0,1]$ as follows,
\begin{equation*}
\sigma(y_1)=
\begin{cases}
0,&\text{ for } |y_{1}|>2,\\
1,&\text{ for } |y_{1}|\le 1.
\end{cases}
\end{equation*}
Moreover, we define the smooth function $\psi_{0}\in (0,1]$ as follows,
\begin{equation*}
\psi_{0}(y_1)=
\begin{cases}
e^{6y_1},&\text{ for }y_1<-1,\\
\frac{1}{2},&\text{ for }y_1>-\frac{1}{2},
\end{cases}
\quad \mbox{with}\ \  \psi_0'(y_1)\ge 0,\ \ \mbox{on}\ \mathbb{R}.
\end{equation*}
We also define the smooth function $\psi_{1}\in (0,\infty)$ as follows,
\begin{equation*}
\psi_{1}(y_1)=
\begin{cases}
e^{10y_1},&\text{ for }y_1<-1,\\
1+y_{1},&\text{ for }y_1>-\frac{1}{2},
\end{cases}
\quad \mbox{with}\ \  \psi_1'(y_1)> 0,\ \ \mbox{on}\ \mathbb{R}.
\end{equation*}
Let $B>100$ be a large enough universal constant to be chosen later. We set 
\begin{equation*}
\psi_{0,B}(y_1)=\psi_0\left(\frac{y_1}{B}\right)\quad \mbox{and}\quad 
\psi_{1,B}(y_1)=\psi_1\left(\frac{y_1}{B}\right).
\end{equation*}
We also set 
\begin{equation*}
\chi_B(y_1)=
\begin{cases}
\sigma\left(\frac{y_1}{2B}\right)\int_0^{y_1}\frac{2}{B}\psi_{0,B}(\rho)\dd \rho,\quad\quad  \mbox{for}\ y_{1}\le 0,\\
\sigma\left(\frac{y_1}{10B^{10}}\right)\int_0^{y_1}\frac{2}{B}\psi_{0,B}(\rho)\dd \rho,\quad \mbox{for}\ y_{1}> 0.
\end{cases}
\end{equation*}
By the definition of the weight functions, we have the following pointwise estimate.
\begin{lemma}\label{le:pointchi}
The following estimates hold.
\begin{enumerate}
\item \emph{Estimates on $\varphi_{i,B}$.} For $i=1,2$, we have 
\begin{equation*}
\begin{aligned}
|\varphi''_{i,B}|&\lesssim B^{-\frac{2}{3}}\varphi'_{i,B}+B^{-20}\psi_{0,B},\quad \mbox{on}\ \R,\\
|\varphi'''_{i,B}|&\lesssim B^{-\frac{4}{3}}\varphi'_{i,B}+B^{-30}\psi_{0,B},\quad \mbox{on}\ \R.
\end{aligned}
\end{equation*}

\item \emph{Estimates on $\psi_{0,B}$ and $\chi_{B}$.} We have 
\begin{equation*}
\begin{aligned}
B|\chi'_{B}|+B^{2}|\chi''_{B}|+B^{3}\left|\chi'''_{B}\right|&\lesssim \psi_{0,B},\quad \mbox{on}\ \R,\\
B\left|\psi'_{0,B}\right|+B^{2}\left|\psi''_{0,B}\right|+B^{3}|\psi'''_{0,B}|&\lesssim \psi_{0,B},\quad \mbox{on}\ \R.
\end{aligned}
\end{equation*}

\item \emph{First-type estimates on $\chi_{B}$.} We have 
\begin{equation*}
\chi_{B}\lesssim \min\left(B^{9}\psi_{0,B},\psi_{1,B}\sqrt{\psi_{0,B}}\right),\quad \mbox{on}\ \R.
\end{equation*}

\item \emph{Second-type estimates on $\chi_{B}$.} We have 
\begin{equation*}
\left|\chi'_{B}-\frac{2}{B}\psi_{0,B}\right|\lesssim B^{9} \varphi'_{i,B},\quad \mbox{on}\ \R.
\end{equation*}

\item \emph{Third-type estimates on $\chi_{B}$.} We have 
\begin{equation*}
\begin{aligned}
\left|\chi'_{B}-\frac{2}{B}\psi_{0,B}\right|&\lesssim \mathbf{1}_{\left(-\infty,-\frac{B}{2}\right]}(y_{1})+\mathbf{1}_{\left[B^{10},\infty\right)}(y_{1}),\quad \quad \quad \ \ \mbox{on}\ \R,\\
\left|\chi_{B}-\frac{2y_{1}}{B}\psi_{0,B}\right|&\lesssim \left(\mathbf{1}_{\left(-\infty,-\frac{B}{2}\right]}(y_{1})+\mathbf{1}_{\left[B^{10},\infty\right)}(y_{1})\right)|y_{1}|,\quad \mbox{on}\ \R.
\end{aligned}
\end{equation*}
\end{enumerate}
\end{lemma}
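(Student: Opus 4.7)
The plan is to verify each assertion by direct computation using the explicit piecewise definitions of $\psi_0$, $\psi_1$, $\sigma$, and the identities $\psi_{0,B}(y_1) = \psi_0(y_1/B)$, $\psi_{1,B}(y_1) = \psi_1(y_1/B)$, and the integral representation of $\chi_B$. Since all the weight functions are assembled from an exponential piece on $y_1 < -B$, a transition region near $y_1 \in [-B, -B/2]$, and a constant/polynomial piece on $y_1 > -B/2$, the strategy is to split $\R$ into these three regions, verify the stated pointwise inequalities on each, and glue the pieces using the smoothness of the cutoffs.

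For part (i), the argument mirrors the proof of (iv) of Lemma~\ref{le:psiphi2}, the only change being the replacement of $\psi_B$ by $\psi_{0,B}$. Since the formulas for $\varphi_{i,B}''$ and $\varphi_{i,B}'''$ from that proof express these derivatives in terms of $\psi_B', \psi_B'', \psi_B'''$, $\psi_B$ and $\vartheta_i$, one checks that the remainder contributions (the ``bulk'' pieces not already absorbed into $\varphi'_{i,B}$) are supported where $\psi_B$ and $\psi_{0,B}$ are comparable up to universal multiplicative constants, which gives (i). For part (ii), differentiating $\psi_{0,B}(y_1) = \psi_0(y_1/B)$ produces a factor $B^{-k}$ at the $k$th derivative; on $y_1 < -B$ the function $\psi_0$ is purely exponential so all its derivatives are bounded pointwise by $\psi_0$, while on $y_1 > -B/2$ the function is constant so its derivatives vanish. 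The derivatives of $\chi_B$ are computed by differentiating through the cutoffs $\sigma(y_1/(2B))$ and $\sigma(y_1/(10B^{10}))$; each such cutoff derivative contributes a factor $B^{-1}$ or $B^{-10}$, and the only surviving term has the $\frac{2}{B}\psi_{0,B}$ prefactor, giving the claimed $B^{-k}\psi_{0,B}$ bounds.

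For part (iii), one separates $y_1 \in [-4B, -B/2]$ (where $\chi_B$ is bounded by $|\int_0^{y_1} \frac{2}{B}\psi_{0,B}| \lesssim \psi_{0,B}$ via the exponential decay), $y_1 \in [-B/2, 10B^{10}]$ (where both $\sigma$'s equal $1$, $\psi_{0,B} = \tfrac{1}{2}$, and $\chi_B = y_1/B \lesssim B^9 \psi_{0,B}$ and also $\chi_B \lesssim \psi_{1,B} \sqrt{\psi_{0,B}}$ since $\psi_{1,B} \sim y_1/B$), and $y_1 \in [10B^{10}, 20B^{10}]$ (where the outer cutoff provides the factor absorbing the growth). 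For parts (iv) and (v), the representation
\[
\chi_B'(y_1) - \frac{2}{B}\psi_{0,B}(y_1) = \sigma'(\cdot)\int_0^{y_1} \frac{2}{B}\psi_{0,B}(\rho)\,\dd\rho + (\sigma(\cdot)-1)\frac{2}{B}\psi_{0,B}(y_1)
\]
shows that the difference is supported in the two transition windows $y_1 \in [-2B, -B/2]$ and $y_1 \in [10B^{10}, 20B^{10}]$. On each window one inserts the explicit size of the weight: in the first window $\psi_{0,B}$ is bounded by $e^{-3}$ and is dominated by $\varphi'_{i,B}$ up to a factor $B$, while in the second window $\varphi'_{i,B} \sim y_1^{i+5}/B^{10(i+6)}$ and $y_1 \sim B^{10}$, so the ratio $\frac{2\psi_{0,B}}{B\varphi'_{i,B}} \lesssim B^9$; the pointwise comparison $\chi_B' - \frac{2}{B}\psi_{0,B} \lesssim B^9 \varphi'_{i,B}$ follows. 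The indicator-type bound in (v) is even cruder: on each transition window the left-hand side is simply $O(B^{-1})$, which is absorbed by the indicator, and the integrated version for $|\chi_B - \frac{2y_1}{B}\psi_{0,B}|$ picks up the extra factor $|y_1|$ by integration.

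The routine obstacle is bookkeeping: ensuring that the support of every transition term lies where the comparison weight on the right-hand side has sufficient mass, especially when comparing against $\varphi'_{i,B}$ whose growth is polynomial of degree $i+5$ and is concentrated at $y_1 \sim B^{10}$. This alignment, together with carefully tracking the $B$-powers through each cutoff derivative, is the only delicate point; no new analytic ideas beyond those used in Lemmas~\ref{le:psiphi} and~\ref{le:psiphi2} are required.
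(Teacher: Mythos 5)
Your overall strategy---split $\R$ into the pure regions and the transition windows of the cutoffs, verify pointwise estimates on each region, and glue---is exactly what the paper intends (its proof is omitted, citing Lemmas~\ref{le:psiphi}--\ref{le:psiphi3} as templates), and the key observation for part~(i), namely that the $\vartheta_i''$ and $\vartheta_i'''$ terms live where $\psi_B$ and $\psi_{0,B}$ are both comparable to $1/2$, is the right one.

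However, there is a concrete bookkeeping error in your treatment of parts~(iv)--(v). The cutoff $\sigma(y_1/(2B))$ equals $1$ for $|y_1|\le 2B$ and vanishes for $|y_1|>4B$, so on $y_1\le 0$ its transition window is $[-4B,-2B]$, \emph{not} $[-2B,-B/2]$. On your claimed window $[-2B,-B/2]$ one has $\sigma\equiv 1$ and $\sigma'\equiv 0$, so the difference $\chi_B'-\tfrac{2}{B}\psi_{0,B}$ is identically zero there. Moreover the difference is not supported merely in the transition windows: for $y_1<-4B$ (respectively $y_1>20B^{10}$) one has $\sigma\equiv 0$, hence $\chi_B\equiv 0$ and $\chi_B'\equiv 0$, yet $\tfrac{2}{B}\psi_{0,B}$ remains positive, so $\chi_B'-\tfrac{2}{B}\psi_{0,B}$ is nonzero on all of $(-\infty,-2B]\cup[10B^{10},\infty)$. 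Your statement that ``in the first window $\psi_{0,B}$ is bounded by $e^{-3}$'' is also off: on your claimed window $\psi_{0,B}$ actually reaches $1/2$ at $y_1=-B/2$, while on the true left-tail $(-\infty,-2B]$ it is $\le e^{-12}$.

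Fortunately, the error is self-correcting: on the true support the relevant comparisons are \emph{easier}, because $\psi_{0,B}(y_1)=e^{6y_1/B}$ decays much faster than $\varphi'_{i,B}\sim\tfrac{1}{B}e^{y_1/B}$ as $y_1\to-\infty$ (ratio $\sim e^{5y_1/B}\to 0$), and on $y_1\ge 10B^{10}$ the quantities involved match the claimed $B^9$ ratio as you computed. So the lemma still holds by the argument you outline, but a careful write-up must replace the window $[-2B,-B/2]$ by $[-4B,-2B]$ and track the contribution from $y_1<-4B$ (respectively $y_1>20B^{10}$), where the difference equals $-\tfrac{2}{B}\psi_{0,B}$ alone.
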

\begin{proof}
The proof is directly based on a similar argument to the proof for Lemma~\ref{le:psiphi}--\ref{le:psiphi3} and the definitions of $\chi_{B}$ and $\psi_{0,B}$, and we omit it.
\end{proof}

Let $0<\gamma\ll 1$ be a small enough constant (depending on $B$) to be chosen later. For any $s\in [0,s_{0})$, we set 
\begin{equation*}
\eta=\left(1-\gamma\Delta\right)^{-1}\mathcal{L}\varepsilon\quad \mbox{and}\quad 
\mathcal{P}(s)=\int_{\R^{2}}\eta^2(s,y)\chi_B(y_1)\dd y.
\end{equation*}
Note that, for any $\gamma>0$, we have 
\begin{equation}\label{equ:Q2gamma}
\left[Q^{2},(1-\gamma \Delta)\right]=\gamma\Delta (Q^{2})+2\gamma Q\nabla Q\cdot \nabla.
\end{equation}
We denote 
\begin{equation}\label{equ:modeta}
\begin{aligned}
{\rm{Mod}}_{\eta}
&=\frac{\lambda_{s}}{\lambda}\Lambda \varepsilon+\left(\frac{\lambda_{s}}{\lambda}+b\right)\left(\Lambda Q_{b}-\Lambda Q\right)\\
&+\left(\frac{x_{1s}}{\lambda}-1\right)\left(\partial_{y_{1}}Q_{b}-\partial_{y_{1}}Q+\partial_{y_{1}}\varepsilon\right)\\
&+\frac{x_{2s}}{\lambda}\left(\partial_{y_{2}}Q_{b}-\partial_{y_{2}}Q+\partial_{y_{2}}\varepsilon\right)-b_{s}\frac{\partial Q_{b}}{\partial{b}}.
\end{aligned}
\end{equation}
We now state the equation and the orthogonality conditions of $\eta$.
\begin{lemma}[Equation of $\eta$]\label{le:equeta}
We have
\begin{equation*}
\begin{aligned}
\partial_{s}\eta
&=\mathcal{L}\partial_{y_{1}}\eta-3\gamma(1-\gamma\Delta)^{-1}\left(\Delta(Q^{2})\partial_{y_{1}}\eta+2Q\nabla Q\cdot \nabla \partial_{y_{1}}\eta\right)\\
&+(1-\gamma\Delta)^{-1}{\mathcal{L}}{\rm{Mod}}_{\eta}-2\left(\frac{\lambda_{s}}{\lambda}+b\right)(1-\gamma\Delta)^{-1}Q\\
&+(1-\gamma\Delta)^{-1}\left({\mathcal{L}}\Psi_{b}-{\mathcal{L}}\partial_{y_{1}}R_{b}-{\mathcal{L}}\partial_{y_{1}}R_{NL}\right).
\end{aligned}
\end{equation*}
Moreover, the function $\eta$ satisfies the following orthogonality conditions
\begin{equation*}
\left(\eta,(1-\gamma\Delta)Q\right)=\left(\eta,(1-\gamma\Delta)\partial_{y_{1}}Q\right)=\left(\eta,(1-\gamma\Delta)\partial_{y_{2}}Q\right)=0.
\end{equation*}
\end{lemma}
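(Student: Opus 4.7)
The plan is to prove both claims by direct calculation, using the self-adjointness of $(1-\gamma\Delta)^{-1}$, the commutator identity \eqref{equ:Q2gamma}, and the $\varepsilon$-equation of Lemma~\ref{le:equvar}. For the orthogonality, note that $(1-\gamma\Delta)$ (and hence its resolvent) is self-adjoint on $L^{2}(\R^{2})$, so for any smooth rapidly decaying $f$,
\begin{equation*}
(\eta,(1-\gamma\Delta)f)=\big((1-\gamma\Delta)^{-1}\mathcal{L}\varepsilon,(1-\gamma\Delta)f\big)=(\mathcal{L}\varepsilon,f)=(\varepsilon,\mathcal{L}f).
\end{equation*}
Taking $f=\partial_{y_{i}}Q$ ($i=1,2$) makes the right-hand side vanish since $\partial_{y_{i}}Q\in{\rm Ker}\,\mathcal{L}$ by Proposition~\ref{Prop:Spectral}(i). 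Taking $f=Q$, the soliton equation $-\Delta Q+Q-Q^{3}=0$ gives $\mathcal{L}Q=-2Q^{3}$, so $(\eta,(1-\gamma\Delta)Q)=-2(\varepsilon,Q^{3})=0$ by the orthogonality \eqref{equ:orth}.

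For the evolution equation, I would differentiate $\eta=(1-\gamma\Delta)^{-1}\mathcal{L}\varepsilon$ in $s$ (since $\mathcal{L}$ and $(1-\gamma\Delta)^{-1}$ are time-independent) and substitute Lemma~\ref{le:equvar}, obtaining
\begin{equation*}
\partial_{s}\eta=(1-\gamma\Delta)^{-1}\mathcal{L}\big[\partial_{y_{1}}\mathcal{L}\varepsilon-b\Lambda\varepsilon+{\rm Mod}+\Psi_{b}-\partial_{y_{1}}R_{b}-\partial_{y_{1}}R_{\rm NL}\big].
\end{equation*}
The dispersive term is handled by using $\mathcal{L}\varepsilon=(1-\gamma\Delta)\eta$ to rewrite $\partial_{y_{1}}\mathcal{L}\varepsilon=(1-\gamma\Delta)\partial_{y_{1}}\eta$ and then commuting $\mathcal{L}$ past $(1-\gamma\Delta)$. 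Since $-\Delta+1$ commutes with $1-\gamma\Delta$, one has $[\mathcal{L},1-\gamma\Delta]=-3[Q^{2},1-\gamma\Delta]$, and \eqref{equ:Q2gamma} then yields
\begin{equation*}
(1-\gamma\Delta)^{-1}\mathcal{L}\partial_{y_{1}}\mathcal{L}\varepsilon=\mathcal{L}\partial_{y_{1}}\eta-3\gamma(1-\gamma\Delta)^{-1}\big(\Delta(Q^{2})\partial_{y_{1}}\eta+2Q\nabla Q\cdot\nabla\partial_{y_{1}}\eta\big),
\end{equation*}
matching the first two terms of the claimed equation.

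For the modulation contribution, a direct comparison of the formulas for ${\rm Mod}$ and ${\rm Mod}_{\eta}$ (see the paragraph preceding Lemma~\ref{le:equvar} and \eqref{equ:modeta}) gives
\begin{equation*}
{\rm Mod}-b\Lambda\varepsilon={\rm Mod}_{\eta}+\Big(\tfrac{\lambda_{s}}{\lambda}+b\Big)\Lambda Q+\Big(\tfrac{x_{1s}}{\lambda}-1\Big)\partial_{y_{1}}Q+\tfrac{x_{2s}}{\lambda}\partial_{y_{2}}Q.
\end{equation*}
Applying $\mathcal{L}$ annihilates the two translation terms by Proposition~\ref{Prop:Spectral}(i) and produces $-2(\lambda_{s}/\lambda+b)Q$ from $\mathcal{L}\Lambda Q=-2Q$, thereby yielding the $(1-\gamma\Delta)^{-1}\mathcal{L}\,{\rm Mod}_{\eta}$ and $-2(\lambda_{s}/\lambda+b)(1-\gamma\Delta)^{-1}Q$ contributions in the statement. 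The remainder and nonlinear pieces $(1-\gamma\Delta)^{-1}(\mathcal{L}\Psi_{b}-\mathcal{L}\partial_{y_{1}}R_{b}-\mathcal{L}\partial_{y_{1}}R_{\rm NL})$ are carried through unchanged. The only calculational step of any substance is the commutator $[\mathcal{L},1-\gamma\Delta]$; everything else is algebraic bookkeeping.
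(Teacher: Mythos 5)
Your proof is correct and proceeds exactly as the paper intends: differentiate $\eta=(1-\gamma\Delta)^{-1}\mathcal{L}\varepsilon$, substitute Lemma~\ref{le:equvar}, commute $\mathcal{L}$ past $(1-\gamma\Delta)$ via~\eqref{equ:Q2gamma}, absorb the $\Lambda Q$ and translation terms of ${\rm Mod}-b\Lambda\varepsilon$ using $\mathcal{L}\Lambda Q=-2Q$ and $\mathcal{L}\nabla Q=0$, and obtain the orthogonality conditions by self-adjointness of $(1-\gamma\Delta)^{-1}$ and $\mathcal{L}$ together with $\mathcal{L}Q=-2Q^{3}$ and~\eqref{equ:orth}. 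The paper's own proof is just a citation of these same ingredients plus ``an elementary computation,'' which your argument carries out in full.
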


\begin{proof}
The proof is based on~\eqref{equ:orth},~\eqref{equ:Q2gamma},~\eqref{equ:modeta}, Proposition~\ref{Prop:Spectral}, Lemma~\ref{le:equvar} and an elementary computation.
\end{proof}

In addition, based on the Fourier transform and elementary computations, we have the following identity related to $\varepsilon$ and $\eta$.
\begin{lemma}\label{le:vareeta}
It holds
\begin{equation*}
\begin{aligned}
&(1-\gamma\Delta)^{-1}\mathcal{L}\Lambda\varepsilon-\Lambda\eta\\
&=3(1-\gamma\Delta)^{-1}\left(\varepsilon y\cdot\nabla \left(Q^{2}\right)\right)\\
&+2\gamma(1-\gamma \Delta)^{-2}\Delta\mathcal{L}\varepsilon-2(1-\gamma\Delta)^{-1}\Delta\varepsilon.
\end{aligned}
\end{equation*}
\end{lemma}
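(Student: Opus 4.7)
The identity is purely algebraic: everything is a commutator computation between $\Lambda$, $\mathcal{L}$, and the resolvent $A^{-1}:=(1-\gamma\Delta)^{-1}$. The plan is to first reduce the difference on the left-hand side to a sum of two commutators, and then to evaluate each of them explicitly.

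\emph{Step 1: commutator with the resolvent.} I would first show that $[\Delta,\Lambda]=2\Delta$. Indeed, writing $\Lambda f=f+y_j\partial_j f$ in index notation, a one-line calculation gives $\Delta(y_j\partial_j f)=2\Delta f+y_j\partial_j\Delta f$, so $\Delta\Lambda f-\Lambda\Delta f=2\Delta f$. Hence $[1-\gamma\Delta,\Lambda]=-2\gamma\Delta$. Multiplying this equality on both sides by $A^{-1}$ and using that $\Delta$ commutes with $A$, I obtain the resolvent identity
\begin{equation*}
\Lambda A^{-1}-A^{-1}\Lambda=-2\gamma A^{-2}\Delta.
\end{equation*}
Applying this to $\mathcal{L}\varepsilon$ and recalling $\eta=A^{-1}\mathcal{L}\varepsilon$ yields
\begin{equation*}
A^{-1}\mathcal{L}\Lambda\varepsilon-\Lambda\eta=A^{-1}\bigl(\mathcal{L}\Lambda\varepsilon-\Lambda\mathcal{L}\varepsilon\bigr)+2\gamma A^{-2}\Delta\mathcal{L}\varepsilon,
\end{equation*}
which already accounts for the middle term $2\gamma(1-\gamma\Delta)^{-2}\Delta\mathcal{L}\varepsilon$ on the right-hand side of the claim.

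\emph{Step 2: commutator with $\mathcal{L}$.} It remains to identify $[\mathcal{L},\Lambda]\varepsilon$ with the linear combination $3\varepsilon\, y\cdot\nabla(Q^{2})-2\Delta\varepsilon$. Since $\mathcal{L}=-\Delta+1-3Q^{2}$, for any smooth $f$
\begin{equation*}
\mathcal{L}\Lambda f-\Lambda\mathcal{L}f=-[\Delta,\Lambda]f+[1,\Lambda]f-3\bigl[Q^{2},\Lambda\bigr]f.
\end{equation*}
The first bracket contributes $-2\Delta f$ by Step~1; the second one vanishes; and the third is a multiplication commutator:
\begin{equation*}
[Q^{2},\Lambda]f=Q^{2}\Lambda f-\Lambda(Q^{2}f)=-\bigl(y\cdot\nabla(Q^{2})\bigr)f.
\end{equation*}
Combining these gives $[\mathcal{L},\Lambda]f=-2\Delta f+3\bigl(y\cdot\nabla(Q^{2})\bigr)f$, so applying $A^{-1}$ to $\varepsilon$ produces exactly the two remaining terms $3A^{-1}(\varepsilon\,y\cdot\nabla Q^{2})$ and $-2A^{-1}\Delta\varepsilon$ on the right-hand side.

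\emph{Conclusion and obstacles.} Plugging Step~2 into the expression obtained in Step~1 yields the stated identity. The proof is a short algebraic verification; there is no real analytical obstacle, because the resolvent $A^{-1}$ is a bounded Fourier multiplier and $\varepsilon\in H^{1}(\mathbb{R}^{2})$ is smooth enough to carry out the manipulations (one can first work on $\mathcal{S}(\mathbb{R}^{2})$ using the Fourier characterization of $A^{-1}$ mentioned in the hint, and then pass to $H^{1}$ by density). The only point requiring care is the bookkeeping of the two commutators $[\Delta,\Lambda]$ and $[Q^{2},\Lambda]$ and the fact that $\Delta$ and $A$ commute, which is what makes the $-2\gamma A^{-2}\Delta$ term collapse cleanly into the $2\gamma(1-\gamma\Delta)^{-2}\Delta\mathcal{L}\varepsilon$ contribution on the right-hand side.
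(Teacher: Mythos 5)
Your proof is correct and takes essentially the same route as the paper: split the difference into the commutator $[\mathcal{L},\Lambda]\varepsilon$ plus the commutator of $\Lambda$ with the resolvent, and identify each piece. The only stylistic difference is that the paper verifies the resolvent commutator identity $[(1-\gamma\Delta)^{-1},y]\cdot\nabla = 2\gamma(1-\gamma\Delta)^{-2}\Delta$ via an explicit Fourier-side computation, whereas you obtain $\Lambda A^{-1}-A^{-1}\Lambda=-2\gamma A^{-2}\Delta$ from the physical-space relation $[\Delta,\Lambda]=2\Delta$ together with the abstract identity $[\Lambda,A^{-1}]=A^{-1}[A,\Lambda]A^{-1}$; these are the same fact dressed differently.
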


\begin{proof}
First, we claim that, for any regular function $f$ on $\R^{2}$, 
\begin{equation}\label{equ:Lambdayf}
\left[(1-\gamma\Delta)^{-1},y\right]\cdot \nabla f=2\gamma\left(1-\gamma\Delta\right)^{-2}f.
\end{equation}
Indeed, using the Fourier transform, we have 
\begin{equation*}
\begin{aligned}
&\mathcal{F}\left(\left[(1-\gamma\Delta)^{-1},y\right]\cdot \nabla f\right)(\xi)\\
&=\nabla _{\xi}\cdot\left(\frac{\xi \widehat{f}(\xi)}{1+\gamma|\xi|^{2}}\right)-\left(\frac{1}{1+\gamma|\xi|^{2}}\right)\nabla_{\xi}\cdot \left(\xi\widehat{f}(\xi)\right)\\
&=\nabla_{\xi}\left(\frac{1}{1+\gamma|\xi|^{2}}\right)\cdot \left(\xi\widehat{f}(\xi)\right)=-\frac{2\gamma|\xi|^{2}\widehat{f}(\xi)}{(1+\gamma|\xi|^{2})^{2}}=2\gamma\mathcal{F}\left((1-\gamma\Delta)^{-2}\Delta f\right),
\end{aligned}
\end{equation*}
which implies~\eqref{equ:Lambdayf}. Moreover, from~\eqref{equ:Lambdayf}, we see that 
\begin{equation}\label{equ:Lambdayf2}
y\cdot \nabla f-(1-\gamma \Delta)y\cdot \left(\nabla \left(1-\gamma \Delta\right)^{-1}f\right)=2\gamma(1-\gamma\Delta)^{-1}\Delta f.
\end{equation}
Note that, from the definition of $\eta$, we have 
\begin{equation*}
\mathcal{L}\Lambda \varepsilon=(1-\gamma\Delta)\Lambda \eta -2\Delta\varepsilon
+3\varepsilon y\cdot\nabla \left(Q^{2}\right)+y\cdot \nabla \mathcal{L}\varepsilon-(1-\gamma\Delta)(y\cdot\nabla \eta).
\end{equation*}
Using~\eqref{equ:Lambdayf2} and the definition of $\eta$, we deduce that
\begin{equation*}
y\cdot \nabla \mathcal{L}\varepsilon-(1-\gamma\Delta)(y\cdot\nabla \eta)=2\gamma (1-\gamma\Delta)^{-1}\Delta \mathcal{L}\varepsilon.
\end{equation*}
Combining the above two identities, we complete the proof of Lemma~\ref{le:vareeta}.
\end{proof}

Then, using the coercivity of $\mathcal{L}$, we obtain the following relations between $\varepsilon$ and $\eta$.
\begin{lemma}\label{le:relationvareta}
Let $B>100$ be a large enough constant and $0<\gamma\ll 1$ be a small enough constant. Then we have 
\begin{equation*}
\begin{aligned}
&\int_{\mathbb{R}^2}(\gamma|\nabla\eta|^2+\eta^{2})\psi_{0,B}\dd y\le C\int_{\mathbb{R}^2}(\gamma^{-1}|\nabla \varepsilon|^2+ \varepsilon^2)\psi_{0,B}\dd y,\\
&\int_{\mathbb{R}^2}(|\nabla \varepsilon|^2+ \varepsilon^2)\psi_{0,B}\dd y\le  C\int_{\mathbb{R}^2}(\gamma^2|\nabla\eta|^2+\eta^{2})\psi_{0,B}\dd y,\\
&\int_{\mathbb{R}^2}(\gamma|\nabla\eta|^2+\eta^{2})\psi_{1,B}\dd y\le C\int_{\mathbb{R}^2}(\gamma^{-1}|\nabla \varepsilon|^2+ \varepsilon^2)\psi_{1,B}\dd y.
\end{aligned}
\end{equation*}
Here, $C>1$ is a large enough universal constant independent of $B$ and $\gamma$.
\end{lemma}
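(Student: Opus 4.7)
The plan is to prove all three bounds by energy estimates: pair the defining identity $(1-\gamma\Delta)\eta=\mathcal{L}\varepsilon=-\Delta\varepsilon+(1-3Q^{2})\varepsilon$ against an appropriate test function weighted by $\psi_{0,B}$ or $\psi_{1,B}$, and integrate by parts. The crucial input is Lemma~\ref{le:pointchi}(ii), which gives $|\nabla\psi_{0,B}|+B|\Delta\psi_{0,B}|\lesssim \psi_{0,B}/B$, so all boundary contributions involving derivatives of the weight are smaller by a factor of $1/B$ or $1/B^{2}$ than the main terms and are harmless once $B$ is chosen large.

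For the first estimate, I would pair with $\eta\,\psi_{0,B}$. Integration by parts on the left produces
\begin{equation*}
\int\eta^{2}\psi_{0,B}\,\dd y+\gamma\int|\nabla\eta|^{2}\psi_{0,B}\,\dd y-\tfrac{\gamma}{2}\int\eta^{2}\Delta\psi_{0,B}\,\dd y,
\end{equation*}
the last term being absorbable by the second for $B$ large. On the right, another integration by parts on the $-\Delta\varepsilon$ term yields $\int\nabla\varepsilon\cdot\nabla\eta\,\psi_{0,B}$ (plus lower-order pieces with $\nabla\psi_{0,B}$ and the $(1-3Q^{2})\varepsilon\eta\,\psi_{0,B}$ term). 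A Cauchy–Schwarz/AM-GM applied with weight $\gamma$ to this cross term,
\begin{equation*}
\Bigl|\!\int\!\nabla\varepsilon\cdot\nabla\eta\,\psi_{0,B}\Bigr|\le\tfrac{\gamma}{2}\!\int|\nabla\eta|^{2}\psi_{0,B}+\tfrac{1}{2\gamma}\!\int|\nabla\varepsilon|^{2}\psi_{0,B},
\end{equation*}
absorbs half of $\gamma\int|\nabla\eta|^{2}\psi_{0,B}$ into the LHS and produces the $\gamma^{-1}|\nabla\varepsilon|^{2}$ factor on the RHS. The remaining terms with $\varepsilon\eta\,\psi_{0,B}$, $Q^{2}\varepsilon\eta\,\psi_{0,B}$ and the $\nabla\psi_{0,B}$ cross term are controlled by AM-GM together with the weight-derivative bounds. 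The third estimate is identical in structure: since $\psi_{1,B}$ satisfies the same bounds on its logarithmic derivatives as $\psi_{0,B}$, the same argument applies verbatim with $\psi_{1,B}$ replacing $\psi_{0,B}$.

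For the second (reverse) estimate, I pair with $\varepsilon\,\psi_{0,B}$. The LHS becomes $\int\eta\varepsilon\,\psi_{0,B}+\gamma\int\nabla\eta\cdot\nabla(\varepsilon\,\psi_{0,B})$, controlled by AM-GM with parameter $\gamma$ by $C\int(\gamma^{2}|\nabla\eta|^{2}+\eta^{2})\psi_{0,B}+\tfrac{1}{4}\int(|\nabla\varepsilon|^{2}+\varepsilon^{2})\psi_{0,B}$. The RHS, after integration by parts, equals the weighted quadratic form
\begin{equation*}
\int|\nabla\varepsilon|^{2}\psi_{0,B}+\int(1-3Q^{2})\varepsilon^{2}\psi_{0,B}-\tfrac{1}{2}\!\int\varepsilon^{2}\Delta\psi_{0,B}.
\end{equation*}
The main obstacle is the indefinite-sign term $-3\!\int Q^{2}\varepsilon^{2}\psi_{0,B}$, which is comparable in size to the target $\int\varepsilon^{2}\psi_{0,B}$ and cannot be absorbed by smallness of a constant. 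To overcome this, I exploit that $Q^{2}$ decays exponentially and that $\psi_{0,B}$ is bounded above and below by universal constants on any fixed compact neighbourhood of the origin (for $B$ large), which gives $\int Q^{2}\varepsilon^{2}\psi_{0,B}\lesssim\int Q^{2}\varepsilon^{2}\lesssim\|\varepsilon\|_{L^{2}}^{2}$. Then I invoke the unweighted coercivity of $\mathcal{L}$ (Proposition~\ref{Prop:Spectral}(iv)) under the orthogonality conditions $(\varepsilon,Q^{3})=|(\varepsilon,\nabla Q)|=0$ to get $\|\varepsilon\|_{H^{1}}^{2}\lesssim(\mathcal{L}\varepsilon,\varepsilon)=(\eta,\varepsilon)+\gamma(\nabla\eta,\nabla\varepsilon)$, which, together with Cauchy–Schwarz and the fact that the relevant inner products are effectively localized where $\psi_{0,B}$ is bounded below, is transferred back to the weighted RHS of estimate (ii).

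The hardest step is undoubtedly the treatment of $\int Q^{2}\varepsilon^{2}\psi_{0,B}$ in the second estimate, reflecting the non-positivity of $\mathcal{L}$; it is handled by passing through global unweighted coercivity and localizing the troublesome region inside the support of $\psi_{0,B}\sim\mathrm{const}$. All other terms are standard energy estimates in which the choice of $B$ large (to kill weight-derivative error terms) and the AM–GM balancing with the $\gamma$-parameter play the decisive roles.
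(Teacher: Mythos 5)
Your treatment of the first and third estimates is correct and matches the paper: pair $(1-\gamma\Delta)\eta=\mathcal{L}\varepsilon$ against $\eta\,\psi_{0,B}$ (resp. $\eta\,\psi_{1,B}$), integrate by parts, and use the weight-derivative bounds together with Cauchy--Schwarz to absorb the $\gamma\int|\nabla\eta|^2\psi_{0,B}$ term and produce the $\gamma^{-1}|\nabla\varepsilon|^2$ factor.

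Your second estimate, however, has a real gap. You correctly identify the obstruction as the indefinite term $-3\int Q^2\varepsilon^2\psi_{0,B}$ inside the weighted quadratic form $\int(\mathcal{L}\varepsilon)\varepsilon\,\psi_{0,B}$, which is of the same order as $\int\varepsilon^2\psi_{0,B}$ and cannot be absorbed by smallness in $B$ or $\gamma$. But the workaround you propose --- bound $\int Q^2\varepsilon^2\psi_{0,B}\lesssim\|\varepsilon\|_{L^2}^2$, then invoke unweighted coercivity $\|\varepsilon\|_{H^1}^2\lesssim(\mathcal{L}\varepsilon,\varepsilon)=(\eta,\varepsilon)+\gamma(\nabla\eta,\nabla\varepsilon)$, then "transfer back" --- does not close. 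The inner products $(\eta,\varepsilon)$ and $(\nabla\eta,\nabla\varepsilon)$ are \emph{global}, while the weight $\psi_{0,B}(y_1)$ decays like $e^{6y_1/B}$ as $y_1\to-\infty$; there is no estimate of the form $\|\eta\|_{L^2}\lesssim\bigl(\int\eta^2\psi_{0,B}\bigr)^{1/2}$. Equivalently, if you try to avoid $\eta$ altogether and argue directly that $3\int Q^2\varepsilon^2\psi_{0,B}\le(1-\nu')\int(|\nabla\varepsilon|^2+\varepsilon^2)\psi_{0,B}$, the unweighted spectral gap $3\int Q^2\varepsilon^2\le(1-\nu)\int(|\nabla\varepsilon|^2+\varepsilon^2)$ does not yield this: since $\psi_{0,B}\le\frac12$, the inequality $\int(|\nabla\varepsilon|^2+\varepsilon^2)\ge 2\int(|\nabla\varepsilon|^2+\varepsilon^2)\psi_{0,B}$ runs in the wrong direction, and the excess $\int(|\nabla\varepsilon|^2+\varepsilon^2)(1-2\psi_{0,B})\dd y$ over the region $y_1<-B/2$ is uncontrolled.

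The paper resolves this by conjugating the quadratic form rather than the operator norm: one writes
\begin{equation*}
\int_{\R^2}(\mathcal{L}\varepsilon)\varepsilon\,\psi_{0,B}\dd y=\bigl(\mathcal{L}(\varepsilon\sqrt{\psi_{0,B}}),\varepsilon\sqrt{\psi_{0,B}}\bigr)+\tfrac12\int\varepsilon^2\psi_{0,B}''+\int(\partial_{y_1}\varepsilon)\varepsilon\,\psi_{0,B}'-\tfrac14\int\varepsilon^2\frac{(\psi_{0,B}')^2}{\psi_{0,B}},
\end{equation*}
and applies Proposition~\ref{Prop:Spectral}(iv) to $f=\varepsilon\sqrt{\psi_{0,B}}$. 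The point is that $\psi_{0,B}\equiv\frac12$ on $\{y_1>-B/2\}$, and $Q^3$, $\nabla Q$ decay exponentially, so $(f,Q^3)$, $(f,\nabla Q)$ are $O(e^{-cB})\|f\|_{L^2}$ by~\eqref{equ:orth}; those orthogonality errors are absorbable for $B$ large. The remaining commutator terms above are $O(B^{-1})\int(|\nabla\varepsilon|^2+\varepsilon^2)\psi_{0,B}$ by the weight-derivative bounds. This gives the weighted lower bound $\int(\mathcal{L}\varepsilon)\varepsilon\,\psi_{0,B}\ge\frac{\nu}{2}\int(|\nabla\varepsilon|^2+\varepsilon^2)\psi_{0,B}$ directly, after which a single Cauchy--Schwarz on $\int((1-\gamma\Delta)\eta)\varepsilon\psi_{0,B}$ finishes the second estimate.
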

\begin{proof}
First, from integration by parts, we deduce that 
\begin{equation*}
\begin{aligned}
&\int_{\R^{2}}\left((1-\gamma\Delta)\eta\right)\eta \psi_{0,B}\dd y
=\int_{\R^{2}}\left(\gamma|\nabla \eta|^{2}+\eta^{2}\right)\psi_{0,B}\dd y-\frac{\gamma}{2}\int_{\R^{2}}\eta^{2}\psi''_{0,B}\dd y,\\
&\int_{\R^{2}}\left(\mathcal{L}\varepsilon\right)\eta \psi_{0,B}\dd y=\int_{\R^{2}}\left(\nabla \varepsilon\cdot\nabla \eta+(1-3Q^{2})\varepsilon\eta\right)\psi_{0,B}\dd y+\int_{\R^{2}}(\partial_{y_{1}}\varepsilon)\eta \psi'_{0,B}\dd y.
\end{aligned}
\end{equation*}
Combining the above identities with $B|\psi'_{0,B}|+B^{2}|\psi''_{0,B}|\lesssim \psi_{0,B}$  on $\R$ and Cauchy-Schwarz inequality, we complete the proof of the first estimate.
 
Next, using again integration by parts, we see that 
\begin{equation*}
\begin{aligned}
\int_{\R^{2}}\left(\mathcal{L}\varepsilon\right)\varepsilon\psi_{0,B}\dd y
&=\left(\mathcal{L}\left(\varepsilon\sqrt{\psi_{0,B}}\right),\varepsilon\sqrt{\psi_{0,B}}\right)
+\frac{1}{2}\int_{\R^{2}}\varepsilon^{2}\psi''_{0,B}\dd y\\
&+\int_{\R^{2}}(\partial_{y_{1}}\varepsilon)\varepsilon\psi'_{0,B}\dd y-\frac{1}{4}\int_{\R^{2}}\varepsilon^{2}\frac{(\psi'_{0,B})^{2}}{\psi_{0,B}}\dd y.
\end{aligned}
\end{equation*}
It follows from~\eqref{equ:orth}, Proposition~\ref{Prop:Spectral} and $B|\psi'_{0,B}|+B^{2}|\psi''_{0,B}|\lesssim \psi_{0,B}$  on $\R$ that 
\begin{equation*}
\int_{\R^{2}}\left((1-\gamma\Delta)\eta\right)\varepsilon\psi_{0,B}\dd y=\int_{\R^{2}}\left(\mathcal{L}\varepsilon\right)\varepsilon\psi_{0,B}\dd y\ge \frac{\nu}{2}\int_{\R^{2}}(|\nabla \varepsilon|^{2}+\varepsilon^{2})\psi_{0,B}\dd y.
\end{equation*}
Combining the above estimate with the  Cauchy-Schwarz inequality, we complete the proof of the second estimate.

The proof of the third estimate is similar to the case of the first one.
\end{proof}

Based on a similar argument and Lemma~\ref{le:pointchi}, we obtain the following estimate.
\begin{lemma}\label{le:etapsiB}
Let $B>100$ be a large enough constant and $0<\gamma\ll 1$ be a small enough constant. Then for all $i=1,2,$ we have 
\begin{equation*}
\begin{aligned}
\int_{\mathbb{R}^2}(\gamma|\nabla\eta|^2+\eta^{2})\varphi'_{i,B}\dd y
&\le C\int_{\mathbb{R}^2}(\gamma^{-1}|\nabla \varepsilon|^2+ \varepsilon^2)\varphi'_{i,B}\dd y\\
&+\frac{C}{B^{20}}\int_{\mathbb{R}^2}
\left(|\nabla \varepsilon|^2+ \varepsilon^2+\eta^{2}\right)\psi_{0,B}\dd y.
\end{aligned}
\end{equation*}
Here, $C>1$ is a large enough universal constant independent of $B$ and $\gamma$.
\end{lemma}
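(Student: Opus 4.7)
The plan is to follow the same scheme used for Lemma~\ref{le:relationvareta}, namely pair the identity $(1-\gamma\Delta)\eta=\mathcal{L}\varepsilon$ against $\eta\varphi'_{i,B}$, integrate by parts on both sides, and use the pointwise control of the derivatives of $\varphi_{i,B}$ from Lemma~\ref{le:pointchi}~(i) to absorb the error terms. Concretely, I would start from the identity
\begin{equation*}
\int_{\R^{2}}\bigl((1-\gamma\Delta)\eta\bigr)\eta\,\varphi'_{i,B}\,\dd y=\int_{\R^{2}}(\mathcal{L}\varepsilon)\,\eta\,\varphi'_{i,B}\,\dd y
\end{equation*}
and integrate by parts. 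On the left-hand side this produces
\begin{equation*}
\int_{\R^{2}}(\gamma|\nabla\eta|^{2}+\eta^{2})\varphi'_{i,B}\,\dd y-\frac{\gamma}{2}\int_{\R^{2}}\eta^{2}\varphi'''_{i,B}\,\dd y,
\end{equation*}
while on the right-hand side, using $\mathcal{L}=-\Delta+1-3Q^{2}$, one gets
\begin{equation*}
\int_{\R^{2}}\bigl(\nabla\varepsilon\cdot\nabla\eta+(1-3Q^{2})\varepsilon\eta\bigr)\varphi'_{i,B}\,\dd y+\int_{\R^{2}}(\partial_{y_{1}}\varepsilon)\,\eta\,\varphi''_{i,B}\,\dd y.
\end{equation*}

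Next I would handle each term with Cauchy--Schwarz. The first two terms on the right are standard and produce $C\int(\gamma^{-1}|\nabla\varepsilon|^{2}+\varepsilon^{2})\varphi'_{i,B}$ plus $\tfrac14\int(\gamma|\nabla\eta|^{2}+\eta^{2})\varphi'_{i,B}$, the latter being absorbed into the left-hand side. The bound $Q^{2}\le 1$ suffices for the $3Q^{2}\varepsilon\eta$ piece. The key input is then Lemma~\ref{le:pointchi}~(i), which gives
\begin{equation*}
|\varphi''_{i,B}|\lesssim B^{-\frac{2}{3}}\varphi'_{i,B}+B^{-20}\psi_{0,B},\qquad |\varphi'''_{i,B}|\lesssim B^{-\frac{4}{3}}\varphi'_{i,B}+B^{-30}\psi_{0,B}.
\end{equation*}
Applied to $\int(\partial_{y_{1}}\varepsilon)\eta\,\varphi''_{i,B}$ and to the boundary term $\tfrac{\gamma}{2}\int\eta^{2}\varphi'''_{i,B}$, these yield, after Cauchy--Schwarz, contributions bounded by
\begin{equation*}
CB^{-\frac{2}{3}}\int_{\R^{2}}(\gamma^{-1}|\nabla\varepsilon|^{2}+\gamma|\nabla\eta|^{2}+\eta^{2})\varphi'_{i,B}\,\dd y+CB^{-20}\int_{\R^{2}}(|\nabla\varepsilon|^{2}+\eta^{2})\psi_{0,B}\,\dd y,
\end{equation*}
so that choosing $B$ large enough lets us absorb the $\varphi'_{i,B}$-weighted $\eta$ and $\nabla\eta$ pieces into the left-hand side.

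The main (mild) obstacle is that $\varphi'_{i,B}$ is not of the same type as $\psi_{0,B}$ (it vanishes exponentially where $\psi_{0,B}$ does not), so the derivative bounds from Lemma~\ref{le:pointchi}~(i) are essential: without the $B^{-2/3}$ gain on $\varphi''_{i,B}$ we could not close the estimate. Once this gain is used, combining all inequalities and dividing by the factor $1/2$ left on the left-hand side yields the claimed bound. The third estimate of Lemma~\ref{le:relationvareta} is not directly invoked, but its method generalizes transparently to the weight $\varphi'_{i,B}$ thanks to the nearly constant ratios $|\nabla\varphi'_{i,B}|/\varphi'_{i,B}\lesssim 1$ implicit in~\eqref{est:psiphi3}, so no further technicality enters.
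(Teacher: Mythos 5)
Your proof is correct and is essentially the argument the paper intends when it says Lemma~\ref{le:etapsiB} follows from ``a similar argument'' to Lemma~\ref{le:relationvareta} together with Lemma~\ref{le:pointchi}: pair $(1-\gamma\Delta)\eta=\mathcal{L}\varepsilon$ against $\eta\,\varphi'_{i,B}$, integrate by parts, use Cauchy--Schwarz, and absorb the derivative-of-weight errors via the pointwise bounds $|\varphi''_{i,B}|\lesssim B^{-2/3}\varphi'_{i,B}+B^{-20}\psi_{0,B}$ and $|\varphi'''_{i,B}|\lesssim B^{-4/3}\varphi'_{i,B}+B^{-30}\psi_{0,B}$. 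You correctly identify that the extra $B^{-20}\psi_{0,B}$ term is the genuinely new ingredient here (compared with the $\psi_{0,B}$-weighted estimate in Lemma~\ref{le:relationvareta}, where $|\psi'_{0,B}|+|\psi''_{0,B}|\lesssim B^{-1}\psi_{0,B}$ suffices).

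Two small inaccuracies worth fixing. First, the claim ``$Q^{2}\le 1$'' is false for the 2D cubic ground state (one has $Q(0)>1$); what you actually need, and what is true, is merely $\|Q\|_{L^{\infty}}\lesssim 1$, absorbed into the universal constant $C$. Second, the closing remark that $|\nabla\varphi'_{i,B}|/\varphi'_{i,B}\lesssim 1$ is ``implicit in~\eqref{est:psiphi3}'' is not accurate: \eqref{est:psiphi3} controls $\nabla\varphi_{i,B}/\varphi_{i,B}$, not $\varphi''_{i,B}/\varphi'_{i,B}$, and the latter ratio is in fact \emph{not} uniformly bounded --- this is precisely why Lemma~\ref{le:pointchi}~(i) produces the extra $B^{-20}\psi_{0,B}$ tail, and hence why the conclusion has the additional $B^{-20}$-weighted term. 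Since your main estimate already uses Lemma~\ref{le:pointchi}~(i) rather than a bare ratio bound, this is only a misstatement in the side commentary, not in the argument itself.
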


We now state the virial estimate of $\eta$. Let $B>100$ be a large enough constant to be chosen later and $\gamma=B^{-3}$. Then the following qualitative estimate of the time variation of $\mathcal{P}$ is true.
\begin{proposition}\label{Prop:Virial}
There exist some universal constants $B>100$ large enough, $0<\kappa_{1}< \min\left\{\kappa^{*},B^{-100}\right\}$ small enough  and $0<\nu_{1}<1$ small enough {\rm{(}}independent of $B${\rm{)}} such that the following holds. Assume that for all $s\in[0,s_0]$, the solution $u(t)$ with initial data $u_{0}$ satisfies the bootstrap assumption \eqref{est:Boot1}--\eqref{est:Boot3} with $0<\kappa<\kappa_{1}$. Then for all $(i,j)\in\left\{1,2\right\}^{2}$ and $s\in[0,s_0]$, we have 
\begin{equation}\label{est:dsP}
    	\begin{aligned}
	&\lambda^{\theta(j-1)}\frac{\dd}{\dd s}\left(\frac{\mathcal{P}}{\lambda^{\theta(j-1)}}\right)+\frac{\nu_{1}}{B}\int_{\R^{2}}\left(|\nabla \eta|^2+\eta^2\right)\psi_{0,B}\dd y\\
&\le \frac{C_{37}}{B^{8}}\int_{\R^{2}}\left(|\nabla \varepsilon|^2+\varepsilon^2\right)\left(B^{23}\varphi'_{i,B}+\psi_{0,B}\right)\dd y+C_{38}b^{4}.
	\end{aligned}
	\end{equation}
Here, $C_{37}>1$ is a universal constant independent of $B$ and $C_{38}=C_{38}(B)>1$ is a constant depending only on $B$.

\end{proposition}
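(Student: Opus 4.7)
\emph{Proof plan.} The starting point is the direct computation
$\frac{\mathrm{d}}{\mathrm{d}s}\mathcal{P}=2\int_{\mathbb{R}^{2}}\eta\,\partial_s\eta\,\chi_B\,\mathrm{d}y$
with $\partial_s\eta$ replaced by the right-hand side of Lemma~\ref{le:equeta}. This splits the time derivative into five pieces: the principal virial piece
$2\int \eta\,\mathcal{L}\partial_{y_1}\eta\,\chi_B$, the regularization commutator
$-6\gamma\int\eta\,(1-\gamma\Delta)^{-1}(\Delta(Q^{2})\partial_{y_1}\eta+2Q\nabla Q\cdot\nabla\partial_{y_1}\eta)\chi_B$, the modulation block
$2\int \eta\,(1-\gamma\Delta)^{-1}\mathcal{L}\mathrm{Mod}_\eta\,\chi_B - 4(\lambda_s/\lambda+b)\int\eta\,(1-\gamma\Delta)^{-1}Q\,\chi_B$, the source $2\int \eta\,(1-\gamma\Delta)^{-1}\mathcal{L}\Psi_b\,\chi_B$, and the nonlinear contribution $-2\int \eta\,(1-\gamma\Delta)^{-1}\mathcal{L}\partial_{y_1}(R_b+R_{\mathrm{NL}})\chi_B$. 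One must also account for the rescaling term $-\theta(j-1)\frac{\lambda_s}{\lambda}\mathcal{P}$ on the left.

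For the principal piece I would use $\mathcal{L}\partial_{y_1}=\partial_{y_1}\mathcal{L}+6Q\partial_{y_1}Q$ followed by two integrations by parts in $y_1$ to rewrite $2\int\eta\,\mathcal{L}\partial_{y_1}\eta\,\chi_B$ as a quadratic form in $\eta$ weighted by $\chi_B'$ and $\chi_B$, plus a $\chi_B'''$ remainder controlled by Lemma~\ref{le:pointchi}(2). Using Lemma~\ref{le:pointchi}(5) to substitute $\chi_B'\sim\frac{2}{B}\psi_{0,B}$ and $\chi_B\sim\frac{2y_1}{B}\psi_{0,B}$ on the support of $Q$, the quadratic form becomes $-\frac{4}{B}(\mathcal{A}\eta,\eta)$ up to acceptable remainders, where $\mathcal{A}$ is the adjoint operator from \S\ref{SS:Comm}; away from the support of $Q$ the bare kinetic part $-3(\partial_{y_1}\eta)^2-(\partial_{y_2}\eta)^2-\eta^2$ is already negative. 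The orthogonalities of Lemma~\ref{le:equeta}, namely $(\eta,(1-\gamma\Delta)Q)=|(\eta,(1-\gamma\Delta)\nabla Q)|=0$, differ from $(\eta,Q)=|(\eta,\nabla Q)|=0$ only by $O(\gamma\|\eta\|_{L^2})$; since $\gamma=B^{-3}$, the numerical coercivity of $\mathcal{A}$ verified in \cite[\S16]{FHRY} transfers with a perturbation absorbable for $B$ large, producing the coercive bound $-\frac{\nu_1}{B}\int(|\nabla\eta|^2+\eta^2)\psi_{0,B}\,\mathrm{d}y$ on the left of~\eqref{est:dsP}.

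For the remaining pieces I would argue as follows. The regularization commutator is small because of its explicit $\gamma=B^{-3}$ prefactor, and Lemma~\ref{le:relationvareta}--\ref{le:etapsiB} convert the resulting bound into the form of the right-hand side of~\eqref{est:dsP}. The modulation block is handled by Lemma~\ref{le:modu1} and Lemma~\ref{le:modu2}, which bound all geometric parameter derivatives by $b^2$ plus a weighted $L^2$ quadratic term in $\varepsilon$, combined with the weighted Sobolev estimate of Lemma~\ref{le:weight2D} and the pointwise growth bound $\chi_B\lesssim B^{9}\psi_{0,B}$ from Lemma~\ref{le:pointchi}(3). The source piece contributes at most $C(B)b^4$ thanks to~\eqref{est:Psib} together with the exponential decay of the kernel of $(1-\gamma\Delta)^{-1}$. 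The nonlinear piece is controlled via Lemma~\ref{le:weight2D}, the smallness $\|\varepsilon\|_{L^2}\leq\kappa$ from~\eqref{est:Boot1}, and the $\eta\leftrightarrow\varepsilon$ equivalence. Finally, the rescaling contribution is at worst $|b|\mathcal{P}$, and Lemma~\ref{le:pointchi}(3) together with~\eqref{est:Boot1} shows this is absorbable. The main obstacle, and the point that forces the precise hierarchy $\gamma=B^{-3}$, is to ensure that the coercivity scale $O(B^{-1})$ on $\eta$ survives simultaneously (i) the localization commutators from $\chi_B$, (ii) the commutators from $(1-\gamma\Delta)^{-1}$, and (iii) the deviation of the orthogonality conditions from the hypotheses of the FHRY coercivity; the delicate bookkeeping needed to ensure every error carries a strictly smaller power of $B$ (namely $B^{-8}$) than the coercivity is the heart of the argument.
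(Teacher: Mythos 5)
The overall decomposition you propose (principal virial piece, regularization commutator, modulation block, source, nonlinear) mirrors the paper's $\mathcal{G}_1,\dots,\mathcal{G}_4$, and the treatment of the commutator, source, and nonlinear pieces is sound. However, the proposal contains a genuine gap at the single most delicate point of the argument.

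You assert that, after replacing $\chi'_B\sim\frac{2}{B}\psi_{0,B}$ and $\chi_B\sim\frac{2y_1}{B}\psi_{0,B}$ on the support of $Q$, the quadratic form $2\int\eta\,\mathcal{L}\partial_{y_1}\eta\,\chi_B$ ``becomes $-\frac{4}{B}(\mathcal{A}\eta,\eta)$ up to acceptable remainders.'' This is not so: the raw virial quadratic form produces the local operator
\begin{equation*}
\mathcal{B}=-\tfrac{3}{2}\partial_{y_1}^2-\tfrac{1}{2}\partial_{y_2}^2+\tfrac{1}{2}-\tfrac{3}{2}Q^2-3y_1Q\partial_{y_1}Q
\end{equation*}
and \emph{not} the adjoint operator $\mathcal{A}$, which differs by the rank-two nonlocal terms $3\frac{(\cdot,y_1Q)}{(Q,Q)}Q^2\partial_{y_1}Q+3\frac{(\cdot,Q^2\partial_{y_1}Q)}{(Q,Q)}y_1Q$. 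The operator $\mathcal{B}$ alone has no established coercivity (this is precisely the obstruction emphasized in \S\ref{SS:Comm}). To arrive at $\mathcal{A}$ one must add and subtract the rank-two piece $\frac{6}{B}\frac{(\eta,Q^2\partial_{y_1}Q)}{(Q,Q)}(\eta,y_1Q)$; the added part upgrades $\mathcal{B}$ to $\mathcal{A}$ (your $\mathcal{G}_{3,2}$-analogue), while the subtracted part must be cancelled against the modulation contribution $-4(\lambda_s/\lambda+b)\int\eta\,(1-\gamma\Delta)^{-1}Q\,\chi_B$. That cancellation is exact only because of the identity $\lambda_s/\lambda+b=3(\eta,Q^2\partial_{y_1}Q)/(Q,Q)+O\bigl(\gamma\|\eta\sqrt{\psi_{0,B}}\|_{L^2}+b^2+\|\varepsilon\sqrt{\psi_{0,B}}\|_{L^2}^2\bigr)$, obtained by differentiating the orthogonality $(\eta,(1-\gamma\Delta)Q)=0$ via the $\eta$-equation (not the $\varepsilon$-equation). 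Neither Lemma~\ref{le:modu1}(ii) nor Lemma~\ref{le:modu2} supplies this identity; the crude bound $|\lambda_s/\lambda+b|\lesssim b^2+\mathcal{N}_0^{1/2}$ is what you invoke, and it is insufficient.

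Indeed, with the crude bound the modulation term is only controlled by $(b^2+\mathcal{N}_0^{1/2})\bigl(\int\eta^2\psi_{0,B}\bigr)^{1/2}$; Young's inequality at scale $B$ converts this into $\tfrac{\nu_1}{2B}\int\eta^2\psi_{0,B}+CB(b^4+\mathcal{N}_0)$. Since $\mathcal{N}_0\gtrsim\int\varepsilon^2\psi_{0,B}$ and the right side of~\eqref{est:dsP} has weight $\frac{C_{37}}{B^8}\psi_{0,B}$ with $C_{37}$ independent of $B$, the resulting $CB\int\varepsilon^2\psi_{0,B}$ would force $C_{37}\gtrsim B^9$ — a $B$-dependent constant — and the statement fails. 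Thus the structural cancellation between the rank-two completion of $\mathcal{B}$ and the $(\lambda_s/\lambda+b)Q$ modulation piece is not an optional refinement: it is what makes the estimate close with $B$-independent constants in front of the $\psi_{0,B}$-weighted error.
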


To complete the proof of Proposition~\ref{Prop:Virial}, we first recall the following coercivity result from~\cite{FHRY} and the introduce a technical estimate related to the weighted norm.

\smallskip
For any $f\in H^{1}(\R^{2})$, we denote 
\begin{equation*}
\begin{aligned}
\mathcal{A}f=&-\frac{3}{2}\partial_{y_1}^2f-\frac{1}{2}\partial_{y_2}^2f+\frac{1}{2}f-\left(\frac{3}{2}Q^2+3y_1Q\partial_{y_{1}}Q\right)f\\
&+3\frac{(f,y_1Q)}{(Q,Q)}Q^{2}\partial_{y_{1}}Q+3\frac{(f,Q^2\partial_{y_{1}}Q)}{(Q,Q)}y_{1}Q.
\end{aligned}
\end{equation*}

We now recall the following coercivity result of $\mathcal{A}$ from \cite{FHRY}.
\begin{lemma}[\cite{FHRY}]\label{le:coerl}
There exists $\nu_{2}>0$, such that for all $f\in H^1(\mathbb{R}^2)$, 
\begin{equation*}
\left(\mathcal{A}f,f\right)\ge \nu_{2}\|f\|^2_{H^1}-\frac{1}{\nu_{2}}\left(\left(f,Q\right)^2+
\left(f,\partial_{y_{1}}Q\right)^{2}+\left(f,\partial_{y_2}Q\right)^2\right).
\end{equation*}
\end{lemma}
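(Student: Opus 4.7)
The plan is to establish the coercivity through the standard two-step framework used for such Schrödinger-type operators arising in near-soliton analysis. First, I would note that $\mathcal{A}$ is self-adjoint on $L^2(\mathbb{R}^2)$ with form domain $H^1(\mathbb{R}^2)$; its principal part $-\frac{3}{2}\partial_{y_1}^2-\frac{1}{2}\partial_{y_2}^2+\frac{1}{2}$ has spectrum $[\frac{1}{2},\infty)$, and the multiplicative potential $\frac{3}{2}Q^2+3y_1Q\partial_{y_1}Q$ together with the rank-two nonlocal perturbation are relatively compact by the exponential decay of $Q$ recalled in Section~\ref{SS:Nota}. Weyl's theorem then yields $\sigma_{\mathrm{ess}}(\mathcal{A})=[\frac{1}{2},\infty)$, so below $\frac{1}{2}$ the spectrum consists of at most finitely many eigenvalues of finite multiplicity.

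Second, to obtain the stated coercivity I would argue by contradiction: if it failed there would exist a sequence $(f_n)\subset H^1$ with $\|f_n\|_{H^1}=1$, satisfying the three orthogonality conditions $(f_n,Q)=(f_n,\partial_{y_1}Q)=(f_n,\partial_{y_2}Q)=0$, and $(\mathcal{A}f_n,f_n)\to m\le 0$. Passing to a weakly convergent subsequence $f_n\rightharpoonup f_\infty$ and using local compactness for the potential and finite-rank parts, one sees that $f_\infty$ attains the infimum of $(\mathcal{A}f,f)/\|f\|_{L^2}^2$ over the orthogonal subspace, and satisfies an Euler--Lagrange equation of the form $\mathcal{A}f_\infty=m\,f_\infty+\alpha_0 Q+\alpha_1\partial_{y_1}Q+\alpha_2\partial_{y_2}Q$. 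Once the $L^2$-coercivity $(\mathcal{A}f,f)\ge c\|f\|_{L^2}^2$ is known on the orthogonal subspace, the upgrade to $H^1$ is routine: a convex combination with a small multiple of $\mathcal{A}$ itself absorbs the gradient terms of the Schrödinger part at the expense of an $L^2$ remainder controlled by the $L^2$-coercivity just obtained.

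The main obstacle is proving the strict positivity $m>0$. In contrast to the classical linearised operator $\mathcal{L}$ treated in Proposition~\ref{Prop:Spectral}, the operator $\mathcal{A}$ is anisotropic in $(y_1,y_2)$, carries the potential $3y_1Q\partial_{y_1}Q$ of indefinite sign, and contains two non-local rank-one projections onto $y_1Q$ and $Q^2\partial_{y_1}Q$. Standard reductions (radial/angular decomposition, comparison with a one-parameter symmetry of the equation, Perron--Frobenius positivity) do not apply, and no analytic identity among $Q$, $\Lambda Q$, $\nabla Q$, $P$ available from Lemma~\ref{le:Nonloca} yields the required bound. This is precisely why the proof is completed by the numerical diagonalisation carried out in~\cite[\S 16]{FHRY}, which verifies that the smallest eigenvalue of $\mathcal{A}$ restricted to the orthogonal complement of $\mathrm{Span}\{Q,\partial_{y_1}Q,\partial_{y_2}Q\}$ is strictly positive. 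The conclusion of the lemma then follows by combining this spectral gap with the interpolation step described above.
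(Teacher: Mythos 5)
Your proposal is essentially correct and matches the paper's approach: the paper's entire proof of Lemma~\ref{le:coerl} is to cite the numerical verification in~\cite[\S16]{FHRY}, and you correctly identify that the analytically irreducible step (strict positivity of the spectral gap on the orthogonal complement of ${\rm Span}\{Q,\partial_{y_1}Q,\partial_{y_2}Q\}$) is exactly what is delegated to that numerical computation. The additional scaffolding you supply (Weyl's theorem for the essential spectrum, the contradiction argument via weak compactness, and the interpolation step upgrading $L^2$-coercivity to $H^1$-coercivity) is standard, accurate, and consistent with how such a coercivity statement would be assembled around the numerically verified spectral input, even though the paper does not spell it out.
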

\begin{proof}
We refer to~\cite[\S16]{FHRY} for the numerical checking of the coercivity result.
\end{proof}

Next we introduce the following weighted estimate.
\begin{lemma}\label{le:weight2D2}
Let $\omega:\mathbb{R}^2\rightarrow (0,\infty)$ be a $C^2$ function such that 
\begin{equation}\label{est:omega2}
\left\|\frac{\nabla \omega}{\omega}\right\|_{L^{\infty}(\R^{2})}+
\sum_{|\alpha|=2}\left\|\frac{\partial_{y}^{\alpha}\omega}{\omega}\right\|_{L^{\infty}(\R^{2})}\lesssim 1.
\end{equation}
Then, for all $ f\in H^2(\mathbb{R}^2)$ and $k=0,1,2$, we have
\begin{equation*}
\sum_{|\alpha|=k}\|\omega(1-\gamma\Delta)^{-1}\partial_{y}^{\alpha} f\|_{L^2(\mathbb{R}^2)}\leq C\gamma^{-\frac{k}{2}} \|\omega f\|_{L^2(\mathbb{R}^2)}.
\end{equation*}
Here, $C$ is a universal constant independent of $\gamma$.
\end{lemma}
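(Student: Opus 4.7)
The plan is to reduce the estimate to a weighted resolvent bound and then transfer derivatives using the commutativity of $\partial_y^\alpha$ with $(1-\gamma\Delta)^{-1}$. Specifically, I will set $g:=(1-\gamma\Delta)^{-1}f$, so that $(1-\gamma\Delta)g=f$ and, since the two constant-coefficient operators commute, $(1-\gamma\Delta)^{-1}\partial_y^\alpha f=\partial_y^\alpha g$. The task is therefore to estimate $\|\omega\,\partial_y^\alpha g\|_{L^2}$ by $\gamma^{-k/2}\|\omega f\|_{L^2}$, and the whole argument will assume $\gamma>0$ small enough to absorb lower-order weight terms (this is consistent with the paper's choice $\gamma=B^{-3}$).

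The first step is a master weighted energy estimate. I will pair the equation $(1-\gamma\Delta)g=f$ with $\omega^2 g$ and integrate by parts, obtaining
\begin{equation*}
\|\omega g\|_{L^2}^2 + \gamma\|\omega\nabla g\|_{L^2}^2 + 2\gamma\int_{\mathbb{R}^2}\omega g\,\nabla\omega\cdot\nabla g\,\dd y = \int_{\mathbb{R}^2}\omega^2 g\,f\,\dd y.
\end{equation*}
Using the hypothesis $\|\nabla\omega/\omega\|_{L^\infty}\lesssim 1$ together with Cauchy--Schwarz, the cross term is controlled by $\tfrac{\gamma}{2}\|\omega\nabla g\|^2+C\gamma\|\omega g\|^2$, which for $\gamma$ small enough can be absorbed into the left-hand side. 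Combined with Young's inequality on the right, this yields the clean master bound
\begin{equation*}
\|\omega g\|_{L^2}^2 + \gamma\,\|\omega\nabla g\|_{L^2}^2 \;\lesssim\; \|\omega f\|_{L^2}^2.
\end{equation*}
This immediately covers the case $k=0$ and, since $\|\omega\,\partial_{y_i}g\|_{L^2}\le\|\omega\nabla g\|_{L^2}\lesssim\gamma^{-1/2}\|\omega f\|_{L^2}$, also the case $k=1$.

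For $k=2$, I iterate the energy estimate. Writing $\partial_y^\alpha=\partial_{y_i}\partial_{y_j}$ and setting $g_j:=\partial_{y_j}g$, the commutation gives $(1-\gamma\Delta)g_j=\partial_{y_j}f$. Pairing this with $\omega^2 g_j$ and integrating by parts exactly as before produces the same left-hand side $\|\omega g_j\|_{L^2}^2+\gamma\|\omega\nabla g_j\|_{L^2}^2$, up to an absorbable cross term governed by $\nabla\omega/\omega$. The right-hand side $\int\omega^2 g_j\,\partial_{y_j}f\,\dd y$ is integrated by parts once back onto the weight, producing two contributions: one of the form $\int\omega\partial_{y_j}\omega\,g_jf$, bounded by $\|\omega g_j\|\,\|\omega f\|$ via the hypothesis on $\nabla\omega/\omega$; and $\int\omega^2(\partial_{y_j}g_j)f$, bounded by $\|\omega\nabla g_j\|\,\|\omega f\|$. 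Applying Young's inequality with weights $1$ and $\gamma^{-1}$ respectively to these two products, then absorbing, yields $\|\omega g_j\|_{L^2}^2+\gamma\|\omega\nabla g_j\|_{L^2}^2\lesssim\gamma^{-1}\|\omega f\|_{L^2}^2$. Hence $\|\omega\,\partial_{y_i}\partial_{y_j}g\|_{L^2}\le\|\omega\nabla g_j\|_{L^2}\lesssim\gamma^{-1}\|\omega f\|_{L^2}$, which is the claim for $k=2$.

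The main obstacle is the correct bookkeeping of the weight during each integration by parts, so that no derivative of $f$ survives on the right-hand side and so that each $\gamma^{-1/2}$ appears only when it must. The second-derivative hypothesis $\sum_{|\alpha|=2}\|\partial_y^\alpha\omega/\omega\|_{L^\infty}\lesssim 1$ is needed precisely to absorb a commutator coming from the pairing in the $k=2$ iteration; without it, the integration-by-parts in the last step would leak an uncontrolled term involving $\Delta\omega$.
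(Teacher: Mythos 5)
Your proof is correct and takes a genuinely different route from the paper's. The paper's argument is algebraic: it first proves the unweighted Fourier bound $\|(1-\gamma\Delta)^{-1}\partial_y^\alpha\|_{L^2\to L^2}\lesssim\gamma^{-|\alpha|/2}$, then compares $F_{\alpha1}=\omega(1-\gamma\Delta)^{-1}\partial_y^\alpha f$ with $F_{\alpha2}=(1-\gamma\Delta)^{-1}(\omega\partial_y^\alpha f)$ via an explicit commutator identity and absorbs the small error for $\gamma$ small, and finally rewrites $\omega\partial_y^\alpha f$ as a Leibniz-type span of terms $\partial_y^{\alpha_1}(f\,\partial_y^{\alpha_2}\omega)$ to reduce to the unweighted bound. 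Your argument instead sets $g=(1-\gamma\Delta)^{-1}f$ and runs a direct weighted energy estimate for $(1-\gamma\Delta)g=f$ (and for $(1-\gamma\Delta)\partial_{y_j}g=\partial_{y_j}f$), controlling the cross term via $\|\nabla\omega/\omega\|_\infty$ and absorbing for $\gamma$ small, which is more elementary and arguably cleaner. Both proofs require $\gamma$ small to absorb weight-commutator terms; the lemma is applied with $\gamma=B^{-3}$, so this is consistent.

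One remark on your closing paragraph: you assert that the second-derivative hypothesis $\sum_{|\alpha|=2}\|\partial_y^\alpha\omega/\omega\|_\infty\lesssim 1$ is needed in the $k=2$ step to control a term involving $\Delta\omega$. Trace your own computation: the left-hand side integration by parts in the pairing with $\omega^2 g_j$ only ever produces $\nabla(\omega^2 g_j)=2\omega\nabla\omega\,g_j+\omega^2\nabla g_j$, and the right-hand side integration by parts produces $\partial_{y_j}(\omega^2 g_j)=2\omega\partial_{y_j}\omega\,g_j+\omega^2\partial_{y_j}g_j$. At no point does a second derivative of $\omega$ appear, so your proof actually establishes the lemma under the weaker hypothesis $\|\nabla\omega/\omega\|_\infty\lesssim1$ alone. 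This is a genuine (if minor) strengthening. It is the \emph{paper's} proof that relies on the second-derivative condition, in two places: to bound $\omega\,|\Delta(1/\omega)|\lesssim1$ inside the commutator identity, and to bound $\|f\,\partial_y^{\alpha_2}\omega\|_{L^2}\lesssim\|\omega f\|_{L^2}$ for $|\alpha_2|=2$ in the Leibniz step. You should simply delete that last sentence; the proof stands without it.
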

\begin{proof}
First, from the Fourier transform and the Cauchy-Schwarz inequality, we have
\begin{equation}\label{est:L2L2}
\sum_{|\alpha|=k}\|(1-\gamma\Delta)^{-1}\partial_{y}^\alpha\|_{L^2\rightarrow L^2}\lesssim \gamma^{-\frac{k}{2}},\quad \mbox{for all}\ k=0,1,2.
\end{equation}
Second, for any $\alpha\in \mathbb{N}^{2}$ with $0\le |\alpha|\le 2$, we denote 
\begin{equation*}
F_{\alpha 1}=\omega(1-\gamma\Delta)^{-1}\partial_{y}^{\alpha}f\quad \mbox{and}\quad 
F_{\alpha 2}=(1-\gamma\Delta)^{-1}\omega\partial_{y}^{\alpha}f.
\end{equation*}
By an elementary computation, we find
\begin{equation*}
\frac{1}{\omega}(1-\gamma\Delta)F_{\alpha 2}=\frac{1}{\omega}(1-\gamma\Delta)F_{\alpha 1}-\gamma F_{\alpha 1}\Delta \left(\frac{1}{\omega}\right)-2\gamma \nabla \left(\frac{1}{\omega}\right)\cdot\nabla F_{\alpha 1},
\end{equation*}
which implies
\begin{equation*}
\begin{aligned}
F_{\alpha 1}
&=F_{\alpha 2}-\gamma (1-\gamma\Delta)^{-1}\left(\omega F_{\alpha 1} \Delta \left(\frac{1}{\omega}\right)\right)\\
&+2\gamma (1-\gamma\Delta)^{-1}
\left(\nabla \cdot \left(\omega F_{\alpha 1}\nabla \left(\frac{1}{\omega}\right)\right)\right)\\
&-2\gamma (1-\gamma\Delta)^{-1}\left(F_{\alpha 1}\nabla \omega\cdot \nabla \left(\frac{1}{\omega}\right)\right).
\end{aligned}
\end{equation*}
Next, from~\eqref{est:omega2}, 
\begin{equation*}
\omega\left|\Delta\left(\frac{1}{\omega}\right)\right|+\omega\left|\nabla\left(\frac{1}{\omega}\right)\right|+\left|\nabla\omega \cdot \nabla\left(\frac{1}{\omega}\right)\right|\lesssim 1.
\end{equation*}
It follows from~\eqref{est:L2L2} that
\begin{equation*}
\|F_{\alpha 1}\|_{L^2}\lesssim \|F_{\alpha 2}\|_{L^2}+\gamma\|F_{\alpha 1}\|_{L^2}+\gamma^{\frac{1}{2}}\|F_{\alpha 1}\|_{L^2}\Longrightarrow \|F_{\alpha 1}\|_{L^2}\lesssim \|F_{\alpha 2}\|_{L^2}.
\end{equation*}
On the other hand, for any $\alpha\in \mathbb{N}^{2}$ with $0\le |\alpha|\le 2$, 
\begin{equation*}
\omega\partial_{y}^{\alpha}f\in {\rm{Span}}\left\{\partial_{y}^{\alpha_{1}}(f\partial_{y}^{\alpha_{2}}\omega):\alpha_{1}+\alpha_{2}=\alpha\right\}.
\end{equation*}
It follows from~\eqref{est:omega2} and~\eqref{est:L2L2} that
\begin{equation*}
\left\|F_{\alpha 2}\right\|_{L^{2}}
\lesssim \sum_{\alpha_{1}+\alpha_{2}=\alpha}\left\|(1-\gamma\Delta)^{-1}\partial_{y}^{\alpha_{1}}(f\partial_{y}^{\alpha_{2}}\omega)\right\|_{L^{2}}
\lesssim
 \sum_{|\alpha_{2}|\le \alpha}\left\|f\partial_{y}^{\alpha_{2}}\omega\right\|_{L^{2}}\lesssim \|\omega f\|_{L^{2}}.
\end{equation*}
Combining the above estimates, we complete the proof of Lemma~\ref{le:weight2D2}.
\end{proof}

Note that, the functions $\psi_{0,B}$ and $\psi_{1,B}$ satisfy
\begin{equation}\label{est:psi1B}
\begin{aligned}
\left\|\frac{\nabla \psi_{0,B}}{\psi_{0,B}}\right\|_{L^{\infty}(\R^{2})}+
\sum_{|\alpha|=2}\left\|\frac{\partial_{y}^{\alpha}\psi_{0,B}}{\psi_{0,B}}\right\|_{L^{\infty}(\R^{2})}&\lesssim 1,\\
\left\|\frac{\nabla \psi_{1,B}}{\psi_{1,B}}\right\|_{L^{\infty}(\R^{2})}+
\sum_{|\alpha|=2}\left\|\frac{\partial_{y}^{\alpha}\psi_{1,B}}{\psi_{1,B}}\right\|_{L^{\infty}(\R^{2})}&\lesssim 1.
\end{aligned}
\end{equation}

We now give a complete proof of Proposition~\ref{Prop:Virial}.
\begin{proof}[Proof of Proposition~\ref{Prop:Virial}]
From Lemma~\ref{le:equeta}, we decompose
\begin{equation}\label{equ:dsP}
\frac{\lambda^{\theta(j-1)}}{2}\frac{\dd}{\dd s}\left(\frac{\mathcal{P}}{\lambda^{\theta(j-1)}}\right)
=\frac{1}{2}\frac{\dd \mathcal{P}}{\dd s}-\frac{\theta(j-1)}{2}\frac{\lambda_{s}}{\lambda}\mathcal{P}
=\mathcal{G}_{1}+\mathcal{G}_{2}+\mathcal{G}_{3}+\mathcal{G}_{4},
\end{equation}
where 
\begin{equation*}
\begin{aligned}
\mathcal{G}_{1}&=\frac{\lambda_{s}}{\lambda}\int_{\R^{2}}\left((1-\gamma\Delta)^{-1}\mathcal{L}\Lambda\varepsilon\right)\eta \chi_{B}\dd y-\frac{\theta(j-1)}{2}\frac{\lambda_{s}}{\lambda}\mathcal{P},\\
\mathcal{G}_{2}&=-3\gamma\int_{\R^{2}}\left((1-\gamma\Delta)^{-1}\left(\Delta(Q^{2})\partial_{y_{1}}\eta+2Q\nabla Q\cdot\nabla \partial_{y_{1}}\eta\right)\right)\eta \chi_{B}\dd y,\\
\mathcal{G}_{3}&=\int_{\R^{2}}\left(\mathcal{L}\partial_{y_{1}}\eta\right)\eta \chi_{B}\dd y
-2\left(\frac{\lambda_{s}}{\lambda}+b\right)
\int_{\R^{2}}\left((1-\gamma\Delta)^{-1}Q\right)\eta \chi_{B}\dd y,\\
\mathcal{G}_{4}&=\int_{\R^{2}}\left((1-\gamma\Delta)^{-1}
\mathcal{L}\left({\rm{Mod}_{\eta}}-\frac{\lambda_{s}}{\lambda}\Lambda \varepsilon+\Psi_{b}-\partial_{y_{1}}R_{b}-\partial_{y_{1}}R_{NL}\right)\right)\eta \chi_{B}\dd y.
\end{aligned}
\end{equation*}

\textbf{Step 1.} Estimate on $\mathcal{G}_{1}$. We claim that, there exist some universal constants $B>100$ large enough and $0<\kappa_{1}<B^{-100}$ small enough, such that 
\begin{equation}\label{est:G1}
\begin{aligned}
\mathcal{G}_{1}
&\le \frac{C_{39}}{B^{30}}\int_{\R^{2}}\left(|\nabla\eta|^2+\eta^2\right)\psi_{0,B}\dd y\\
&+\frac{C_{40}}{B^{30}}\int_{\R^{2}}\left(|\nabla\varepsilon|^2+\varepsilon^2\right)(B\varphi'_{i,B}+\psi_{0,B})\dd y.
\end{aligned}
\end{equation}
Here, $C_{39}>1$ and $C_{40}>1$ are some universal constants independent of $B$.

Indeed, from Lemma~\ref{le:vareeta} and integration by parts, we see that 
\begin{equation*}
\begin{aligned}
&\int_{\R^{2}}\left((1-\gamma\Delta)^{-1}\mathcal{L}\Lambda\varepsilon\right)\eta \chi_{B}\dd y\\
&=-\frac{1}{2}\int_{\R^{2}}\eta ^{2}\chi'_{B}\dd y+3\int_{\R^{2}}\left((1-\gamma\Delta)^{-1}\left(\varepsilon y\cdot\nabla \left(Q^{2}\right)\right)\right)\eta \chi_{B}\dd y\\
&+2\gamma\int_{\R^{2}}\left((1-\gamma\Delta)^{-2}\Delta\mathcal{L}\varepsilon\right)\eta \chi_{B}\dd y-2\int_{\R^{2}}\left((1-\gamma\Delta)^{-1}\Delta \varepsilon\right)\eta \chi_{B}\dd y.
\end{aligned}
\end{equation*}
From~Lemma~\ref{le:pointchi}, we directly have
\begin{equation*}
\left|\int_{\R^{2}}\eta^{2}\chi'_{B}\dd y\right|\lesssim \frac{1}{B}\int_{\R^{2}}\eta^{2}\psi_{0,B}\dd y.
\end{equation*}
Then, using again Lemma~\ref{le:pointchi}, Lemma~\ref{le:weight2D2} and the Cauchy-Schwarz inequality, 
\begin{equation*}
\begin{aligned}
&\left|\int_{\R^{2}}\left((1-\gamma\Delta)^{-1}\left(\varepsilon y\cdot\nabla \left(Q^{2}\right)\right)\right)\eta \chi_{B}\dd y\right|\\
&\lesssim B^{9}\left( \int_{\R^{2}}\varepsilon^{2}\psi_{0,B}\dd y\right)^{\frac{1}{2}}\left(\int_{\R^{2}}\eta^{2}\psi_{0,B}\dd y\right)^{\frac{1}{2}}\\
&\lesssim B^{9}\int_{\R^{2}}\eta^{2}\psi_{0,B}\dd y+B^{9}\int_{\R^{2}}
\left(|\nabla \varepsilon|^{2}+\varepsilon^{2}\right)\psi_{0,B}\dd y.
\end{aligned}
\end{equation*}
Based on a similar argument, we also obtain 
\begin{equation*}
\begin{aligned}
&\gamma\left|\int_{\R^{2}}\left((1-\gamma\Delta)^{-2}\Delta\mathcal{L}\varepsilon\right)\eta \chi_{B}\dd y\right|+\left|\int_{\R^{2}}\left((1-\gamma\Delta)^{-1}\Delta \varepsilon\right)\eta \chi_{B}\dd y\right|\\
&\lesssim B^{12}\int_{\R^{2}}\left(|\nabla \eta|^{2}+\eta^{2}\right)\psi_{0,B}\dd y+B^{12}\int_{\R^{2}}\left(|\nabla \varepsilon|^{2}+\varepsilon^{2}\right)\psi_{0,B}\dd y.
\end{aligned}
\end{equation*}
On the other hand, using again Lemma~\ref{le:pointchi} and Lemma~\ref{le:relationvareta},
\begin{equation*}
\left|\mathcal{P}\right|
\lesssim B^{9} \int_{\R^{2}}\eta^{2}\psi_{0,B}\dd y\\
\lesssim B^{12}\int_{\R^{2}}\left(|\nabla \varepsilon|^{2}+\varepsilon^{2}\right)\psi_{0,B}\dd y.
\end{equation*}
We see that~\eqref{est:G1} follows from the above estimates,~\eqref{est:Boot1} and (ii) of Lemma~\ref{le:modu1}.

\smallskip
\textbf{Step 2.} Estimate on $\mathcal{G}_{2}$.
We claim that, there exist some universal constants $B>100$ large enough and $0<\kappa_{1}<B^{-100}$ small enough, such that 
\begin{equation}\label{est:G2}
\begin{aligned}
\mathcal{G}_{2}
&\le \frac{C_{41}}{B^{\frac{3}{2}}}\int_{\R^{2}}\left(|\nabla\eta|^2+\eta^2\right)\psi_{0,B}\dd y.
\end{aligned}
\end{equation}
Here, $C_{41}>1$ is a universal constants independent of $B$.

Indeed, using~\eqref{est:psi1B}, Lemma~\ref{le:pointchi} and Lemma~\ref{le:weight2D2}, we find
\begin{equation*}
\begin{aligned}
&\left|\int_{\R^{2}}\left((1-\gamma\Delta)^{-1}\left(\Delta(Q^{2})\partial_{y_{1}}\eta\right)\right)\eta \chi_{B}\dd y\right|\\
&\lesssim
\left\||\nabla \eta|\Delta(Q^{2})\psi_{1,B}\right\|_{L^{2}}
\left\|\eta\sqrt{\psi_{0,B}}\right\|_{L^{2}}\lesssim 
 \int_{\R^{2}}\left(|\nabla \eta|^{2}+\eta^{2}\right)\psi_{0,B}\dd y.
\end{aligned}
\end{equation*}
Next, by an elementary computation,
\begin{equation*}
2Q\nabla Q\cdot \nabla \partial_{y_{1}}\eta
=2\partial_{y_{1}}\left(Q\nabla Q\cdot \nabla \eta\right)-2\nabla \eta \cdot \partial_{y_{1}}\left(Q\nabla Q\right).
\end{equation*}
It follows from~\eqref{est:psi1B}, Lemma~\ref{le:pointchi} and Lemma~\ref{le:weight2D2} that 
\begin{equation*}
\begin{aligned}
&\left|\int_{\R^{2}}\left((1-\gamma\Delta)^{-1}\left(2Q\nabla Q\cdot\nabla\partial_{y_{1}}\eta\right)\right)\eta \chi_{B}\dd y\right|\\
&\lesssim \left(\gamma^{-\frac{1}{2}}\left\|\left(Q\nabla Q\cdot\nabla \eta\right)\psi_{1,B}\right\|_{L^{2}}+\left\|\left(\nabla \eta\cdot\partial_{y_{1}}(Q\nabla Q)\right)\psi_{1,B}\right\|_{L^{2}}\right)\left\|\eta \sqrt{\psi_{0,B}}\right\|_{L^{2}}\\
&\lesssim \gamma^{-\frac{1}{2}}
\left(\int_{\R^{2}}|\nabla \eta|^{2}\psi_{0,B}\dd y\right)^{\frac{1}{2}}
\left(\int_{\R^{2}}\eta^{2}\psi_{0,B}\dd y\right)^{\frac{1}{2}}\lesssim 
B^{\frac{3}{2}} \int_{\R^{2}}\left(|\nabla \eta|^{2}+\eta^{2}\right)\psi_{0,B}\dd y.
\end{aligned}
\end{equation*}
We see that~\eqref{est:G2} follows from the above estimates and $\gamma=B^{-3}$.

\smallskip
\textbf{Step 3.} Estimate on $\mathcal{G}_{3}$. We claim that, there exist some universal constants $B>100$ large enough and $0<\kappa_{1}<B^{-100}$ small enough, such that 
\begin{equation}\label{est:G3}
\begin{aligned}
\mathcal{G}_{3}
&\le -\frac{\nu_{3}}{B}\int_{\R^{2}}\left(|\nabla\eta|^2+\eta^2\right)\psi_{0,B}\dd y\\
&+\frac{C_{42}}{B^{8}}\int_{\R^{2}}\left(|\nabla\varepsilon|^2+\varepsilon^2\right)
(B^{23}\varphi'_{i,B}+\psi_{0,B})\dd y+C_{43}b^{4}.
\end{aligned}
\end{equation}
Here, $\nu_{3}>0$ and $C_{42}>1$ are some universal constants independent of $B$ and $C_{43}=C_{43}(B)$ is a constant depending only on $B$. 

\smallskip
Indeed, by an elementary computation,
\begin{equation*}
\begin{aligned}
&\int_{\R^{2}}\left(\mathcal{L}\partial_{y_{1}}\eta\right)\eta \chi_{B}\dd y\\
&=-\frac{3}{2}\int_{\R^{2}}(\partial_{y_{1}}\eta)^{2}\chi'_{B}\dd y
-\frac{1}{2}\int_{\R^{2}}\left(\partial_{y_{2}}\eta\right)^{2}\chi'_{B}\dd y
-\frac{1}{2}\int_{\R^{2}}\eta^{2}\chi'_{B}\dd y\\
&+3\int_{\R^{2}}\left(Q\partial_{y_{1}}Q\right)\eta^{2}\chi_{B}\dd y
+\frac{3}{2}\int_{\R^{2}}Q^{2}\eta^{2}\chi'_{B}\dd y+\frac{1}{2}\int_{\R^{2}}\eta^{2}\chi'''_{B}\dd y.
\end{aligned}
\end{equation*}

Based on the above identity, we rewrite the term $\mathcal{G}_{3}$ by 
\begin{equation*}
\mathcal{G}_{3}=\mathcal{G}_{3,1}+\mathcal{G}_{3,2}+\mathcal{G}_{3,3},
\end{equation*}
where
\begin{equation*}
\begin{aligned}
\mathcal{G}_{3,1}=\frac{6}{B}\frac{(\eta,Q^2\partial_{y_{1}}Q)}{(Q,Q)}(\eta,y_1Q)-2\left(\frac{\lambda_s}{\lambda}+b\right) \left((1-\gamma\Delta)^{-1}Q,\eta\chi_{B}\right),\qquad \qquad \quad 
\end{aligned}
\end{equation*}
\begin{equation*}
\begin{aligned}
\mathcal{G}_{3,2}&=
-\frac{1}{B}\int_{\R^{2}}\left(3(\partial_{y_{1}}\eta)^{2}+\left(\partial_{y_{2}}\eta\right)^{2}\right)\psi_{0,B}\dd y
-\frac{1}{B}\int_{\R^{2}}\eta^{2}\psi_{0,B}\dd y\\
&+\frac{3}{B}\int_{\R^{2}}\left(Q^{2}+2y_{1}Q\partial_{y_{1}}Q\right)\eta^{2}\psi_{0,B}\dd y
-\frac{6}{B}\frac{(\eta,Q^2\partial_{y_{1}}Q)}{(Q,Q)}(\eta,y_1Q),\qquad \qquad \quad 
\end{aligned}
\end{equation*}
\begin{equation*}
\begin{aligned}
\mathcal{G}_{3,3}&=
-\frac{1}{2}\int_{\R^{2}}\left(3(\partial_{y_{1}}\eta)^{2}+\left(\partial_{y_{2}}\eta\right)^{2}+\eta^{2}\right)\left(\chi'_{B}-\frac{2}{B}\psi_{0,B}\right)\dd y+\frac{1}{2}\int_{\R^{2}}\eta^{2}\chi'''_{B}\dd y
\\
&+\frac{3}{2}\int_{\R^{2}}\eta^{2}Q^{2}\left(\chi'_{B}-\frac{2}{B}\psi_{0,B}\right)\dd y
+3\int_{\R^{2}}\eta^{2}\left(Q\partial_{y_{1}}Q\right)\left(\chi_{B}-\frac{2y_{1}}{B}\psi_{0,B}\right)\dd y.
\end{aligned}
\end{equation*}

\smallskip
\emph{Estimate on $\mathcal{G}_{3,1}$.} Using again Lemma~\ref{le:equeta}, we have
\begin{equation*}
\begin{aligned}
&\left(\mathcal{L}\partial_{y_{1}}\eta,(1-\gamma\Delta)Q\right)\\
&=2\left(\frac{\lambda_{s}}{\lambda}+b\right)(Q,Q)+\left(\mathcal{L}\partial_{y_{1}}R_{b}+\mathcal{L}\partial_{y_{1}}R_{NL},Q\right)\\
&+3\gamma\left(\Delta (Q^{2})\partial_{y_{1}}\eta+2Q\nabla Q\cdot\nabla \partial_{y_{1}}\eta,Q\right)
-\left(\mathcal{L}{\rm{Mod}}_{\eta}+\mathcal{L}\Psi_{b},Q\right).
\end{aligned}
\end{equation*}
Note that 
\begin{equation*}
\left(\mathcal{L}\partial_{y_{1}}\eta,(1-\gamma\Delta)Q\right)
=6\left(\eta,Q^{2}\partial_{y_{1}}Q\right)+\gamma\left(\eta,\partial_{y_{1}}\mathcal{L}\Delta Q\right).
\end{equation*}
Therefore, from Lemma~\ref{le:modu1} and the Cauchy-Schwarz inequality,
\begin{equation*}
\left|\frac{\lambda_s}{\lambda}+b-3\frac{(\eta,Q^{2}\partial_{y_{1}}Q)}{(Q,Q)}\right|
\lesssim \gamma\left(\int_{\R^{2}}|\eta|^2\psi_{0,B}\dd y\right)^{\frac{1}{2}}+b^2+\int_{\R^{2}}\varepsilon^{2}\psi_{0,B}\dd y.
\end{equation*}
It follows from~\eqref{est:Boot1} and Lemma~\ref{le:relationvareta} that 
\begin{equation*}
\begin{aligned}
&\left(\gamma\left(\int_{\R^{2}}\eta^{2}\psi_{0,B}\dd y\right)^{\frac{1}{2}}+b^2+\int_{\R^{2}}\varepsilon^2\psi_{0,B}\dd y\right)
\left(\int_{\R^{2}}\eta^2\psi_{0,B}\dd y\right)^{\frac{1}{2}}\\
&\le \left((\gamma+\|\varepsilon\|_{L^{2}})\left(\int_{\R^{2}}\left(|\nabla \eta|^{2}+\eta^{2}\right)\psi_{0,B}\dd y\right)^{\frac{1}{2}}+b^{2}\right)\left(\int_{\R^{2}}\eta^2\psi_{0,B}\dd y\right)^{\frac{1}{2}}\\
&\le \frac{C_{44}}{B^3}\int_{\R^{2}}(|\nabla \eta|^2+|\eta|^2)\psi_{0,B}\dd y+C_{45}b^4.
\end{aligned}
\end{equation*}
Here, $C_{44}>1$ is a universal constant independent of $B$ and $C_{45}=C_{45}(B)>1$ is a constant depending only on $B$. In the above estimate, we use $\gamma=B^{-3}$.

\smallskip
Note also that 
\begin{equation*}
(1-\gamma\Delta)^{-1}Q=Q+\gamma(1-\gamma\Delta)^{-1}\Delta Q.
\end{equation*}
Using~\eqref{est:psi1B},~Lemma~\ref{le:pointchi} and the Cauchy-Schwarz inequality, we find
\begin{equation*}
\begin{aligned}
&\left|\int_{\R^{2}}\left((1-\gamma\Delta)^{-1} \Delta Q\right)\eta \chi_{B}\dd y\right|\\
&\lesssim \int_{\R^{2}}\left|\psi_{1,B}\left((1-\gamma\Delta)^{-1} \Delta Q\right)\right|\eta \sqrt{\psi_{0,B}}\dd y\lesssim \left(\int_{\R^{2}}\eta^{2} \psi_{0,B}\dd y\right)^{\frac{1}{2}}.
\end{aligned}
\end{equation*}
Therefore, from $\chi_{B}(y_1)=\frac{y_{1}}{B}$ for $|y_{1}|\le \frac{B}{2}$ and $\gamma=B^{-3}$, we conclude that 
\begin{equation}\label{est:G31}
\begin{aligned}
\mathcal{G}_{3,1}
&\lesssim \left| (\eta,Q^{2}\partial_{y_{1}}Q)\left(\eta Q,\chi_{B}-\frac{y_{1}}{B}\right)\right|\\
&+\left|\left(\frac{\lambda_{s}}{\lambda}+b-3\frac{(\eta,Q^{2}\partial_{y_{1}}Q)}{(Q,Q)}
\right)\left(\eta Q,\chi_{B}\right)\right|\\
&+\gamma\left|\left(\frac{\lambda_{s}}{\lambda}+b\right)
\left((1-\gamma\Delta)^{-1}\Delta Q,\eta \chi_{B}\right)\right|\\
&\le \frac{C_{46}}{B^3}\int_{\R^{2}}(|\nabla \eta|^2+\eta^2)\psi_{0,B}\dd y+C_{47}b^4.
\end{aligned}
\end{equation}
Here, $C_{46}>1$ is a universal constant independent of $B$ and $C_{47}=C_{47}(B)>1$ is a constant depending only on $B$.

\smallskip
\emph{Estimate on $\mathcal{G}_{3,2}$.} By an elementary computation, we rewrite
\begin{equation*}
\begin{aligned}
\mathcal{G}_{3,2}
&=-\frac{2}{B}\left(\mathcal{A}\eta \sqrt{\psi_{0,B}},\eta \sqrt{\psi_{0,B}}\right)
+\frac{3}{4B}\int_{\R^{2}}\eta^{2}\left(\frac{(\psi'_{0,B})^{2}}{\psi_{0,B}}-2\psi''_{0,B}\right)\dd y\\
&+\frac{6}{B(Q,Q)}\left(\left(\eta\sqrt{2\psi_{0,B}},y_{1}Q\right)\left(\eta\sqrt{2\psi_{0,B}},Q^{2}\partial_{y_{1}}Q\right)-\left(\eta,y_{1}Q\right)\left(\eta,Q^{2}\partial_{y_{1}}Q\right)\right).
\end{aligned}
\end{equation*}
First, using Lemma~\ref{le:coerl}, we deduce that 
\begin{equation*}
\begin{aligned}
\left(\mathcal{A}\eta \sqrt{\psi_{0,B}},\eta \sqrt{\psi_{0,B}}\right)
&\ge \nu_{2}\left\|\eta \sqrt{\psi_{0,B}}\right\|_{H^{1}}^{2}-\frac{1}{\nu_{2}}\left(\eta\sqrt{\psi_{0,B}},Q\right)^{2}\\
&-\frac{1}{\nu_{2}}\left(\eta\sqrt{\psi_{0,B}},\partial_{y_{1}}Q\right)^{2}-\frac{1}{\nu_{2}}\left(\eta\sqrt{\psi_{0,B}},\partial_{y_{2}}Q\right)^{2}.
\end{aligned}
\end{equation*}
On the one hand side, from Lemma~\ref{le:pointchi}, we see that 
\begin{equation*}
\left\|\eta \sqrt{\psi_{0,B}}\right\|_{H^{1}}^{2}=\left(1+O\left(\frac{1}{B}\right)\right)
\int_{\R^{2}}\left(\left|\nabla \eta\right|^{2}+\eta^{2}\right)\psi_{0,B}\dd y.
\end{equation*}
On the other hand, from Lemma~\ref{le:equeta} and the definition of $\psi_{0,B}$, 
\begin{equation*}
\left(\eta\sqrt{\psi_{0,B}},Q\right)^{2}+
\left(\eta\sqrt{\psi_{0,B}},\partial_{y_{1}}Q\right)^{2}+
\left(\eta\sqrt{\psi_{0,B}},\partial_{y_{2}}Q\right)^{2}
\lesssim \frac{1}{B^{6}}\int_{\R^{2}}\eta^{2}\psi_{0,B}\dd y.
\end{equation*}
Second, using again Lemma~\ref{le:pointchi}, we have
\begin{equation*}
\left|\frac{1}{B}\int_{\R^{2}}\eta^{2}\left(\frac{(\psi'_{0,B})^{2}}{\psi_{0,B}}-2\psi''_{0,B}\right)\dd y\right|
\lesssim \frac{1}{B^{2}}\int_{\R^{2}}\eta^{2}\psi_{0,B}\dd y.
\end{equation*}

Last, using again the definition of $\psi_{0,B}$ and the exponential decay of $Q$, 
\begin{equation*}
\begin{aligned}
\left|\left(\eta\sqrt{2\psi_{0,B}},y_{1}Q\right)-\left(\eta,y_{1}Q\right)\right|
&\lesssim  \frac{1}{B^{2}}\left(\int_{\R^{2}}\eta^{2}\psi_{0,B}\dd y\right)^{\frac{1}{2}},\\
\left|\left(\eta\sqrt{2\psi_{0,B}},Q^{2}\partial_{y_{1}}Q\right)-\left(\eta,Q^{2}\partial_{y_{1}}Q\right)\right|
&\lesssim \frac{1}{B^{2}}\left(\int_{\R^{2}}\eta^{2}\psi_{0,B}\dd y\right)^{\frac{1}{2}}.
\end{aligned}
\end{equation*}
Based on the above estimates and the exponential decay of $Q$, we obtain
\begin{equation*}
\begin{aligned}
&\left|\left(\eta\sqrt{2\psi_{0,B}},y_{1}Q\right)\left(\eta\sqrt{2\psi_{0,B}},Q^{2}\partial_{y_{1}}Q\right)-\left(\eta,y_{1}Q\right)\left(\eta,Q^{2}\partial_{y_{1}}Q\right)\right|\\
&\lesssim \frac{1}{B^{2}}\left(\int_{\R^{2}}\eta^{2}\psi_{0,B}\dd y\right)^{\frac{1}{2}}\left(\int_{\R^{2}}\eta^{2}e^{-\frac{|y|}{10}}\dd y\right)\lesssim \frac{1}{B^{2}}\int_{\R^{2}}\eta^{2}\psi_{0,B}\dd y.
\end{aligned}
\end{equation*}
Combining the above estimates, for $B$ large enough, we conclude that 
\begin{equation}\label{est:G32}
\mathcal{G}_{3,2}\le -\frac{\nu_{2}}{2B}\int_{\R^{2}}\left(\left|\nabla \eta\right|^{2}+\eta^{2}\right)\psi_{0,B}\dd y.
\end{equation}

\emph{Estimate on $\mathcal{G}_{3,3}$.} Recall that, from Lemma~\ref{le:pointchi}, we have
\begin{equation*}
\left|\chi'_{B}-\frac{2}{B}\psi_{0,B}\right|\lesssim B^{9}\varphi'_{i,B},\quad \mbox{on}\ \R.
\end{equation*}
It follows from Lemma~\ref{le:etapsiB} that
\begin{equation*}
\begin{aligned}
&\left|\int_{\R^{2}}\left(3(\partial_{y_{1}}\eta)^{2}+\left(\partial_{y_{2}}\eta\right)^{2}+\eta^{2}\right)\left(\chi'_{B}-\frac{2}{B}\psi_{0,B}\right)\dd y\right|\\
&\lesssim 
B^{15}\int_{\R^{2}}\left(|\nabla \varepsilon|^{2}+\varepsilon^{2}\right)\varphi'_{i,B}\dd y
+\frac{1}{B^{8}}\int_{\R^{2}}\left(|\nabla \varepsilon|^{2}+\varepsilon^{2}+\eta^{2}\right)\psi_{0,B}\dd y.
\end{aligned}
\end{equation*}
Next, using again Lemma~\ref{le:pointchi} and the exponential decay of $Q$, we deduce that 
\begin{equation*}
\begin{aligned}
&\left|\int_{\R^{2}}\eta^{2}\chi'''_{B}\dd y\right|
+\left|\int_{\R^{2}}\eta^{2}Q^{2}\left(\chi'_{B}-\frac{2}{B}\psi_{0,B}\right)\dd y\right|\\
&+\left|\int_{\R^{2}}\eta^{2}\left(Q\partial_{y_{1}}Q\right)\left(\chi_{B}-\frac{2y_{1}}{B}\psi_{0,B}\right)\dd y\right|\lesssim \frac{1}{B^{3}}\int_{\R^{2}}\eta^{2}\psi_{0,B}\dd y.
\end{aligned}
\end{equation*}
Combining the above estimates, we obtain
\begin{equation}\label{est:G33}
\begin{aligned}
\mathcal{G}_{3,3}
&\le \frac{C_{48}}{B^{3}}\int_{\R^{2}}\left(\left|\nabla \eta\right|^{2}+\eta^{2}\right)\psi_{0,B}\dd y\\
&+\frac{C_{49}}{B^{8}}\int_{\R^{2}}\left(\left|\nabla \varepsilon\right|^{2}+\varepsilon^{2}\right)
\left(B^{23}\varphi'_{i,B}+\psi_{0,B}\right)\dd y.
\end{aligned}
\end{equation}
Here, $C_{48}>1$ and $C_{49}>1$ are some universal constants independent of $B$.

We see that~\eqref{est:G3} follows from~\eqref{est:G31},~\eqref{est:G32},~\eqref{est:G33} and $B$ large enough.

\smallskip
\textbf{Step 4.} Estimate on $\mathcal{G}_{4}$. We claim that, there exist some universal constants $B>100$ large enough and $0<\kappa_{1}<B^{-100}$ small enough, such that 
\begin{equation}\label{est:G4}
\begin{aligned}
\mathcal{G}_{4}
&\le \frac{C_{50}}{B^{10}}\int_{\R^{2}}\left(|\nabla\eta|^2+\eta^2\right)\psi_{0,B}\dd y\\
&+\frac{C_{51}}{B^{10}}\int_{\R^{2}}\left(|\nabla\varepsilon|^2+\varepsilon^2\right)
(B\varphi'_{i,B}+\psi_{0,B})\dd y+C_{52}b^{4}.
\end{aligned}
\end{equation}
Here, $C_{50}>1$ and $C_{51}>1$ are some universal constants independent of $B$ and $C_{52}=C_{52}(B)$ is a constant depending only on $B$. 

\smallskip
Note that, from the definition of ${\rm{Mod}}_{\eta}$ in~\eqref{equ:modeta}, we have 
\begin{equation*}
\begin{aligned}
&\left|{\rm{Mod}_{\eta}}-\frac{\lambda_{s}}{\lambda}\Lambda \varepsilon\right|\\
&\lesssim
 \left(b^{2}+\left(\int_{\R^{2}}\varepsilon^{2}e^{-\frac{|y|}{10}}\dd y\right)^{\frac{1}{2}}\right)|\nabla \varepsilon|\\
 &+\left(b^{2}+\int_{\R^{2}}\varepsilon^{2}e^{-\frac{|y|}{10}}\dd y\right)\left(e^{-\frac{|y_{2}|}{3}}\mathbf{1}_{[-2,0]}(|b|^{\frac{3}{4}}y_{1})+e^{-\frac{|y|}{3}}\right)\\
&+ |b|(1+|y|)\left(b^{2}+\left(\int_{\R^{2}}\varepsilon^{2}e^{-\frac{|y|}{10}}\dd y\right)^{\frac{1}{2}}\right)\left(e^{-\frac{|y_{2}|}{3}}\mathbf{1}_{[-2,0]}(|b|^{\frac{3}{4}}y_{1})+e^{-\frac{|y|}{3}}\right).
\end{aligned}
\end{equation*}
Based on the above estimate, we deduce that
\begin{equation*}
\begin{aligned}
&\left\|\left({\rm{Mod}}_{\eta}-\frac{\lambda_{s}}{\lambda}\Lambda \varepsilon\right)\sqrt{\psi_{0,B}}\right\|^{2}_{L^{2}}\\
&\lesssim \left(b^{4}+\int_{\R^{2}}\varepsilon^{2}e^{-\frac{|y|}{10}}\dd y\right)\int_{\R^{2}}|\nabla \varepsilon|^{2}\psi_{0,B}\dd y\\
 &+\left(b^{2}+\int_{\R^{2}}\varepsilon^{2}e^{-\frac{|y|}{10}}\dd y\right)^{2}\int_{\R^{2}}\left(e^{-\frac{2|y_{2}|}{3}}\mathbf{1}_{[-2,0]}(|b|^{\frac{3}{4}}y_{1})+e^{-\frac{2|y|}{3}}\right)\psi_{0,B}\dd y\\
&+ b^{2}\left(b^{4}+\int_{\R^{2}}\varepsilon^{2}e^{-\frac{|y|}{10}}\dd y\right)\int_{\R^{2}}(1+|y|^{2})\left(e^{-\frac{2|y_{2}|}{3}}\mathbf{1}_{[-2,0]}(|b|^{\frac{3}{4}}y_{1})+e^{-\frac{2|y|}{3}}\right)\psi_{0,B}\dd y.
\end{aligned}
\end{equation*}
It follows from~\eqref{est:Boot1} and $\psi_{0,B}\lesssim \psi_{B}$ that 
\begin{equation*}
\left\|\left({\rm{Mod}}_{\eta}-\frac{\lambda_{s}}{\lambda}\Lambda \varepsilon\right)\sqrt{\psi_{0,B}}\right\|_{L^{2}}\lesssim B^{\frac{1}{2}}b^{2}+\frac{1}{B^{30}}\left\|\varepsilon\sqrt{\psi_{0,B}}\right\|_{L^{2}}.
\end{equation*}
Based on a similar argument and (ii) of Lemma~\ref{le:Ketest}, we see that 
\begin{equation*}
\left\|\Psi_{b} \sqrt{\psi_{0,B}}\right\|_{L^{2}}
\lesssim\left( \int_{\R^{2}}\Psi^{2}_{b}\psi_{0,B}\dd y\right)^{\frac{1}{2}}
\lesssim B^{\frac{1}{2}}b^{2}.
\end{equation*}
On the other hand, from the definitions of $R_{b}$ and $R_{NL}$ in Lemma~\ref{le:equvar},
\begin{equation*}
\left|R_{b}+R_{NL}\right|\lesssim |b||\varepsilon|+|b|\varepsilon^{2}+|\varepsilon|^{3}.
\end{equation*}
Therefore, from the 2D Sobolev embedding estimate, we have 
\begin{equation*}
\begin{aligned}
\left\|\left(R_{b}+R_{NL}\right)\sqrt{\psi_{0,B}}\right\|_{L^{2}}
&\lesssim |b|\left\|\varepsilon\sqrt{\psi_{0,B}}\right\|_{L^{2}}+\left\|\varepsilon\psi^{\frac{1}{6}}_{0,B}\right\|^{3}_{H^{1}}\\
&\lesssim \frac{1}{B^{30}}\left(\int_{\R^{2}}\left(|\nabla\varepsilon|^{2}+\varepsilon^{2}\right)\left(B\varphi'_{i,B}+\psi_{0,B}\right)\dd y\right)^{\frac{1}{2}}.
\end{aligned}
\end{equation*}
Here, we use the fact that 
\begin{equation*}
\psi_{0,B}^{\frac{1}{3}}\lesssim \psi_{B}\lesssim \varphi_{i,B}\quad \mbox{and}\quad 
\psi_{0,B}^{\frac{1}{3}}\lesssim B\varphi'_{i,B}+\psi_{0,B}.
\end{equation*}

We see that~\eqref{est:G4} follows from the above estimates, Lemma~\ref{le:pointchi} and Lemma~\ref{le:weight2D2}.

\smallskip
\textbf{Step 5.} Conclusion. Combining the estimates~\eqref{est:G1},\eqref{est:G2},~\eqref{est:G3} with~\eqref{est:G4}, we complete the proof of Proposition~\ref{Prop:Virial}.

\end{proof}

\subsection{Energy-Virial Lyapunov functional}\label{SS:Mon}
In this subsection, we introduce the energy-virial Lyapunov functional $\mathcal{M}_{i,j}$ which will play a crucial role in closing the energy estimate in the bootstrap setting~\eqref{est:Boot1}--\eqref{est:Boot3}. We mention here that, the construction of this functional is based on the combination of the energy and virial quantity that we defined in \S\ref{SS:ENERGY} and \S\ref{SS:VIRIAL}, respectively.

\smallskip
For $\left(i,j\right)\in \left\{1,2\right\}^{2}$, we define
\begin{equation*}
\mathcal{M}_{i,j}=\mathcal{F}_{i,j}+\frac{1}{B^{20}}\mathcal{P}.
\end{equation*}
\begin{proposition}\label{Prop:Mon}
There exist some universal constants $B>100$ large enough and $0<\kappa_{1}<B^{-100}$ small enough such that the following holds. Assume that for all $s\in[0,s_0]$, the solution $u(t)$ with initial data $u_{0}$ satisfies the bootstrap assumption \eqref{est:Boot1}--\eqref{est:Boot3} with $0<\kappa<\kappa_{1}$. Then for all $(i,j)\in\left\{1,2\right\}^{2}$ and $s\in[0,s_0]$, the following estimates are true.
\begin{enumerate}
	\item \emph{Coercivity.} It holds
	\begin{equation*}
	\mathcal{N}_i\lesssim\mathcal{M}_{i,j}\lesssim \mathcal{N}_i.
	\end{equation*}
\item {\emph{Monotonicity formula}.} There exists a universal constant $D>0$ (independent of $B$), such that
\begin{equation*}
\lambda^{\theta(j-1)}\frac{\dd}{\dd s}
\left(\frac{\mathcal{M}_{i,j}}{\lambda^{\theta(j-1)}}\right)+\frac{D}{B^{27}}\mathcal{N}_{i-1}\le C_{53}b^{4}.
\end{equation*}
Here, $C_{53}=C_{53}(B)>1$ is a constant dependent only on $B$.
\end{enumerate}
\end{proposition}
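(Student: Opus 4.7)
My plan is to establish the proposition by adding the weighted energy inequality from Proposition~\ref{Prop:dsFij} to $B^{-20}$ times the regularized virial inequality from Proposition~\ref{Prop:Virial}, with the weight $1/B^{20}$ in the definition of $\mathcal{M}_{i,j}$ serving as the balancing parameter that enables each estimate to absorb the other's error terms.

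For the coercivity in (i), the upper bound $\mathcal{M}_{i,j}\lesssim\mathcal{N}_i$ follows by expanding $\mathcal{F}_{i,j}$, bounding the quartic correction by Gagliardo--Nirenberg together with the bootstrap $\|\varepsilon\|_{L^2}\le\kappa$, observing that $|\mathcal{J}_{i,j}|\lesssim(B^{10}\mathcal{N}_i)^{1/2}\lesssim\kappa^{1/2}$ via Lemma~\ref{le:modu2}(i) so the $(1+\mathcal{J}_{i,j})$ prefactor is essentially $1$, and handling $\mathcal{P}/B^{20}$ through $\chi_B\lesssim\psi_{1,B}\sqrt{\psi_{0,B}}$ from Lemma~\ref{le:pointchi}(iii) combined with the third inequality of Lemma~\ref{le:relationvareta}. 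For the lower bound $\mathcal{N}_i\lesssim\mathcal{M}_{i,j}$, I would apply the second coercivity in Proposition~\ref{Prop:Spectral}(iv) to $\varepsilon\sqrt{\psi_B}$: since $|\psi_B-\tfrac{1}{2}|\lesssim e^{-B^{1/3}/6}$ on the exponentially localized support of $Q$ by Lemma~\ref{le:psiphi}(iii), the orthogonality conditions \eqref{equ:orth} on $\varepsilon$ against $Q$, $Q^3$, $\nabla Q$ transfer to $\varepsilon\sqrt{\psi_B}$ modulo exponentially small errors, yielding $\int(|\nabla\varepsilon|^2+\varepsilon^2)\psi_B\lesssim\mathcal{F}_{i,j}$. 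The remaining $\int\varepsilon^2(\varphi_{i,B}-\psi_B)$ piece is already explicit in $\mathcal{F}_{i,j}$ (as $\varphi_{i,B}\ge\psi_B$), while $\mathcal{P}/B^{20}\ge 0$ only strengthens the lower bound.

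For the monotonicity in (ii), summing the energy inequality with $B^{-20}$ times the virial inequality, the two bad RHS terms of the rescaled virial estimate are absorbed on the LHS: $\tfrac{C_{37}}{B^{5}}\int(|\nabla\varepsilon|^2+\varepsilon^2)\varphi'_{i,B}$ is eaten by $\tfrac{1}{4}\int(|\nabla\varepsilon|^2+\varepsilon^2)\varphi'_{i,B}$ from the energy LHS for $B$ large, while $\tfrac{C_{37}}{B^{28}}\int(|\nabla\varepsilon|^2+\varepsilon^2)\psi_{0,B}$ is eaten by $\tfrac{\nu_1}{B^{21}}\int(|\nabla\eta|^2+\eta^2)\psi_{0,B}$ from the virial LHS after converting $\varepsilon$-norms to $\eta$-norms via the second inequality of Lemma~\ref{le:relationvareta} with $\gamma=B^{-3}$. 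What remains on the RHS is the bulk $\tfrac{C_0}{B^{30}}\int(|\nabla\varepsilon|^2+\varepsilon^2)\psi_B$ plus $b^4$ terms; since $\varphi_{i-1,B}\gtrsim\psi_B$ by Lemma~\ref{le:psiphi2}(v), this bulk term is bounded by $\tfrac{2C_0}{B^{30}}\mathcal{N}_{i-1}\le\tfrac{D/2}{B^{27}}\mathcal{N}_{i-1}$ for $B$ large, and can be moved to the LHS without exhausting more than half of the $\mathcal{N}_{i-1}$ budget.

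The main obstacle is producing the remaining $\tfrac{D/2}{B^{27}}\mathcal{N}_{i-1}$ from the surviving good LHS $\tfrac{1}{8}\int(|\nabla\varepsilon|^2+\varepsilon^2)\varphi'_{i,B}+\tfrac{\nu_1}{2B^{21}}\int(|\nabla\eta|^2+\eta^2)\psi_{0,B}$. Using $\varphi_{i-1,B}\lesssim B^{10}\varphi'_{i,B}+\psi_B$ from Lemma~\ref{le:psiphi2}(v), the $\varphi'_{i,B}$-weighted part of this target is absorbed by the energy LHS remnant, and the task reduces to dominating $\tfrac{D}{B^{27}}\int(|\nabla\varepsilon|^2+\varepsilon^2)\psi_B$ by $\tfrac{\nu_1}{2CB^{21}}\int(|\nabla\varepsilon|^2+\varepsilon^2)\psi_{0,B}$ (obtained from the good $\eta$-term via Lemma~\ref{le:relationvareta}). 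The pointwise inequality $\psi_{0,B}\le\psi_B$ makes a direct comparison fail in the left tail, but the global relation $\psi_B^3\lesssim\psi_{0,B}$ saves the day: splitting at $y_1=-\tfrac{3}{2}B\log B$, in the main region one has $\psi_B\lesssim B^6\psi_{0,B}$, and the coefficient gap $B^{27-21}=B^{6}$ compensates by choosing $D$ small and universal; in the complementary tail region $\psi_B\le 2B^{-3}$, and the resulting contribution $\tfrac{D}{B^{30}}\int_{\{y_1<-\tfrac{3}{2}B\log B\}}(|\nabla\varepsilon|^2+\varepsilon^2)$ is controlled via the bootstrap \eqref{est:Boot1} (and \eqref{est:Boot2} to control the ratio $|b|/\lambda^\theta$) and is therefore harmless after allowing $C_{53}=C_{53}(B)$ to depend on $B$. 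Taking $B$ sufficiently large and $D$ sufficiently small universal concludes the proof.
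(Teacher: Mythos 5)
The overall structure of your argument is correct and matches the paper's: combine Proposition~\ref{Prop:dsFij} with $B^{-20}$ times Proposition~\ref{Prop:Virial}, use Lemma~\ref{le:relationvareta} to convert between $\varepsilon$- and $\eta$-norms, and use the pointwise relations among the weights $\psi_B,\varphi_{i,B},\psi_{0,B}$. However, there are two genuine gaps.

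First, in the coercivity lower bound you assert that ``$\mathcal{P}/B^{20}\ge 0$ only strengthens the lower bound.'' This is false: by construction $\chi_B(y_1)=\sigma(\cdot)\int_0^{y_1}\tfrac{2}{B}\psi_{0,B}(\rho)\,\dd\rho\le 0$ for $y_1\le 0$ (the integrand is positive and the orientation is reversed), so $\mathcal{P}=\int\eta^2\chi_B$ is indefinite and can well be negative when $\eta$ concentrates on the left half-plane. The correct treatment, and the one the paper uses, bounds the absolute value $|\mathcal{P}|\lesssim B^{9}\int\eta^2\psi_{0,B}\lesssim B^{16}\int(|\nabla\varepsilon|^2+\varepsilon^2)\psi_{0,B}\lesssim B^{16}\mathcal{N}_i$ via Lemma~\ref{le:pointchi}(iii) and Lemma~\ref{le:relationvareta}, so that $|\mathcal{P}|/B^{20}\lesssim B^{-4}\mathcal{N}_i$ is a harmless perturbation of the coercive $\mathcal{F}_{i,j}$ term.

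Second, in the monotonicity step your reduction to dominating $\frac{D}{B^{27}}\int(|\nabla\varepsilon|^2+\varepsilon^2)\psi_B$ by $\frac{\nu_1}{2CB^{21}}\int(|\nabla\varepsilon|^2+\varepsilon^2)\psi_{0,B}$ alone cannot work, as you yourself observe: $\psi_B$ decays strictly slower than $\psi_{0,B}$ in the left tail, and no power of $B$ fixes a pointwise failure. But your proposed remedy, splitting at $y_1=-\tfrac{3}{2}B\log B$, produces a tail contribution $\frac{CD}{B^{30}}\int_{\{y_1<-\frac{3}{2}B\log B\}}(|\nabla\varepsilon|^2+\varepsilon^2)$, and this is \emph{not} controllable by $C_{53}(B)b^4$ — the bootstrap~\eqref{est:Boot1}--\eqref{est:Boot3} gives no upper bound of $\|\nabla\varepsilon\|_{L^2}^2$ in terms of $b^4$. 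The correct ingredient, used in the paper, is the additional pointwise bound $\psi_B\lesssim B\varphi'_{i,B}+\psi_{0,B}$ on $\R$: in the left tail, $\psi_B\sim e^{2y_1/B}$ while $B\varphi'_{i,B}\sim e^{y_1/B}$, so $\varphi'_{i,B}$ dominates precisely where $\psi_{0,B}$ fails. You already retained $\tfrac{1}{8}\int(|\nabla\varepsilon|^2+\varepsilon^2)\varphi'_{i,B}$ as a good LHS term after absorbing the $B^{10}\varphi'_{i,B}$ part of $\varphi_{i-1,B}$; that same term, applied to the $B\varphi'_{i,B}$ part of $\psi_B$, eats $\frac{D}{B^{26}}\int(|\nabla\varepsilon|^2+\varepsilon^2)\varphi'_{i,B}$ for $B$ large, and the remaining $\frac{D}{B^{27}}\psi_{0,B}$ piece is absorbed by the $\eta$-coercivity via Lemma~\ref{le:relationvareta}. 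With this inequality your argument closes cleanly without any splitting and without any $B$-dependent fudging of the $b^4$ term.
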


\begin{proof}

Proof of (i). On the one hand side, from (iii) of Lemma~\ref{le:pointchi} and Lemma~\ref{le:relationvareta},
\begin{equation}\label{est:PL21}
\left|\mathcal{P}\right|\lesssim B^{9}\int_{\R^{2}}\eta^{2}\psi_{0,B}\dd y\lesssim B^{16}\int_{\R^{2}}\left(|\nabla \varepsilon|^{2}+\varepsilon^{2}\right)\psi_{0,B}\dd y\lesssim B^{16} \mathcal{N}_{i}.
\end{equation}
Here, we use the fact that $\psi_{0,B}\lesssim \min(\varphi_{i,B},\psi_{B})$ and the definition of $\mathcal{N}_{i}$.

\smallskip
On the other hand side, we decompose 
\begin{equation*}
\mathcal{F}_{i,j}=\mathcal{N}_{i}
-3\int_{\R^{2}}Q_{b}^{2}\varepsilon^{2}\psi_{B}\dd y
+\mathcal{J}_{i,j}\int_{\R^{2}}\varepsilon^{2}\varphi_{i,B}\dd y
-2\int_{\R^{2}}Q_{b}\varepsilon^{3}\psi_{B}\dd y-\frac{1}{2}\int_{\R^{2}}\varepsilon^{4}\psi_{B}\dd y.
\end{equation*}
First, by an elementary computation,
\begin{equation*}
\begin{aligned}
\mathcal{N}_{i}-3\int_{\R^{2}}Q_{b}^{2}\varepsilon^{2}\psi_{B}\dd y
&=\left(\mathcal{L}(\varepsilon\sqrt{\psi_{B}}),\varepsilon\sqrt{\psi_{B}}\right)+\frac{1}{4}\int_{\R^{2}}\varepsilon^{2}\left(\frac{(\psi'_{B})^{2}}{\psi_{B}}-2\psi''_{B}\right)\dd y\\
&+\int_{\R^{2}}\varepsilon^{2}(\varphi_{i,B}-\psi_{B})\dd y-3\int_{\R^{2}}\left(Q^{2}_{b}-Q^{2}\right)\varepsilon^{2}\psi_{B}\dd y.
\end{aligned}
\end{equation*}
Therefore, from (iv) of Proposition~\ref{Prop:Spectral}, (i) of Lemma~\ref{le:psiphi2} and~\eqref{equ:orth}, we deduce that 
\begin{equation}\label{est:e21}
\begin{aligned}
\mathcal{N}_{i}-3\int_{\R^{2}}Q_{b}^{2}\varepsilon^{2}\psi_{B}\dd y
&\ge \frac{\nu}{2}\int_{\R^{2}}\left(|\nabla \varepsilon|^{2}+\varepsilon^{2}\right)\psi_{B}\dd y\\
&+\int_{\R^{2}}\varepsilon^{2}(\varphi_{i,B}-\psi_{B})\dd y\ge \frac{1}{2}\min(1,\nu)\mathcal{N}_{i}.
\end{aligned}
\end{equation}
Second, from~\eqref{est:Boot1}, the definition of $\mathcal{J}_{i,j}$ in~\eqref{equ:defJij} and (iii) of Lemma~\ref{le:modu1}, 
\begin{equation}\label{est:e22}
\left|\mathcal{J}_{i,j}\right|\lesssim \left|J_{1}\right|\lesssim \frac{1}{B^{10}}\Longrightarrow
\left|\mathcal{J}_{i,j}\int_{\R^{2}}\varepsilon^{2}\varphi_{i,B}\dd y\right|\lesssim \frac{1}{B^{10}}\mathcal{N}_{i}.
\end{equation}
Next, using again the 2D Sobolev embedding and (i) of Lemma~\ref{le:psiphi2}, 
\begin{equation}\label{est:e3}
\begin{aligned}
\left|\int_{\R^{2}}Q_{b}\varepsilon^{3}\psi_{B}\dd y\right|
&\lesssim \|\varepsilon\|_{L^{2}}\left\|\varepsilon\sqrt{\psi_{B}}\right\|_{H^{1}}^{2}\\
&\lesssim \|\varepsilon\|_{L^{2}}\int_{\R^{2}}\left(|\nabla \varepsilon|^{2}+\varepsilon^{2}\right)\psi_{B}\dd y\lesssim \frac{1}{B^{10}}\mathcal{N}_{i}.
\end{aligned}
\end{equation}
Last, from~\eqref{est:weight2}, we obtain
\begin{equation}\label{est:e4}
\left|\int_{\R^{2}}\varepsilon^{4}\psi_{B}\dd y\right|\lesssim \|\varepsilon\|_{L^{2}}^{2}
\int_{\R^{2}}\left(|\nabla \varepsilon|^{2}+\varepsilon^{2}\right)\psi_{B}\dd y\lesssim \frac{1}{B^{10}}\mathcal{N}_{i}.
\end{equation}
Combining~\eqref{est:PL21},~\eqref{est:e21},~\eqref{est:e22},~\eqref{est:e3} with~\eqref{est:e4}, we complete the proof of (i).

\smallskip
Proof of (ii). From Proposition~\ref{Prop:dsFij} and Proposition~\ref{Prop:Virial}, we see that 
\begin{equation*}
\begin{aligned}
&\lambda^{\theta(j-1)}\frac{\dd }{\dd s}\left(\frac{\mathcal{M}_{i,j}}{\lambda^{\theta(j-1)}}\right)
+\frac{1}{4}\int_{\R^{2}}\left(|\nabla \varepsilon|^{2}+\varepsilon^{2}\right)\varphi'_{i,B}\dd y
+\frac{\nu_{1}}{B^{21}}\int_{\R^{2}}\left(|\nabla \eta|^{2}+\eta^{2}\right)\psi_{0,B}\dd y\\
&\le \frac{C_{54}}{B^{5}}
\int_{\R^{2}}\left(|\nabla \varepsilon|^{2}+\varepsilon^{2}\right)\varphi'_{i,B}\dd y
+\frac{C_{55}}{B^{28}}\int_{\R^{2}}\left(|\nabla \varepsilon|^{2}+\varepsilon^{2}\right)
\psi_{B}\dd y+C_{56}b^{4}.
\end{aligned}
\end{equation*}
Here, $C_{54}>1$ and $C_{55}>1$ are some universal constants independent of $B$ and $C_{56}=C_{56}(B)>1$ is a constant dependent only on $B$. Therefore, from Lemma~\ref{le:relationvareta}, there exists a universal constant $\nu_{4}>0$ (independent of $B$) such that 
\begin{equation*}
\begin{aligned}
&\lambda^{\theta(j-1)}\frac{\dd }{\dd s}\left(\frac{\mathcal{M}_{i,j}}{\lambda^{\theta(j-1)}}\right)
+\frac{1}{5}\int_{\R^{2}}\left(|\nabla \varepsilon|^{2}+\varepsilon^{2}\right)\varphi'_{i,B}\dd y\\
&+\frac{\nu_{4}}{B^{27}}\int_{\R^{2}}\left(|\nabla \varepsilon|^{2}+\varepsilon^{2}\right)\psi_{0,B}\dd y
\le
\frac{C_{55}}{B^{28}}\int_{\R^{2}}\left(|\nabla \varepsilon|^{2}+\varepsilon^{2}\right)
\psi_{B}\dd y+C_{56}b^{4}.
\end{aligned}
\end{equation*}
Based on the above estimate and $\psi_{B}\lesssim B\varphi'_{i,B}+\psi_{0,B}$ on $\R$, we deduce that 
\begin{equation*}
\lambda^{\theta(j-1)}\frac{\dd }{\dd s}\left(\frac{\mathcal{M}_{i,j}}{\lambda^{\theta(j-1)}}\right)
+
\int_{\R^{2}}\left(|\nabla \varepsilon|^{2}+\varepsilon^{2}\right)\left(\frac{1}{6}\varphi'_{i,B}+\frac{\nu_{4}}{2B^{27}}\psi_{0,B}\right)\dd y\le C_{56}b^{4}.
\end{equation*}
It follows from $\psi_{B}\lesssim B\varphi'_{i,B}+\psi_{0,B}$ and (v) of Lemma~\ref{le:psiphi2} that
\begin{equation*}
\frac{\psi_{B}}{B^{27}}+\frac{\varphi_{i-1,B}}{B^{27}}\lesssim \frac{\varphi'_{i,B}}{B^{17}}+\frac{\psi_{B}}{B^{27}}\lesssim \frac{1}{6}\varphi'_{i,B}+\frac{\nu_{4}}{2B^{27}}\psi_{0,B}.
\end{equation*}
Combining the above two estimates, we complete the proof of (ii).
\end{proof}

\subsection{Decay property on the $y_1$-variable}
In this subsection, we will introduce an elementary estimate of the decay for the remainder term $\varepsilon$ on the right-hand side of $y_{1}$-variable. We mention here that, this estimate will be used to close the bootstrap estimate~\eqref{est:Boot3} at the end of proof of Theorem~\ref{MT} in \S\ref{S:Endproof}.

We define the smooth function $\Phi\in C^{\infty}(\R)$ as follows,
\begin{equation*}
\Phi(y_1)=
\begin{cases}
y_1^{100},&\text{ for }y_1>1,\\
0,&\text{ for }y_1<0,
\end{cases}\quad\mbox{with}\quad \Phi'\geq 0,\quad \mbox{on}\ \R.
\end{equation*}
\begin{proposition}\label{Prop:decay}
Under the assumption of Proposition \ref{Prop:Mon}, we have
\begin{equation*}
\frac{1}{\lambda^{100}}\frac{\dd}{\dd s}
\left(\lambda^{100}\int_{\R^{2}}\varepsilon^2\Phi\dd y\right)\lesssim b^2+\int_{\R^{2}}\varepsilon^2\psi_B\dd y.
\end{equation*}
\end{proposition}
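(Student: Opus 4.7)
The plan is to differentiate $X(s):=\int_{\mathbb{R}^2}\varepsilon^2\Phi\,\dd y$ in $s$ via Lemma~\ref{le:equvar}, then add $100\frac{\lambda_s}{\lambda}X$ to reconstruct $\frac{1}{\lambda^{100}}\frac{\dd}{\dd s}(\lambda^{100}X)$. The essential observation is that the linear evolution plus the $\Lambda\varepsilon$ and transport pieces of ${\rm Mod}$ combine into natural objects: first, $-b\Lambda\varepsilon$ (from $\partial_s\varepsilon$) added to $(\frac{\lambda_s}{\lambda}+b)\Lambda\varepsilon$ (from ${\rm Mod}$) yields $\frac{\lambda_s}{\lambda}\Lambda\varepsilon$; second, the $\partial_{y_1}\varepsilon$ inside $\partial_{y_1}\mathcal{L}\varepsilon$ added to $(\frac{x_{1s}}{\lambda}-1)\partial_{y_1}\varepsilon$ yields $\frac{x_{1s}}{\lambda}\partial_{y_1}\varepsilon$; third, $\frac{x_{2s}}{\lambda}\partial_{y_2}\varepsilon$ integrated against $\varepsilon\Phi$ vanishes since $\Phi$ depends only on $y_{1}$.

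After integration by parts, three critical groups of terms emerge. The $-\partial_{y_1}\Delta\varepsilon$ piece gives $-3\int(\partial_{y_1}\varepsilon)^2\Phi'\,\dd y-\int(\partial_{y_2}\varepsilon)^2\Phi'\,\dd y$, both non-positive and dropped. The $\partial_{y_1}\varepsilon$ transport together with the Laplacian commutator produces $-\frac{x_{1s}}{\lambda}\int\varepsilon^2\Phi'\,\dd y+\int\varepsilon^2\Phi'''\,\dd y$; since $|\Phi'''(y_1)|\lesssim y_1^{97}\leq\tfrac{1}{2}\Phi'(y_1)$ for $y_1\geq C$ large (and $\frac{x_{1s}}{\lambda}\geq\tfrac{1}{2}$ by the modulation estimates), the integrand is non-positive off a compact set $[0,C]$, on which it is bounded and absorbed into $\int\varepsilon^2\psi_B\,\dd y$. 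The crucial scaling cancellation: $-\frac{\lambda_s}{\lambda}\int\varepsilon^2 y_1\Phi'\,\dd y+100\frac{\lambda_s}{\lambda}X=\frac{\lambda_s}{\lambda}\int\varepsilon^2(100\Phi-y_1\Phi')\,\dd y$, and by the Euler-type identity $100\Phi(y_1)=y_1\Phi'(y_1)$ for $y_1\geq 1$, the integrand is supported on the bounded transition region $[0,1]$; combined with $|\frac{\lambda_s}{\lambda}|\lesssim 1$ this yields $\lesssim\int\varepsilon^2\psi_B\,\dd y$.

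The remaining contributions are standard. The $Q_b$-profile parts of ${\rm Mod}$ and the $Q^2$ commutator from $\partial_{y_1}\mathcal{L}$ are bounded via the exponential decay on $y_1\geq 0$ (support of $\Phi$) together with the modulation estimates of Lemma~\ref{le:modu1}, producing factors of the form $\bigl(b^2+(\int\varepsilon^2\psi_B\,\dd y)^{1/2}\bigr)\cdot(\int\varepsilon^2\psi_B\,\dd y)^{1/2}\lesssim b^4+\int\varepsilon^2\psi_B\,\dd y$. The $\Psi_b$ term uses $|\Psi_b|\lesssim b^2$ with exponential decay on $y_1\ge 0$ from Lemma~\ref{le:Ketest}. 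The nonlinear pieces $\partial_{y_1}R_b$ and $\partial_{y_1}R_{NL}$ are handled by integration by parts together with $|R_b|\lesssim|b||\varepsilon|e^{-|y|/3}$ and $|R_{NL}|\lesssim|b|\varepsilon^2+|\varepsilon|^3$, and the weighted Sobolev inequality of Lemma~\ref{le:weight2D}.

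The main obstacle is the correct pairing of terms across $\partial_s\varepsilon$ and ${\rm Mod}$: treating $(\frac{x_{1s}}{\lambda}-1)\partial_{y_1}\varepsilon$ as a small perturbation of $\partial_{y_1}\varepsilon$ would generate an uncontrolled $\int\varepsilon^2\Phi'\,\dd y$ (bounded only by $1+\lambda^{-100}$ through the bootstrap), whereas combining the two into $\frac{x_{1s}}{\lambda}\partial_{y_1}\varepsilon$ restores a definite sign; the same regrouping principle is what drives the scaling cancellation. Moreover, the exponent $100$ in the prefactor $\lambda^{100}$ is dictated precisely by the degree of $\Phi$ via the identity $y_1\Phi'=100\Phi$ on $y_1\ge 1$, which is what makes the cancellation in the third group exact.
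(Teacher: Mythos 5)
Your proof is essentially correct and follows the same skeleton as the paper's: differentiate $\int\varepsilon^2\Phi\,\dd y$ via Lemma~\ref{le:equvar}, integrate by parts, exploit the Euler identity $y_1\Phi'=100\Phi$ on $y_1\ge 1$ (which makes $100\Phi-y_1\Phi'$ compactly supported, so the scaling term dies against $\psi_B$), control $\Phi'''$ by $\Phi'$ for large $y_1$ and by a constant otherwise, and handle the profile, $\Psi_b$ and nonlinear pieces via the exponential decay on $\{y_1\ge 0\}$ and the weighted Sobolev estimate. Your Euler-identity observation, the vanishing of the $\partial_{y_2}$-transport contribution, and the treatment of the scaling term all match the paper.

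The one place where your discussion is off is the claim that regrouping $\partial_{y_1}\varepsilon+(\tfrac{x_{1s}}{\lambda}-1)\partial_{y_1}\varepsilon=\tfrac{x_{1s}}{\lambda}\partial_{y_1}\varepsilon$ is the essential step, and that treating $(\tfrac{x_{1s}}{\lambda}-1)\partial_{y_1}\varepsilon$ as a perturbation ``would generate an uncontrolled $\int\varepsilon^2\Phi'$.'' The paper does exactly the perturbative treatment: it keeps $(\tfrac{x_{1s}}{\lambda}-1)\partial_{y_1}\varepsilon$ inside the ${\rm Mod}$-block, uses the modulation bound $|\tfrac{x_{1s}}{\lambda}-1|\lesssim b^2+\mathcal{N}_0^{1/2}\lesssim B^{-50}$ from the bootstrap, and gets a contribution $\lesssim B^{-10}\int\varepsilon^2\Phi'\,\dd y$ which is \emph{absorbed} by the strictly negative $-\tfrac{1}{2}\int\varepsilon^2\Phi'\,\dd y$ coming from the transport part of the linear evolution; it is never dumped onto the right side and hence never needs the bootstrap bound $\int y_1^{100}\varepsilon^2\lesssim 1+\lambda^{-100}$. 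Your regrouping is a perfectly valid and slightly tidier way to reach the same sign condition -- since $\tfrac{x_{1s}}{\lambda}\ge\tfrac{1}{2}$, the combined term $-\tfrac{x_{1s}}{2\lambda}\int\varepsilon^2\Phi'$ is non-positive outright -- but it is not forced, and the rationale you give for it is not accurate. This does not affect the correctness of the proof.
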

\begin{proof}
Using~\eqref{le:equvar}, we obtain
\begin{equation*}
\begin{aligned}
\frac{1}{2}\frac{\dd }{\dd s}\int_{\R^{2}}\varepsilon^{2}\Phi\dd y&=
\int_{\R^{2}}\varepsilon\Phi\left(\frac{\lambda_{s}}{\lambda}\Lambda \varepsilon+\partial_{y_{1}}\left(-\Delta \varepsilon+\varepsilon\right)\right)\dd y\\
&+\int_{\R^{2}}\varepsilon\Phi\left({\rm{Mod}}-\left(\frac{\lambda_{s}}{\lambda}+b\right)\Lambda\varepsilon+\Psi_{b}\right)\dd y\\
&-\int_{\R^{2}}\varepsilon\Phi\partial_{y_{1}}\left(\left(Q_{b}+\varepsilon\right)^{3}-Q^{3}_{b}\right)\dd y.
\end{aligned}
\end{equation*}
From integration by parts and the fact that $y_1\Phi'=100\Phi$ for $y_1\geq1$ and $\Phi'''\ll\Phi'$ for $y_1$ large, we obtain
\begin{equation*}
\begin{aligned}
&\int_{\R^{2}}\varepsilon\Phi\left(\frac{\lambda_{s}}{\lambda}\Lambda \varepsilon+\partial_{y_{1}}\left(-\Delta \varepsilon+\varepsilon\right)\right)\dd y\\
&=-\frac{1}{2}\frac{\lambda_s}{\lambda}\int_{\R^{2}} \varepsilon^2y_1\Phi'\dd y
-\frac{1}{2}\int_{\R^{2}}\left(3(\partial_{y_{1}}\varepsilon)^{2}
+(\partial_{y_{2}}\varepsilon)^{2}+\varepsilon^{2}\right)\Phi'\dd y+\frac{1}{2}\int_{\R^{2}}\varepsilon^{2}\Phi'''\dd y\\
&=-50\frac{\lambda_s}{\lambda}\int_{\R^{2}} \varepsilon^2\Phi\dd y
-\frac{1}{2}\int_{\R^{2}}\left(3(\partial_{y_{1}}\varepsilon)^{2}
+(\partial_{y_{2}}\varepsilon)^{2}+\varepsilon^{2}\right)\Phi'\dd y+O\left(\int_{\R^{2}}\varepsilon^{2}\psi_{B}\dd y\right).
\end{aligned}
\end{equation*}
Recall that, from the definition of ${\rm{Mod}}$ in \S\ref{SS:Geo}, 
\begin{equation*}
\begin{aligned}
\left({\rm{Mod}}-\left(\frac{\lambda_{s}}{\lambda}+b\right)\Lambda\varepsilon+\Psi_{b}\right)
&=\left(\frac{\lambda_{s}}{\lambda}+b\right)\Lambda Q_{b}
+\left(\frac{x_{1s}}{\lambda}-1\right)\left(\partial_{y_{1}}Q_{b}+\partial_{y_{1}}\varepsilon\right)\\
&+\frac{x_{2s}}{\lambda}\left(\partial_{y_{2}}Q_{b}+\partial_{y_{2}}\varepsilon\right)+\Psi_{b}-b_{s}\frac{\partial Q_{b}}{\partial b}.
\end{aligned}
\end{equation*}
Therefore, from (i) and (ii) of Lemma~\ref{le:Ketest} and (ii) of Lemma~\ref{le:modu1}, we obtain 
\begin{equation*}
\begin{aligned}
&\left|\int_{\R^{2}}\varepsilon\Phi\left({\rm{Mod}}-\left(\frac{\lambda_{s}}{\lambda}+b\right)\Lambda\varepsilon+\Psi_{b}\right)\dd y\right|\\
&\lesssim 
b^{2}+\int_{\R^{2}}\varepsilon^{2}\psi_{B}\dd y+\frac{1}{B^{10}}\int_{\R^{2}}\varepsilon^{2}\Phi'\dd y.
\end{aligned}
\end{equation*}
Using integration by parts, the non-linear term can be rewritten by 
\begin{equation*}
\begin{aligned}
&\int_{\R^{2}}\varepsilon\Phi\partial_{y_{1}}\left(\left(Q_{b}+\varepsilon\right)^{3}-Q^{3}_{b}\right)\dd y\\
&=\frac{3}{2}\int_{\R^{2}}\varepsilon^{2}Q_{b}\left(2\Phi\partial_{y_{1}}Q_{b}-\Phi'Q_{b}\right)\dd y\\
&+\int_{\R^{2}}\varepsilon^{3}\left(\Phi\partial_{y_{1}}Q_{b}-\Phi'Q_{b}\right)\dd y-\frac{3}{4}\int_{\R^{2}}\varepsilon^{4}\Phi'\dd y.
\end{aligned}
\end{equation*}
Note that
\begin{equation*}
\left\|\frac{\nabla \left(\Phi'+\psi_{B}\right)}{\Phi'+\psi_{B}}\right\|_{L^{\infty}(\R^{2})}+
\sum_{|\alpha|=2}\left\|\frac{\partial_{y}^{\alpha}\left(\Phi'+\psi_{B}\right)}{\Phi'+\psi_{B}}\right\|_{L^{\infty}(\R^{2})}\lesssim 1.
\end{equation*}
Therefore, using again~\eqref{est:weight2} and (i) of Lemma~\ref{le:Ketest}, we obtain
\begin{equation*}
\left|\int_{\R^{2}}\varepsilon\Phi\partial_{y_{1}}
\left(\left(Q_{b}+\varepsilon\right)^{3}-Q^{3}_{b}\right)\dd y\right|
\lesssim \int_{\R^{2}}\varepsilon^{2}\psi_{B}\dd y+\frac{1}{B^{10}}\int_{\R^{2}}\left(|\nabla \varepsilon|^{2}+\varepsilon^{2}\right)\Phi'\dd y.
\end{equation*}
Combining the above estimates, for $B$ large enough, we obtain
\begin{equation*}
\frac{\dd}{\dd s}\int_{\R^{2}}\varepsilon^2\Phi\dd y+100\frac{\lambda_s}{\lambda}\int_{\R^{2}}\varepsilon^2\Phi\dd y\lesssim b^2+ \int_{\R^{2}} \varepsilon^2\psi_B\dd y,
\end{equation*}
which completes the proof of Proposition~\ref{Prop:decay} immediately.
\end{proof}

\section{End of the proof of Theorem~\ref{MT}}\label{S:Endproof}
Let $0<\alpha\ll \alpha^{*}\ll \kappa \ll 1$ to be chosen later. Recall that, in Definition~\ref{def:Tube}, we define a $L^{2}$-moduled tube $\mathcal{T}_{\alpha^{*}}$ and a set of initial data $\mathcal{A}_{\alpha}$.
In this section, we will classify the asymptotic behavior of any solution with initial data in $\mathcal{A}_{\alpha}$ which directly implies Theorem \ref{MT}. 
We start with the following definition,
\begin{equation}\label{equ:deft1}
t^{*}=\sup\{0<t<+\infty:u(t_{1})\in \mathcal{T}_{\alpha^{*}},\ \forall t_{1}\in [0,t]\}.
\end{equation} 
Since $0<\alpha\ll\alpha^*\ll \kappa \ll1$, then for any initial data $u_{0}\in \mathcal{A}_{\alpha}$, we have $t^*>0$.

Next, by Proposition \ref{Prop:decomposition}, we know that $u(t)$ admits the following geometrical decomposition on $[0,t^*]$:
\begin{equation*}
u(t,x)=\frac{1}{{\lambda}(t)}\left[{Q}_{b(t)}+{\varepsilon}(t)\right]\left(\frac{x-{x}(t)}{{\lambda}(t)}\right).
\end{equation*}

Using the fact that $u_{0}\in \mathcal{A}_{\alpha}$, we obtain
\begin{equation}\label{est:smallnessinital}
\begin{aligned}
\|\varepsilon(0)\|_{H^1}+|b(0)|+&|1-\lambda(0)|+\mathcal{N}_2(0)\lesssim \delta(\alpha),\\
|E(u_0)|+\bigg|\int u_0^2-\int Q^2\bigg|\lesssim\delta(\alpha)\quad &\mbox{and}\quad 
\int_{\R}\int_{0}^{\infty}y_1^{100}\varepsilon^2(0,y)\dd y_{1}\dd y_{2}\leq2.
\end{aligned}
\end{equation}
We fix constants $B>100$ large enough and $0<\kappa_{1}< \min\left\{\kappa_{*},B^{-100}\right\}$ small enough such that Proposition~\ref{Prop:dsFij} and Proposition~\ref{Prop:Virial} hold. Define
\begin{equation}\label{equ:deft2}
t^{**}=\sup\{0<t<t^*:\eqref{est:Boot1}-\eqref{est:Boot3}\ \mbox{hold for all }t_{1}\in[0,t]\}.
\end{equation}
Note that from~\eqref{est:smallnessinital} and a straightforward continuity argument, we have $t^{**}>0$ is well-defined.  The key point in our analysis is to deduce $t^{*}=t^{**}$ by improving the bootstrap assumptions~\eqref{est:Boot1}--\eqref{est:Boot3}.
From now on, we denote $s^*=s(t^*)$, $s^{**}=s(t^{**})$. 
In the remainder of the proof, the implied constants in $\lesssim$ and $O$ do not depend on the small constant $\kappa$ appearing in bootstrap assumptions~\eqref{est:Boot1}--\eqref{est:Boot3} but can depend on the large constant $B$.

\subsection{Consequence of the monotonicity formula} We derive some crucial estimates from the monotonicity formula introduced in Proposition~\ref{Prop:Mon}. The proof of the following Lemma is similar to~\cite[Lemma 4.3]{MMR}, but it is given for the sake of completeness and the readers' convenience.
\begin{lemma}\label{le:Conmon}
The following estimates hold.
\begin{enumerate}

\item Control of $b$. For all $0\leq s_1<s_2\leq s^{**}$ and $m=2,3,4$, we have
	\begin{equation*}
    \int_{s_1}^{s_2}|b(s)|^m\,\dd s\lesssim\mathcal{N}_1(s_1)+|b(s_1)|^{m-1}+|b(s_2)|^{m-1}.
	\end{equation*}

	\item \emph{Control of $\mathcal{N}_{i}$.} For all $0\le s_1<s_2\le s^{**}$ and $i=1,2$, we have
	\begin{equation*}
	\begin{aligned}
	\mathcal{N}_{i}(s_2)+\int_{s_1}^{s_2}\mathcal{N}_{i-1}(s)\dd s&\lesssim\mathcal{N}_i(s_1)+|b(s_1)|^3+|b(s_2)|^3,\\
	\frac{\mathcal{N}_i(s_2)}{\lambda^{\theta}(s_2)}+\int_{s_1}^{s_2}\frac{1}{\lambda^{\theta}(s)}\left(\mathcal{N}_{i-1}(s)+|b(s)|^3\right)\dd s&\lesssim\frac{\mathcal{N}_i(s_1)}{\lambda^{\theta}(s_1)}+\frac{b^2(s_1)}{\lambda^{\theta}(s_1)}+\frac{b^2(s_2)}{\lambda^{\theta}(s_2)}.
	\end{aligned}
	\end{equation*}
	\item \emph{Control of $\frac{b}{\lambda^{\theta}}$}. For all $0\leq s_1<s_2\leq s^{**}$, we have 
	\begin{equation*}
	\left|\frac{b(s_2)}{\lambda^{\theta}(s_2)}-\frac{b(s_1)}{\lambda^{\theta}(s_1)}\right|\le K\left(\frac{\mathcal{N}_1(s_1)}{\lambda^{\theta}(s_1)}+\frac{b^2(s_1)}{\lambda^{\theta}(s_1)}+\frac{b^2(s_2)}{\lambda^{\theta}(s_2)}\right).
	\end{equation*}
	Here, $K>1$ is a universal constant.
	\item \emph{Refined control of $\lambda$}. Let $\lambda_0(s)=\lambda(s)(1-J_1(s))^2$. Then for all $s\in[0,s^{**}]$,
	\begin{equation*}
	\bigg|\frac{\lambda_{0s}}{\lambda_0}+b\bigg|\lesssim\mathcal{N}_0+b^2.
	\end{equation*}
\end{enumerate}
\end{lemma}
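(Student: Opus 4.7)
All four estimates follow by combining the modulation laws of Lemma~\ref{le:modu2} with the monotonicity formula of Proposition~\ref{Prop:Mon} (whose coercivity gives $\mathcal{N}_i\sim\mathcal{M}_{i,j}$ and whose differential inequality reads $\lambda^{\theta(j-1)}\frac{d}{ds}(\mathcal{M}_{i,j}/\lambda^{\theta(j-1)})+DB^{-27}\mathcal{N}_{i-1}\le C_{53}b^4$). Throughout, we use that the correction terms $J_1,J_2,J_3,J$ satisfy $|J_k|\lesssim\mathcal{N}_0^{1/2}\lesssim\kappa^{1/2}$ and $|J_{ks}|\lesssim |b|+\mathcal{N}_0^{1/2}$, by Cauchy--Schwarz and Lemma~\ref{le:modu1}--\ref{le:modu2}. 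Parts (i) and (ii) will be established together in a coupled fashion; (iii) and (iv) then follow quickly.

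\emph{Part (i).} Rewrite the law of $b$ from Lemma~\ref{le:modu2}(ii) as
\begin{equation*}
\theta b^2=-b_s-bJ_{2s}+O(|b|^3+\mathcal{N}_0),
\end{equation*}
multiply by $|b|^{m-2}$, and integrate on $[s_1,s_2]$. The $|b|^{m-2}b_s$ term produces boundary data of size $|b(s_1)|^{m-1}+|b(s_2)|^{m-1}$. For the cross term $|b|^{m-2}b\,J_{2s}$, integrate by parts in $s$: the boundary piece is absorbed into the $|b|^{m-1}$ contributions since $|J_2|\lesssim\kappa^{1/2}$, while the bulk piece, of the form $J_2\,(|b|^{m-2}b)_s$, is estimated by Lemma~\ref{le:modu1}(ii) and can be moved to the left-hand side because $|J_2|\ll 1$. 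Finally, $\int_{s_1}^{s_2}\mathcal{N}_0\,\dd s$ is controlled by Part (ii). The circularity is resolved by first treating $m=2$ using the crude bound $|b|^3\le\kappa|b|^2$, then using (ii), then returning to (i) for $m=3,4$.

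\emph{Part (ii).} Integrating Proposition~\ref{Prop:Mon}(ii) with $j=1$ on $[s_1,s_2]$ and applying the coercivity $\mathcal{N}_i\sim\mathcal{M}_{i,1}$ yields
\begin{equation*}
\mathcal{N}_i(s_2)+\frac{D}{B^{27}}\int_{s_1}^{s_2}\mathcal{N}_{i-1}\,\dd s\lesssim\mathcal{N}_i(s_1)+\int_{s_1}^{s_2}b^4\,\dd s,
\end{equation*}
and the last integral is bounded via Part (i) with $m=4$ (noting $\mathcal{N}_1\le\mathcal{N}_i$). For the weighted estimate, use Proposition~\ref{Prop:Mon}(ii) with $j=2$, integrate, and bound $\int b^4/\lambda^\theta\,\dd s$ by extracting a power of $\kappa$ from $\int|b|^3/\lambda^\theta\,\dd s$. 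The latter is produced by the algebraic identity
\begin{equation*}
\frac{\dd}{\dd s}\!\left(\frac{b^2}{\lambda^\theta}\right)=\frac{2bb_s}{\lambda^\theta}-\frac{\theta b^2}{\lambda^\theta}\frac{\lambda_s}{\lambda}=-\frac{\theta b^3}{\lambda^\theta}+(\text{lower order}),
\end{equation*}
where the cancellation of the leading $-2\theta b^3/\lambda^\theta$ against $+\theta b^3/\lambda^\theta$ (coming from $\lambda_s/\lambda=-b+\ldots$ via Lemma~\ref{le:modu2}(i)) is the mechanism behind the precise form of the right-hand side. Integrating in $s$ and integrating by parts the $J_{2s}$ error gives $\int_{s_1}^{s_2}|b|^3/\lambda^\theta\,\dd s\lesssim b^2(s_1)/\lambda^\theta(s_1)+b^2(s_2)/\lambda^\theta(s_2)+\int_{s_1}^{s_2}\mathcal{N}_{i-1}/\lambda^\theta\,\dd s$, and absorbing the last integral into the left completes the proof.

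\emph{Parts (iii) and (iv).} For (iii), integrate the $b/\lambda^\theta$-law of Lemma~\ref{le:modu2}(iii) from $s_1$ to $s_2$: the left-hand side produces $b(s_2)/\lambda^\theta(s_2)-b(s_1)/\lambda^\theta(s_1)$ plus $\int(b/\lambda^\theta)J_s\,\dd s$, which one integrates by parts using $|J|\lesssim\mathcal{N}_0^{1/2}$; the right-hand side is then controlled via Parts (i)--(ii). For (iv), $\ln\lambda_0=\ln\lambda+2\ln(1-J_1)$ gives
\begin{equation*}
\frac{\lambda_{0s}}{\lambda_0}=\frac{\lambda_s}{\lambda}-\frac{2J_{1s}}{1-J_1};
\end{equation*}
by Lemma~\ref{le:modu2}(i), $\lambda_s/\lambda-2J_{1s}=-b+O(B^5b^2+B^5\mathcal{N}_0)$, and since $|J_1|\lesssim\mathcal{N}_0^{1/2}$ and $|J_{1s}|\lesssim|b|+\mathcal{N}_0^{1/2}$, the discrepancy between $2J_{1s}/(1-J_1)$ and $2J_{1s}$ is $O(\mathcal{N}_0+b^2)$, yielding the claimed bound.

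\emph{Main obstacle.} The delicate point is the coupled closure of (i) and (ii): the law of $b$ controls $\int b^2$ only modulo $\int\mathcal{N}_0$, while the monotonicity controls $\int\mathcal{N}_0$ only modulo $\int b^4$, so the two must be bootstrapped against each other. The critical cancellation $\frac{\dd}{\dd s}(b^2/\lambda^\theta)=-\theta b^3/\lambda^\theta+\ldots$ is what allows the $b^4/\lambda^\theta$ source in the $j=2$ monotonicity to be absorbed and simultaneously delivers the weighted $|b|^3$ contribution on the left-hand side of the second estimate in (ii); this cancellation is a direct consequence of the formal ODE $\frac{\dd}{\dd t}(b/\lambda^\theta)=0$ discussed in \S\ref{SS:Comm}.
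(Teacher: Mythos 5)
Your overall strategy matches the paper's: couple the modulation law for $b$ with the monotonicity formula of Proposition~\ref{Prop:Mon} to close (i) and (ii), then derive (iii) and (iv) by direct computation. Parts (iii) and (iv) are essentially correct; for (iii), the paper works instead with the integrating factor $e^{J}$, so that $\frac{\dd}{\dd s}\bigl(\tfrac{b}{\lambda^\theta}e^{J}\bigr)=e^{J}\bigl(\frac{\dd}{\dd s}(\tfrac{b}{\lambda^\theta})+\tfrac{b}{\lambda^\theta}J_s\bigr)$ is precisely the combination bounded in Lemma~\ref{le:modu2}(iii), avoiding the recursive $J_s$ dependence that your integration by parts produces (your version still closes since $|J|\lesssim\kappa^{1/2}$, but only after iterating the absorption, effectively reconstructing the exponential).

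There is, however, a genuine gap in the weighted part of (ii). You propose to extract $\int_{s_1}^{s_2}|b|^3/\lambda^\theta\,\dd s$ by integrating the identity $\frac{\dd}{\dd s}\bigl(b^2/\lambda^\theta\bigr)=-\theta b^3/\lambda^\theta+(\text{lower order})$, but this controls only the \emph{signed} integral $\int b^3/\lambda^\theta\,\dd s$. On $[0,s^{**}]$ the sign of $b$ is not fixed (it becomes definite only after the separation time $t_1^*$ in Section~\ref{S:Endproof}), and $\bigl|\int b^3/\lambda^\theta\,\dd s\bigr|$ does not majorize $\int|b|^3/\lambda^\theta\,\dd s$. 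The paper's route is to first absorb, via Young's inequality, all the error terms in the $b$-law into the \emph{one-sided} pointwise bound $\tfrac34 b^2+\tfrac{b_s}{\theta}\lesssim\mathcal{N}_0$ (this is~\eqref{est:endbs}, itself a consequence of~\eqref{est:bsb2}), and only then multiply by the nonnegative weight $|b|/\lambda^\theta$. Writing $2b_s|b|=\frac{\dd}{\dd s}(b|b|)$ and integrating by parts in $s$ with $\lambda_s/\lambda=-b+O(b^2+\mathcal{N}_0^{1/2})$, the surviving coefficient in front of $\int|b|^3/\lambda^\theta\,\dd s$ is $\tfrac34-\tfrac12=\tfrac14>0$, and the boundary and error terms all carry the correct sign, yielding $\int|b|^3/\lambda^\theta\lesssim\int\mathcal{N}_0/\lambda^\theta+b^2(s_1)/\lambda^\theta(s_1)+b^2(s_2)/\lambda^\theta(s_2)$ as required. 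The same device of multiplying the one-sided pointwise inequality by $|b|^{m-2}$ (rather than differentiating an even power of $b$) is also what gives part (i) directly for each $m=2,3,4$ and makes your separate $J_{2s}$ integration by parts unnecessary.
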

\begin{proof}
Proof of (i)--(ii).
From~\eqref{est:bsb2}, we have
\begin{equation}\label{est:endbs}
b^2= -\frac{b_{s}}{\theta}+O\left(\mathcal{N}_0+|b|\mathcal{N}_0^{\frac{1}{2}}+|b|^3\right)\Longrightarrow \frac{3}{4}b^{2}+\frac{b_{s}}{\theta}\lesssim \mathcal{N}_{0}.
\end{equation}
Note that, for $m=2,3,4$, we also have 
\begin{equation*}
b_{s}|b|^{m-2}=\frac{1}{m-1}\frac{\dd }{\dd s}\left(b|b|^{m-2}\right).
\end{equation*}
Based on the above identities,~\eqref{est:Boot1} and~\eqref{est:endbs}, for all $0\leq s_1<s_2\leq s^{**}$, we have
\begin{equation*}
\int_{s_1}^{s_2}|b(s)|^m\dd s\lesssim
\kappa^{m-2}\int_{s_1}^{s_2}\mathcal{N}_0(s)\dd s+ |b(s_1)|^{m-1}+|b(s_2)|^{m-1}.
\end{equation*}
Next, from (i) and (ii) of Proposition~\ref{Prop:Mon}, for $i=1,2$, we have
\begin{align*}
\mathcal{N}_{i}(s_2)+\int_{s_1}^{s_2}\mathcal{N}_{i-1}(s)\,\dd s\lesssim\mathcal{N}_i(s_1)+\int_{s_1}^{s_2}|b(s)|^4\,\dd s.
\end{align*}
The above two estimates imply (i) and the first estimate in (ii) immediately.

Then, using again~\eqref{est:endbs}, we have
\begin{equation*}
\int_{s_{1}}^{s_{2}}\frac{|b(s)|^{3}}{\lambda^{\theta}(s)}\dd s+\int_{s_{1}}^{s_{2}}\frac{b_{s}(s)|b(s)|}{\lambda^{\theta}(s)}\dd s\lesssim \int_{s_{1}}^{s_{2}}\frac{\mathcal{N}_{0}(s)}{\lambda^{\theta}(s)}\dd s,
\end{equation*}
which implies
\begin{equation*}
\int_{s_{1}}^{s_{2}}\frac{|b(s)|^{3}}{\lambda^{\theta}(s)}\dd s\lesssim 
 \frac{b^2(s_1)}{\lambda^{\theta}(s_1)}+\frac{b^2(s_2)}{\lambda^{\theta}(s_2)}+
 \int_{s_1}^{s_2}\frac{\mathcal{N}_0(s)}{\lambda^{\theta}(s)}\dd s.
\end{equation*}
Using again (i) and (ii) of Proposition~\ref{Prop:Mon}, we deduce that 
\begin{equation*}
\frac{\mathcal{N}_{i}(s_2)}{\lambda^{\theta}(s_2)}+\int_{s_1}^{s_2}\frac{\mathcal{N}_{i-1}(s)}{\lambda^{\theta}(s)}\dd s
\lesssim\frac{\mathcal{N}_{i}(s_1)}{\lambda^{\theta}(s_1)}+\int_{s_1}^{s_2}\frac{b^{4}(s)}{\lambda^{\theta}(s)}.
\end{equation*}
The above two estimates imply the second estimate in (ii) immediately.

Proof of (iii). First, from (iii) of Lemma~\ref{le:modu1}, we have 
\begin{equation*}
|e^{J}-1|\lesssim |J|\lesssim \mathcal{N}_0^{\frac{1}{2}}\lesssim \delta(\kappa)\ll1.
\end{equation*}
Based on (iii) of Lemma~\ref{le:modu2} and (ii) of Lemma~\ref{le:Conmon}, for 
$0\le s_1<s_2\le s^{**}$, we have
\begin{equation*}
\begin{aligned}
&\left|
\frac{b(s_2)}{\lambda^{\theta}(s_2)}e^{J(s_2)}-\frac{b(s_1)}{\lambda^{\theta}(s_1)}e^{J(s_1)}
\right|\\
&\lesssim \int_{s_1}^{s_2}\left|\frac{\dd}{\dd s}\left(\frac{b}{\lambda^{\theta}}e^J\right)(s)\dd s\right|
\lesssim \frac{\mathcal{N}_1(s_1)}{\lambda^{\theta}(s_1)}+\frac{b^2(s_1)}{\lambda^{\theta}(s_1)}+\frac{b^2(s_2)}{\lambda^{\theta}(s_2)}.
\end{aligned}
\end{equation*}
On the other hand, we have 
\begin{equation*}
\left|\frac{b}{\lambda^{\theta}}e^J-\frac{b}{\lambda^{\theta}}\right|\lesssim \frac{|b|\mathcal{N}_0^{\frac{1}{2}}}{\lambda^{\theta}}\lesssim \frac{b^2+\mathcal{N}_1}{\lambda^{\theta}}.
\end{equation*}
Combining the above two estimates, we complete the proof of (iii).

Proof of (iv). By an elementary computation, 
\begin{equation*}
\left|\frac{\lambda_{0s}}{\lambda_{0}}+b-\left(\frac{\lambda_{s}}{\lambda}+b-2J_{1s}\right)\right|
\lesssim\frac{|J_{1}||J_{1s}|}{1-J_{1}}.
\end{equation*}
Note that, from (ii) and (iii) of Lemma~\ref{le:modu1} and (i) of Lemma~\ref{le:modu2}, we have 
\begin{equation*}
\left|J_{1}\right|+\left|J_{1s}\right|\lesssim b^{2}+\mathcal{N}_{0}^{\frac{1}{2}}.
\end{equation*}
Combining the above estimates with Lemma~\ref{le:modu2}, we complete the proof of (iv).
\end{proof}

\subsection{Rigidity dynamics in $\mathcal{A}_{\alpha}$} In this subsection, we will give a specific classification for the asymptotic behavior of solutions with initial data in $\mathcal{A}_{\alpha}$. We denote 
\begin{equation*}
\widetilde{\mathcal{N}}_{1}(t)=\mathcal{N}_{1}(t)+b^{2}(t),\quad \mbox{on}\ [0,t^{*}].
\end{equation*}

Denote $t_{1}^{*}$ by the following separation time,
\begin{equation*}
\begin{aligned}
t_1^*=\begin{cases}
0, &\mbox{if} \ |b(0)|\ge C^{*}\widetilde{N}_{1}(0),\\
\sup\left\{0<t<t^*:|b(t_{1})|\le C^{*}\widetilde{N}_{1}(t_{1}),\forall t_{1}\in[0,t]\right\},\quad &\mbox{otherwise}.
\end{cases}
\end{aligned}
\end{equation*}
Here $C^{*}=100K$ and $K$ is the constant introduced in Lemma~\ref{le:Conmon}.
Then the following rigidity dynamics of solution flows near soliton manifold hold.
\begin{proposition}[Rigidity Dynamics]\label{Prop:rigidity}
There exist universal constants $0<\alpha\ll\alpha^*\ll\kappa$ such that the following holds. Let $u_0\in\mathcal{A}_{\alpha}$ and $u(t)$ be the corresponding solution to \eqref{CP} on $[0,T)$. Then the following trichotomy holds:
\begin{description}
\item [Soliton]  If $t^*=t_{1}^{*}$, then $t^*=t_{1}^{*}=T=\infty$ with
\begin{equation}\label{est:Solitoncase}
\begin{aligned}
&\lambda(t)=\lambda_{\infty}\left(1+o(1)\right),\quad |b(t)|+\mathcal{N}_2(t)\rightarrow0,\ \mbox{as}\ t\to \infty,\\
 &x_1(t)=\frac{t}{\lambda_{\infty}^2}\left(1+o(1)\right),\quad \quad x_2(t)\rightarrow x_{2,\infty},\ \ \mbox{as}\ t\to \infty,
 \end{aligned}
\end{equation}
for some $(\lambda_{\infty},x_{2,\infty})\in \R^{2}$. In addition, we have $|\lambda_{\infty}-1|\lesssim \delta(\alpha_0)$.

\smallskip
\item [Exit] If $t^{*}>t_{1}^{*}$ with $b(t^{*}_{1})\le -C^{*}\widetilde{\mathcal{N}}_{1}(t^{*}_{1})$, then $t^*<T$. In particular,
\begin{equation}\label{est:Exitcase1}
\inf_{\substack{\lambda_{0}>0\\x_{0}\in \R^{2}}}
\left\|u(t^*)-\frac{1}{\lambda_0}Q\left(\frac{x-x_0}{\lambda_0}\right)\right\|_{L^2}=\alpha^{*}.
\end{equation}
Moreover, the following estimates hold
\begin{equation}\label{est:Exitcase2}
b(t^*)\le-C(\alpha^*)<0\quad \mbox{and}\quad\lambda(t^*)\geq\frac{C(\alpha^*)}{\delta(\alpha)}\gg1.
\end{equation}

\smallskip
\item [Blow-up] If $t^{*}>t_{1}^{*}$ with $b(t^{*}_{1})\ge C^{*}\widetilde{\mathcal{N}}_{1}(t_{1}^{*})$, then $t^*=T<\infty$. In particular
\begin{equation}\label{est:Blowupcase}
\begin{aligned}
\lim_{t\uparrow T}\|\nabla\varepsilon(t)\|_{L^2}=0,\quad \lim_{t\uparrow T}\frac{\lambda(t)}{(T-t)^{\frac{1}{3-\theta}}}=\ell_1(u_0),\\
\lim_{t\uparrow T}\frac{b(t)}{(T-t)^{\frac{\theta}{3-\theta}}}=\ell_2(u_0),\quad  x_1(t)\sim (T-t)^{-\frac{\theta-1}{3-\theta}}.
\end{aligned}
\end{equation}
Here, $\ell_1$ and $\ell_2$ are positive constants depending only on the initial data $u_{0}$.
\end{description}
\end{proposition}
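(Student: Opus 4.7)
The plan is to first close the bootstrap \eqref{est:Boot1}--\eqref{est:Boot3} on $[0,t^*]$, which amounts to proving $t^{**}=t^*$, and then to analyze the finite-dimensional ODE system for $(\lambda,b)$ in each of the three regimes separated by $t_1^*$ and by the sign of $b(t_1^*)$.

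\emph{Step 1: Closing the bootstrap.} First, I would upgrade \eqref{est:Boot1}--\eqref{est:Boot2} using Proposition~\ref{Prop:Mon}(i)--(ii) and Lemma~\ref{le:Conmon}(i)--(ii). From~\eqref{est:smallnessinital} we have $\mathcal{N}_2(0)+|b(0)|\lesssim\delta(\alpha)$, hence the scaling-invariant quantity $\mathcal{N}_2(s)+|b(s)|^2$ satisfies, by item (ii) of Lemma~\ref{le:Conmon} with $i=2$, the estimate $\mathcal{N}_2(s)\le C\bigl(\mathcal{N}_2(0)+|b(0)|^3+|b(s)|^3\bigr)$. Combined with the conservation of mass and energy in Lemma~\ref{le:modu1}(i), this gives $\mathcal{N}_2(s)+|b(s)|\ll \kappa/2$, improving~\eqref{est:Boot1}. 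For the $\lambda^{-\theta}$-scaled bound \eqref{est:Boot2}, I would use the second estimate in (ii) of Lemma~\ref{le:Conmon} to obtain $\mathcal{N}_2/\lambda^\theta+|b|^2/\lambda^\theta\le 2(\mathcal{N}_2(0)+b^2(0))\le\delta(\alpha)$, which is stronger than $\kappa$. Finally, to improve \eqref{est:Boot3}, I apply Proposition~\ref{Prop:decay} together with the control of $\int b^2+\mathcal{N}_0$ from (i)--(ii) of Lemma~\ref{le:Conmon}, obtaining
\begin{equation*}
\lambda^{100}(s)\!\!\int_{\R}\!\!\int_0^\infty\! y_1^{100}\varepsilon^2(s,y)\,\dd y_1\dd y_2
\le \!\!\int_{\R}\!\!\int_0^\infty\! y_1^{100}\varepsilon^2(0,y)\,\dd y_1\dd y_2+C\int_0^s\!\!\!\lambda^{100}(\tau)\bigl(b^2+\mathcal{N}_0\bigr)\dd \tau,
\end{equation*}
which by a continuity argument (and using that $\lambda$ is bounded on $[0,t^*]$ since $u(t)\in\mathcal{T}_{\alpha^*}$) yields the improved bound $\le 9(1+\lambda^{-100})$. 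This proves $t^{**}=t^*$, so all monotonicity estimates hold up to $t^*$.

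\emph{Step 2: Soliton case $t^*=t_1^*$.} On $[0,t^*)$, $|b|\le C^*\widetilde{\mathcal{N}}_1$, so~\eqref{est:endbs} gives $\frac{3}{4}b^2+\theta^{-1}b_s\lesssim\mathcal{N}_0$. Together with Lemma~\ref{le:Conmon}(ii) this yields $\int_0^{s^*}\mathcal{N}_0\,\dd s<\infty$ and $\widetilde{\mathcal{N}}_1(s)\to 0$ as $s\to s^*$. The rigidity must give $t^*=\infty$: otherwise $\lambda(t^*)$ would be bounded away from $0$ (by the blowup criterion) and $u(t^*)$ would lie strictly inside $\mathcal{T}_{\alpha^*/2}$, contradicting maximality. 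Using Lemma~\ref{le:Conmon}(iv), $\frac{\lambda_{0s}}{\lambda_0}+b\lesssim\mathcal{N}_0+b^2\in L^1(\dd s)$, and $\int b\,\dd s\lesssim \int\mathcal{N}_0^{1/2}\lesssim (\int\mathcal{N}_0)^{1/2}(\int 1)^{1/2}$ shows that $\log\lambda_0$ is Cauchy, hence $\lambda(t)\to\lambda_\infty>0$. A similar argument on $x_2$ using Lemma~\ref{le:modu2}(iv) gives $x_2\to x_{2,\infty}$, and $\frac{x_{1,s}}{\lambda}=1+o(1)$ gives $x_1(t)\sim t/\lambda_\infty^2$. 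The bound $|\lambda_\infty-1|\lesssim\delta(\alpha)$ follows from $|\lambda(0)-1|\lesssim\delta(\alpha)$ and the integrability of $\lambda_s/\lambda$.

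\emph{Step 3: Exit and Blow-up cases $t^*>t_1^*$.} At $s_1^*=s(t_1^*)$ we have $|b(s_1^*)|=C^*\widetilde{\mathcal{N}}_1(s_1^*)$, and for $s>s_1^*$ the parameter $b$ \emph{dominates} $\widetilde{\mathcal{N}}_1$. The key mechanism is Lemma~\ref{le:Conmon}(iii): from
\begin{equation*}
\left|\frac{b(s)}{\lambda^\theta(s)}-\frac{b(s_1^*)}{\lambda^\theta(s_1^*)}\right|
\le K\bigl(\widetilde{\mathcal{N}}_1(s_1^*)+b^2(s)\bigr)/\lambda^\theta,
\end{equation*}
and $|b(s_1^*)|=C^*\widetilde{\mathcal{N}}_1(s_1^*)$ with $C^*=100K$, the sign of $b/\lambda^\theta$ is \emph{frozen} for $s>s_1^*$ and $b$ stays comparable to $C^{-1}|b(s_1^*)|\lambda^\theta/\lambda^\theta(s_1^*)$, i.e., sgn$(b)=$sgn$(b(t_1^*))$ throughout. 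Combined with $\lambda_s/\lambda=-b+O(\text{small})$ from Lemma~\ref{le:modu1}(ii) and Lemma~\ref{le:Conmon}(iv), this governs the ODE:
\begin{itemize}
\item If $b(t_1^*)<0$, then $b<0$ and $\lambda$ grows monotonically. Using the (near) conservation of $b/\lambda^\theta$, one finds $\lambda(t)\to\infty$ in finite $t$-time, forcing the $H^1$-distance to the soliton manifold to reach $\alpha^*$, yielding \eqref{est:Exitcase1}--\eqref{est:Exitcase2}.
\item If $b(t_1^*)>0$, then $b>0$ and $\lambda$ decays; using $\mathrm{d}s/\mathrm{d}t=\lambda^{-3}$ and the almost-conservation law $b/\lambda^\theta\approx\mathrm{const}$, the ODE $\lambda_t=-b\lambda^{-2}$ integrates to $\lambda(t)\sim \bigl(\ell_1^{3-\theta}(T-t)\bigr)^{1/(3-\theta)}$ for some $T<\infty$. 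The asymptotics of $b$ then follow, as do those of $x_1$ from Lemma~\ref{le:modu1}(ii). The vanishing $\|\nabla\varepsilon\|_{L^2}\to 0$ comes from Lemma~\ref{le:modu1}(i) and the finiteness of $E(u_0)$ relative to the explosive $\lambda^{-1}\|\nabla Q_b\|_{L^2}$ factor.
\end{itemize}

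\emph{Main obstacle.} The most delicate step is the sign-freezing argument of Step~3: one must show that once $|b|$ strictly dominates $\widetilde{\mathcal{N}}_1$ at $t_1^*$, the remainder estimates $\mathcal{N}_i\lesssim b^3+\mathcal{N}_i(s_1^*)$ from Lemma~\ref{le:Conmon}(ii) feed back into the almost-conservation of $b/\lambda^\theta$ (Lemma~\ref{le:Conmon}(iii)) in a way that keeps $|b|\gg\widetilde{\mathcal{N}}_1$ for all later times---this is precisely why the constant $C^*=100K$ is chosen large. A subtle point is that in the blow-up regime $\lambda\to 0$, the factor $\lambda^{-\theta}$ amplifies errors, so one has to iterate carefully, exploiting the scaling-invariant bound \eqref{est:Boot2} and the weighted decay \eqref{est:Boot3} (which prevents the $y_1^{100}$ mass from being ``transported in'' from spatial infinity as $\lambda\to 0$).
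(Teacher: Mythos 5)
Your overall strategy mirrors the paper's, but there are two concrete problems.

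\textbf{Step 1 cannot be closed uniformly.} You attempt to upgrade \eqref{est:Boot1}--\eqref{est:Boot3} on all of $[0,t^*]$ before splitting into cases, and you justify the improvement of \eqref{est:Boot3} by asserting ``\(\lambda\) is bounded on $[0,t^*]$ since $u(t)\in\mathcal{T}_{\alpha^*}$.'' This is false: $\mathcal{T}_{\alpha^*}$ is an $L^2$-modulated tube and imposes no constraint whatsoever on the modulation parameter $\lambda(t)$; indeed $\lambda\to 0$ in the Blow-up regime and $\lambda\to\infty$ in the Exit regime. The paper's proof is forced to close the bootstrap case-by-case precisely because the behaviour of $\lambda$ (and hence the $\lambda^{-100}$ weight in \eqref{est:Boot3} and the $\lambda^{-\theta}$ weight in \eqref{est:Boot2}) depends on the sign of $b(t_1^*)$: on $[0,s_1^*]$ one has $|b|\lesssim\mathcal{N}_1$ so $|\lambda-1|\lesssim\delta(\alpha)$; for $s>s_1^*$ with $b<0$ one shows $\lambda\ge 1/2$ and the bound $\int\varepsilon^2\Phi\le 7$; with $b>0$ one shows $\lambda\le 2$ and the bound $\int\varepsilon^2\Phi\le(5+\delta(\alpha))/\lambda^{100}$. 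These arguments use $\lambda_{0s}/\lambda_0\approx-b$ and the sign of $b$ in an essential way, so you cannot decouple them from the trichotomy.

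\textbf{The Exit-case exit mechanism is misstated.} You claim ``\(\lambda(t)\to\infty\) in finite $t$-time, forcing the $H^1$-distance to the soliton manifold to reach $\alpha^*$.'' This is wrong on two counts. First, with $b\approx\ell^*\lambda^\theta<0$ and $\lambda^{2-\theta}\lambda_t\approx-\ell^*>0$, one gets $\lambda^{3-\theta}(t)$ growing linearly in $t$ (since $3-\theta>0$), i.e. $\lambda\to\infty$ only as $t\to\infty$, not in finite time. Second, growth of $\lambda$ does not by itself push $u$ out of the $L^2$-tube $\mathcal{T}_{\alpha^*}$ (the tube is scaling-invariant). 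The actual contradiction argument in the paper is that if $t^*=T$, then the blow-up criterion together with $\lambda\ge 1/2$ forces $T=\infty$; but then the lower bound $|b(t)|\gtrsim|\ell^*|\bigl(|\ell^*|(t-t_1^*)+\lambda_0^{3-\theta}(t_1^*)\bigr)$ from \eqref{est:Exitblu} gives $|b(t)|\to\infty$, contradicting the smallness $|b|\lesssim\delta(\kappa)$ guaranteed by Proposition~\ref{Prop:decomposition} while $u(t)\in\mathcal{T}_{\alpha^*}$. Hence $t^*<T$, and then $u(t^*)$ sits exactly at the $L^2$-distance $\alpha^*$ by continuity. Steps 2 and the Blow-up part of Step 3 otherwise follow the paper's route; the sign-freezing heuristic you highlight as the main obstacle is indeed the key use of Lemma~\ref{le:Conmon}(iii) with $C^*=100K$.
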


Note that, Proposition~\ref{Prop:rigidity} classifies the behavior of solution flows near the soliton manifold which implies Theorem~\ref{MT}. In the rest of this section, we are devoted to the proof of Proposition~\ref{Prop:rigidity} and split the proof into the following three parts.

\subsubsection{The Soliton case.} \label{SSS:Soliton}

Assume that $t^*=t_{1}^{*}$, \emph{i.e.} for all $t\in[0,t^*]$,
\begin{equation}\label{est:Solitonb}
|b(t)|\leq C^*\big[\mathcal{N}_1(t)+b^2(t)\big]\Longrightarrow |b(t)|\leq 2C^*\mathcal{N}_1(t).
\end{equation}

Step 1. Closing the bootstrap. We claim that, for all $s\in [0,s^{**}]$, 
\begin{equation}\label{est:Bootsoliton}
\begin{aligned}
|b(s)|+\|\varepsilon(s)\|_{L^2}+\mathcal{N}_2(s)+\int_0^{s}\mathcal{N}_1(s_{1})\dd s_{1}\lesssim\delta(\alpha),\\
 |\lambda(s)-1|\lesssim \delta(\alpha)\quad \mbox{and}\quad 
\int_{\R}\int_{0}^{\infty}y_1^{100}\varepsilon^2(s,y)\dd y_{1} \dd y_{2}\le 5.
\end{aligned}
\end{equation}

Note that, from $0<\alpha\ll\alpha^*\ll\kappa$, the estimates in ~\eqref{est:Bootsoliton} strictly improve the boostrap estimates~\eqref{est:Boot1}--\eqref{est:Boot3} and so we obtain $t^{*}=t^{**}$.

\smallskip
Indeed, from~\eqref{est:endbs},~\eqref{est:Solitonb} and (ii) of Lemma~\ref{le:Conmon}, for all $s\in [0,s^{**}]$, we have 
\begin{equation*}
\begin{aligned}
\left|b(s)\right|
&\lesssim \left|b(0)\right|+\int_{0}^{s}\left(b^{2}(s_{1})+\mathcal{N}_{0}(s_{1})\right)\dd s_{1}\\
&\lesssim  \left|b(0)\right|+\int_{0}^{s}\left(4\left(C^{*}\right)^{2}\mathcal{N}_{1}^{2}(s_{1})+\mathcal{N}_{0}(s_{1})\right)\dd s_{1}\\
&\lesssim |b(0)|+|b(0)|^{3}+\mathcal{N}_{2}(0)+|b(s)|^{3}.
\end{aligned}
\end{equation*}
It follows from~\eqref{est:smallnessinital}, (i) of Lemma~\ref{le:modu1} and (i) of Lemma~\ref{le:Conmon} that 
\begin{equation}\label{est:Solitonbvn1}
\left|b(s)\right|+\|\varepsilon(s)\|_{L^2}+\mathcal{N}_2(s)+\int_{0}^{s}\mathcal{N}_{1}(s_{1})\dd s_{1}\lesssim \delta(\alpha),\quad \mbox{for all}\ s\in [0,s^{**}].
\end{equation}
Then we use~\eqref{est:Solitonb} and (iv) of Lemma~\ref{le:Conmon} to obtain
\begin{align*}
\left|\frac{\lambda_{0s}}{\lambda_0}\right|\lesssim b+\mathcal{N}_0+b^2\lesssim 4C^*\mathcal{N}_1+\mathcal{N}_0\lesssim\mathcal{N}_{1}.
\end{align*}
Integrating the above estimate over $[s_{1},s_{2}]$ for any $0\le s_{1}<s_{2}\le s^{**}$ and then using~\eqref{est:smallnessinital} and \eqref{est:Solitonbvn1}, we deduce that 
\begin{equation}\label{est:Solitonlambda}
\left|\frac{\lambda_{0}(s_{2})}{\lambda_{0}(s_{1})}-1\right|\lesssim \delta(\alpha)
\Longrightarrow | \lambda(s)-1|\lesssim \delta(\alpha)\ \mbox{on}\ [0,s^{**}].
\end{equation}
Last, integrating the estimate in Proposition~\ref{Prop:decay} over $[0,s]$ for any $s\in [0,s^{**}]$ and then using~\eqref{est:smallnessinital},~\eqref{est:Solitonlambda} and (i) and (ii) of Lemma~\ref{le:Conmon}, we obtain
\begin{equation}\label{est:Solitone2}
\begin{aligned}
\int_{\R^{2}}\varepsilon^{2}(s)\Phi\dd y
&\le C \int_{0}^{s}\frac{\lambda^{100}(s_{1})}{\lambda^{100}(s)}\left(b^{2}(s_{1})+\mathcal{N}_{1}(s_{1})\right)\dd s_{1}\\
&+\frac{\lambda^{100}(0)}{\lambda^{100}(s)}\int_{\R^{2}}\varepsilon^{2}(0)\Phi\dd y\le 2+\delta(\alpha)\le 5.
\end{aligned}
\end{equation}
Here, $C>1$ is a universal constant independent with $\alpha$. Combining~\eqref{est:Solitonbvn1} and~\eqref{est:Solitonlambda} with~\eqref{est:Solitone2}, we complete the proof of~\eqref{est:Bootsoliton}, and thus, from a standard continuity argument, we obtain $t^{*}=t^{**}=T=\infty$.

\smallskip Step 2. Proof of~\eqref{est:Solitoncase}.
From~\eqref{est:Bootsoliton} and (i) of Lemma~\ref{le:Conmon}, we know that $\mathcal{N}_1\in L^1((0,+\infty))$, and thus, there exists an increasing sequence $\{s_n\}_{n=1}^{\infty}\subset(0,+\infty)$ such that $s_n\rightarrow+\infty$ and $\mathcal{N}_1(s_n)\rightarrow0$ as $n\to \infty$. Then, from~\eqref{est:Solitonb} and (i) of Lemma~\ref{le:Conmon}, for all $n\in\mathbb{N}^+$ and $s\geq s_n$, we have 
\begin{equation*}
\mathcal{N}_1(s)\lesssim \mathcal{N}_1(s_n)+\mathcal{N}_1^3(s)+\mathcal{N}_1^3(s_n)
\Longrightarrow \lim_{s\to\infty}\mathcal{N}_{1}(s)=0.
\end{equation*}
From~\eqref{est:y18},~\eqref{est:Solitonb} and~\eqref{est:Bootsoliton}, we deduce that 
\begin{equation*}
\mathcal{N}_{2}\lesssim \mathcal{N}^{\frac{92}{93}}_{1}\ \mbox{and}\ |b|\lesssim \mathcal{N}_{1}\Longrightarrow \lim_{s\to \infty} \left(\mathcal{N}_{2}(s)+|b(s)|\right)=0.
\end{equation*}
Next, from~\eqref{est:Solitonb},~\eqref{est:Bootsoliton} and (i) of Lemma~\ref{le:Conmon}, we see that 
\begin{equation*}
\int_{0}^{\infty}\left(|b(s)|+\mathcal{N}_{0}(s)\right)\lesssim \int_{0}^{\infty}\left(\mathcal{N}_{0}(s)+\mathcal{N}^{2}_{1}(s)\right)\dd s_{1}\lesssim \delta(\alpha).
\end{equation*}
It follows from~\eqref{est:Bootsoliton} and (iv) of Lemma~\ref{le:Conmon} that 
\begin{equation*}
\int_{0}^{\infty}|\lambda_{0s}(s)|\dd s\lesssim 
\int_{0}^{\infty}\left(|b(s)|+\mathcal{N}_{0}(s)\right)\dd s\lesssim \delta(\alpha).
\end{equation*}
Based on above estimate, ~\eqref{est:smallnessinital} and~\eqref{est:Bootsoliton}, we know that there exists $\lambda_{\infty}\in \R$ with $|\lambda_{\infty}-1|\lesssim \delta(\alpha)$ such that
\begin{equation*}
\lim_{s\to \infty}\lambda_{0}(s)=\lambda_{\infty}\Longrightarrow
\lim_{s\to \infty}\lambda(s)=\lambda_{\infty}.
\end{equation*}
Then, from the above estimates, (ii) of Lemma~\ref{le:modu1} and $t\sim s$, 
\begin{equation*}
x_{1t}=\frac{x_{1s}}{\lambda^{3}}=\frac{1+o(1)}{\lambda^{2}_{\infty}}\Longrightarrow
x_{1}(t)=\frac{t}{\lambda^{2}_{\infty}}\left(1+o(1)\right).
\end{equation*}
Last, from~\eqref{est:Bootsoliton}, (ii) of Lemma~\ref{le:modu1} and~(iv) of Lemma~\ref{le:modu2}, we have 
\begin{equation*}
\left|x_{2s}-\left(\lambda J_{3}\right)_{s}\right|\lesssim |b|\mathcal{N}_{0}^{\frac{1}{2}}+b^{2}+\mathcal{N}_{0}\lesssim b^{2}+\mathcal{N}_{0}.
\end{equation*}
It follows from (i) and (ii) of Lemma~\ref{le:Conmon} that 
\begin{equation*}
\int_{0}^{\infty}\left|x_{2s}-\left(\lambda J_{3}\right)_{s}\right|\dd s
\lesssim \int_{0}^{\infty}\left(b^{2}(s)+\mathcal{N}_{0}(s)\right)\dd s \lesssim \delta(\alpha).
\end{equation*}
Based on the above estimate and $|\lambda(s) J_{3}(s)|\lesssim |J_{3}(s)|\lesssim \mathcal{N}_{0}^{\frac{1}{2}}\to 0$ as $s\to \infty$, we know that there exists $x_{2,\infty}\in \R$ such that 
\begin{equation*}
\lim_{s\to \infty}\left(x_{2}(s)-\lambda(s)J_{3}(s)\right)=x_{2,\infty}\Longrightarrow
\lim_{s\to \infty}x_{2}(s)=x_{2,\infty}.
\end{equation*}
Combining the above estimates with $t\sim s$, we complete the proof of~\eqref{est:Solitoncase}.

\subsubsection{The Exit case.} Assume that $t^{*}>t_1^{*}$ and $ b(t^*_1)\le -C^*\widetilde{\mathcal{N}}_{1}(t_{1}^{*})$.

\smallskip
Step 1. Closing the bootstrap.
First of all, using the same argument as in the Soliton case, the following estimates hold on $[0,s_{1}^{*}]$:
\begin{equation}\label{est:BootExit1}
\begin{aligned}
|b(s)|+\|\varepsilon(s)\|_{L^2}+\mathcal{N}_2(s)+\int_0^{s}\mathcal{N}_1(s_{1})\dd s_{1}\lesssim\delta(\alpha),\\
|\lambda(s)-1|\lesssim \delta(\alpha)\quad \mbox{and}\quad 
\int_{\R}\int_{0}^{\infty}y_1^{100}\varepsilon^2(s,y)\dd y_{1} \dd y_{2}\le 5.
\end{aligned}
\end{equation}
In particular, we have $t_1^*<t^{**}\leq t^*$. Now, we claim that $t^{*}=t^{**}<T$. To prove this, we use a slightly different bootstrap argument than the one used in the Soliton case to improve the bootstrap assumptions~\eqref{est:Boot1}--\eqref{est:Boot3} on $[t_{1}^{*},t^{**}]$.

We denote 
\begin{equation*}
\ell^{*}=\frac{b(s^*_1)}{\lambda^{\theta}(s^*_1)}<0.
\end{equation*}
From~\eqref{est:BootExit1}, we deduce that~$|\ell^{*}|\lesssim \delta(\alpha)$. From~\eqref{est:BootExit1}, (iii) of Lemma~\ref{le:Conmon}, $|b(s^{*}_{1})|\ge C^*|{N}_{1}(s_{1}^{*})|$,
 and the definition of $C^*$, for all $s\in [s_{1}^{*},s^{**}]$, we obtain
 \begin{equation*}
\begin{aligned}
\bigg|\frac{b(s)}{\lambda^{\theta}(s)}-\ell^*\bigg|
&\le 
K\left(\frac{\mathcal{N}_1(s^*_1)}{\lambda^{\theta}(s^*_1)}+\frac{b^2(s^*_1)}{\lambda^{\theta}(s^*_1)}+\frac{b^2(s)}{\lambda^{\theta}(s)}\right)\\
&\le  \frac{|\ell^{*}|}{100}+\delta(\alpha)|\ell^{*}|+\kappa\frac{|b(s)|}{\lambda^{\theta}(s)},
\end{aligned}
\end{equation*}
which implies immediately
\begin{equation}\label{est:Exitblambda}
2\ell^*\le\frac{b(s)}{\lambda^{\theta}(s)}\le \frac{\ell^*}{2}<0\quad \mbox{and}\quad b(s)<0.
\end{equation}
It follows from~\eqref{est:BootExit1} and (ii) of Lemma~\ref{le:Conmon} that 
\begin{equation*}
\frac{b(s)}{\lambda^{\theta}(s)}+\frac{\mathcal{N}_{2}(s)}{\lambda^{\theta}(s)}\lesssim \delta(\alpha).
\end{equation*}
On the other hand, from (iii) of Lemma~\ref{le:Conmon} and $b(s)<0$ on $[s^{*}_{1},s^{**}]$, we obtain 
\begin{equation*}
\frac{\lambda_{0s}}{\lambda_0}\gtrsim-\mathcal{N}_{0}\gtrsim -\kappa\Longrightarrow
\frac{\lambda_{0}(s_{2})}{\lambda_{0}(s_{1})}-1\gtrsim-\kappa,\quad \mbox{for all}\ s_{1}^{*}\le s_{1}<s_{2}\le s^{**}.
\end{equation*}
Therefore, from $|J_{1}|\lesssim \kappa\ll 1$ on $[s^{*}_{1},s^{**}]$, we directly have
\begin{equation}\label{est:Exitlambda12}
\frac{\lambda(s_{2})}{\lambda(s_{1})}-1\gtrsim -\kappa,\quad \mbox{for all}\ s_{1}^{*}\le s_{1}<s_{2}\le s^{**}\Longrightarrow \lambda(s)\ge \frac{1}{2}\ \mbox{on}\ [s^{*}_{1},s^{**}].
\end{equation}
Then, using a similar argument to the one in the soliton case, for all $s\in [s_{1}^{*},s^{**}]$, we have
\begin{equation*}
\int_{\R^{2}}\varepsilon^{2}(s)\Phi\dd y\le 
\int_{\R}\int_{0}^{\infty}y_{1}^{100}\varepsilon^{2}(s^{*}_{1},y)\dd y_{1}\dd y_{2}+\delta(\kappa)\le 7.
\end{equation*}
Last, for all $t\in[t^{*}_1,t^{*})$, we have $u(t)\in\mathcal{T}_{\alpha^*}$. Therefore, from Proposition~\ref{Prop:decomposition},
\begin{equation*}
|b(t)|\lesssim\delta(\alpha^*)\ll \kappa,\quad \mbox{on}\  [t_{1}^{*},t^{*}].
\end{equation*} 
 It follows from~\eqref{est:smallnessinital}, (i) of Lemma~\ref{le:modu1} and~(ii) of Lemma~\ref{le:Conmon} that 
\begin{equation*}
\|\varepsilon(s)\|_{L^2}+|b(s)|+\mathcal{N}_2(s)\lesssim\delta(\alpha)+\delta(\alpha^*)\ll \kappa,\quad \mbox{on}\ [s_{1}^{*},s^{**}].
\end{equation*}
Combining the above estimates, we improve the bootstrap assumptions~\eqref{est:Boot1}--\eqref{est:Boot3} on $[s_{1}^{*},s^{**}]$ and thus we conclude that $t^{*}=t^{**}$.

\smallskip
Step 2. Proof of~\eqref{est:Exitcase1} and~\eqref{est:Exitcase2}. First, from~\eqref{est:Exitblambda} and (iv) of Lemma~\ref{le:Conmon}, we have 
\begin{equation*}
\frac{|\ell^{*}|}{3}-C\frac{\mathcal{N}_{0}}{\lambda^{\theta}}\le \lambda_{0}^{2-\theta}\lambda_{0t}\le 3|\ell^{*}|+C\frac{\mathcal{N}_{0}}{\lambda^{\theta}}.
\end{equation*}
Here, $C$ is a universal constant independent with $\alpha$ and $\alpha^{*}$. Integrating the above estimate over $[t_{1}^{*},t]$ for any $t\in [t_{1}^{*},t^{*}]$, we deduce that 
\begin{equation*}
\begin{aligned}
\frac{|\ell^{*}|}{3}(t-t_{1}^{*})-C\int_{t_{1}^{*}}^{t}\frac{\mathcal{N}_{0}(t_{1})}{\lambda^{\theta}(t_{1})}\dd t_{1}&\le \frac{\lambda_{0}^{3-\theta}(t)-\lambda_{0}^{3-\theta}(t^{*}_{1})}{3-\theta},\\
3|\ell^{*}|(t-t_{1}^{*})+C\int_{t_{1}^{*}}^{t}\frac{\mathcal{N}_{0}(t_{1})}{\lambda^{\theta}(t_{1})}\dd t_{1}
&\ge \frac{\lambda_{0}^{3-\theta}(t)-\lambda_{0}^{3-\theta}(t^{*}_{1})}{3-\theta}.
\end{aligned}
\end{equation*}
Note that, from~\eqref{est:Boot1},~\eqref{est:Exitlambda12} and (ii) of Lemma~\ref{le:Conmon},
\begin{equation*}
\begin{aligned}
\int_{t_{1}^{*}}^{t}\frac{\mathcal{N}_{0}(t_{1})}{\lambda^{\theta}(t_{1})}\dd t_{1}
&\lesssim \int_{s^{*}_{1}}^{s}\lambda^{3-\theta}(s_{1})\mathcal{N}_{0}(s_{1})\dd s_{1}\\
&\lesssim \lambda^{3-\theta}(s)\int_{s_{1}^{*}}^{s}\mathcal{N}_{0}(s_{1})\dd s_{1}\lesssim \delta(\kappa)\lambda^{3-\theta}(t).
\end{aligned}
\end{equation*}
Combining the above two estimates with $\lambda_{0}\approx \lambda$, we obtain 
\begin{equation*}
\frac{1}{5}\left(|\ell^{*}|(t-t_{1}^{*})+\lambda_{0}^{3-\theta}(t^{*}_{1})\right)
\le \lambda^{3-\theta}(t)
\le 5\left(|\ell^{*}|(t-t_{1}^{*})+\lambda_{0}^{3-\theta}(t^{*}_{1})\right).
\end{equation*}
Based on the above estimate and~\eqref{est:Exitblambda}, 
\begin{equation}\label{est:Exitblu}
\begin{aligned}
-10|\ell^{*}|\left(|\ell^{*}|(t-t_{1}^{*})+\lambda_{0}^{3-\theta}(t^{*}_{1})\right)^{\frac{\theta}{3-\theta}}
&\le b(t),\\
-\frac{1}{10}|\ell^{*}|\left(|\ell^{*}|(t-t_{1}^{*})+\lambda_{0}^{3-\theta}(t^{*}_{1})\right)&\ge  b(t).
\end{aligned}
\end{equation}
The above lower and upper estimate of $b(t)$ is enough to show that $t^{*}<T$. 

Indeed, for the sake of contradiction assume that $t^{*}=T$. Then from~\eqref{est:Boot1},~\eqref{est:smallnessinital} and~\eqref{est:Exitlambda12}, we obtain 
\begin{equation*}
\left\|\nabla \varepsilon(t)\right\|_{L^{2}}\lesssim \delta(\kappa)\Longrightarrow
\left\|\nabla u(t)\right\|_{L^{2}}=\frac{\|Q\|_{L^{2}}+\delta(\kappa)}{\lambda(t)}\lesssim 1.
\end{equation*}
It follows from the blow-up criterion~\eqref{equ:Blowcri} that $T=\infty$. Therefore, from~\eqref{est:Exitblu}, we obtain $b(t)\to -\infty$ as $t\to \infty$ which contradictory with $|b(t)|\lesssim \delta(\kappa)$. This means that $t^{*}<T$ and thus~\eqref{est:Exitcase1} holds at time $t^{*}$ from the definition of $\mathcal{T}_{\alpha^{*}}$.

Last, from~\eqref{est:smallnessinital} and (i) of Lemma~\ref{le:modu1}, we have
 $\left(\alpha^{*}\right)^{2}\lesssim |b(t^{*})|$, and thus from~\eqref{est:Exitblambda} and $|\ell^{*}|\lesssim \delta(\alpha)$, we complete the proof of~\eqref{est:Exitcase2}.

\subsubsection{The Blow-up case.} Assume that $t^{*}>t_1^{*}$ and 
$b(t_{1}^{*})\ge C^*\widetilde{\mathcal{N}}_{1}(t_{1}^{*})$.

\smallskip
Step 1. Closing the bootstrap. First of all, using the same argument as in the Soliton case, the following estimates hold on $[0,s_{1}^{*}]$:
\begin{equation}\label{est:BootBlowup1}
\begin{aligned}
|b(s)|+\|\varepsilon(s)\|_{L^2}+\mathcal{N}_2(s)+\int_0^{s}\mathcal{N}_1(s_{1})\dd s_{1}\lesssim\delta(\alpha),\\
|\lambda(s)-1|\lesssim \delta(\alpha)\quad \mbox{and}\quad 
\int_{\R}\int_{0}^{\infty}y_1^{100}\varepsilon^2(s,y)\dd y_{1} \dd y_{2}\le 5.
\end{aligned}
\end{equation}
In particular, we have $t_1^*<t^{**}\leq t^*$. Now, we claim that $t^{*}=t^{**}=T$. To prove this, we use a slightly different bootstrap argument than the one used in the Soliton and Exit cases to improve the bootstrap assumptions~\eqref{est:Boot1}--\eqref{est:Boot3} on $[t_{1}^{*},t^{**}]$.

We denote 
\begin{equation*}
\ell^{*}=\frac{b(s^*_1)}{\lambda^{\theta}(s^*_1)}>0.
\end{equation*}

From~\eqref{est:BootBlowup1}, we deduce that~$0<\ell^{*}\lesssim \delta(\alpha)$. Based on \eqref{est:BootBlowup1}, (iii) of Lemma~\ref{le:Conmon} and a similar argument to the one in the Exit case, we obtain
\begin{equation}\label{est:Blowblambda}
\frac{1}{2}\ell^*\le\frac{b(s)}{\lambda^{\theta}(s)}\le 2\ell^*\quad \mbox{and}\quad b(s)>0.
\end{equation}
It follows from~\eqref{est:BootBlowup1} and (ii) of Lemma~\ref{le:Conmon} that 
\begin{equation}\label{est:Blowblambda2}
\frac{b(s)}{\lambda^{\theta}(s)}+\frac{\mathcal{N}_{2}(s)}{\lambda^{\theta}(s)}\lesssim \delta(\alpha).
\end{equation}
On the other hand, from (iii) of Lemma~\ref{le:Conmon} and $b(s)>0$ on $[s^{*}_{1},s^{**}]$, we obtain 
\begin{equation*}
\frac{\lambda_{0s}}{\lambda_0}\lesssim \mathcal{N}_{0}\lesssim \kappa\Longrightarrow
\frac{\lambda_{0}(s_{2})}{\lambda_{0}(s_{1})}-1\lesssim \kappa,\quad \mbox{for all}\ s_{1}^{*}\le s_{1}<s_{2}\le s^{**}.
\end{equation*}
Therefore, from $|J_{1}|\lesssim \kappa\ll 1$ on $[s^{*}_{1},s^{**}]$, we directly have
\begin{equation}\label{est:Blowlambda12}
\frac{\lambda(s_{2})}{\lambda(s_{1})}-1\lesssim \kappa,\quad \mbox{for all}\ s_{1}^{*}\le s_{1}<s_{2}\le s^{**}\Longrightarrow \lambda(s)\le 2\ \mbox{on}\ [s^{*}_{1},s^{**}].
\end{equation}
Therefore, from~\eqref{est:Blowblambda2} and (i) of Lemma~\ref{le:modu1}, we deduce that 
\begin{equation}\label{est:Blowebn}
\|\varepsilon(s)\|_{L^{2}}+|b(s)|+\mathcal{N}_{2}(s)\lesssim \delta(\alpha).
\end{equation}

Last, integrating the estimate in Proposition~\ref{Prop:decay} over $[s^{*}_{1},s]$ for any $s\in [s^{*}_{1},s^{**}]$ and then using~\eqref{est:BootBlowup1} and (i) and (ii) of Lemma~\ref{le:Conmon}, we obtain
\begin{equation*}
\begin{aligned}
\int_{\R^{2}}\varepsilon^{2}(s)\Phi\dd y
&\le C \int_{s_{1}^{*}}^{s}\frac{\lambda^{100}(s_{1})}{\lambda^{100}(s)}\left(b^{2}(s_{1})+\mathcal{N}_{1}(s_{1})\right)\dd s_{1}\\
&+\frac{\lambda^{100}(s_{1}^{*})}{\lambda^{100}(s)}\int_{\R^{2}}\varepsilon^{2}(s_{1}^{*})\Phi\dd y\le 2+\delta(\alpha)\le \frac{5+\delta(\alpha)}{\lambda^{100}(s)}.
\end{aligned}
\end{equation*}
Combining the above estimate, we improve the bootstrap estimates~\eqref{est:Boot1}--\eqref{est:Boot3} and thus we conclude that $t^{*}=t^{**}$. In particular, we also conclude that $t^{*}=t^{**}=T$ since~\eqref{est:Blowebn} improve the estimate in the definition of $\mathcal{T}_{\alpha^{*}}$ provided that $0<\alpha\ll \alpha^{*}$.

\smallskip 
Step 2. Proof of~\eqref{est:Blowupcase}. Similar to the case of Exit, for all $t\in[t^*_1,T)$, we have
\begin{equation*}
-3\ell^{*}-C\frac{\mathcal{N}_{0}}{\lambda^{\theta}}\le\lambda_0^{2-\theta}\lambda_{0t}\le -\frac{1}{3}\ell^*+C\frac{\mathcal{N}_{0}}{\lambda^{\theta}}.
\end{equation*}
Here, $C$ is a universal constant independent with $\alpha$ and $\alpha^{*}$. Integrating the above estimate over $[t_{1}^{*},t]$ for any $t\in [t_{1}^{*},t^{*}]$, we deduce that 
\begin{equation*}
\begin{aligned}
-3\ell^{*}(t-t^{*}_1)-C\int_{t^{*}_1}^{t}\frac{\mathcal{N}_{0}(t_{1})}{\lambda^{\theta}(t_{1})}\dd t_{1}
&\le\frac{\lambda^{3-\theta}_0(t)-\lambda^{3-\theta}_0(t_1^*)}{3-\theta},\\
-\frac{1}{3}\ell^{*}(t-t^{*}_1)+C\int_{t^{*}_1}^{t}\frac{\mathcal{N}_{0}(t_{1})}{\lambda^{\theta}(t_{1})}\dd t_{1}
&\ge \frac{\lambda^{3-\theta}_0(t)-\lambda^{3-\theta}_0(t_1^*)}{3-\theta}.
\end{aligned}
\end{equation*}
From~\eqref{est:Blowlambda12} and (ii) of Lemma~\ref{le:Conmon}, we know that
\begin{equation*}
\int_{t_{1}^{*}}^{t}\frac{N_{0}(t_{1})}{\lambda^{\theta}(t_{1})}\dd t_{1}
\lesssim \int_{s_{1}^{*}}^{s}\lambda^{3-\theta}(s_{1})\mathcal{N}_{0}(s_{1})\dd s_{1}\lesssim \delta(\kappa).
\end{equation*}
Combining the above estimates, we obtain
\begin{equation*}
0\le \lambda_{0}^{3-\theta}(t)\le \lambda_{0}^{3-\theta}(t_{1}^{*})-\left(1-\frac{\theta}{3}\right)\ell^{*}(t-t_{1}^{*})+\delta(\kappa),
\end{equation*}
which directly implies $T<\infty$.

Then, from~\eqref{est:smallnessinital},~\eqref{est:Blowlambda12},~\eqref{est:Blowebn} and (i) of Lemma~\ref{le:modu1}, we find
\begin{equation*}
\|\nabla \varepsilon\|_{L^{2}}^{2}\lesssim \lambda^{2}E(u_{0})+\delta(\alpha)\lesssim \delta(\alpha).
\end{equation*}
It follows that 
\begin{equation*}
\|\nabla u(t)\|_{L^{2}}^{2}=\frac{\|\nabla Q\|^{2}_{L^{2}}+\delta(\alpha)}{\lambda^{2}(t)},\quad \mbox{for all}\ t\in [0,T).
\end{equation*}
Based on the above identity, blow-up criterion~\eqref{equ:Blowcri} of the Cauchy-problem and~\eqref{est:Blowblambda2},
\begin{equation*}
\lim_{t\uparrow T}\lambda(t)=0\Longrightarrow \lim_{t\uparrow T}\left(\lambda(t)+|b(t)|+\mathcal{N}_{2}(t)\right)=0.
\end{equation*}
Using again (i) of Lemma~\ref{le:modu1}, we conclude that 
\begin{equation*}
\lim_{t\uparrow T}\left(\lambda(t)+|b(t)|+\mathcal{N}_{2}(t)+\|\nabla \varepsilon\|_{L^{2}}^{2}\right)=0.
\end{equation*}
Then, from~\eqref{est:Blowblambda2}, (iii) of Lemma~\ref{le:modu2} and (ii) of Lemma~\ref{le:Conmon},
\begin{align*}
\int_0^{\infty}\left|\frac{\dd}{\dd s}\left(\frac{b}{\lambda^{\theta}}e^J\right)\right|\dd s\lesssim  \int_0^{\infty}\frac{|b(s)|^3+\mathcal{N}_0(s)}{\lambda^{\theta}(s)}\dd s<\infty.
\end{align*}
On the other hand, based on the above estimates, we directly obtain
\begin{equation*}
\lim_{t\uparrow T}\left(\left|J(t)\right|+\left|\frac{\lambda_{0}(t)}{\lambda(t)}-1\right|\right)=0.
\end{equation*}
Hence, there exists $0<\ell_0\lesssim\delta(\alpha)$ such that 
\begin{equation*}
\lim_{t\uparrow T}\frac{b(t)}{\lambda^{\theta}(t)}=\lim_{t\rightarrow T}\frac{b(t)}{\lambda_0^{\theta}(t)}=\ell_0.
\end{equation*}
It follows from (ii) and (iv) of Lemma~\ref{le:Conmon} that
\begin{equation*}
\begin{aligned}
&\left|\frac{\lambda_{0}^{3-\theta}(t)}{3-\theta}-\int_{t}^{T}\frac{b(t_{1})}{\lambda_{0}^{\theta}(t_{1})}\dd t_{1}\right|\\
&=\left|\int_{s}^{s(T)}\lambda_{0}^{3-\theta}(s_{1})
\left(\frac{\lambda_{0s}(s_{1})}{\lambda_{0}(s_{1})}+b(s_{1})\right)\dd s_{1}\right|\\
&\lesssim \lambda^{3-\theta}(s(t))\int_{s(t)}^{s(T)}\left(b^{2}(s_{1})+\mathcal{N}_{0}(s_{1})\right)\dd s_{1}\to 0\quad \mbox{as}\ t\to T.
\end{aligned}
\end{equation*}
Therefore, we conclude that 
\begin{equation*}
\begin{aligned}
\lim_{t\uparrow T}\frac{\lambda(t)}{(T-t)^{\frac{1}{3-\theta}}}&=\left((3-\theta)\ell_0\right)^{\frac{1}{3-\theta}},\\
\lim_{t\uparrow T}\frac{b(t)}{(T-t)^{\frac{\theta}{3-\theta}}}&=\left((3-\theta)^{\theta}\ell_{0}^{3}\right)^{\frac{1}{3-\theta}}.
\end{aligned}
\end{equation*}
Last, from (ii) of Lemma~\ref{le:modu1}, we obtain $x_{1t}\sim \lambda^{-2}$ which implies
\begin{equation*}
x_1(t)\sim (T-t)^{-\frac{\theta-1}{3-\theta}},\quad \text{as }t\rightarrow T.
\end{equation*}
The proof of Proposition~\ref{Prop:rigidity} is complete.

\section{End of the proof of Theorem~\ref{MT2}}\label{S:Endproof2}

In this section, we will give a complete proof of Theorem~\ref{MT2}. First, we recall the following variational property of the ground state $Q$.
\begin{lemma}[Variation property of $Q$]\label{le:orbi}
There exists $\alpha_{1}>0$ such that the following hold. For any $0<\alpha_{2}<\alpha_{1}$ and  $u_0\in H^1$ with
\begin{equation*}
E(u_0)\le\alpha_{2}\int_{\mathbb{R}^2}|\nabla u_0|^2\dd y\quad\mbox{and}\quad 
 \left|\int_{\mathbb{R}^2}(u_0^2-Q^2)\dd y\right|\le \alpha_{2}.
\end{equation*}
Then we have 
\begin{equation*}
\inf_{\substack{\lambda_{0}>0\\x_{0}\in \R^{2}}}\left\|u_0(\cdot)-\frac{\sigma_{0}}{\lambda_0}Q\left(\frac{\cdot-x_0}{\lambda_0}\right)\right\|_{L^2}\le \delta(\alpha_{2})\quad \mbox{where}\ \sigma_{0}\in \{-1,1\}.
\end{equation*}
\end{lemma}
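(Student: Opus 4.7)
The plan is to argue by contradiction, reducing the statement to the rigidity of extremizing sequences for the sharp Gagliardo--Nirenberg inequality. Suppose the conclusion fails: there exist $\alpha_n \downarrow 0$ and $u_n \in H^1(\R^{2})$ with $E(u_n) \le \alpha_n \|\nabla u_n\|_{L^2}^2$ and $\bigl|\|u_n\|_{L^2}^2 - \|Q\|_{L^2}^2\bigr| \le \alpha_n$, yet
\begin{equation*}
d(u_n) := \inf_{\sigma \in \{-1,1\},\, \lambda_0>0,\, x_0\in\R^{2}} \left\| u_n - \frac{\sigma}{\lambda_0}\, Q\!\left(\frac{\cdot - x_0}{\lambda_0}\right)\right\|_{L^2} \ge \delta_0 > 0.
\end{equation*}

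The first step is to exploit the $L^2$-critical scaling, under which $d$ is invariant, to normalize the gradient. Since any $u \in H^1(\R^{2})$ with $\|u\|_{L^2}^2$ close to $\|Q\|_{L^2}^2$ satisfies $\|\nabla u\|_{L^2} > 0$, I set $\lambda_n = \|\nabla Q\|_{L^2}/\|\nabla u_n\|_{L^2} \in (0,\infty)$ and $v_n(x) = \lambda_n u_n(\lambda_n x)$. Then $d(v_n) = d(u_n) \ge \delta_0$, $\|\nabla v_n\|_{L^2} = \|\nabla Q\|_{L^2}$, $\|v_n\|_{L^2}^2 \to \|Q\|_{L^2}^2$, and $E(v_n) = \lambda_n^2 E(u_n) \le \alpha_n \|\nabla v_n\|_{L^2}^2 \to 0$. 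Consequently $\|v_n\|_{L^4}^4 = 2\|\nabla v_n\|_{L^2}^2 - 4E(v_n) \to 2\|\nabla Q\|_{L^2}^2 = \|Q\|_{L^4}^4$, so $(v_n)$ is an $H^1$-bounded extremizing sequence for the sharp Gagliardo--Nirenberg inequality on $\R^{2}$.

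Next I would apply a standard concentration-compactness argument to $(v_n)$. Since $\|v_n\|_{L^4}$ is bounded below, Lions' vanishing lemma yields translations $x_n \in \R^{2}$ and, along a subsequence, a nonzero weak $H^1$-limit $v_\infty$ of $v_n(\cdot + x_n)$. The dichotomy scenario is ruled out by the strict subadditivity of the sharp Gagliardo--Nirenberg constant under $L^2$-mass splitting; hence a single profile carries the full mass, $\|v_\infty\|_{L^2}^2 = \|Q\|_{L^2}^2$, the convergence is strong in $L^2 \cap L^4$, and weak lower semicontinuity together with the extremizing condition forces $\|\nabla v_\infty\|_{L^2} = \|\nabla Q\|_{L^2}$ and $\|v_\infty\|_{L^4}^4 = \|Q\|_{L^4}^4$, so $v_\infty$ saturates the sharp inequality.

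To finish, I would invoke the characterization of sharp Gagliardo--Nirenberg extremizers: by Kwong's uniqueness theorem for the ground state, together with the translation and mass-critical scaling symmetries of the functional, $v_\infty$ must have the form $\sigma \mu^{-1} Q(\mu^{-1}(\cdot - z))$ for some $\sigma \in \{-1,1\}$, $\mu > 0$, $z \in \R^{2}$. Then $L^2$-convergence of $v_n(\cdot + x_n)$ to $v_\infty$ forces $d(v_n) \to 0$, contradicting $d(v_n) \ge \delta_0$. I expect the main obstacle to be the rigorous execution of the concentration-compactness step, specifically the verification of strict subadditivity of the sharp Gagliardo--Nirenberg constant to exclude mass splitting; once that is in place, the identification with the $Q$-orbit follows immediately from Kwong's theorem and the continuous symmetries of the functional.
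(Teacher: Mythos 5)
Your proposal is correct and follows the same standard variational route that the paper delegates to Merle's Lemma~1: normalize via the mass-critical scaling so the gradient matches $\|\nabla Q\|_{L^2}$, observe that the hypotheses force $\|v_n\|_{L^2}^2\to\|Q\|_{L^2}^2$ and $\|v_n\|_{L^4}^4\to\|Q\|_{L^4}^4$ (using $E(Q)=0$), so $(v_n)$ is a Gagliardo--Nirenberg extremizing sequence, and then apply concentration-compactness together with Weinstein's and Kwong's characterization of the extremizers to conclude $L^2$-precompactness modulo translation, scaling, and sign. Your exclusion of dichotomy is stated somewhat loosely (the GN constant itself carries no mass parameter, so ``subadditivity under $L^2$-mass splitting'' needs to be read as the strict inequality $(m_1+m_2)(g_1+g_2)>m_1g_1+m_2g_2$ once two nontrivial profiles with masses $m_i$ and Dirichlet energies $g_i$ both appear), but this reading does close the argument; a profile decomposition would make the same point more cleanly.
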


\begin{proof}
The proof is based on a standard variational argument. We refer to~\cite[Lemma 1]{Merle} for the details of the proof.
\end{proof}

For the case of $E_{0}<0$, from Lemma~\ref{le:orbi}, we know that (Exit) is not possible for initial data with negative energy. Then, by (i) of Lemma~\ref{le:modu1} and Proposition~\ref{Prop:rigidity}, we obtain
\begin{equation*}
\lambda^2(t)|E_0|+\|\nabla \varepsilon(t)\|_{L^{2}}^{2}\lesssim |b(t)|+\mathcal{N}_0(t)\rightarrow 0,\quad \mbox{as}\ t\to T,
\end{equation*}
which implies the corresponding solution $u(t)$ belongs to the Blow-up regime.

\smallskip
For the case of $E_0=0$, from Lemma~\ref{le:orbi}, we also know that (Exit) is not possible.
Then,  the proof proceeds by contradiction. We assume that the corresponding solution $u(t)$ belongs to the Soliton regime. A contradiction then follows from the subsequent discussion. We refer to~\cite[\S5]{MMR} for a similar proof.

From Proposition~\ref{Prop:rigidity}, we can choose $t_0$ large enough such that
\begin{equation*}
|\lambda(t)-1|+|x_{1t}(t)-1|\le \frac{1}{100},\quad \mbox{for all} \ t\ge t_{0}.
\end{equation*}
We define the smooth function $G_{0}\in C^{\infty}$ with $G'_{0}\le 0$ as follows,
\begin{equation*}
G_0(x_1)=
\begin{cases}
1,&\text{if }x_1<-2,\\
1-(x_1+2)^{100}e^{-\frac{1}{x_1+2}},&\text{if }-2< x_1\leq -\frac{19}{10},\\
(x_1+1)^{100}e^{\frac{1}{x_1+1}},&\text{if }-\frac{11}{10}\leq x_1< -1,\\
0,&\text{if }x_1>-1.
\end{cases}
\end{equation*}
We also define the smooth function $G_{1}\in C^{\infty}$ with $G'_{1}\le 0$ as follows,
\begin{equation*}
G_1(x_1)=
\begin{cases}
1,&\text{if }x_1<-3,\\
1-(x_1+3)^{100}e^{-\frac{1}{x_1+3}},&\text{if }-3< x_1\leq -\frac{29}{10},\\
(x_1+2)^{100}e^{\frac{1}{x_1+2}},&\text{if }-\frac{21}{10}\le x_1< -2,\\
0,&\text{if }x_1>-2.
\end{cases}
\end{equation*}
From the fact that 
\begin{equation*}
\frac{\dd x^{n}_{1}}{\dd x_{1}}=nx_{1}^{n-1}\quad \mbox{and}\quad 
\frac{\dd e^{-\frac{1}{x_{1}}}}{\dd x_{1}}=\frac{1}{x_{1}^{2}}e^{-\frac{1}{x_{1}}},
\end{equation*}
we obtain
\begin{equation*}
\begin{aligned}
|G_0''|\lesssim |G_0'|^{\frac{48}{49}},\quad |G_0'''|\lesssim |G_0'|^{\frac{47}{49}}\quad \mbox{and}\quad |G_0'|\lesssim |G_0|^{\frac{49}{50}},\\
|G_1''|\lesssim |G_1'|^{\frac{48}{49}},\quad |G_1'''|\lesssim |G_1'|^{\frac{47}{49}}\quad \mbox{and}\quad |G_1'|\lesssim |G_1|^{\frac{49}{50}}.
\end{aligned}
\end{equation*}

For all $t_{0}\le t_{1}\le t$ and $x_0\gg1$, we set 
\begin{equation*}
\widetilde{G}_{0}(t,x_{1})=G_{0}(\widetilde{x}) \quad \mbox{and}\quad 
\widetilde{G}_{1}(t,x_{1})=G_{1}(\widetilde{x}),
\end{equation*}
where
\begin{equation*}
L(t)=\frac{t-t_1}{4}+x_0\quad 
\mbox{and}\quad 
\widetilde{x}=\frac{x_{1}-x_{1}(t)}{L(t)}.
\end{equation*}
Moreover, we denote
\begin{equation*}
\begin{aligned}
M_{x_{0}}(t)&=\frac{1}{2}\int_{\R^{2}}|\partial_{x_{1}}u(t)|^{2}\widetilde{G}_{1}(t,x_{1})\dd x,\\
E_{x_0}(t)&=\int_{\mathbb{R}^2}\left(\frac{1}{2}|\nabla u(t)|^2-\frac{1}{4}|u(t)|^4\right)\widetilde{G}_{0}(t,x_{1})\dd x.
\end{aligned}
\end{equation*} 
We are in a position to complete the proof of Theorem~\ref{MT2}.
\begin{proof}[End of the proof of Theorem~\ref{MT2}]
We split the proof into the following three steps.

\smallskip
Step 1. Control of $E_{x_{0}}(t)$. By an elementary computation, we have 
\begin{equation*}
\begin{aligned}
\frac{\dd}{\dd t}{E}_{x_{0}}
&=-\frac{1}{L}\int_{\R^{2}}\left(\left(\partial_{x_{1}}^{2}u\right)^{2}+\left(\partial_{x_{1}x_{2}}u\right)^{2}\right)G'_{0}(\widetilde{x})\dd x\\
&-\frac{1}{4L}\int_{\R^{2}}\left(\frac{1}{2}|\nabla u(t)|^2-\frac{1}{4}|u(t)|^4\right)
G'_{0}(\widetilde{x})\widetilde{x}\dd x\\
&-\frac{1}{2L}\int_{\R^{2}}\left(\Delta u+u^{3}\right)^{2}G'_{0}(\widetilde{x})\dd x+\frac{1}{2L^{3}}\int_{\R^{2}}\left(\partial_{x_{1}}u\right)^{2}G'''_{0}(\widetilde{x})\dd x\\
&-\frac{x_{1t}}{L}\int_{\R^{2}}\left(\frac{1}{2}|\nabla u(t)|^2-\frac{1}{4}|u(t)|^4\right)G'_{0}(\widetilde{x})\dd x+\frac{3}{L}\int_{\R^{2}}u^{2}\left(\partial_{x_{1}}u\right)^{2}G'_{0}(\widetilde{x})\dd x.
\end{aligned}
\end{equation*}
First, from ${\rm{Supp}} \ G'_{0}\subset[-2,-1]$ and $x_{1t}\sim 1$, we have 
\begin{equation*}
-\frac{1}{8L}\int_{\R^{2}}|\nabla u(t)|G'_{0}(\widetilde{x})\dd x
-\frac{x_{1t}}{2L}\int_{\R^{2}}|\nabla u(t)|G'_{0}(\widetilde{x})\widetilde{x}\dd x
\ge \frac{1}{16L}\int_{\R^{2}}|\nabla u(t)|G'_{0}(\widetilde{x})\dd x.
\end{equation*}
Then, from Proposition~\ref{Prop:decomposition} and \S\ref{SSS:Soliton}, we find
\begin{equation*}
\begin{aligned}
&\int_{\R}\int_{\widetilde{x}<-1}\left(|\nabla u|^{2}+u^{2}\right)\dd x_{1}\dd x_{2}\\
&\lesssim \int_{\R}\int^{-\frac{L}{\lambda(t)}}_{-\infty}\left(|\nabla Q|^{2}+|\nabla (bP\phi_{b})|^{2}+|\nabla \varepsilon|^{2}\right)\dd y_{1}\dd y_{2}\\
&+ \int_{\R}\int_{-\infty}^{-\frac{L}{\lambda(t)}}\left(Q^{2}+(bP\phi_{b})^{2}+\varepsilon^{2}\right)\dd y\lesssim e^{-\frac{t-t_{1}+x_{0}}{50}}+b(t)+\mathcal{N}_{0}(t)\to 0,\ \mbox{as}\ t\to \infty.
\end{aligned}
\end{equation*}
Based on the above estimate and the 2D Sobolev embedding, we obtain
\begin{equation*}
\lim_{t\to \infty}E_{x_{0}}(t)=0,\quad \mbox{for any}\ x_{0}\gg 1.
\end{equation*}
On the other hand, it is easily checked that 
\begin{equation}\label{est:G0123}
\begin{aligned}
\left|\partial_{x_{1}}\widetilde{G}_{0}\right|&\lesssim L^{-1}\left|\widetilde{G}_{0}\right|^{\frac{48}{49}},\\
\left|\partial_{x_{1}}^{2}\widetilde{G}_{0}\right|\lesssim L^{-\frac{50}{49}}\left|\partial_{x_{1}}\widetilde{G}_{0}\right|^{\frac{48}{49}}\quad &\mbox{and}\quad 
\left|\partial_{x_{1}}^{3}\widetilde{G}_{0}\right|\lesssim L^{-\frac{100}{49}}\left|\partial_{x_{1}}\widetilde{G}_{0}\right|^{\frac{47}{49}}.
\end{aligned}
\end{equation}
Therefore, based on a similar argument to the proof for Lemma~\ref{le:weight2D},
\begin{equation*}
\begin{aligned}
&\left\|u^{2}\sqrt{|\partial_{x_{1}}\widetilde{G}_{0}|}\right\|_{L^{\infty}}\\
&\lesssim L^{-\frac{100}{49}}\int_{\R^{2}}u^{2}|\partial_{x_{1}}\widetilde{G}_{0}|^{\frac{45}{98}}\dd x+L^{-\frac{50}{49}}\|u\|_{L^{2}}\left(\int_{\R^{2}}|\nabla u|^{2}|\partial_{x_{1}}\widetilde{G}_{0}|\dd x\right)^{\frac{47}{98}}\\
&+\left(\int_{\R}\int_{\widetilde{x}<-1}\left(|\nabla u|^{2}+u^{2}\right)\dd x\right)^{\frac{1}{2}}\left(\int_{\R^{2}}\left(|\nabla u|^{2}+|\partial_{x_{1}}\partial_{x_{2}}u|^{2}\right)|\partial_{x_{1}}\widetilde{G}_{0}|\dd x\right)^{\frac{1}{2}}.
\end{aligned}
\end{equation*}
Then, using the Cauchy-Schwarz inequality, we obtain
\begin{equation*}
\left|\int_{\R^{2}}(\partial_{x_{1}}u)^{2}\sqrt{|\partial_{x_{1}}\widetilde{G}_{0}|}\dd x\right|
\lesssim \left(\int_{\R}\int_{\widetilde{x}<-1}|\nabla u|^{2}\dd x\right)^{\frac{1}{2}}\left(\int_{\R^{2}}|\nabla u|^{2}|\partial_{x_{1}}\widetilde{G}_{0}|\dd x\right)^{\frac{1}{2}}.
\end{equation*}
It follows from~\eqref{est:G0123} that 
\begin{equation*}
\begin{aligned}
&\left|\frac{1}{L}\int_{\R^{2}}u^{2}\left(\partial_{x_{1}}u\right)^{2}G'_{0}(\widetilde{x})\dd x\right|\\
&\lesssim 
\left(\left|\int_{\R^{2}}(\partial_{x_{1}}u)^{2}\sqrt{|\partial_{x_{1}}\widetilde{G}_{0}|}\dd x\right|\right)
\left\|u^{2}\sqrt{|\partial_{x_{1}}\widetilde{G}_{0}|}\right\|_{L^{\infty}}\\
&\lesssim \frac{1}{L^{3}}+\frac{o(1)}{L}\int_{\R^{2}}\left(|\nabla u|^{2}+|\partial_{x_{1}}\partial_{x_{2}}u|^{2}\right)|{G}'_{0}(\widetilde{x})|\dd x.
\end{aligned}
\end{equation*}
Here, we use the fact that 
\begin{equation*}
\lim_{t\to \infty}\left(L^{-1}(t)+\int_{\R}\int_{\widetilde{x}<-1}\left(|\nabla u|^{2}+u^{2}\right)\dd x\right)=0.
\end{equation*}
Then, based on a similar argument to the proof for Lemma~\ref{le:weight2D}, we obtain
\begin{equation*}
\begin{aligned}
\left|\frac{1}{L}\int_{\R^{2}}u^{4}G_{0}'(\widetilde{x})\dd x\right|
&\lesssim 
\left(\int_{\R}\int_{\widetilde{x}<-1}u^{2}\dd x\right)\left(\int_{\R^{2}}|\nabla u|^{2}|\partial_{x_{1}}\widetilde{G}_{0}|\dd x\right)\\
&+\left(\int_{\R}\int_{\widetilde{x}<-1}u^{2}\dd x\right)
\left(\int_{\R^{2}}u^{2}\left|\frac{\partial^{2}_{x_{1}}\widetilde{G}_{0}}{\sqrt{\partial_{x_{1}}\widetilde{G}_{0}}}\right|^{2}\dd x\right)\\
&\lesssim \frac{1}{L^{3}}+\frac{o(1)}{L}\int_{\R^{2}}\left(|\nabla u|^{2}+|\partial_{x_{1}}\partial_{x_{2}}u|^{2}\right)|{G}'_{0}(\widetilde{x})|\dd x.
\end{aligned}
\end{equation*}
Combining the above estimates, one has
\begin{equation*}
\frac{\dd }{\dd t}E_{x_{0}}(t)\ge -\frac{1}{20L}\int_{\R^{2}}\left(|\nabla u|^{2}+|\partial_{x_{1}}\partial_{x_{2}}u|^{2}\right)G'_{0}(\widetilde{x})\dd x-\frac{C}{L^{3}}.
\end{equation*}
Here, $C>1$ is a universal constant independent with $L$. Integrating the above estimate over $[t_{1},t]$ and then using the fact that $E_{x_{0}}(t)\to 0$ as $t\to \infty$, 
\begin{equation}\label{est:Ex0}
E_{x_{0}}(t_{1})-\int_{t_{1}}^{\infty}\frac{1}{L(t)}\int_{\R^{2}}\left(|\nabla u|^{2}+|\partial^{2}_{x_{1}}u|^{2}+|\partial_{x_{1}}\partial_{x_{2}}u|^{2}\right)G'_{0}(\widetilde{x})\dd x\dd t\lesssim \frac{1}{x^{2}_{0}}.
\end{equation}
\smallskip
Step 2. Refined control of localized $\dot{H}^{1}$ norm. First, using again~\eqref{est:G0123} and a similar argument to the proof of Lemma~\ref{le:weight2D}, 
\begin{equation*}
\begin{aligned}
\int_{\R^{2}}u^{4}\widetilde{G}_{0}\dd x
&\lesssim \left(\int_{\R}\int_{\widetilde{x}<-1}u^{2}\dd x\right)\left(\int_{\R^{2}}|\nabla u|^{2}|\widetilde{G}_{0}|\dd x\right)\\
&+\left(\int_{\R}\int_{\widetilde{x}<-1}u^{2}\dd x\right)
\left(\int_{\R^{2}}u^{2}\left|\frac{\partial_{x_{1}}\widetilde{G}_{0}}{\sqrt{\widetilde{G}_{0}}}\right|^{2}\dd x\right)\\
&\lesssim \frac{1}{L^{2}}+o(1)\int_{\R^{2}}|\nabla u|^{2}\widetilde{G}_{0}\dd x\lesssim 
\frac{1}{x_{0}^{2}}+o(1)\int_{\R^{2}}|\nabla u|^{2}\widetilde{G}_{0}\dd x.
\end{aligned}
\end{equation*}
It follows from~\eqref{est:Ex0} that 
\begin{equation}\label{est:endnablau}
\int_{\R}\int_{\widetilde{x}<-2}|\nabla u|^{2}\dd x\lesssim 
\int_{\R^{2}}|\nabla u|^{2}\widetilde{G}_{0}\dd x\lesssim E_{x_{0}}(t)+\frac{1}{x_{0}^{2}}\lesssim \frac{1}{x^{2}_{0}}.
\end{equation}
Using again~\eqref{est:Ex0},
\begin{equation*}
\begin{aligned}
&\frac{\dd}{\dd x_{0}}\left(\int_{t_{0}}^{\infty}\int_{\R^{2}}
\left(
|\nabla u|^{2}+(\partial_{x_{1}}^{2}u)^{2}+(\partial_{x_{1}}\partial_{x_{2}}u)^{2}
\right)G_{0}(\widetilde{x})\dd x\dd t
\right)\\
&=-\int_{t_{0}}^{\infty}\int_{\R^{2}}
\left(
|\nabla u|^{2}+(\partial_{x_{1}}^{2}u)^{2}+(\partial_{x_{1}}\partial_{x_{2}}u)^{2}
\right)G'_{0}(\widetilde{x})\frac{\widetilde{x}}{L}\dd x\dd t\\
&\ge \int_{t_{0}}^{\infty}\int_{\R^{2}}\left(
|\nabla u|^{2}+(\partial_{x_{1}}^{2}u)^{2}+(\partial_{x_{1}}\partial_{x_{2}}u)^{2}
\right)G'_{0}(\widetilde{x})\frac{2}{L}\dd x\dd t\gtrsim -\frac{1}{x_{0}^{2}}.
\end{aligned}
\end{equation*}
Integrating the above estimate over $[x_{0},\infty)$, we see that 
\begin{equation}\label{est:endG0x}
\int_{t_{1}}^{\infty}\int_{\R^{2}}
\left(
|\nabla u|^{2}+(\partial_{x_{1}}^{2}u)^{2}+(\partial_{x_{1}}\partial_{x_{2}}u)^{2}
\right)G_{0}(\widetilde{x})\dd x\dd t\lesssim \frac{1}{x_{0}}.
\end{equation}
Here, we use the fact that 
\begin{equation*}
\lim_{x_{0}\to \infty}\int_{t_{1}}^{\infty}\int_{\R^{2}}
\left(
|\nabla u|^{2}+(\partial_{x_{1}}^{2}u)^{2}+(\partial_{x_{1}}\partial_{x_{2}}u)^{2}
\right)G_{0}(\widetilde{x})\dd x\dd t=0.
\end{equation*}
Note that, 
\begin{equation}\label{est:G1123}
\begin{aligned}
\left|\partial_{x_{1}}\widetilde{G}_{1}\right|&\lesssim L^{-1}\left|\widetilde{G}_{1}\right|^{\frac{48}{49}},\\
\left|\partial_{x_{1}}^{2}\widetilde{G}_{1}\right|\lesssim L^{-\frac{50}{49}}\left|\partial_{x_{1}}\widetilde{G}_{1}\right|^{\frac{48}{49}}\quad &\mbox{and}\quad 
\left|\partial_{x_{1}}^{3}\widetilde{G}_{1}\right|\lesssim L^{-\frac{100}{49}}\left|\partial_{x_{1}}\widetilde{G}_{1}\right|^{\frac{47}{49}}.
\end{aligned}
\end{equation}
By an elementary computation, 
\begin{equation*}
\begin{aligned}
\frac{\dd}{\dd t}M_{x_{0}}
&=-\frac{1}{2L}\int_{\R^{2}}\left(3(\partial^{2}_{x_{1}}u)^{2}+(\partial_{x_{1}}\partial_{x_{2}}u)^{2}\right)G'_{1}(\widetilde{x})\dd x-3\int_{\R^{2}}u(\partial_{x_{1}}u)^{3}G_{1}(\widetilde{x})\dd x
\\
&+\frac{1}{2L^{3}}\int_{\R^{2}}(\partial_{x_{1}}u)^{2}G'''_{1}(\widetilde{x})\dd x+\frac{3}{2L}\int_{\R^{2}}u^{2}(\partial_{x_{1}}u)^{2}G'_{1}(\widetilde{x})\dd x\\
&-\frac{x_{1t}}{2L}\int_{\R^{2}}(\partial_{x_{1}}u)^{2}G'_{1}(\widetilde{x})\dd x
-\frac{1}{8L}\int_{\R^{2}}(\partial_{x_{1}}u)^{2}G'_{1}(\widetilde{x})\widetilde{x}\dd x.
\end{aligned}
\end{equation*}
First, from Supp $G'_{1}\subset [-3,-2]$ and $x_{1t}\sim 1$, we have 
\begin{equation*}
-\frac{x_{1t}}{2L}\int_{\R^{2}}(\partial_{x_{1}}u)^{2}G'_{1}(\widetilde{x})\dd x
-\frac{1}{8L}\int_{\R^{2}}(\partial_{x_{1}}u)^{2}G'_{1}(\widetilde{x})\widetilde{x}\dd x\ge
-\frac{1}{20L}\int_{\R^{2}}(\partial_{x_{1}}u)^{2}G'_{1}(\widetilde{x})\dd x.
\end{equation*}
Then, from~\eqref{est:Solitonb} and (ii) of Lemma~\ref{le:Conmon}, we deduce that 
\begin{equation*}
\begin{aligned}
&\int_{t_{1}}^{\infty}\left|\frac{1}{L^{3}(t)}\int_{\R^{2}}(\partial_{x_{1}}u)^{2}G'''_{1}(\widetilde{x})\dd x\right|\dd t\\
&\lesssim \frac{1}{x_{0}^{3}}\int_{t_{1}}^{\infty}\left(e^{-\frac{t-t_{1}+x_{0}}{50}}+|b(t)|+
\mathcal{N}_{0}(t)\right)\dd t\lesssim \frac{1}{x_{0}^{3}}.
\end{aligned}
\end{equation*}
Based on~\eqref{est:endnablau} and a similar argument to the one in the Lemma~\ref{le:weight2D}, 
\begin{equation*}
\begin{aligned}
&\left|\frac{1}{L}\int_{\R^{2}}u^{2}(\partial_{x_{1}}u)^{2}G'_{1}(\widetilde{x})\dd x\right|\\
&\lesssim 
\left(\left|\int_{\R^{2}}(\partial_{x_{1}}u)^{2}\sqrt{|\partial_{x_{1}}\widetilde{G}_{1}|}\dd x\right|\right)
\left\|u^{2}\sqrt{|\partial_{x_{1}}\widetilde{G}_{1}|}\right\|_{L^{\infty}}\\
&\lesssim \frac{1}{L^{5}}+\frac{1}{x^{2}_{0}}\int_{\R^{2}}\left(|\nabla u|^{2}+|\partial_{x_{1}}\partial_{x_{2}}u|^{2}\right)|{G}'_{1}(\widetilde{x})|\dd x.
\end{aligned}
\end{equation*}
It follows from~\eqref{est:endG0x} that 
\begin{equation*}
\int_{t_{1}}^{\infty}\left|\frac{1}{L}\int_{\R^{2}}u^{2}(\partial_{x_{1}}u)^{2}G'_{1}(\widetilde{x})\dd x\right|\dd t\lesssim \frac{1}{x_{0}^{3}}.
\end{equation*}
Then, using again~\eqref{est:G1123} and a similar argument to the one in the Lemma~\ref{le:weight2D},
\begin{equation*}
\begin{aligned}
&\left|\int_{\R^{2}}u^{2}(\partial_{x_{1}}u)^{2}G_{1}(\widetilde{x})\dd x\right|\\
&\lesssim 
\left(\left|\int_{\R^{2}}(\partial_{x_{1}}u)^{2}\sqrt{\widetilde{G}_{1}}\dd x\right|\right)
\left\|u^{2}\sqrt{\widetilde{G}_{1}}\right\|_{L^{\infty}}\\
&\lesssim \frac{1}{L^{5}}+\frac{1}{x_{0}}\int_{\R^{2}}\left(|\nabla u|^{2}+|\partial_{x_{1}}\partial_{x_{2}}u|^{2}\right){G}_{1}(\widetilde{x})\dd x.
\end{aligned}
\end{equation*}
Then, from~\eqref{est:endnablau}, we see that 
\begin{equation*}
\begin{aligned}
\left|\int_{\R^{2}}(\partial_{x_{1}}u)^{4}G_{1}(\widetilde{x})\dd x\right|
&\lesssim \left(\int_{\widetilde{x}<-2}(\partial_{x_{1}}u)^{2}\dd x\right)
\left(\int_{\R^{2}}\left((\partial^{2}_{x_{1}}u)^{2}+(\partial_{x_{1}}\partial_{x_{2}}u)^{2}\right)G_{1}(\widetilde{x})\dd x\right)\\
&+ \left(\int_{\widetilde{x}<-2}(\partial_{x_{1}}u)^{2}\dd x\right)
\left(\int_{\R^{2}}(\partial_{x_{1}}u)^{2}\frac{\partial_{x_{1}}\widetilde{G}_{1}}{\sqrt{\widetilde{G}_{1}}}\dd x\right)\\
&\lesssim \frac{1}{x_{0}^{2}}\int_{\R^{2}}\left( 
(\partial^{2}_{x_{1}}u)^{2}
+(\partial_{x_{1}}\partial_{x_{2}}u)^{2}
\right){G}_{1}(\widetilde{x})\dd x
+\frac{1}{x^{2}_{0}L^{2}}.
\end{aligned}
\end{equation*}
Therefore, from the Cauchy-Schwarz inequality, 
\begin{equation}
\begin{aligned}
&\left|\int_{\R^{2}}u(\partial_{x_{1}} u)^{3}G_{1}(\widetilde{x})\dd x\right|\\
&\lesssim \left(\int_{\R^{2}}u^{2}(\partial_{x_{1}} u)^{2}G_{1}(\widetilde{x})\dd x\right)^{\frac{1}{2}}
\left(\int_{\R^{2}}(\partial_{x_{1}} u)^{4}G_{1}(\widetilde{x})\dd x\right)^{\frac{1}{2}}\\
&\lesssim \frac{1}{x_{0}^{\frac{3}{2}}}\int_{\R^{2}}\left( 
(\partial^{2}_{x_{1}}u)^{2}
+(\partial_{x_{1}}\partial_{x_{2}}u)^{2}
\right){G}_{1}(\widetilde{x})\dd x+\frac{1}{L^{5}}+\frac{1}{x_0^\frac{3}{2}L^2}.
\end{aligned}
\end{equation}
Combining the above estimates, we obtain
\begin{align*}
\frac{\dd}{\dd t}M_{x_{0}} &\gtrsim -\frac{1}{x_{0}^{\frac{3}{2}}}\int_{\R^{2}}\left( 
(\partial^{2}_{x_{1}}u)^{2}
+(\partial_{x_{1}}\partial_{x_{2}}u)^{2}
\right){G}_{1}(\widetilde{x})\dd x-\frac{1}{L^{5}}-\frac{1}{x_0^\frac{3}{2}L^2}.
\end{align*}
Integrating the above estimate over $[t_{1},\infty)$, we obtain
\begin{align*}
M_{x_0}(t_1)\lesssim \frac{1}{x_0^{\frac{5}{2}}}+\frac{1}{x_0^{\frac{3}{2}}}\int_{t_1}^{+\infty}\left(\int_{\R^{2}}\left( 
(\partial^{2}_{x_{1}}u)^{2}
+(\partial_{x_{1}}\partial_{x_{2}}u)^{2}
\right){G}_{1}(\widetilde{x})\dd x\right)\dd t\lesssim \frac{1}{x_0^{\frac{5}{2}}},
\end{align*}
which implies
\begin{equation}\label{eq:-3u_x}
\int_{\mathbb{R}}\int_{\widetilde{x}<-3}|\partial_{x_1} u|^2\, \dd x\leq G_{x_0}\lesssim \frac{1}{x_0^{\frac{5}{2}}}.
\end{equation}
\smallskip
Step 3. Conclusion. 
Using the Sobolev embedding with $x_2$ fixed, for all $y_0\gg1$,
\begin{align*}
&|u(t,x_1(t)-2y_0,x_2)|^2\\
&\lesssim \left(\int_{-\infty}^{x_{1}(t)-y_0}(\partial_{x_1}u)^2(t,x_1,x_2)\dd x_1\right)^{\frac{1}{2}}\left(\int_{-\infty}^{x_{1}(t)-y_0}u^2(t,x_1,x_2)\dd x_1\right)^{\frac{1}{2}}.
\end{align*}
It follows from the H\"older's inequality and \eqref{eq:-3u_x} that 
\begin{equation*}
\begin{aligned}
&\int_{\mathbb{R}}|u(t,x_1(t)-2y_0,x_2)|^2\dd x_2\\
&\leq \left(\int_{\mathbb{R}}\int_{\infty}^{x_1(t)-y_0}(\partial_{x_1}u)^2\dd x\right)^{\frac{1}{2}}\left(\int_{\mathbb{R}}\int_{-\infty}^{x_1(t)-y_0}u^2\dd x\right)^{\frac{1}{2}}\lesssim \frac{1}{y_0^{\frac{5}{4}}}.
\end{aligned}
\end{equation*}
Integrating the above estimate with respect to $y_0$ over $[\frac{x_{0}}{2},\infty)$, we obtain
\begin{equation}\label{eq:decayx0}
\int_{\R}\int_{-\infty}^{x_1(t)-x_0}u^2\,\dd x \lesssim \frac{1}{x^\frac{1}{4}_0}.
\end{equation}
For any given $x_0\gg 1$, we split the $L_2$ norm of $u$ into the following two pieces,
\begin{equation*}
\int_{\R^{2}}u^2\,\dd x=
 \int_{\R}\int_{-\infty}^{x_1(t)-x_0}u^2\,\dd x+ \int_{\R}\int^{\infty}_{x_1(t)-x_0}u^2\,\dd x.
\end{equation*}
The first term above will go to $0$ as $x_0\rightarrow\infty$ by \eqref{eq:decayx0}. Then, for the second term, 
\begin{align*}
&\left|\int_{\R} \int^{\infty}_{x_1(t)-x_0}u^2\,\dd x-\int_{\R^{2}} Q^2\,\dd x\right|\\
&=\left|\int_{\R}\int^{\infty}_{-\frac{x_{0}}{\lambda(t)}}|Q+b\phi_bP+\varepsilon|^2\,\dd y-\int_{\R^{2}}Q^2\,\dd y\right|\\
&\lesssim
\left(\int_{\R} \int^{\infty}_{-2x_0}\varepsilon^2\,\dd y+|b|^2\int_{\R^{2}}\phi_b^2P^2\,\dd y\right)^{\frac{1}{2}}
+\int_{\R} \int_{-\infty}^{\frac{x_{0}}{2}}Q^2\,\dd y\\
&+\int_{\R} \int^{\infty}_{-2x_0}\varepsilon^2\,\dd y+|b|^2\int_{\R^{2}}\phi_b^2P^2\,\dd y
\lesssim e^{\frac{10x_0}{B}}\mathcal{N}_0(t)+|b(t)|+e^{-\frac{x_0}{10}}.
\end{align*}
We know $\mathcal{N}_0(t)$ and $|b(t)|$ go to $0$ as $t\rightarrow\infty$. We can take a fixed $x_0$ to make  $e^{-\frac{x_0}{10}}$ and $x^{-\frac{1}{4}}_{0}$ as small as we want, and then pass $t\rightarrow\infty$. Therefore, we obtain
\begin{equation*}
\lim_{t\rightarrow+\infty}\int_{\R^{2}}|u(t)|^2 \dd x=\int_{\R^{2}}Q^2\dd x,
\end{equation*}
 which leads a contradiction, and so the proof of the zero energy case is completed.
\end{proof}

\appendix
\section{Numerical computations}\label{App:A}
In this appendix, we provide some details for the numerical computations for the value of $\theta$. These numerical computations were carried out by Mathematica 13.1.

Recall that, $Q$ is the unique nonnegative radial solution with exponential decay to the following second-order elliptic equation:
\begin{equation*}
-\Delta Q+Q-Q^3=0\quad \mbox{on}\ \R^{2}.
\end{equation*}
Recall also that, we denote by $\Lambda$ the scaling operator $\Lambda={\rm{Id}}+x\cdot \nabla$.

Our numerical computations are used to compute
\begin{equation*}
  \theta=2\bigg(\int_{\R}\frac{\big|\widehat{F}(\xi)\big|^{2}}{1+|\xi|^{2}}\dd \xi\bigg)
\bigg/
\left(\int_{\R}\big|\widehat{F}(\xi)\big|^{2}\dd \xi\right).
\end{equation*}
Here, the functions $F$ and $\widehat{F}$ are given by 
\begin{equation*}
F(y_{2})=\int_{\R}\Lambda Q(y_{1},y_{2})\dd y_{1}\quad \mbox{and}\quad 
\widehat{F}(\xi)=\frac{1}{\sqrt{2\pi }}\int_{\mathbb{R}}F(y_{2})e^{-iy_{2}\xi}\dd y_{2}.
\end{equation*}
\subsection{Numerical computations of $Q$} We numerically compute $Q$ in the polar coordinates
\begin{equation*}
    -R_{rr}-\frac{1}{r}R_r+R-R^3=0,
\end{equation*}
with $R_r(0)=0$ and $R(r)\rightarrow0$ as $r\rightarrow0$. To perform the numerical computation, we truncate the system to $r\in[0,L]$ and set $R(L)=0$. Then we employ the non-spectral renormalization method to iterate and obtain an approximated solution.  For full details, we refer to Section 3 of Chapter 28 in Fibich \cite{Fib} which also contains references on the convergence of the non-spectral renormalization method. Due to the exponential decay of $Q$, the error caused by the truncation is very small. After obtaining the numerical solution in $r$, we use the standard interpolation to recover the numerical solution in the $(x_{1},x_{2})$ coordinate.

\subsection{Fourier transforms}
We use the default numerical integrations in Mathematica to integrate the numerical solution obtained above to find an approximation of $F$.  Following this, we apply the FFT (Fast Fourier Transform) in Mathematica to compute the Fourier transform of the approximated $F$. It is necessary to renormalize the constants to align with our conventions of Fourier transforms; see the codes in the next section for details.

\subsection{Mathematica code}
With these numerical computations, one has
\begin{center}
\begin{tabular}{ |c|c|c|c|c|} 
 \hline
 \multicolumn{5}{|c|}{Numerical values of $\theta$} \\
 %\hline
\hline
Grid size \textbackslash  $L$ & \qquad 5 \qquad & \qquad10 \qquad& \qquad15\qquad & \qquad20\qquad\\
\hline

0.05 & 1.65849  & 1.65849  & 1.66112 & 1.66112\\
\hline
0.02  & 1.67766 & 1.65741 & 1.66006 & 1.66095\\ 
\hline 
0.01 & 1.67862 & 1.65703 & 1.66112 & 1.66032\\ 
\hline
\end{tabular}
\end{center}
We also plot the graph of \(\widehat{F}(\xi)\) with \(L=20\) and a grid size of $0.02$ as a reference.
\smallskip
\begin{center}
\includegraphics[scale=0.17]{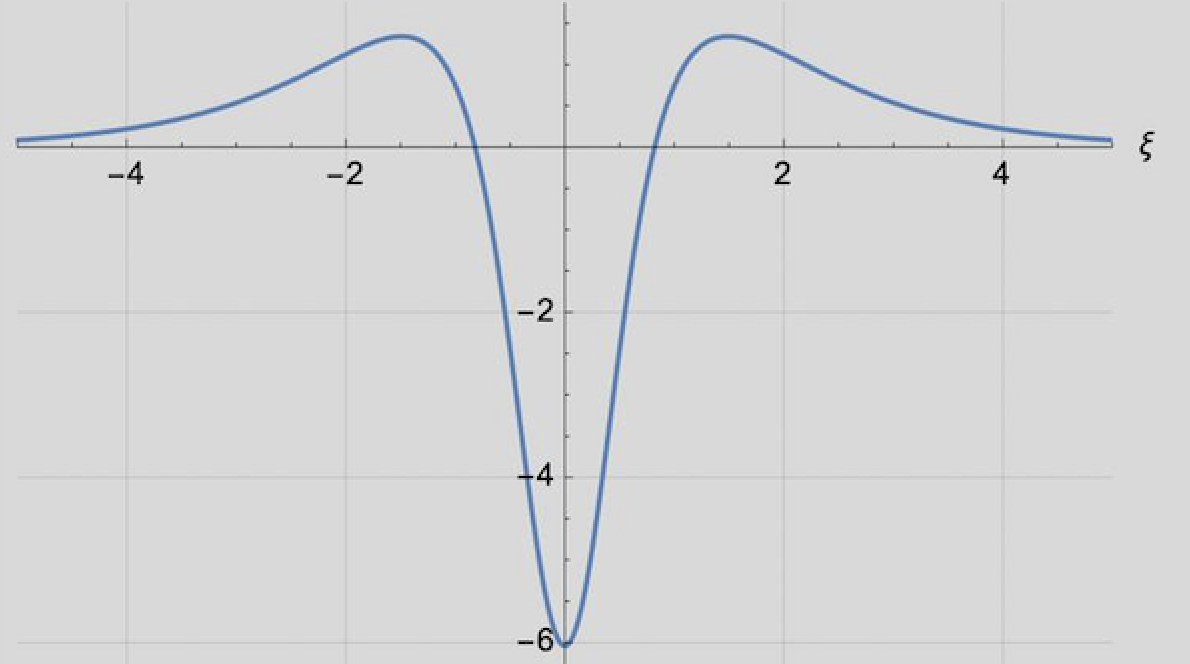}
\end{center}

Picking $L=20$ and the grid size to be $0.01$, we record
\begin{equation*}
    \theta=2\bigg(\int_{\R}\frac{\big|\widehat{F}(\xi)\big|^{2}}{1+|\xi|^{2}}\dd \xi\bigg)
\bigg/
\left(\int_{\R}\big|\widehat{F}(\xi)\big|^{2}\dd \xi\right)\approx 1.66032.
\end{equation*}
For the sake of completeness, we provide the code for computations in Mathematica. 
{\footnotesize
\begin{lstlisting}[extendedchars=true,language=Mathematica]
dr=0.05;
m=0;
r=Range[0,rmax=20,dr];
R=r Exp[-r^2]//N;
R::usage="Initial guess."; 

rdr=r dr;
i0=Max[Round[Min[1/rmax,0.1]/dr],1];
M=Length[r];
L1=SparseArray[{Band[{2,3}]->1./dr^2,Band[{2,1}]->1./dr^2,Band[{1,1}]->-(2./dr^2)-m^2/Max[r,dr/10]^2,{1,2}->2./dr^2},{M,M}];
L2=SparseArray[{Band[{i0+1,i0+2}]->1./(2. dr r[[i0+1;;-2]]),Band[{i0+1,i0}]->-(1./(2. dr r[[i0+1;;All]]))},{M,M}];
L3[R_,{d_,\[Sigma]_}]:=Module[{R0=R[[1]],R0p2,R0p4},R0p2=(R0 (1-Abs[R0]^(2 \[Sigma])))/d;R0p4=(3. R0p2 (1-(2 \[Sigma]+1) Abs[R0]^(2 \[Sigma])))/(2.+d);SparseArray[{Band[{1,1}]->R0p2+(r[[1;;i0]]^2 R0p4)/6.},{M,M}]];
L[R_,{d_,\[Sigma]_}]:=L1+L2+L3[R,{d,\[Sigma]}]-IdentityMatrix[M];
iL[R_,{d_,\[Sigma]_}]:=Inverse[L[R,{d,\[Sigma]}]];
L[R_]:=L[R,{2,1}];
L3[R_]:=L3[R,{2,1}];
iL[R_]:=iL[R,{2,1}]; 

SL[R_]:=Total[rdr*Abs[R]^2];
SR[R_,{d_,\[Sigma]_}]:=-Total[rdr*R iL[R,{d,\[Sigma]}].(Abs[R]^(2\[Sigma]) R)];
SR[R_,{d_,\[Sigma]_},iLR_]:=-Total[rdr*R iLR];
SR[R_]:=SR[R,{2,1}]; 

R0set[R_]:=If[m==0,R,ReplacePart[R,1->0.]];
newR[R_,{d_,\[Sigma]_}]:=With[{iLR=R0set[iL[R0set[R],{d,\[Sigma]}].(Abs[R]^(2 \[Sigma]) R)]},-Abs[SL[R]/SR[R,{d,\[Sigma]},iLR]]^(((2 \[Sigma]+1)/(2 \[Sigma]))) iLR];
newR[R_]:=newR[R,{2,1}]; 

solver[initR_,{d_,\[Sigma]_},OptionsPattern[{MaxIterations->1000,Tolerance->N[1*^-10]}]]:=With[{f=Function[{nR},{nR[[1]]+1,nR[[3]],newR[nR[[3]],{d,\[Sigma]}]}]},FixedPoint[f,{0,0,initR},OptionValue[MaxIterations],SameTest->(Max[Abs[#1[[3]]-#2[[3]]]]<OptionValue[Tolerance]&)]];
solver[initR_]:=solver[initR,{2,1}];
solver::usage="Solver given inital guess. Returns {# of iterations, 2nd last result, last result}";

simplesolver[initR_]:=With[{s=solver[R]},Print["Converged after "<>TextString[s[[1]]]<>" steps"];Print["Max error is "<>TextString[Max[Abs[s[[2]]-s[[3]]]]]];s[[3]]];
simplesolver::usage="A simplified solver. Example: Q=simplesolver[R];"; 

s=solver[R,{2,1}];
Qp=s[[2]];
Q=s[[3]];

nD[x_List]:=Join[Join[{x[[2]]-x[[1]]},1/2 (x[[3;;All]]-x[[1;;-3]])],{x[[-1]]-x[[-2]]}];
nD::usage="Discrete derivative in r.";
\[CapitalLambda][R_]:=R+(r nD[R])/dr;
normR[R_]:=Total[rdr Abs[R]^2]; 

interpR[R_]:=With[{f=Interpolation[Transpose[{r,R}]]},{x}|->Piecewise[{{f[x],0<=x<=Max[r]}},0]];
interpR::usage="Evaluate function on a rectangular grid.";

\[CapitalLambda]Q=ParallelTable[interpR[\[CapitalLambda][Q]][Norm[{x,y}]],{x,rpm},{y,rpm}];
\[CapitalLambda]Q::usage="\[CapitalLambda]Q(x,y).";
g=((#+Reverse[#])/2&)[Total[\[CapitalLambda]Q dr]];
g::usage="The g(x,y). Also make it duely even."; 


fftshift[x_]:=With[{ls=Floor[Dimensions[x]/2]},RotateLeft[x,ls]];
ifftshift[x_]:=With[{ls=Floor[Dimensions[x]/2]},RotateRight[x,ls]];
fftshift::usage="Shift r=0 to first element."
ifftshift::usage="Shift r=0 to central element. ifftshift@fftshift=identity."; 

d\[Xi]=N[(2 \[Pi])/((2 M-1) dr)];
\[Xi]=Range[1-M,M-1] d\[Xi];
g\[Xi]:=Sqrt[Length[g]/(2Pi)]dr Re[ifftshift[Fourier[fftshift[g]]]];
g\[Xi]::usage="g\[Xi] fourier transforms, g[\[Xi]]:=1/Sqrt[2\[Pi]]\[Integral]g(r)E^Ix\[Xi]\[DifferentialD]r. Utilizes the fact that g(r) is an even function, hence take real part of g[\[Xi]] in the end."; 

theta[Q]=1.66032;

ListPlot[Transpose[{\[Xi],g\[Xi]}],AxesLabel->{"\[Xi]","g[\[Xi]]"}]
ListPlot[Transpose[{\[Xi],g\[Xi]}],AxesLabel->{"\[Xi]","g[\[Xi]]"},PlotRange->{{-5,5},All}] 
\end{lstlisting}
}

\end{document}